\author{S\'ebastien Alvarez}
\date{}
\title{Gibbs $u$-states for the foliated geodesic flow and transverse invariant measures}
\begin{document}

\newtheorem{maintheorem}{Theorem}
\newtheorem{maincoro}[maintheorem]{Corollary}
\renewcommand{\themaintheorem}{\Alph{maintheorem}}

\theoremstyle{plain}
\newtheorem{theorem}{Theorem}[section]
\newtheorem{defi}[theorem]{Definition}
\newtheorem{lemma}[theorem]{Lemma}
\newtheorem{proposition}[theorem]{Proposition}
\newtheorem{coro}[theorem]{Corollary}

\theoremstyle{definition}

\newtheorem{conj}{Conjecture}[section]
\newtheorem{exmp}{Example}[section]
\newtheorem{rem}{Remark}[section]

\theoremstyle{remark}

\newtheorem{note}{Note}[section]

\def\ep{\noindent{\hfill $\fbox{\,}$}\medskip\newline}
\renewcommand{\theequation}{\arabic{section}.\arabic{equation}}
\renewcommand{\thetheorem}{\arabic{section}.\arabic{theorem}}
\newcommand{\eps}{\varepsilon}
\newcommand{\disp}[1]{\displaystyle{\mathstrut#1}}
\newcommand{\fra}[2]{\displaystyle\frac{\mathstrut#1}{\mathstrut#2}}
\newcommand{\dif}{{\rm Diff}}
\newcommand{\homeo}{{\rm Homeo}}
\newcommand{\Per}{{\rm Per}}
\newcommand{\Fix}{{\rm Fix}}
\newcommand{\A}{\mathcal A}
\newcommand{\Z}{\mathbb Z}
\newcommand{\Q}{\mathbb Q}
\newcommand{\R}{\mathbb R}
\newcommand{\C}{\mathbb C}
\newcommand{\N}{\mathbb N}
\newcommand{\T}{\mathbb T}
\newcommand{\U}{\mathbb U}
\newcommand{\D}{\mathbb D}
\newcommand{\PP}{\mathbb P}
\newcommand{\Sp}{\mathbb S}
\newcommand{\K}{\mathbb K}
\newcommand{\car}{\mathbf 1}
\newcommand{\g}{\mathfrak g}
\newcommand{\gs}{\mathfrak s}
\newcommand{\hh}{\mathfrak h}
\newcommand{\rr}{\mathfrak r}
\newcommand{\s}{\sigma}
\newcommand{\fhi}{\varphi}
\newcommand{\ffhi}{\tilde{\varphi}}
\newcommand{\moins}{\setminus}
\newcommand{\ds}{\subset}
\newcommand{\W}{\mathcal W}
\newcommand{\WW}{\widetilde{W}}
\newcommand{\F}{\mathcal F}
\newcommand{\UF}{\mathcal U \mathcal F}
\newcommand{\G}{\mathcal G}
\newcommand{\CC}{\mathcal C}
\newcommand{\RR}{\mathcal R}
\newcommand{\DD}{\mathcal D}
\newcommand{\M}{\mathcal M}
\newcommand{\B}{\mathcal B}
\newcommand{\cS}{\mathcal S}
\newcommand{\HH}{\mathcal H}
\newcommand{\Hyp}{\mathbb H}
\newcommand{\UU}{\mathcal U}
\newcommand{\Pp}{\mathcal P}
\newcommand{\QQ}{\mathcal Q}
\newcommand{\E}{\mathcal E}
\newcommand{\GG}{\Gamma}
\newcommand{\LL}{\mathcal L}
\newcommand{\cN}{\mathcal N}
\newcommand{\KK}{\mathcal K}
\newcommand{\TT}{\mathcal T}
\newcommand{\X}{\mathcal X}
\newcommand{\Y}{\mathcal Y}
\newcommand{\ZZ}{\mathcal Z}
\newcommand{\bE}{\overline{E}}
\newcommand{\bF}{\overline{F}}
\newcommand{\wF}{\widetilde{F}}
\newcommand{\hcF}{\widehat{\mathcal F}}
\newcommand{\hM}{\widehat{M}}
\newcommand{\hL}{\widehat{L}}
\newcommand{\hg}{\widehat{g}}
\newcommand{\bW}{\overline{W}}
\newcommand{\bcW}{\overline{\mathcal W}}
\newcommand{\tL}{\widetilde{L}}
\newcommand{\diam}{{\rm diam}}
\newcommand{\diag}{{\rm diag}}
\newcommand{\Jac}{\mathcal J}
\newcommand{\Plong}{{\rm Plong}}
\newcommand{\Tr}{{\rm Tr}}
\newcommand{\Conv}{{\rm Conv}}
\newcommand{\Ext}{{\rm Ext}}
\newcommand{\Spec}{{\rm Sp}}
\newcommand{\Isom}{{\rm Isom}\,}
\newcommand{\Supp}{{\rm Supp}\,}
\newcommand{\Grass}{{\rm Grass}}
\newcommand{\Hold}{{\rm H\ddot{o}ld}}
\newcommand{\Ad}{{\rm Ad}}
\newcommand{\ad}{{\rm ad}}
\newcommand{\Aut}{{\rm Aut}}
\newcommand{\End}{{\rm End}}
\newcommand{\Leb}{{\rm Leb}}
\newcommand{\Liouv}{{\rm Liouv}}
\newcommand{\Int}{{\rm Int}}
\newcommand{\cc}{{\rm cc}}
\newcommand{\grad}{{\rm grad}}
\newcommand{\vol}{{\rm vol}}
\newcommand{\proj}{{\rm proj}}
\newcommand{\mass}{{\rm mass}}
\newcommand{\dive}{{\rm div}}
\newcommand{\dom}{{\rm dom}}
\newcommand{\dist}{{\rm dist}}
\newcommand{\im}{{\rm Im}}
\newcommand{\re}{{\rm Re}}
\newcommand{\codim}{{\rm codim}}
\newcommand{\Map}{\longmapsto}
\newcommand{\vide}{\emptyset}
\newcommand{\tr}{\pitchfork}
\newcommand{\ssl}{\mathfrak{sl}}

\newenvironment{demo}{\noindent{\textbf{Proof.}}}{\quad \hfill $\square$}
\newenvironment{pdemo}{\noindent{\textbf{Proof of the proposition.}}}{\quad \hfill $\square$}
\newenvironment{IDdemo}{\noindent{\textbf{Idea of proof.}}}{\quad \hfill $\square$}

\newenvironment{remark}[1][Remark]{\begin{trivlist}
\item[\hskip \labelsep {\bfseries #1}]}{\end{trivlist}}

\def\to{\mathop{\rightarrow}}
\def\act{\mathop{\curvearrowright}}
\def\To{\mathop{\longrightarrow}}
\def\Sup{\mathop{\rm Sup}}
\def\Max{\mathop{\rm Max}}
\def\Inf{\mathop{\rm Inf}}
\def\Min{\mathop{\rm Min}}
\def\lims{\mathop{\overline{\rm lim}}}
\def\limi{\mathop{\underline{\rm lim}}}
\def\egal{\mathop{=}}
\def\dans{\mathop{\subset}}
\def\surj{\mathop{\twoheadrightarrow}}

\def\h#1#2#3{ {\mathfrak{h}}^{#1}_{#2\rightarrow#3}}

\maketitle

\begin{abstract}
This paper is devoted to the study of Gibbs $u$-states for the geodesic flow tangent  to a foliation $\F$ of a manifold $M$ having negatively curved leaves. By definition, they are  the probability measures on the unit tangent bundle to the foliation that are invariant under the foliated geodesic flow and have Lebesgue disintegration in the unstable  manifolds of this flow.

On the one hand we give sufficient conditions for the existence of transverse invariant measures. In particular we prove that when  the foliated geodesic flow  has a Gibbs $su$-state, i.e.  an invariant measure with Lebesgue disintegration both in the stable and unstable manifolds, then this measure has to be obtained by combining a transverse invariant measure and the Liouville measure on the leaves.

On the other hand we exhibit a  bijective correspondence between the set of Gibbs $u$-states and a set of probability measure on $M$ that we call $\phi^u$-harmonic. Such measures have Lebesgue disintegration in the leaves and their local densities have a very specific form: they possess an integral representation analogue to the Poisson representation of harmonic functions.
\end{abstract}

\section{Introduction}

This is the second of a series of three papers in which we study a notion of Gibbs measure for the geodesic flow tangent to the leaves of a closed foliated manifold with negatively curved leaves \cite{Al2,Al1}.

Let $\F$ be a foliation of a closed manifold $M$ (we say that $(M,\F)$ is a \emph{closed foliated manifold}) whose leaves $L$ are endowed with smooth (i.e. of class $C^{\infty}$)
 Riemannian metrics $g_L$ which vary continuously transversally in the smooth topology (we refer to the assignment $L\mapsto g_L$ as a \emph{leafwise metric}). The \emph{unit
  tangent bundle} of $\F$ is the set $\hM$ of unit vectors tangent to $\F$. An element of $\hM$ shall be denoted by $v$ or, when we want to specify its basepoint, by $(x,v)$ with
   $x\in M$ and $v\in T^1_x\F$, the unit sphere of the tangent space to $\F$ at $x$. The set $\hM$ is naturally a manifold endowed with a foliation $\hcF$ whose leaves are the unit tangent bundles of the leaves of $\F$ (see \S \ref{definitionunittgtbdle}). It also carries a flow $G_t$, the \emph{foliated geodesic flow}, which preserves the leaves of $\hcF$ and whose restriction to a leaf $T^1L$ is precisely its geodesic flow. We will be mostly interested in \emph{negatively curved leafwise metrics} i.e. in the case where all metrics $g_L$ have negative sectional curvature. In that case the foliated geodesic flow possesses a weak form of hyperbolicity that resembles the classical notion of \emph{partial hyperbolicity} and that we shall analyze in detail in \S \ref{foliatedhyperbolicity}. This new notion has been introduced by Bonatti, G{\'o}mez-Mont and Mart{\'i}nez in a recent preprint \cite{BGM}. They called it the \emph{foliated hyperbolicity}. This means that $\hcF$ admits two \emph{continuous subfoliations}, called \emph{stable and unstable foliations} and denoted by $\W^s$ and $\W^u$, which are $G_t$-invariant and respectively uniformly contracted and dilated by $G_t$. One would like to know the extent to which the classical results concerning partial hyperbolicity can be applied to that context.

\begin{figure}[hbtp]
\centering
\includegraphics[scale=0.6]{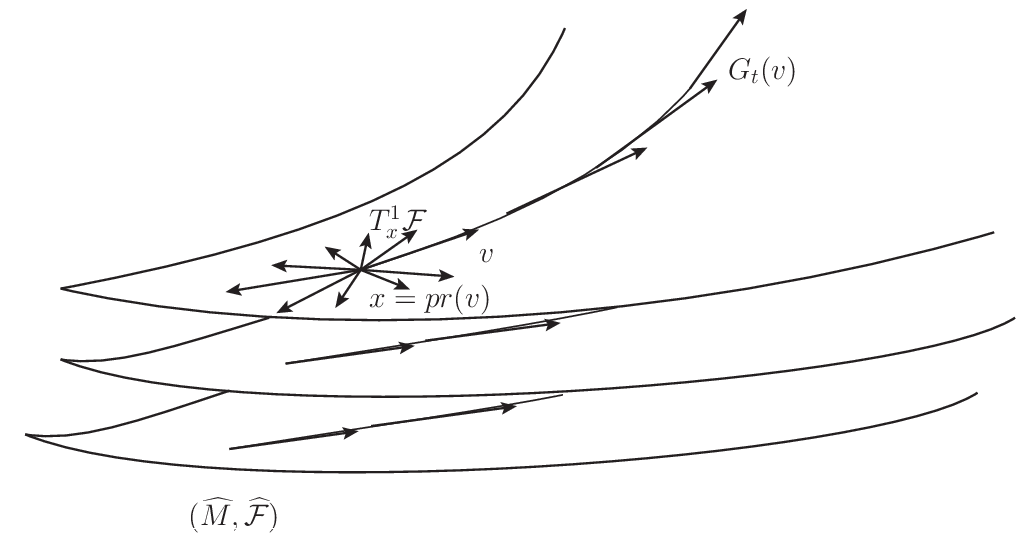}
\caption{Construction  of the unit tangent bundle and the foliated geodesic flow}
\label{obiwankenobi}
\end{figure}

The motivating problem of this paper is the research of \emph{SRB measures} (or physical measures) for $G_t$, i.e. of $G_t$-invariant measures $\mu$ whose basin (the set of  elements $v\in\hM$ such that the averages of Dirac masses along the orbit of $v$ tend to $\mu$ in the weak$^{\ast}$ sense) has positive Lebesgue measure. As for partially hyperbolic dynamics (see \cite{BV,BDV}) we expect that SRB measures will be given by \emph{Gibbs $u$-states} for $G_t$ i.e. invariant measures whose conditional measures along the leaves of  $\W^u$ are equivalent to the Lebesgue measure. Such invariant measures were first considered in the partially hyperbolic context by Pesin and Sinai in \cite{PS}.

Motivated by the work of Deroin-Klpetsyn \cite{DK} we wish to prove the following dichotomy, at least for codimension $1$ foliations (they deal with transversally conformal foliations) with negatively curved leaves.
\begin{itemize}
\item Either there exists a transverse invariant measure for $\F$;
\item or there exists a finite number of SRB measures for $G_t$ (which are precisely the only ergodic Gibbs $u$-states of $G_t$) whose basins cover a Borel subset of $\hM$ full for the Lebesgue measure. 
\end{itemize}

Establishing this dichotomy, \emph{when the leaves are hyperbolic}, is the purpose of Bonatti, G\'omez-Mont and Mart\'inez in their preprint \cite{BGM}, which strongly relies on the study of the statistical properties of the tangential Brownian motion performed in \cite{DK}. The dichotomy has also been established in the case where the foliation is transverse to a projective $\C\PP^1$-bundle over a closed manifold with negative sectional curvature: see \cite{Al2,BG,BGV} where the authors prove the uniqueness of the SRB measure in the absence of transverse invariant measure. Recently, in joint work with Yang (see \cite{AlY}) and using the main result of the present paper, we were able to prove the dichotomy for transversally conformal foliations with negatively curved leaves.

The purpose of this paper is twofold. Firstly, we wish to give new sufficient conditions for a foliation with negatively curved leaves to admit a transverse invariant measure that are decisive for establishing the aforementioned dichotomy (see \cite{AlY}). Secondly, this paper serves as a companion to \cite{Al1}. In the latter, we were led to associate a notion of Gibbs measure for the foliated geodesic flow to the now classical notion of Garnett's \emph{harmonic measure} (see \cite{Gar}). Here, we associate a new family of measures on a manifold $M$ foliated by negatively curved manifolds, called $\phi^u$\emph{-harmonic measure}, to the notion of Gibbs $u$-states already mentioned (see also Definition \ref{Gibbsustates}) and study some of their properties.

\paragraph{Existence of invariant measures.}

Plante was the first to give in \cite{Pl} a sufficient condition for the existence of a transverse invariant measure for a foliation $\F$ endowed with a leafwise metric. He proved that if a leaf has a subexponential growth, then its closure supports a transverse invariant measure. Later in \cite{GLW}, Ghys, Langevin and Walczak developed a notion of geometric entropy for foliations and proved that its vanishing implies the existence of a transverse invariant measure.

When the leaves of $\F$ are hyperbolic Riemann surfaces there is another condition ensuring the existence of a transverse invariant measure. In that case $\hM$ comes with three flows: the foliated geodesic flow $G_t$ and the foliated stable and unstable horocyclic flows $H_t^{\pm}$. The following folklore result may be found for example in \cite{BG} and \cite{Ma}.

\emph{Any probability measure on} $\hM$ \emph{which is invariant by the foliated geodesic flow and by both the foliated horocyclic flows is totally invariant, meaning that it is locally the product of the Liouville measure by a transverse holonomy invariant measure}.

The proof follows from an elegant algebraic argument. Since all the leaves are hyperbolic surfaces they are uniformized by the upper half plane $\Hyp$ endowed with the Poincar{\'e} metric. The unit tangent bundle $T^1\Hyp$ can be identified with the Lie group $PSL_2(\R)$. The geodesic and the two horocyclic flows are then identified with actions of $1$-parameter subgroups of $PSL_2(\R)$ by right translations. Moreover they generate all $PSL_2(\R)$. Hence a measure invariant by these three subgroups has to be invariant by the whole action of $PSL_2(\R)$ by right translations. Thus it is a multiple of the bi-invariant \emph{Haar measure} which is identified with the \emph{Liouville measure} of $T^1\Hyp$.

Consequently the conditional measures in the plaques of $\hcF$ of any measure on $\hM$ invariant by the joint action of the three foliated flows are proportional to the Liouville measure. Such a measure reads locally as the product of the Liouville measure by a transverse invariant measure.

We are interested in the case where the leaves of $\F$ are of arbitrary dimension and with negative sectional curvature. In this case there are no horocyclic flows on $\hM$ anymore, but two stable and unstable foliations which are continuous, subfoliate $\hcF$ and, as we recall, are denoted by $\W^s$ and $\W^u$. We can define a \emph{Gibbs su-state} as an invariant measure for the foliated geodesic flow $G_t$ which is both a Gibbs $s$-state and a Gibbs $u$-state. If one prefers it is a $G_t$-invariant probability measure whose conditional measures along the leaves of both $\W^s$ and $\W^u$ are equivalent to the Lebesgue measure. The main result is

\begin{maintheorem}
\label{mtheoremsugibbsgeodesique}
Let $(M,\F)$ be a closed foliated manifold endowed with a negatively curved leafwise metric. Suppose the foliated geodesic flow admits a Gibbs $su$-state $\mu$. Then $\mu$ is totally invariant i.e. is locally the product of the Liouville measures of leaves of $\hcF$ by a transverse invariant measure. Therefore $\F$ possesses a transverse invariant measure.
\end{maintheorem}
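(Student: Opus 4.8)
The strategy is to mimic the algebraic argument from the hyperbolic-surface case, but replace the explicit $PSL_2(\mathbb R)$ computation by a direct analysis of the conditional measures. The starting point is that $\mu$ is simultaneously a Gibbs $u$-state and a Gibbs $s$-state, so by definition it disintegrates with Lebesgue conditionals along the leaves of both $\W^u$ and $\W^s$. The key structural fact I would invoke is that the weak-unstable foliation $\W^{cu}$ (spanned by $\W^u$ and the flow direction) and the weak-stable foliation $\W^{cs}$ are subfoliations of $\hcF$, and that inside a leaf $T^1L$ of $\hcF$ the local product structure $\W^{cu}\times\W^s$ (and dually $\W^{cs}\times\W^u$) holds --- this is the standard local product structure of Anosov-type flows on negatively curved manifolds, applied leafwise. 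The plan is: (i) use invariance under $G_t$ together with the Gibbs $u$-property to show $\mu$ has a product structure $(\text{Lebesgue along }\W^u)\times(\text{flow})\times(\text{something})$ along the center-unstable leaves, i.e.\ $\mu$ is a \emph{$u$-Gibbs state with flow direction absorbed}; (ii) use the Gibbs $s$-property symmetrically; (iii) combine to conclude that the conditional of $\mu$ along a whole leaf $T^1L$ of $\hcF$ is, up to normalization, the Liouville (Bowen--Margulis--Liouville) measure of $T^1L$.

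For step (iii), the technical heart, I would argue as follows. Fix a leaf $\hL=T^1L$ of $\hcF$ and look at the conditional measure $\mu_{\hL}$ of $\mu$ on $\hL$ (these exist by measurability of the partition into leaves, at least locally in a foliated chart). Being a Gibbs $u$-state that is $G_t$-invariant, $\mu_{\hL}$ must be absolutely continuous along $\W^u\cap\hL$ with a density that transforms under holonomy of $\W^u$ by the unstable Jacobian of $G_t$ --- this is the classical characterization of $u$-Gibbs conditionals (Pesin--Sinai \cite{PS}), here applied inside the single negatively curved manifold $L$. Simultaneously being a Gibbs $s$-state forces the analogous transformation law along $\W^s\cap\hL$ with the stable Jacobian. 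A measure on $T^1L$ ($L$ a complete simply connected --- after passing to the universal cover --- negatively curved manifold, or just a closed one) whose conditionals along both $W^u$ and $W^s$ are the natural Margulis-type densities, and which is flow-invariant, is unique up to scalar: it is the Liouville measure. Concretely, on $T^1L$ the Liouville measure is the unique $G_t$-invariant measure locally of the form $d\mu^u\times dt\times d\mu^s$ where $d\mu^{u/s}$ are the (Lebesgue-class) stable/unstable horospherical measures; absolute continuity of the conditionals on both foliations pins down the transverse (to $\W^u$ inside $\W^{cu}$) factor to be $dt\times d\mu^s$ in the Lebesgue class, and then flow-invariance plus the known transformation rules for $d\mu^u,d\mu^s$ under $G_t$ fix everything. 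I would phrase this as: the only $G_t$-invariant probability on $T^1L$ absolutely continuous with respect to Liouville is Liouville itself (when $L$ is compact; for noncompact leaves one works with the local product structure and the claim is about the measure class, which is all that is needed).

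Once $\mu_{\hL}\propto\mathrm{Liouville}_{\hL}$ for $\mu$-almost every leaf $\hL$, the transverse structure follows by a standard disintegration argument: in a foliated chart $U\cong P\times T$ of $\hcF$ (plaques $P$, transversal $T$), write $\mu|_U=\int_T \mu_t\,d\nu(t)$ with $\mu_t$ supported on the plaque $P\times\{t\}$; the previous paragraph gives $\mu_t=\mathrm{Liouville}_{P\times\{t\}}$ up to a $t$-dependent constant which can be absorbed into $\nu$, so $\mu|_U=\mathrm{Liouville}_{\mathrm{leaf}}\otimes\nu$. That $\nu$ is holonomy-invariant is forced by the fact that $\mu$ is globally well-defined (independent of the chart): comparing two overlapping charts and using that the leafwise Liouville measures glue (they are intrinsic to the leafwise metric), $\nu$ must transform by the holonomy pseudogroup trivially, i.e.\ it is a transverse invariant measure. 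This gives total invariance, and in particular the existence of a transverse invariant measure for $\F$.

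**Main obstacle.** The delicate point is step (iii): justifying rigorously that a flow-invariant measure on $T^1L$ with Lebesgue-class conditionals along \emph{both} horospherical foliations must be in the Liouville class. For surfaces this is the clean $PSL_2(\mathbb R)$ argument quoted in the introduction; in variable negative curvature and higher dimension there is no group, so one needs the local-product-structure description of Liouville together with the Margulis-type transformation rules for the conditional densities under $G_t$ --- and one must be careful that "Gibbs $s$/$u$-state" gives not merely absolute continuity but absolute continuity \emph{with the specific density dictated by invariance}, which is exactly the content of the Pesin--Sinai characterization. Handling noncompact leaves (most leaves of a foliation are not compact) requires doing everything at the level of measure classes inside foliated charts rather than globally on $T^1L$, which is where I expect the bookkeeping to be heaviest.
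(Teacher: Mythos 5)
Your proposal follows essentially the same route as the paper: both disintegrate the Gibbs $su$-state in local rectangles using the product structure of $\W^u$ and $\W^{cs}$, invoke the Pesin--Sinai characterization of the densities along both horospherical directions (Theorem \ref{reconstruirelesetatsdeugibbs}), combine with absolute continuity of the invariant holonomies (Theorem \ref{absolutecontinuity}) and the local product structure of Liouville (Proposition \ref{leblps}), and then use flow-invariance of the leafwise Liouville measure to pin the conditional density down to a constant, after which the transverse factor is forced to be holonomy-invariant by uniqueness of disintegration. The ``bookkeeping'' you flag as the heavy step is exactly Proposition \ref{equalliouville}: the identity $1=\Jac G_{-t}(w)=\frac{\alpha(G_{-t}w)}{\alpha(w)}\Jac^u G_{-t}(w)\Jac^{cs}G_{-t}(w)$, together with the explicit formulas for $\psi^u$, $\psi^{cs}$ and for the holonomy Jacobians, is what collapses the product of densities to $1$; note in particular that one cannot avoid the angle function $\alpha$ between $E^u$ and $E^{cs}$ (your $d\mu^u\times dt\times d\mu^s$ is only Liouville up to the factor $1/\alpha$), and that one needs constancy of the density --- not merely the measure class, as your parenthetical ``which is all that is needed'' suggests --- for the induced transverse measure to be genuinely invariant rather than just quasi-invariant.
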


This theorem contains an implicit statement, namely that the existence of a transverse invariant measure for $\hcF$ implies the existence of a transverse invariant measure for $\F$. In fact these two assertions are equivalent as explained in Proposition \ref{equholinv}.

The Lie group action argument we gave above will be replaced by an argument of \emph{absolute continuity} of stable and unstable subfoliations, which will allow us to prove that all Gibbs $su$-states are totally invariant. The principle behind this result, which is the cornerstone of \cite{Al2}, is that prescribing both the measure classes of the conditional measures of a $G_t$-invariant measure in the stable and unstable manifolds is  very restrictive and implies the existence of a transverse invariant measure, which is a rare phenomenon.

Note that this theorem implies that foliated geodesic flows of most smooth foliations with negatively curved leaves don't preserve any smooth measure. Indeed, assume that $M$ is equipped with a smooth Riemannian metric whose restriction to the leaves has negative sectional curvature everywhere (see \cite{AlY} for a discussion on the existence of such metrics). Suppose the foliated geodesic flow $G_t$ preserves a smooth measure.
 Using the absolute continuity of the stable and unstable foliations (see Theorem \ref{absolutecontinuity}), the conditional measures in local stable and unstable manifolds of this measure are both equivalent to the Lebesgue measure. We deduce that such a measure is a Gibbs $su$-state and Theorem \ref{mtheoremsugibbsgeodesique} implies that there must exist a transverse invariant measure.

There is a relation with Walczak's work on dynamics of the foliated geodesic flow \cite{W}. He proved that when $(M,\F)$ is endowed with a Riemannian metric (the leaves are not supposed to be negatively curved), the foliated geodesic flow of $\hM$ preserves the Riemannian volume of $\hM$ if and only if $\F$ is transversally minimal (see \cite{W} for further details).

\begin{maincoro}
Let $(M,\F)$ be a closed foliated manifold endowed with a smooth Riemannian metric. Assume that for the induced leafwise metric, all leaves are negatively curved. Assume that the foliated geodesic flow of $T^1\F$ preserves a smooth measure. Then this measure is totally invariant.
\end{maincoro}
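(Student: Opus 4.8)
The plan is to deduce this statement directly from Theorem \ref{mtheoremsugibbsgeodesique} by showing that any $G_t$-invariant smooth measure on $\hM=T^1\F$ is automatically a Gibbs $su$-state. So let $m$ be the invariant probability measure under consideration. Since the leafwise metric comes from an ambient Riemannian metric on $M$, it induces a Riemannian volume $\vol$ on $\hM$, and the hypothesis is that $m$ is of the form $\rho\,\vol$ with $\rho$ smooth; we may and do assume $\rho>0$ (a smooth measure in this context is an honest volume form, and in any case the set $\{\rho>0\}$ is $G_t$-saturated modulo a null set, so one could restrict attention to it).

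The first step is to recall that, by the absolute continuity of the unstable horospheric subfoliation $\W^u$ of $\hcF$ (Theorem \ref{absolutecontinuity}), the measure $\vol$ disintegrates along the leaves of $\W^u$ into conditional measures equivalent to the leafwise Lebesgue measure, the Radon--Nikodym derivatives being positive (they are built from the Jacobians of the horospheric holonomy maps). Multiplying by the strictly positive smooth density $\rho$ does not change the measure class of these conditionals, so the conditional measures of $m$ along the leaves of $\W^u$ are again equivalent to Lebesgue. Hence $m$ is a Gibbs $u$-state in the sense of Definition \ref{Gibbsustates}. Running the same argument for the stable horospheric subfoliation $\W^s$ — whose absolute continuity is Theorem \ref{absolutecontinuity} applied to $G_{-t}$ — shows that $m$ is also a Gibbs $s$-state. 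Therefore $m$ is a Gibbs $su$-state, and Theorem \ref{mtheoremsugibbsgeodesique} applies verbatim: $m$ is totally invariant, i.e. locally the product of the Liouville measures of the leaves of $\hcF$ by a transverse invariant measure.

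There is no serious obstacle here: the corollary is essentially a repackaging of the remark preceding it, the only input beyond Theorem \ref{mtheoremsugibbsgeodesique} being the absolute continuity statement of Theorem \ref{absolutecontinuity}. The one point that deserves a line of justification is the compatibility between the transverse (merely H\"older) regularity of the subfoliations $\W^s$ and $\W^u$ on the one hand, and the smoothness of $\rho$ in the ambient manifold on the other; this causes no difficulty, since absolute continuity is precisely the tool that bridges these two regularities, and once $\vol$ has the required disintegration so does any measure in its class.
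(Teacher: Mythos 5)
Your proof is correct and is essentially the argument the paper gives in the paragraph immediately preceding the corollary: an ambient smooth metric makes $\hcF$ a smooth foliation of $\hM$, and by the leafwise absolute continuity of $\W^u$ and $\W^s$ (Theorem~\ref{absolutecontinuity}) any smooth volume on $\hM$ has Lebesgue disintegration along both horospheric subfoliations, hence is a Gibbs $su$-state, and Theorem~\ref{mtheoremsugibbsgeodesique} concludes. Your extra care about the density possibly vanishing is harmless but not needed if one reads ``smooth measure'' in the usual sense of a smooth positive volume form, which is what the paper implicitly does.
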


\paragraph{$\phi^u$-harmonic measures.} When the leaves of $\F$ are hyperbolic, the basepoint projection induces a bijective correspondence between Gibbs $u$-states (which are measures on $\hM$) and Garnett's harmonic measures (which are measures on $M$: see \cite{Gar} and Definition \ref{harmonicmeasure} for the definition). This has been proven in \cite{BMar,Ma} (for leaves of dimension $2$) and in \cite{Al1} (for leaves of higher dimension). As shown in \cite{Al1} the situation is more subtle when the curvature varies.

Our goal is to show the existence of a canonical bijective correspondence between Gibbs $u$-states 
(whose existence is stated in Theorem \ref{reconstruirelesetatsdeugibbs}) and
 a certain type of measure on $M$ with a special local form. Namely, this
 class consists of measures whose conditional measures in the plaques of $\F$
 have densities with respect to Lebesgue, these densities having an integral
  form reminescent of the \emph{Poisson representation} of harmonic functions of negatvely curved manifolds (see \cite{AS}). Let us be more precise.

Let $L$ be a leaf of $\F$, and $\widetilde{L}$ be its universal cover. It is a complete connected and simply connected Riemannian manifold whose sectional curvature is pinched between two negative constants. Therefore it can be compactified by adding a topological sphere $\widetilde{L}(\infty)$. This sphere is defined as the set of equivalence classes of geodesic rays for the relation ``stay at bounded distance''. Say a vector $v\in T^1\tL$ \emph{points to} $\xi\in\tL(\infty)$ if $\xi$ is the equivalence class of the geodesic ray it directs.

\begin{figure}[!h]
\centering
\includegraphics[scale=0.7]{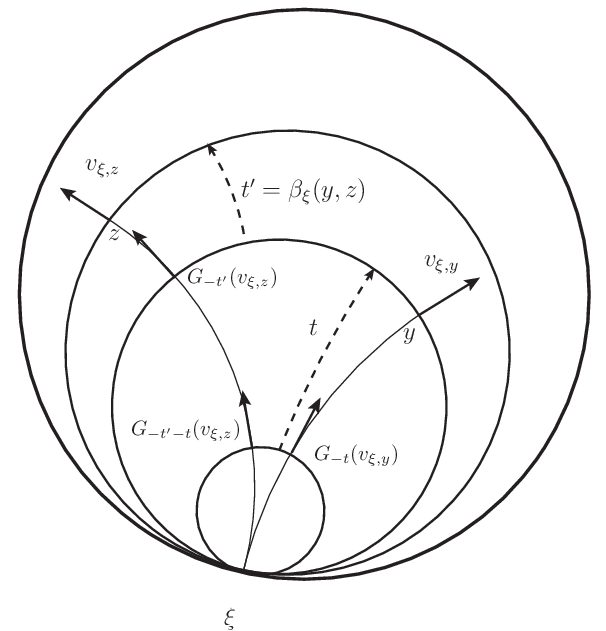}
\caption{Construction of the Gibbs kernel}\label{bordel}
\end{figure}

Let $\Jac^u_t=\det(DG_t)_{|E^u}$ denote the Jacobian of the time $t$ of the flow in the unstable direction and let $\beta_{\xi}$ denotes the Busemann function at $\xi\in\tL(\infty)$ (see \S \ref{sphereinfty} for the definition). One defines the \emph{Gibbs kernel} $k^u$ on $\widetilde{L}\times\widetilde{L}\times\widetilde{L}(\infty)$ by the formula
$$k^u(y,z;\xi)=\lim_{t\to\infty}\frac{\Jac^u_{-t-\beta_{\xi}(y,z)}(v_{\xi,z})}{\Jac^u_{-t}(v_{\xi,y})},$$
where $v_{\xi,z}$ is the unit vector based at $z$ such that $-v_{\xi,z}$ points to $\xi$. We justify the existence of this limit in \S \ref{gibbskernelsection}.

We define a $\phi^u$-\emph{harmonic function} on $\widetilde{L}$ as a function which has an integral representation
$$h(z)=\int_{\widetilde{L}(\infty)}k^u(o,z;\xi)d\eta_o(\xi),$$
where $o\in\widetilde{L}$ is a base point and $\eta_o$ is a finite Borel measure on $\widetilde{L}(\infty)$. When $L$ is hyperbolic, the Gibbs kernel coincides with the usual Poisson kernel (see Remark \ref{kernelhyperbolic}) and $\phi^u$-harmonic functions are in fact \emph{harmonic} (recall that a function is harmonic if its laplacian vanishes everywhere).

A $\phi^u$-\emph{harmonic measure} for $\F$ is a probability measure on $M$ which has Lebesgue disintegration for $\F$, and whose local densities in the plaques with respect to the Lebesgue measure are $\phi^u$-harmonic functions.

If one copies verbatim the definition above replacing ``$\phi^u$-harmonic'' by ``harmonic'' one obtains Garnett's definition of harmonic measures (see \cite{Gar} and Definition \ref{harmonicmeasure}). In particular, when the leaves of $\F$ are hyperbolic all $\phi^u$-harmonic measures for $\F$ are in fact harmonic.

The next theorem provides a canonical one-to-one correspondence between Gibbs $u$-states  and $\phi^u$-harmonic measures. In particular, it provides the existence of $\phi^u$-harmonic measures for foliations with negatively curved leaves.

\begin{maintheorem}
\label{mthbijcorresp}
Let $(M,\F)$ be a closed foliated manifold endowed with a negatively curved leafwise metric. Let $\TT$ be a complete transversal to $\F$. 
\begin{itemize}
\item For every Gibbs $u$-state for  the foliated geodesic flow $G_t$, there exists a unique $\phi^u$-harmonic measure for $\F$ inducing the same measure on $\TT$.
\item Reciprocally, for every $\phi^u$-harmonic measure for $\F$, there exists a unique Gibbs $u$-state for $G_t$ inducing the same measure on $\TT$.
\end{itemize}
\end{maintheorem}

We will postpone the definitions of complete transversals and induced measures until \S \ref{reparouille}.

The name $\phi^u$-harmonic has been chosen because Gibbs $u$-states come from a potential that is usually denoted by $\phi^u$ (see \cite{BR} for example and Formula \eqref{Eq:potential} for the definition of the potential). This choice of terminology is coherent with the notion of $F$-harmonic measures introduced in \cite{Al2} for foliated bundles and general potentials.

\paragraph{Ergodic decomposition.}

Using Theorem \ref{mthbijcorresp} we are able to study the structure of the space of $\phi^u$-harmonic measures. Say a $\phi^u$-harmonic $m$ is \emph{ergodic} if we have $m(\X)=0$ or $1$ for every Borel set $\X\dans M$ which is saturated by $\F$.

\begin{maintheorem}[Ergodic decomposition]
\label{decompositionergodiquephiu}
Let $(M,\F)$ be a closed foliated manifold endowed with a negatively curved leafwise metric. The space $\HH ar^{\phi^u}(\F)$ of $\phi^u$-harmonic measures for $\F$ is a non empty convex set whose extremal points are given by the ergodic measures.

Moreover, there exists a Borel set $\X$ which is full for all $\phi^u$-harmonic measures, as well as a unique family $(m_x)_{x\in\X}$ of probability measures on $M$ such that:
\begin{enumerate}
\item for all $x\in\X$, $m_x$ is an ergodic $\phi^u$-harmonic measure;
\item if $x,y\in\X$ belong to the same leaf then $m_x=m_y$;
\item for every $\phi^u$-harmonic measure $m$, we have:
$$m=\int_{\X} m_x\,dm(x).$$
\end{enumerate}
\end{maintheorem}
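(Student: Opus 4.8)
The plan is to deduce Theorem \ref{decompositionergodiquephiu} from the analogous ergodic decomposition for Gibbs $u$-states of the foliated geodesic flow $G_t$, using the canonical bijection of Theorem \ref{mthbijcorresp} together with the Birkhoff-type statement of Theorem \ref{Birkhouille}. First I would observe that the set $\HH ar^{\phi^u}(\F)$ is non-empty by Theorem \ref{mthbijcorresp} (combined with the existence of Gibbs $u$-states, Theorem \ref{reconstruirelesetatsdeugibbs}), and that it is convex and compact for the weak$^\ast$ topology: convexity is immediate from the definition of Lebesgue disintegration with $\phi^u$-harmonic local densities (a convex combination of $\phi^u$-harmonic functions is $\phi^u$-harmonic, since the Gibbs kernel integral representation is linear in the boundary measure $\eta_o$), and compactness follows because the property of being a Gibbs $u$-state is closed under weak$^\ast$ limits and $pr_\ast$ is continuous. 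Next I would identify the extremal points: since $pr_\ast$ is an affine homeomorphism from the set of Gibbs $u$-states onto $\HH ar^{\phi^u}(\F)$ (bijective by Theorem \ref{mthbijcorresp}, affine and continuous, hence a homeomorphism of compact convex sets), extremal points correspond to extremal points, and an extremal Gibbs $u$-state is exactly an ergodic one for $G_t$; I would then check that $G_t$-ergodicity of the Gibbs $u$-state is equivalent to $\F$-ergodicity (in the sense of the statement, $m(\X)\in\{0,1\}$ for $\F$-saturated $\X$) of its projection. This last equivalence uses that a $G_t$-invariant saturated set upstairs corresponds to an $\F$-saturated set downstairs via $pr$, together with the fact that the foliated geodesic flow orbit of $\hat v$ stays in a single leaf of $\hcF$, so the two $\sigma$-algebras of invariant sets are carried into one another by $pr$.

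For the second part — the measurable ergodic decomposition — I would first invoke the classical ergodic decomposition of the $G_t$-invariant measure $\hat m$ (the unique Gibbs $u$-state projecting to $m$) along the $\sigma$-algebra $\mathcal{I}$ of $G_t$-invariant Borel sets: this gives a full Borel set $\hat{\X}_0\subset\hM$ and a measurable family $(\hat m_{\hat v})$ of ergodic $G_t$-invariant probability measures with $\hat m=\int\hat m_{\hat v}\,d\hat m(\hat v)$. The key technical point is that each component $\hat m_{\hat v}$ is again a Gibbs $u$-state: this should follow from the fact that the defining property (Lebesgue conditionals in $\W^u$) is preserved under ergodic decomposition — essentially because the unstable-horospheric partition is measurable and refines, modulo the flow direction, the partition into ergodic components, so the local densities of $\hat m_{\hat v}$ in the unstable plaques are obtained by disintegrating those of $\hat m$. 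One then pushes everything down: set $\X = pr(\hat{\X}_0)$ (intersected with a full set on which things are well-defined), and $m_x = pr_\ast \hat m_{\hat v}$ for any $\hat v\in pr^{-1}(x)\cap\hat{\X}_0$. By the first part, $m_x$ is an ergodic $\phi^u$-harmonic measure; leaf-constancy (item 2) follows because $\hat m_{\hat v}$ depends only on the $G_t$-invariant-set-class of $\hat v$, hence only on the leaf of $\hcF$ through $\hat v$, hence only on the leaf of $\F$ through $x$; and the integral formula $m=\int_\X m_x\,dm(x)$ is just $pr_\ast$ applied to $\hat m=\int\hat m_{\hat v}\,d\hat m(\hat v)$ together with the change-of-variables for $pr_\ast\hat m = m$.

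For uniqueness of the family $(m_x)_{x\in\X}$, I would use the characterization furnished by Theorem \ref{Birkhouille}: for $f\in C^0(M)$ the Birkhoff average $f^\star = \lim_{T}\frac1T\int_0^T D_tf\,dt$ exists $m$-almost everywhere, is leaf-constant, and the map $x\mapsto \int_M f\,dm_x$ must coincide $m$-a.e. with $f^\star$ — indeed, by ergodicity of $m_x$ and a second application of the Birkhoff theorem relative to each $m_x$, one gets $\int_M f\,dm_x = f^\star(x)$ for $m_x$-almost every point, and since $m = \int m_x\,dm(x)$ this pins down $x\mapsto\int f\,dm_x$ as an $L^1(m)$-class, hence pins down $m_x$ for $m$-almost every $x$; leaf-constancy then propagates this to a genuine (not just a.e.-defined) leaf-constant family on a full saturated set, which is what the statement asserts. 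The main obstacle I anticipate is the claim that ergodic components of a Gibbs $u$-state are again Gibbs $u$-states — this is the place where the whole argument could break, since a priori the Lebesgue-disintegration property need not survive conditioning on an arbitrary invariant sub-$\sigma$-algebra; making it work requires carefully relating the unstable-horospheric disintegration to the ergodic one, presumably by the same absolute-continuity machinery (Theorem \ref{absolutecontinuity}) used elsewhere in the paper, and possibly by first establishing that the conditionals along $\W^u$ of $\hat m$ restrict correctly to each ergodic component because $\W^u$-plaques are (up to the flow) contained in single ergodic components.
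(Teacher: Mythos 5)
There is a genuine gap at exactly the place where the paper does the most work. In your step establishing item 2 you write that $\hat m_{\hat v}$ ``depends only on the $G_t$-invariant-set-class of $\hat v$, hence only on the leaf of $\hcF$ through $\hat v$.'' The first ``depends only on'' is correct; the ``hence'' is not. The $G_t$-ergodic components are unions of $G_t$-orbits, not unions of leaves of $\hcF$: a single leaf $\hL$ contains a $(2d-2)$-parameter family of geodesic orbits, and there is no a priori reason these all lie in a single ergodic component, nor even that the ergodic-component map is leafwise constant on a full-measure set. In fact the ergodic decomposition of $G_t$ can a priori cut through a leaf in a complicated way, so the inference ``constant on orbits $\Rightarrow$ constant on leaves'' requires an argument. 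This is precisely the content of the paper's Proposition \ref{accessibiity} and Corollary \ref{indepleaf}: one shows that any two $u$-regular vectors on the same leaf of $\hcF$ can be joined by a regular $(u,cs)$-path, and since Birkhoff limits are unchanged along unstable and center-stable pieces (Remark \ref{muplusmumoins}), the ergodic limit $\mu_v$ is constant on each leaf restricted to $u$-regular points. Establishing this accessibility property uses the absolute continuity of the invariant foliations (Theorem \ref{absolutecontinuity}), the local product structure (Proposition \ref{lpshypfeuilletee}), and the saturation lemma (Lemma \ref{fullsaturated}, via Remark \ref{regholinv}) to ensure that $\Y^u$ cuts every nearby plaque; none of this is automatic.

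On the other hand, the point you flag as a possible obstacle — that ergodic components of a Gibbs $u$-state are again Gibbs $u$-states — is not in fact a gap: it is part 2 of Theorem \ref{reconstruirelesetatsdeugibbs}, which the paper has already established (following \cite[\S 11.2.2]{BDV}). Also, to get a set $\X$ full for \emph{all} $\phi^u$-harmonic measures you should not take it from the ergodic decomposition of a single lift $\hat m$; the paper instead uses the set $\Y^u$ of $u$-regular vectors, which is full for every Gibbs $u$-state by construction (Remark \ref{ergdecuGibbsstates}), then sets $\X = pr(\Y^u)$. Finally, your route through the abstract ergodic decomposition theorem of the flow followed by pushing forward by $pr_\ast$ is different in flavor from the paper's explicit construction of the $m_x$ via Birkhoff limits $\mu_v$; both can work, but both hinge on the same missing lemma — leafwise constancy of the ergodic component of $u$-regular points — which you would need to prove along the lines of Proposition \ref{accessibiity}.
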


The proof of this theorem uses the fact that ergodic components of Gibbs $u$-states are still Gibbs $u$-states (see Theorem \ref{reconstruirelesetatsdeugibbs}). The measures $m_x$ of the theorem above are the images of ergodic  components of Gibbs-$u$-states by the correspondence given in Theorem \ref{mthbijcorresp}. More details will be found in \S \ref{ergounette}.

\paragraph{Generalization of a theorem of Matsumoto.} In \cite{Mat} Matsumoto considered closed foliated manifolds $(M,\F)$ with hyperbolic leaves and their harmonic measures (in the sense of Garnett \cite{Gar}).  Following Matsumoto's terminology we say that a property holds for an \emph{$m$-typical leaf} (or just \emph{typical} when there is no ambiguity) if it holds in a saturated Borel set full for $m$. Matsumoto showed that the extension by holonomy of a local harmonic density on a typical leaf $L$ (see Lemma \ref{extenddensity} for more details about extension of local densities by holonomy) defines, up to multiplication by a constant, a harmonic function on its universal cover $\widetilde{L}$ called the \emph{characteristic function} of $L$ denoted by $h_L$. Using the Poisson representation of harmonic functions, we see that the function $h_L$ is associated to a measure on $\widetilde{L}(\infty)$ denoted by $\eta_L$. Its measure class $[\eta_L]$ only depends on the leaf $L$, and we call it the \emph{characteristic measure class} of $L$.

By analyzing the properties of Brownian motion tangent to the leaves, he proved the following
\begin{theorem}[Matsumoto]
\label{matsumototheorem}
Let $(M,\F)$ be a closed foliatied manifold whose leaves are hyperbolic manifolds. Let $m$ be a harmonic measure which is not totally invariant, i.e. which is not locally the product of the Lebesgue measure of the plaques by a transverse invariant measure. Then for $m$-almost every leaf $L$, the characteristic measure class $[\eta_L]$ on $\widetilde{L}(\infty)$ is singular with respect to the Lebesgue measure. Moreover, the characteristic function of $m$-almost every leaf is unbounded.
\end{theorem}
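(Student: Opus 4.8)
The plan is to recast Matsumoto's argument in probabilistic terms, exploiting that a harmonic measure $m$ is, by definition, stationary for the leafwise Brownian motion. First I would pass to the ergodic decomposition of harmonic measures (classical in Garnett's theory, see \cite{Gar}; the counterpart for ordinary harmonic measures of Theorem \ref{decompositionergodiquephiu}) and treat each ergodic component separately; so assume $m$ is ergodic, and by hypothesis not totally invariant. Recall the structure of the characteristic function: by Lemma \ref{extenddensity} the local harmonic densities of $m$ glue along plaques and lift to a positive harmonic function $h_L$ on $\tL$, automorphic under $\pi_1(L)$ up to the modular holonomy cocycle. Consequently $\nabla\log h_L$ descends to $L$, so that $\psi:=|\nabla\log h_L|^2$ is a well defined Borel function on $M$; and since the leaves are hyperbolic, Yau's gradient estimate for positive harmonic functions bounds $|\nabla\log h_L|$ by a constant $C_n$ depending only on the leaf dimension $n$, whence $\psi\in L^\infty(M)\subset L^1(m)$. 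Finally, in the hyperbolic case the Gibbs kernel is the ordinary Poisson kernel (Remark \ref{kernelhyperbolic}), so that $h_L(z)=\int_{\tL(\infty)}P(o,z;\xi)\,d\eta_L(\xi)$; write $\eta_L=\eta_L^{ac}+\eta_L^{sing}$ for its Lebesgue decomposition and $h_L=h^{ac}+h^{sing}$ for the induced splitting into positive harmonic functions.

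The heart of the argument is a drift estimate. Set $c:=\int_M\psi\,dm$. If $c=0$ then $\nabla h_L$ vanishes on $m$-almost every leaf, the densities are leafwise constant, and $m$ is totally invariant, against our assumption; hence $c>0$. Now fix a $\mathbb{P}_m$-typical Brownian path, lift it to $\widetilde\omega_t\in\tL$, and apply It\^o's formula to $\log h_L$: since $\Delta\log h_L=-\psi$, one gets $\log h_L(\widetilde\omega_t)=\log h_L(\widetilde\omega_0)+N_t-\tfrac12\int_0^t\psi(\widetilde\omega_s)\,ds$ with $N_t$ a continuous martingale of bracket $\langle N\rangle_t=\int_0^t\psi(\widetilde\omega_s)\,ds$. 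By ergodicity of the foliated Brownian motion with respect to $m$ (equivalent to $m$ being an ergodic harmonic measure) together with the bound $\psi\le C_n^2$, the Birkhoff ergodic theorem gives $\tfrac1t\int_0^t\psi(\widetilde\omega_s)\,ds\to c$ almost surely, so $\langle N\rangle_t\sim ct$, and a Dambis--Dubins--Schwarz time change to a Brownian motion then forces $N_t=o(t)$ almost surely. Therefore $\log h_L(\widetilde\omega_t)/t\to-c/2<0$, i.e. $h_L(\widetilde\omega_t)\to 0$ almost surely.

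To harvest the two conclusions, recall that in pinched negative curvature the lifted path converges to a point $\widetilde\omega_\infty\in\tL(\infty)$ whose law $\nu$ is the harmonic measure, which belongs to the Lebesgue class (see \cite{AS}). Writing $f_L=d\eta_L^{ac}/d\nu\in L^1(\nu)$ and using $P(o,z;\xi)=d\nu_z/d\nu$, the process $h^{ac}(\widetilde\omega_t)$ equals the conditional expectation of $f_L(\widetilde\omega_\infty)$ given the path up to time $t$; being a uniformly integrable martingale it converges almost surely to $f_L(\widetilde\omega_\infty)$. Since $0\le h^{ac}\le h_L$ and $h_L(\widetilde\omega_t)\to 0$, this yields $f_L(\widetilde\omega_\infty)=0$ almost surely, hence $f_L=0$ $\nu$-almost everywhere, so $\eta_L^{ac}=0$ and the characteristic class $[\eta_L]$ is singular with respect to Lebesgue for $m$-almost every $L$. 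For the last assertion, if $h_L$ were bounded on $\tL$ then $(h_L(\widetilde\omega_t))_t$ would be a bounded positive martingale, hence uniformly integrable, forcing $h_L(\widetilde\omega_0)=\mathbb{E}[\lim_t h_L(\widetilde\omega_t)]=0$, which is absurd; so $h_L$ is unbounded for $m$-almost every $L$.

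The hard part will be the drift step. One must (i) give $|\nabla\log h_L|^2$ a meaning as a genuine function on the compact base $M$, which is where the automorphy of $h_L$ under the holonomy cocycle is used; (ii) establish its integrability against $m$, for which the constant curvature of the leaves and Yau's estimate are exactly what makes things work --- in the general pinched negatively curved setting considered elsewhere in this paper, $h_L$ is only $\phi^u$-harmonic, Yau's estimate no longer applies, and this integrability becomes precisely the delicate point requiring the fine analysis of the Gibbs kernel; and (iii) upgrade the $L^1$ ergodic theorem for the foliated Brownian motion to the pathwise divergence $\log h_L(\widetilde\omega_t)\to-\infty$ by controlling the martingale term $N_t$, which the Dambis--Dubins--Schwarz time change reduces to the law of large numbers for Brownian motion.
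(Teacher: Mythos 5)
Your proof is correct, but it takes a genuinely different route from the paper's, and the paper says so explicitly. What you have written is essentially Matsumoto's original probabilistic argument: It\^o's formula applied to $\log h_L$ along leafwise Brownian motion, Yau's gradient estimate to bound $|\nabla\log h_L|$ and make $\psi$ integrable on $M$, the ergodic theorem producing a strictly negative drift $-c/2$, the Dambis--Dubins--Schwarz time change to kill the martingale term, and finally martingale convergence to conclude $\eta_L^{ac}=0$ and to get unboundedness from uniform integrability. The paper, by contrast, obtains Theorem \ref{matsumototheorem} as an immediate corollary of the more general Theorem \ref{matsumotocbrvar} (singularity of the characteristic class with respect to the visibility class for any non totally invariant $\phi^u$-harmonic measure), and proves that theorem purely dynamically: one lifts $m$ to its canonical Gibbs $u$-state via Theorem \ref{mthbijcorresp}, shows (Proposition \ref{propositionprincipalesugibbs}, via the characterization in Theorem \ref{reconstruirelesetatsdeugibbs} and Remark \ref{transversediscs}) that an ergodic Gibbs $u$-state whose disintegration in the unit tangent fibers is not Lebesgue-singular must be a Gibbs $su$-state, and then invokes Theorem \ref{mtheoremsugibbsgeodesique} to force total invariance. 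The unboundedness claim is then deduced from the singularity via Fatou's theorem for harmonic functions of hyperbolic manifolds rather than your bounded-martingale argument. The paper describes this precisely as ``a new and dynamical proof of Matsumoto's result where the properties of Brownian motion are not needed,'' resting entirely on absolute continuity of the invariant horospheric foliations. Each approach buys something: yours is self-contained, requiring neither the Gibbs-$u$-state machinery nor the absolute continuity results, but as you yourself flag in point (ii), it is confined to the hyperbolic case because Yau's gradient estimate controls the drift and has no analogue for $\phi^u$-harmonic functions in variable pinched negative curvature; the paper's dynamical route is longer but is exactly what makes the generalization to Theorem \ref{matsumotocbrvar} possible.
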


When the curvatures of the leaves of $\F$ are variable and $m$ is a $\phi^u$-harmonic measure, we can define the characteristic $\phi^u$-harmonic function as well as the characteristic measure class of $m$-almost every leaf $L$ in the same way (see \S \ref{generthmatsumoto} for the details). Moreover there is a way to associate canonically to every transverse measure a $\phi^u$-invariant measure, which we call \emph{totally invariant} (see Definition \ref{totallyinvphiu}).

If $L$ is a leaf of $\F$ and $z\in\tL$, $T_z^1\tL$ can be identified to $\tL(\infty)$ by sending a vector $v$ on the equivalence class of the geodesic ray it directs. Pushing the Lebesgue measure of $T_z^1\tL$ by this identification provides a measure on $\tL(\infty)$ which depends on $z$. However it can be shown (see Lemma \ref{visibility}) that its measure class does not. We call it the \emph{visibility class of $\tL(\infty)$}.

The following theorem gives a sufficient condition on the densities of $\phi^u$-harmonic measures for the existence of a transverse invariant measure. It implies Matsumoto's result (we explain how in \S \ref{generthmatsumoto}). The proof of this theorem is dynamical and only relies on the absolute continuity of horospheric subfoliations.

\begin{maintheorem}
\label{matsumotocbrvar}
Let $(M,\F)$ be a closed foliated manifold endowed with a negatively curved leafwise metric. Let $m$ be a non totally invariant $\phi^u$-harmonic measure. Then for $m$-almost every leaf $L$, the characteristic measure class $[\eta_L]$ on $\widetilde{L}(\infty)$ is singular with respect to the visibility class.
\end{maintheorem}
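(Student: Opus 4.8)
The plan is to transport the problem to the unit tangent bundle $\hM$, to identify the characteristic class $[\eta_L]$ with the measure class of the conditionals of the associated Gibbs $u$-state along the \emph{stable} horospheres, and then to play the absolute-continuity argument that underlies Theorem \ref{mtheoremsugibbsgeodesique}. First, by Theorem \ref{mthbijcorresp}, $m$ is the basepoint projection of a unique Gibbs $u$-state $\hat m$ on $\hM$. Unwinding the construction of $\hat m$ out of $m$ (the disintegration producing the Gibbs kernel, \S\ref{gibbskernelsection}), along each leaf $T^1\tL$ the measure $\hat m$ disintegrates over the sphere at infinity: with $\W^{cu}_\xi$ the weak-unstable leaf of vectors $w$ with $w(-\infty)=\xi$ — a global section of $T^1\tL\to\tL$ — one has $\hat m|_{T^1\tL}=\int_{\tL(\infty)}\mu_\xi\,d\hat\nu_L(\xi)$, where each $\mu_\xi$ is Lebesgue along the strong unstable foliation (the Gibbs $u$-property), its basepoint projection is proportional to $k^u(o,\cdot\,;\xi)$, and the boundary factor $\hat\nu_L$ lies in the class $[\eta_L]$. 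Since the strong stable leaf $\W^s(v)$ meets each $\W^{cu}_\xi$ ($\xi\neq v(+\infty)$) in a single vector with backward endpoint $\xi$, the conditional of $\hat m$ along $\W^s$, pushed to $\tL(\infty)$ by $w\mapsto w(-\infty)$, has class $[\eta_L]$; symmetrically the conditional along $\W^u$, pushed by $w\mapsto w(+\infty)$, is the Lebesgue measure of the unstable horosphere, hence in the visibility class (Lemma \ref{visibility}). Therefore $[\eta_L]$ coincides with the visibility class for $m$-a.e.\ $L$ precisely when the stable conditionals of $\hat m$ are equivalent to Lebesgue, that is, precisely when $\hat m$ is a Gibbs $su$-state.

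Next I would run the Lebesgue decomposition. Write $\eta_L=\eta_L^{ac}+\eta_L^{sing}$ relative to the visibility class, performed measurably in $L$ and — being canonical — coherently with the holonomy. Through the integral representation this splits $h_L=h_L^{ac}+h_L^{sing}$ into two nonnegative $\phi^u$-harmonic functions, hence splits the plaque densities of $m$, hence (via the correspondence of the previous paragraph applied to non-normalized measures) splits $\hat m=\hat m^{ac}+\hat m^{sing}$ into $G_t$-invariant Gibbs $u$-states, with the stable conditionals of $\hat m^{ac}$ now \emph{absolutely continuous} with respect to Lebesgue.

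Now the absolute continuity of the horospheric subfoliations (Theorem \ref{absolutecontinuity}) does the work. It first turns $\ll$ into $\sim$ for $\hat m^{ac}$: the positivity locus of the Radon--Nikodym derivative of its stable conditional with respect to Lebesgue obeys the cocycle relation forced by the stable Jacobian along $G_t$ and by the absolutely continuous $\W^s$- and $\W^u$-holonomies, hence is null or conull on each $G_t$-ergodic component of $\hat m^{ac}$; discarding the null part, $\hat m^{ac}$ is a Gibbs $su$-state, so by Theorem \ref{mtheoremsugibbsgeodesique} it is totally invariant and $m^{ac}$ is a totally invariant $\phi^u$-harmonic measure (Definition \ref{totallyinvphiu}). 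To conclude, decompose $m$ into ergodic $\phi^u$-harmonic measures (Theorem \ref{decompositionergodiquephiu}); for an ergodic component $m_x$ the saturated set $\{L:\eta_L^{ac}\neq 0\}$ has $m_x$-measure $0$ or $1$, and measure $1$ gives $\eta_L^{sing}=0$ $m_x$-a.e., i.e.\ $m_x=m_x^{ac}$ is totally invariant. As $m$ is not totally invariant, $m$-a.e.\ ergodic component therefore falls in the other case, so $[\eta_L]$ is singular with respect to the visibility class for $m$-a.e.\ leaf $L$.

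The main obstacle is the $\ll\Rightarrow\sim$ upgrade: one must verify carefully that the a.c.\ density of the stable conditional of $\hat m^{ac}$ transforms by the stable Jacobian cocycle under $G_t$ and is invariant under the absolutely continuous holonomies, so that its positivity set is a bona fide invariant set on which ergodicity bites — this is exactly the ``absolute continuity of horospheric subfoliations'' input advertised in the introduction, and the same mechanism drives Theorem \ref{mtheoremsugibbsgeodesique}. A lesser, but real, difficulty is the bookkeeping of paragraph two: carrying out the fibrewise Lebesgue decomposition measurably and holonomy-equivariantly, and checking that the induced splitting of the densities on $M$ lifts to a genuine splitting of $\hat m$ on $\hM$ under the (non-normalized extension of the) correspondence of Theorem \ref{mthbijcorresp}.
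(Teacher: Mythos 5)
Your overall scaffolding agrees with the paper — reduce by Theorem \ref{decompositionergodiquephiu} to an ergodic $\phi^u$-harmonic measure, lift to an ergodic Gibbs $u$-state by Theorem \ref{mthbijcorresp}, show that non-singularity of $[\eta_L]$ against the visibility class forces the lift to be a Gibbs $su$-state, then invoke Theorem \ref{mtheoremsugibbsgeodesique} to get total invariance — but the heart of the argument is genuinely different and, as written, has a real gap.

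The paper's key lemma is Proposition \ref{propositionprincipalesugibbs}: for an \emph{ergodic} Gibbs $u$-state $\mu$, either the conditionals along the unit-tangent-sphere foliation $\UF$ are Lebesgue-\emph{singular}, or $\mu$ is a Gibbs $su$-state. The proof takes a $\mu$-typical vector $v_0$ whose fiber-conditional is not singular, extracts (from the a.c.\ part and the time-reversed Birkhoff set) a Borel set $D\subset T^1_{x_0}\F$ with $\Leb(D)>0$ consisting entirely of $G_{-t}$-generic points, and applies the time-reversed version of Theorem \ref{reconstruirelesetatsdeugibbs} together with Remark \ref{transversediscs} to conclude that $\mu=\lim_T \frac1T\int_0^T G_{-t\ast}\big(\Leb_{|D}/\Leb(D)\big)\,dt$ is a Gibbs $s$-state. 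Crucially, this mechanism delivers \emph{equivalence} of the stable conditionals to Lebesgue in one stroke, because the Pesin--Sinai construction of Gibbs states produces conditionals with strictly positive densities. The paper also works with the transversal $\UF$ rather than the stable foliation, so that the visibility class is literally the Lebesgue class on the fibers and Lemma \ref{remonterlaclassecaracteristique} becomes a one-line translation of Proposition \ref{projectionugibbs}.

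By contrast, you perform a fiberwise Lebesgue decomposition $\eta_L=\eta_L^{ac}+\eta_L^{sing}$, lift it to a split $\hat m=\hat m^{ac}+\hat m^{sing}$ into invariant Gibbs $u$-states, and then try to upgrade the $\ll$ of the stable conditionals of $\hat m^{ac}$ to $\sim$ by a cocycle/invariance argument. This is the step where the proposal does not close. The invariant set $B=\{v : \rho^s(v)=0\}$ (the vanishing locus of the stable RN density) is indeed $G_t$-invariant, and $\hat m^{ac}(B)=0$ is automatic; but ergodicity of $\hat m^{ac}$ only reiterates that $B$ is $\hat m^{ac}$-null — it says nothing about $\Leb^s_v(B\cap W^s_{\mathrm{loc}}(v))$. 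Going from ``$\hat m^{ac}$-null and invariant under $G_t$ and the absolutely continuous holonomies'' to ``Lebesgue-null on stable leaves of $\hat m^{ac}$-typical points'' is precisely a Hopf argument, not a direct consequence of ergodicity of $\hat m^{ac}$, and you have not supplied it (the paper uses such an argument for a different purpose in Proposition \ref{mesuresinduitesugibbs}, quoting \cite{Al1}). Separately, the fiberwise Lebesgue decomposition has to be carried out measurably in the leaf and $\pi_1(L)$-equivariantly before the split of $\hat m$ even makes sense as measures on $\hM$; you flag this, but it is a second non-trivial hole. So: same endgame, different and more delicate middle, with the $\ll\Rightarrow\sim$ step not actually established. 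If you want a self-contained route, the density/Birkhoff argument of Proposition \ref{propositionprincipalesugibbs} is both shorter and immune to the $\ll$ versus $\sim$ issue, because the limit measures it produces are Gibbs states by construction.
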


\paragraph{Organization of the paper.} In Section \ref{transversemeasures} we will give the basic properties of transverse measures for foliations as well as of the associated cocycles. In Section \ref{folgeoflownegcurvleaves}, we introduce the foliated geodesic flow and study its hyperbolic properties when the leaves are negatively curved. We state theorems of existence and absolute continuity of stable and unstable foliations, and discuss the local product structure of the Liouville measure inside the leaves. In Section \ref{sugibbssection}, we prove Theorem \ref{mtheoremsugibbsgeodesique}. In Section \ref{phiuharmonicsection} we define $\phi^u$-harmonic measures, prove Theorem \ref{mthbijcorresp} and obtain some basic results about the ergodic theory of these measures. Finally in Section \ref{matsumotosection}, we prove our generalization of Matsumoto's theorem.

\section{Transverse measures for foliations}
\label{transversemeasures}

\subsection{Foliations and holonomy}

\paragraph{Foliations.} A closed manifold ($C^{\infty}$, compact and boundaryless) $M$ of dimension $n$ possesses a \emph{foliation} of dimension $d$ and of class $C^{\infty}$ (we will say that $(M,\F)$ is a closed foliated manifold) if it is endowed with a finite \emph{foliated atlas} $\A=(U_i,\phi_i)_{i\in I}$. This means that  $(U_i)_{i\in I}$ is a finite open cover of $M$, that $\phi_i:U_i\to P_i\times T_i$ is a $C^{\infty}$-diffeomorphism, where $P_i$ and $T_i$ are cubes of respective dimensions $d$ and $n-d$ and that when $U_i\cap U_j\neq\vide$, the corresponding changes of charts $\phi_j\circ\phi_i^{-1}:\phi_i(U_i\cap U_j)\to\phi_j(U_i\cap U_j)$ are $C^{\infty}$-diffeomorphisms of the form
\begin{equation}
\label{Eq:changecharts}
\phi_j\circ\phi_i^{-1}(z,x)=(\zeta_{ij}(z,x),\tau_{ij}(x)),
\end{equation}
where $(z,x)\in\phi_i(U_i\cap U_j)$. Here $\tau_{ij}$ is a diffeomorphism between $S_i=p_i\circ\phi_i(U_i\cap U_j)$ and $S_j=p_j\circ\phi_j(U_i\cap U_j)$ where $p_i$ denotes the natural projection $P_i\times T_i\to T_i$ ($p_j$ is defined similarly). The map $\zeta_{ij}$ is a smooth map such that $\zeta_{ij}(.,x)$ maps diffeomorphically $\phi_i(U_i\cap U_j)\cap (P_i\times\{x\})$ onto $\phi_j(U_i\cap U_j)\cap (P_j\times\{\tau_{ij}(x)\})$ for every $x\in S_i$ .

\begin{figure}[hbtp]
\centering
\includegraphics[scale=0.7]{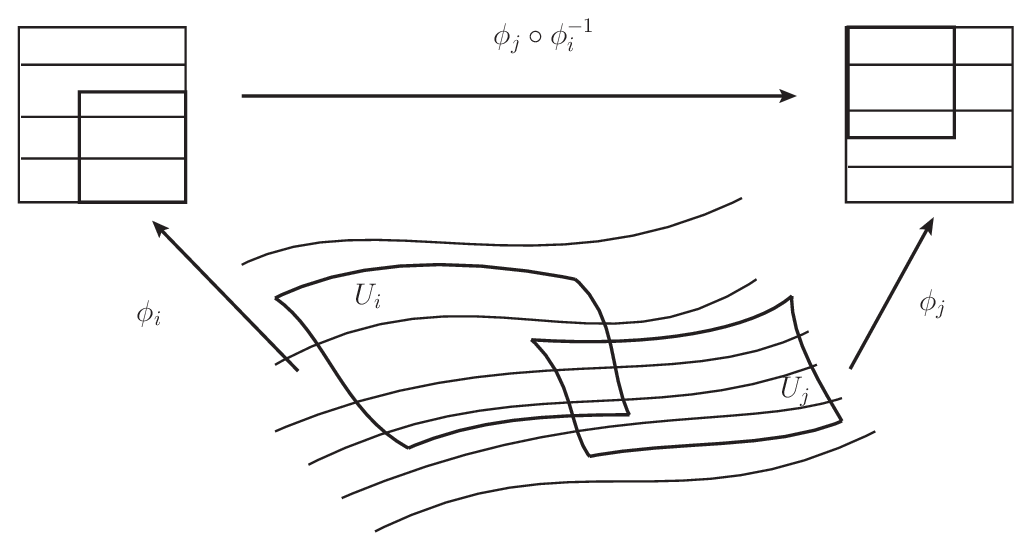}
\caption{Changes of charts}
\label{foliouille}
\end{figure}

The sets $\phi_i^{-1}(P_i\times\{x\})$ are called \emph{plaques}. Since the changes of charts have this very particular form, we can glue the plaques together, so as to obtain a partition of $M$ by immersed submanifolds, called the \emph{leaves} of $\F$. We will denote by $(P_i(x))_{x\in T_i,i\in I}$, the plaques of $\F$ and, abusively, we will always identify the transversals $T_i$ with their preimages $\phi_i^{-1}(\{z_i\}\times T_i)$ for some choice $z_i$. Such a collection of embedded submanifolds transverse to the leaves and whose union $\TT$ meets each leaf is called a \emph{complete system of transversals}. Finally we identify the maps $\tau_{ij}$ with diffeomorphisms between open sets of $\TT$.

\begin{rem}
We chose for commodity to work with smooth foliations but the work presented here would be also valid in the more general case of \emph{laminations} of compact metric spaces by manifolds of class $C^3$ as defined in \cite{Gh} for example (we explain in Remark \ref{regularity} why we need $C^3$).
\end{rem}

\paragraph{Leafwise metrics.} Say a closed foliated manifold $(M,\F)$ is endowed with a \emph{leafwise metric} if
\begin{itemize}
\item each leaf $L$ possesses a $C^{\infty}$ Riemannian metric denoted by $g_L$;
\item the metric $g_L$ varies continuously transversally in local charts in the $C^{\infty}$-topology.
\end{itemize}

\begin{rem}
\label{transitionisom}
Let $\A=(U_i,\phi_i)_{i\in I}$ be a foliated atlas for $\F$, which we assume to be endowed with a leafwise metric. By definition, if we push forward by $\phi_i$ the metric of the plaque $\phi_i^{-1}(P_i\times\{x\})$, $x\in T_i$, we obtain a metric on $P_i$ denoted by $g_i(x)$ which varies continuously with $x$ in the smooth topology. Assume that $U_i\cap U_j\neq\vide$. Then the function $\zeta_{ij}(.,x)$, where it is defined, is an isometry between $g_i(x)$ and $g_j(\tau_{ij}(x))$.
\end{rem}

\begin{rem}
\label{regularity}
We chose to stay in the $C^{\infty}$-regularity by commodity, and in the sequel, ``smooth'' will always refer to this regularity. But we could also have stayed in the $C^3$-regularity where the results would hold true as well. We can't ask for lower regularity since our results strongly depend on the absolute continuity of some subfoliations invariant by the foliated geodesic flow, which is automatic when the flow is $C^2$ in the leaves (see the discussion in the next Section).
\end{rem}

\begin{rem}
\label{bddgeo}
Since $M$ is compact, a leafwise metric gives \emph{uniformly bounded geometry} to the leaves of $\F$, i.e. their injectivity radii are bounded uniformly from below, and their sectional curvatures are uniformly pinched.
\end{rem}

\begin{rem} Our leafwise metrics don't come a priori from ambient smooth metrics. Indeed it would be too restrictive as shows the following example. Consider a foliation by hyperbolic Riemann surfaces (the universal cover of each leaf is conformally equivalent to a disc). Candel proves in \cite{C} that all the leaves can be simultaneously uniformized in the sense that there exists a leafwise hyperbolic metric (in the sense defined above), i.e. the curvature of each leaf is everywhere $-1$. However, this leafwise metric does not need to be induced by an ambient Riemannian metric. It does not even need to vary smoothly transversally. We refer to \cite{AlLe} for the construction of various leafwise hyperbolic metrics for a Riemann surface foliation known as the Hirsch foliation which don't vary smoothly transversally.
\end{rem}

\begin{defi}When all leaves have negative sectional curvatures, we say that the leafwise metric is negatively curved.
\end{defi}

\paragraph{Holonomy.} Let $(M,\F)$ be a closed foliated manifold. Fix a foliated atlas $\A=(U_i,\phi_i)_{i\in I}$ of $M$ and suppose it is a \emph{good foliated atlas} in the sense that
\begin{enumerate}
\item if two plaques intersect each other, their intersection is connected;
\item if two charts $U_i$ and $U_j$ intersect each other, then $\overline{U_i\cup U_j}$ is included in a foliated chart. Thus a plaque of $U_i$ intersects at most one plaque of $U_j$.
\end{enumerate}

We can define a \emph{complete transversal} $\TT$ as the union of all the $T_i$. Recall that we chose to identify $T_i$ and a local transversal $\phi_i^{-1}(\{z_i\}\times T_i)$, and thus to consider points of $\TT$ as points of $M$, and to identify the maps $\tau_{ij}$ with local diffeomorphisms of $\TT$. The maps $\tau_{ij}$ generate a \emph{pseudogroup} $\Pp$ of local diffeomorphisms of $\TT$, called the \emph{holonomy pseudogroup} of $\F$. 

Also, we can define the \emph{holonomy map along a path}. If $c:[0,1]\to M$ is a path tangent to a leaf, and $T_i$ and $T_j$ are submanifolds transverse to $\F$ containing $c(0)$ and $c(1)$, there exist two neighbourhoods $S_i\dans T_i$ and $S_j\dans T_j$ of $c(0)$ and $c(1)$ respectively such that we can send every $x\in S_i$ onto $\tau_c(x)\in U_1$ by sliding along the leaves of $\F$. More precisely, if we consider any chain of charts that cover $c$, say $U_{i_0},...,U_{i_n}$, then $\tau_c$ is defined as the composition $\tau_{i_{n-1}i_n}\circ...\circ\tau_{i_1 i_0}$. The germ at $c(0)$ of $\tau_c:S_i\to S_j$ does not depend on the choice of $c$ nor does it on the choice of the chain of charts, but only on the homotopy class of $c$.

\subsection{Transverse measures and cocycles}
\label{transversecocycles}

\paragraph{Invariant measures.} Let $(M,\F)$ be a closed foliated manifold endowed with a good foliated atlas. We have a dynamical system given by the action of the holonomy pseudogroup $\Pp$ on the complete transversal $\TT$. We say that a finite Borel measure $\nu$ on $\TT$ is \emph{invariant} by an element $\tau\in\Pp$ if $\nu(\tau A)=\nu(A)$ for every Borel set $A$ included in the domain of $\tau$. A \emph{transverse invariant measure} is a finite measure on $\TT$ that is invariant by the action of each element of $\Pp$. Note that the existence of a transverse invariant measure is extremely rare.

In \cite{BM}, in order to prove the unique ergodicity of horocycle foliations, Bowen and Marcus introduce another notion of transverse invariant measure that is equivalent to the one we just defined and is maybe more adapted to the theory of dynamical systems. In the sequel we will make use of both notions.

Let $(T_i)_{i\in I}$ be a complete system of transversals to the foliation. A transverse invariant measure is a family of finite nonnegative measures $(\nu_i)_{i\in I}$ satisfying

\begin{enumerate}
\item $\nu_i(T_i)>0$ for some $i\in I$;
\item if for $i,j\in I$ there is a holonomy map $\tau:S_i\to S_j$ between two open sets $S_i\dans T_i$ and $S_j\dans T_j$, then for any Borel set $A_i\dans S_i$ we have $\nu_i(A_i)=\nu_j(\tau(A_i))$.
\end{enumerate}

\paragraph{Totally invariant measures.} Assume that the foliation $\F$ is endowed with a leafwise metric and possesses a transverse invariant measure $(\nu_i)_{i\in I}$. Then, if $\nu_i(T_i)>0$, it is possible inside $U_i$ to integrate the volume of the plaques against $\nu_i$. We obtain this way a measure $m_i$ in the chart $U_i$.

Since the family of measures $(\nu_i)_{i\in I}$ is holonomy-invariant, these local measures glue together and provide a finite measure $m$ on $M$. Such a measure will be from now one called \emph{totally invariant}.

\paragraph{Quasi-invariant measures.} Most foliations don't possess transverse invariant measures. Thus, we are led to consider invariant \emph{measure classes} on a complete transversal.

We say that a finite Borel measure $\nu$ on $\TT$ is \emph{quasi-invariant} by the holonomy pseudogroup $\Pp$ if for every $\tau\in\Pp$ and every Borel set $A$ lying in the domain of $\tau$ such that $\nu(A)=0$, we still have $\nu(\tau A)=0$.

We associate to each transverse quasi-invariant measure the so called \emph{Radon-Nikodym cocycle}, defined for every $\tau\in\Pp$ and $\nu$-almost every $x$ inside the range of $\tau$ by
$$k(\tau,x)=\frac{d[\tau_{\ast}\nu]}{d\nu}(x).$$
We refer to \cite{KL} for a very interesting discussion about Radon-Nikodym cocycles.

\paragraph{Full and saturated sets.} The following lemma states that without loss of generality, one can assume that a Borel subset of a complete transversal which is of full measure for a transverse quasi-invariant measure is holonomy-invariant.

\begin{lemma}
\label{fullsaturated}
Let $\nu$ be a finite Borel measure on $\TT$ which is quasi-invariant by the action of the holonomy pseudogroup $\Pp$. Let $X\dans\TT$ be Borel set of full measure for $\nu$. Then there exists a Borel set $Y\dans X$ such that
\begin{enumerate}
\item $Y$ is of full measure for $\nu$:
\item $Y$ is saturated for the action of $\Pp$ i.e. for every $y\in Y$ the orbit $\Pp y$ is included in $Y$.
\end{enumerate}
\end{lemma}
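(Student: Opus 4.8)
The plan is to take the complement $Z = \TT \setminus X$, which is a $\nu$-null Borel set, and enlarge it to a null set that is itself saturated, then let $Y$ be the complement of that enlargement. The natural candidate is the $\Pp$-saturation of $Z$, i.e.\ the union of all orbits $\Pp z$ for $z \in Z$. The two things to check are that this saturation is still $\nu$-null (this is where quasi-invariance enters), and that it is a Borel set (this is the delicate point, since an uncountable union of Borel sets need not be Borel).

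First I would recall that the holonomy pseudogroup $\Pp$ is generated by the \emph{finitely many} maps $\tau_{ij}$ coming from the good foliated atlas $\A = (U_i,\phi_i)_{i\in I}$. Hence every element of $\Pp$ is a restriction of a finite composition $\tau_{i_{n-1}i_n}\circ\cdots\circ\tau_{i_1i_0}$, and there are only countably many such finite words in the generators. Therefore the saturation of $Z$ can be written as a countable union
$$
\widehat{Z} \;=\; \bigcup_{w} \tau_w(Z \cap \dom \tau_w),
$$
the union ranging over the countable set of admissible words $w$ in the generators. Each $\tau_w$ is a diffeomorphism between open subsets of $\TT$, hence Borel-measurable with Borel image, so $\widehat{Z}$ is a countable union of Borel sets and thus Borel. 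Moreover, applying the quasi-invariance hypothesis inductively along the word $w$ (each generator sends $\nu$-null sets to $\nu$-null sets, and restriction to a subdomain cannot increase measure), we get $\nu(\tau_w(Z\cap \dom\tau_w)) = 0$ for every $w$, whence $\nu(\widehat{Z}) = 0$ by countable subadditivity. One small point to make carefully: $\widehat Z$ as defined contains $Z$ itself, namely via the empty word giving the identity on each $T_i$, so $Z \subset \widehat Z$; and $\widehat Z$ is genuinely saturated because post-composing an admissible word with a generator is again an admissible word, so $\Pp \widehat Z \subset \widehat Z$.

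Finally I would set $Y = \TT \setminus \widehat{Z} = X \setminus \widehat Z$. Then $Y \subset X$, $Y$ is Borel, $\nu(Y) = \nu(\TT) - \nu(\widehat Z) = \nu(\TT) = \nu(X)$ so $Y$ has full measure, and $Y$ is saturated: if $y \in Y$ and $y' \in \Pp y$ but $y' \notin Y$, then $y' \in \widehat Z$, and since $\widehat Z$ is saturated and $y \in \Pp y' = \Pp y$... more directly, the complement of a saturated set is saturated (orbits are equivalence classes of the relation ``in the same $\Pp$-orbit''), so $\TT \setminus \widehat Z$ is saturated because $\widehat Z$ is. This completes the proof. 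The main obstacle — really the only genuine content — is the Borel-measurability of the saturation, which is handled by the finite generation of $\Pp$; everything else is a routine countable-subadditivity argument using quasi-invariance.
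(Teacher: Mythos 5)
Your proof is correct, and it takes a genuinely different route from the one in the paper. The paper works directly with the full set $X$: it forms, for each pair of generators $\tau_{ij}$, the $\nu$-null set $X'_{ij}=X_{ji}\setminus\tau_{ij}(X_{ij})$, removes all of these from $X$ in a single pass to get $Y$, and then asserts that $\tau_{ij}(Y_{ij})=Y_{ji}$ for all $i,j$, from which saturation under the generated pseudogroup follows. Your approach works instead with the null complement $Z=\TT\setminus X$: you take its full saturation $\widehat Z=\bigcup_w\tau_w(Z\cap\dom\tau_w)$ over the countable set of words $w$ in the generators, verify that $\widehat Z$ is Borel (each $\tau_w$ is a homeomorphism between open subsets of $\TT$, so images of Borel sets are Borel, and a countable union is again Borel) and $\nu$-null (quasi-invariance plus countable subadditivity), and set $Y=\TT\setminus\widehat Z=X\setminus\widehat Z$. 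Your route has a real advantage: by ranging over all words, the saturation of $Z$ is handled in one step, so there is nothing left to check about closure under composition, whereas the paper's single-pass removal leaves a non-obvious verification (that removing only the generator-level defect sets $X'_{ij}$ already yields a set invariant under each $\tau_{ij}$, even when a generator could a priori send a retained point into a defect set $X'_{kj}$ attached to a different generator). Your argument sidesteps that issue entirely at the modest cost of invoking the countability of words in the finitely many generators more explicitly. One tiny remark: you do not actually need the inductive word-by-word application of quasi-invariance that you sketch, since the paper's definition of quasi-invariance already quantifies over all $\tau\in\Pp$, so $\tau_w$ sends the null set $Z\cap\dom\tau_w$ to a null set directly; but the inductive version is also correct.
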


\begin{proof}
Let $\nu$ and $X$ be such as in the statement. We assume that 
$\TT$ is associated to a good foliated atlas $\A=(U_i,\phi_i)_{i\in I}$, i.e. we have $\TT=\bigcup_{i\in I}T_i$. When $U_i\cap U_j\neq\vide$, we set $X_{ij}=X\cap\dom(\tau_{ij})\dans T_i$ (where $\dom(\tau_{ij})$ denotes the domain of the holonomy map $\tau_{ij}$).

Set $X'_{ji}=X_{ji}\moins\tau_{ij}(X_{ij})\dans T_j$. Since the class of $\nu$  is preserved by the holonomy maps $\tau_{ij}$ and $X_{ij}$ is full in $\dom(\tau_{ij})$ we have $\nu(X'_{ji})=0$.

Define $Y=X\moins\bigcup_{i,j}X'_{ij}$. This is a Borel set which is full for $\nu$ (since the sets $X_{ij}'$ are null). Let $Y_{ij}=Y\cap\dom(\tau_{ij})$. One checks that for every $i,j$ the sets $\tau_{ij}(Y_{ij})$ and $Y_{ji}$ coincide. This implies that the set $Y$ is invariant by every map $\tau_{ij}$ and thus that $Y$ is saturated by $\Pp$.
\end{proof}

\paragraph{Ghys' lemma.} The following lemma, due to Ghys (see \cite[pp.413-414]{Gh}), will be useful all along this text. Ghys stated it for some special families of transverse measures associated to harmonic measures (see Definition \ref{harmonicmeasure}). In his proof, that we recall below, he only needs the quasi-invariance of the families of measures.

\begin{lemma}[Ghys]
\label{ghysquasiinvariant}
Let $(M,\F)$ be a closed foliated manifold and let $\TT$ be a complete transversal. Let $\nu$ be a measure on $\TT$ quasi-invariant by the action of the holonomy pseudogroup $\Pp$. Then there exists a Borel set $\X_0\dans\TT$ which is full for $\nu$, and saturated by the action of $\Pp$, such that for every $x\in\X_0$ and every $\tau\in\Pp$ that fixes $x$, we have
$$\frac{d[\tau_\ast\nu]}{d\nu}(x)=1.$$
\end{lemma}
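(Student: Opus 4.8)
The plan is to show that the Radon--Nikodym cocycle $k$ is necessarily trivial along the \emph{isotropy} of the holonomy pseudogroup action; the only substantial input beyond soft measure theory will be a sandwich estimate exploiting that $\nu$ is a \emph{finite} measure. First I would record two elementary facts. Quasi-invariance gives $\tau_*\nu\ll\nu$ for every $\tau\in\Pp$, so $k(\tau,\cdot)$ is well defined $\nu$-almost everywhere, and for every pair $\sigma,\tau\in\Pp$ the cocycle identity $k(\tau\circ\sigma,x)=k(\tau,\sigma x)\,k(\sigma,x)$ holds for $\nu$-a.e.\ $x$. Iterating this identity for a single $\tau$ and restricting to the fixed-point set $\Fix(\tau)=\{x\in\dom\tau:\tau x=x\}$ (which is closed in the open set $\dom\tau$, hence Borel), where $\tau^j x=x$ for all $j$, one gets $k(\tau^n,x)=k(\tau,x)^n$ for $\nu$-a.e.\ $x\in\Fix(\tau)$ and all $n\ge 1$.

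The core step is then: for each fixed $\tau\in\Pp$ one has $k(\tau,\cdot)=1$ for $\nu$-a.e.\ point of $\Fix(\tau)$. If $A=\{x\in\Fix(\tau):k(\tau,x)\ge c\}$ had positive measure for some $c>1$, then for every $n$ one would have $c^n\nu(A)\le\int_A k(\tau,x)^n\,d\nu(x)=\tau^n_*\nu(A)\le\nu(\TT)<\infty$, which forces $\nu(A)=0$; letting $c\downarrow 1$ kills $\{k(\tau,\cdot)>1\}\cap\Fix(\tau)$. If $B=\{x\in\Fix(\tau):k(\tau,x)\le c\}$ had positive measure for some $c<1$, then since $B\ds(\tau^n)^{-1}(B)$ one gets $\nu(B)\le\tau^n_*\nu(B)=\int_B k(\tau,x)^n\,d\nu(x)\le c^n\nu(B)$, i.e.\ $1\le c^n$, a contradiction; letting $c\uparrow 1$ kills $\{k(\tau,\cdot)<1\}\cap\Fix(\tau)$. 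I expect this to be the decisive step: it is short, but it is where one must remember to use the boundedness of the total mass of $\tau^n_*\nu$ in one direction and the inclusion $B\ds(\tau^n)^{-1}(B)$ in the other.

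It remains to globalize over $\Pp$. Since $\Pp$ is generated by the finitely many holonomy maps $\tau_{ij}$, the set $\mathcal W$ of words in the $\tau_{ij}^{\pm 1}$ is countable, and the germ at any point of any element of $\Pp$ is the germ of some $w\in\mathcal W$; moreover $k(\tau,x)$ depends only on the germ of $\tau$ at $x$. Fixing Borel representatives of each $k(w,\cdot)$, the core step provides a $\nu$-null set $N_w\ds\Fix(w)$ with $k(w,\cdot)\equiv 1$ on $\Fix(w)\moins N_w$. By quasi-invariance the $\Pp$-saturation of $\bigcup_{w\in\mathcal W}N_w$ is a countable union of images of null sets, hence null; adjoining the $\nu$-null sets on which the cocycle identity or the germ-independence of the chosen representatives fail, one obtains a null, $\Pp$-saturated set. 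Its complement in $\TT$ (tidied up via Lemma~\ref{fullsaturated} if one wishes to be safe about saturation) is the desired $\X_0$: for $x\in\X_0$ and $\tau\in\Pp$ with $\tau x=x$, writing $\tau$ near $x$ as a word $w\in\mathcal W$ gives $x\in\Fix(w)\moins N_w$, whence $\frac{d[\tau_*\nu]}{d\nu}(x)=k(w,x)=1$.

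The genuine obstacle, beyond the estimate of the second paragraph, is purely bookkeeping: making rigorous the reduction from ``all $\tau\in\Pp$'' to the countable family $\mathcal W$ of words, and handling throughout the fact that $k$ is only defined up to null sets so that a single well-chosen family of representatives serves simultaneously all germs at $\nu$-a.e.\ point. This is routine but must be carried out with care so that removing a \emph{single} null set yields the conclusion for \emph{every} $\tau$ fixing a given point.
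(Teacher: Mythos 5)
Your proof is correct and follows essentially the same structure as the paper's: for each fixed $\tau$ one shows that the Radon--Nikodym cocycle equals $1$ $\nu$-a.e.\ on $\Fix(\tau)$ by exploiting that this set lies in its own $\tau$-preimage while having its measure contracted, then one globalizes over a countable family of germs and passes to a saturated full set. The only cosmetic variations are that the paper argues $\{k<1\}\cap\Fix(\tau)$ directly and disposes of $\{k>1\}$ by passing to $\tau^{-1}$ rather than iterating $\tau^n$ with a threshold $c$, and it establishes saturation via conjugacy of isotropy germ groups at points of a common leaf rather than by saturating the exceptional null set; both variants are valid and yield the same conclusion.
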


\begin{proof}
The Radon-Nikodym cocycle $d[\tau_\ast\nu]/d\nu$, $\tau\in\Pp$ allows us to define for every $x$ lying in the support of $\nu$ a morphism $\pi_1(L_x,x)\to(0,\infty)$. We want to prove that, almost surely, it is trivial.

Let $\tau\in\Pp$. Consider the Borel set constituted of points $x\in\TT$ which are fixed by $\tau$ and satisfy $d[\tau_\ast\nu]/d\nu\,(x)<1$. By definition, this set is fixed by $\tau$, and its measure is contracted by $\tau$. Hence, it has measure zero. By considering $\tau^{-1}$, we can prove that the set of points $x$ which are fixed by $\tau$ and satisfy $d[\tau_\ast\nu]/d\nu(x)>1$ has also measure zero. It follows that the set of points $x$ fixed by $\tau$ and such that $d[\tau\ast\nu]/d\nu(x)\neq 1$ has zero measure for $\nu$.

The pseudogroup $\Pp$ is finitely generated. In particular, it is countable. Using the $\sigma$-additivity of $\nu$, we see that the set of points which are fixed by an element of the pseudogroup $\Pp$ with Jacobian $\neq 1$ is of measure zero for $\nu$. Denote by $\X_0$ the complement of this set. We have to show that it is saturated by the action of the pseudogroup.

The groups of germs of holonomy transformations which fix two points of the same leaf are conjugated. Thus if $x\in\X_0$, we see that for every $y\in L_x$, and $\tau\in\Pp$ that fixes $y$, we also have $d[\tau_\ast\nu]/d\nu(y)=1$. This shows that $\X_0$ is saturated by $\Pp$, completing the proof of the lemma.
\end{proof}

We have the following useful interpretation of Ghys' lemma. For $\nu$-almost every point $x\in\TT$ we have, if $\tau_1,\tau_2\in\Pp$ contain $x$ in their domains and satisfy $\tau_1(x)=\tau_2(x)$
\begin{equation}
\label{consequenceghys}
\frac{d\left[\tau_{1\ast}^{-1}\nu\right]}{d\nu}(x)=\frac{d\left[\tau_{2\ast}^{-1}\nu\right]}{d\nu}(x).
\end{equation}

\subsection{Disintegration and foliations}
\label{disintegrationfoliations}

\paragraph{Disintegration of measures.} Let us give a general discussion about Rokhlin's theory of disintegration of measures. For the details we refer to Rokhlin's seminal paper  \cite{Rok}

Let $\ZZ$ be a compact metric space, $\mu$ be a finite Borel measure on $\ZZ$ and $\Pi$ be a partition modulo $\mu$ of $\ZZ$ into measurable sets (i.e. the intersection of two distinct atoms of $\Pi$ is null for $\mu$ while the union of all atoms of $\Pi$ is full). Say the partition $\Pi$ is \emph{measurable} if there exist a set $F_0$ full for $\mu$ as well as a countable family of measurable sets $(B_n)_{n\in\N}$ such that for every pair of distinct atoms $(A_1,A_2)$ of $\Pi$ there exists $n\in\N$ such that:
$$A_1\cap F_0\dans B_n\,\,\,\,\,\,\,\,\textrm{and}\,\,\,\,\,\,\,\,\,A_2\cap F_0\dans\,\,^cB_n\,\,\,\,\,\,\,\,\,\,\,\,\,\,\,\,\,\,\,\,\,\,\,\,\,\,\,\,\,\,\,\,\,\,\,\,\,\,\,\,\,\,\,\,\,\,\,\,\textrm{or}\,\,\,\,\,\,\,\,\,\,\,\,\,\,\,\,\,\,\,\,\,\,\,\,\,\,\,\,\,\,\,\,\,\,\,\,\,\,\,\,\,\,\,\,\,\,\,\,A_1\cap F_0\dans \,\,^cB_n\,\,\,\,\,\,\,\,\textrm{and}\,\,\,\,\,\,\,\,A_2\cap F_0\dans B_n.$$
Let $p:\ZZ\to\Pi$ denotes the natural projection which associates to $z\in\ZZ$ the atom containing it, we define $\nu=p_\ast\mu$. The set $\Pi$ is endowed with the image by $p$ of the Borel $\sigma$-algebra.

\begin{defi}[Disintegration]
A \emph{disintegration} of $\mu$ with respect to $\nu$ is a family of measures $(\mu_P)_{P\in\Pi}$, called \emph{conditional measures}, such that:
\begin{enumerate}
\item $\mu_P(P)=1$ for $\nu$-almost every $P\in\Pi$;
\item for every continuous function $f:\ZZ\to\R$ the map $P\mapsto\int_P fd\mu_P$ is measurable and:
$$\int_{\ZZ}fd\mu=\int_{\Pi}\left(\int_P fd\mu_P\right) d\nu(P).$$

We will often denote the disintegration by:
$$d\mu=(d\mu_P)\,\,d\nu(P).$$
\end{enumerate}
\end{defi}

\begin{theorem}[Rokhlin]
\label{Rokhlin}
Let $\ZZ$ be a compact metric space, $\Pi$ be a measurable partition. Every finite Borel measure $\mu$ can be disintegrated on the atoms of $\Pi$ with respect to its projection. Moreover the disintegration is unique up to a zero measure set in $\Pi$.
\end{theorem}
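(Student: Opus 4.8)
The plan is to realise the conditional measures as Riesz representatives of a family of positive linear functionals on $C(\ZZ)$ extracted from conditional expectations; the whole difficulty will be to make a single $\nu$-null set work for all test functions at once, which is possible because $\ZZ$ being a compact metric space makes $C(\ZZ)$ separable. So first I would let $\mathcal B$ be the sub-$\sigma$-algebra of Borel subsets of $\ZZ$ generated by $F_0$ and the countable separating family $(B_n)_{n\in\N}$; by the defining property of a measurable partition the atoms of $\mathcal B$ coincide $\mu$-almost everywhere with the atoms of $\Pp$, so any bounded $\mathcal B$-measurable function is, off a $\mu$-null set, constant along the atoms of $\Pp$ and descends to a function on $\Pp$ measurable for the quotient $\sigma$-algebra. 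For $f\in C(\ZZ)$ the conditional expectation $\mathbb E_\mu(f\mid\mathcal B)\in L^1(\mu)$ therefore defines, for $\nu$-almost every $P\in\Pp$, a real number $\lambda_P(f)$, and by the definition of conditional expectation $\int_\ZZ f\,d\mu=\int_\Pp\lambda_P(f)\,d\nu(P)$; moreover $\lambda_P$ is positive, linear, normalised ($\lambda_P(1)=1$) and bounded by $\|\cdot\|_\infty$, each of these assertions holding off an $f$-dependent $\nu$-null set.

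Next I would fix a countable $\Q$-vector subspace $\DD\ds C(\ZZ)$ which is uniformly dense and contains the constants, available by separability of $C(\ZZ)$. Choosing a Borel version of $\lambda_P(f)$ for each $f\in\DD$ and discarding the single $\nu$-null set $N_0$ outside which all the relations above hold simultaneously — legitimate since $\DD$ is countable and only countably many such conditions are involved — I obtain, for every $P\notin N_0$, a positive normalised $\Q$-linear functional on $\DD$ bounded by the uniform norm; it extends uniquely to a positive normalised linear functional on $C(\ZZ)$, hence by the Riesz representation theorem is integration against a unique Borel probability measure $\mu_P$. For arbitrary $f\in C(\ZZ)$, $P\mapsto\int f\,d\mu_P$ is a uniform limit of the measurable functions attached to elements of $\DD$, hence measurable; and the integration identity of item~2 of the definition of disintegration, which holds on $\DD$ by construction, passes to $C(\ZZ)$ by uniform convergence and then to all bounded Borel functions by a monotone class argument.

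It remains to check that $\mu_P$ is carried by $P$ for $\nu$-almost every $P$, and to establish uniqueness. For each $n$, $P\mapsto\mu_P(B_n)$ coincides $\nu$-almost everywhere with $\mathbb E_\mu(\car_{B_n}\mid\mathcal B)$, which, since $B_n\in\mathcal B$, equals $\car_{B_n}$ — as a function on $\Pp$, the indicator of those atoms contained in $B_n$; hence, off a $\nu$-null set (countability of $(B_n)$ again), $\mu_P(B_n)\in\{0,1\}$ equals $1$ exactly when $P\ds B_n$, and similarly $\mu_P(F_0)=1$. Since the $B_n$ separate the atoms lying in $F_0$, this forces $\Supp\mu_P\cap F_0\ds P$, so $\mu_P(P)=1$. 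For uniqueness, if $(\mu_P')$ is another disintegration then for each $f\in\DD$ both $P\mapsto\int f\,d\mu_P$ and $P\mapsto\int f\,d\mu_P'$ are versions of $\mathbb E_\mu(f\mid\mathcal B)$, so they agree $\nu$-almost everywhere; intersecting the countably many null sets and using density of $\DD$ in $C(\ZZ)$ yields $\mu_P=\mu_P'$ for $\nu$-almost every $P$. Finally, a measure equivalent to $\mu$ has projection equivalent to $\nu$ and the same measurable partition, and its disintegration is obtained by reweighting and renormalising the $\mu_P$ within the atoms, which accounts for the parenthetical clause. The step I expect to be the main obstacle is precisely the passage from the conditional expectations $\mathbb E_\mu(f\mid\mathcal B)$, each defined only up to an $f$-dependent $\nu$-null set, to a genuine measurable assignment $P\mapsto\mu_P$ of probability measures — this is where separability of $C(\ZZ)$ and countability of the separating family are indispensable.
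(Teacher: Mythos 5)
The paper does not in fact give a proof of this theorem; it simply states it and refers to Rokhlin's original paper, so your attempt must be judged on its own. Your overall strategy — conditional expectations, a countable dense $\Q$-subalgebra of $C(\ZZ)$, the Riesz representation theorem, and intersection of countably many $\nu$-null sets — is the standard route and, in outline, the right one.

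There is, however, a genuine gap at the very first step, and it propagates. You set $\mathcal B$ to be the $\sigma$-algebra generated by $F_0$ and the separating family $(B_n)_{n\in\N}$ and assert that ``by the defining property of a measurable partition the atoms of $\mathcal B$ coincide $\mu$-almost everywhere with the atoms of $\Pp$''. That does not follow from the definition of a measurable partition given in the paper. The definition only asks that for each pair of distinct atoms $A_1,A_2$ there exist some $n$ with $A_1\cap F_0\ds B_n$ and $A_2\cap F_0\ds {}^cB_n$ (or the reverse); it does \emph{not} require the sets $B_n$ to be $\Pp$-saturated, i.e., unions of atoms. What the separating property actually yields is one inclusion only: the $\mathcal B$-partition of $F_0$ refines the $\Pp$-partition of $F_0$. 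Nothing prevents some $B_n$ from cutting through an atom. A crude example: on $[0,1]$ with Lebesgue measure take $\Pp$ the one-atom partition, $F_0=[0,1]$ and $B_1=[0,1/2]$; the separation condition is vacuous, yet $\mathcal B$ has two nontrivial atoms while $\Pp$ has one. In that situation $\mathbb E_\mu(f\mid\mathcal B)$ is a function that is constant on $\mathcal B$-atoms but \emph{not} on $\Pp$-atoms, hence does not factor through $\Pi:\ZZ\to\Pp$ after discarding a $\mu$-null set, and the number $\lambda_P(f)$ is simply not well defined. The same issue reappears at the end of your argument, when you identify $P\mapsto\mu_P(B_n)$ with ``the indicator of those atoms contained in $B_n$'': this phrase is meaningful only if every atom is entirely inside $B_n$ or inside ${}^cB_n$, which is precisely the saturation you did not have.

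To repair the proof you must condition on a $\sigma$-algebra of $\Pp$-\emph{saturated} Borel sets. Either take the definition of measurable partition to require the $B_n$ to be unions of atoms (this is Rokhlin's own formulation and that of most textbooks — Viana--Oliveira, for instance — and under it your claim about atoms of $\mathcal B$ is correct), or else insert an intermediate step showing that the given $B_n$ can be replaced by measurable $\Pp$-saturated sets that still separate atoms of $\Pp$ within $F_0$; the latter is not automatic from the paper's wording and is where the real content of the measurable-partition hypothesis gets used. Once that is secured, the remainder of your argument — Borel versions over a countable dense $\Q$-subalgebra $\DD$, extraction of a single $\nu$-null set, Riesz representation, passage from $\DD$ to $C(\ZZ)$ by uniform convergence and to bounded Borel functions by a monotone class argument, the verification of $\mu_P(P)=1$ via separation of atoms, and uniqueness by intersecting countably many null sets — is correct and complete. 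Also note a smaller point: positivity of the functional need not be imposed as a separate countable family of conditions; it already follows from normalisation $\lambda_P(1)=1$ together with the bound $|\lambda_P(f)|\leq\|f\|_\infty$, by writing $f=\|f\|_\infty/2+(f-\|f\|_\infty/2)$ for $f\geq 0$.
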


\begin{rem} We can disintegrate $\mu$ with respect to \emph{any measure} $\nu'$ \emph{equivalent to the projection}. But in that case, the new conditional measures $\mu'_P$ \emph{are no longer probability measures}: by uniqueness in Rokhlin's theorem, they are obtained by multiplication of $\mu_P$ by the Radon-Nikodym derivative $d\nu/d\nu'(P)$.
\end{rem}

\begin{rem}
If $\ZZ=\X\times\Y$ where $\X$ and $\Y$ are compact metric spaces the trivial partition $\Pi=(\X\times\{y\})_{y\in\Y}$ is measurable.

More generally the fibers of a continuous fiber bundle $p:\ZZ\to\Y$ with fiber $\X$, with $\X,\Y$ compact metric spaces, form a measurable partition of $\ZZ$.

Finally in general the leaves of a foliation form a partition which is not measurable. However since this partition is locally trivial, it is possible  to disintegrate locally probability measures in the local foliated charts.
\end{rem}

\paragraph{Lebesgue disintegration and cocycles.}

Assume now that $(M,\F)$ is endowed with a leafwise metric. In particular each leaf of $\F$ is endowed with a volume form.

\begin{defi}[Lebesgue disintegration]
\label{Lebacsing}
We say that a probability measure $m$ on $M$ has \emph{Lebesgue disintegration} for $\F$ if for every foliated chart $U$ the conditional measures of the restriction $m_{|U}$ in the plaques of $U$ are equivalent to the volume of the plaques.

We say that a probability measure $m$ on $M$ has \emph{Lebesgue-singular disintegration} for $\F$ if for every foliated chart $U$ the conditional measures of the restriction $m_{|U}$ in the plaques of $U$ are singular with respect to the volume of the plaques.
\end{defi}

Let $m$ be a measure on $M$ which has Lebesgue disintegration for $\F$. We can associate to $m$ a family of transverse quasi-invariant measures as well as a Radon-Nikodym cocycle. Indeed, let $\A=(U_i,\phi_i)_{i\in I}$ be a good foliated atlas. Consider a chart of the form $U_i=\bigcup_{x\in T_i}P_i(x)$ satisfying $m(U_i)>0$. In restriction to $U_i$, $m$ disintegrates as follows
\begin{equation}
\label{disintegration}
dm_{|U_i}=\left(h_i(z,x)\,d\Leb_{P_i(x)}(z)\right)\,\,d\nu_i(x),
\end{equation}
where $\nu_i$ is a finite Borel measure on $T_i$, $\Leb_{P_i(x)}$ denotes the Lebesgue measure of the plaque $P_i(x)$ and $h_i$ is a measurable function of $U_i$ such that for $\nu_i$-almost every $x$, $h_i(.,x)$ is positive and Lebesgue-integrable on $P_i(x)$ (we use here an abusive identification between $U_i$ and $P_i\times T_i$).

Then the family of transverse measures $(\nu_i)_{i\in I}$ is quasi-invariant by the holonomy pseudogroup. Indeed, if $m(U_i\cap U_j)>0$ the evaluation of $m$ on $U_i\cap U_j$ gives 
\begin{eqnarray*}
dm_{|U_i\cap U_j}&=&\left(h_i(z,x)\,d\Leb_{P_i(x)\cap P_j(\tau_{ij}(x))}(z)\right)d\nu_i(x)\\
                 &=&\left(h_j(z,\tau_{ij}(x))\,d\Leb_{P_i(x)\cap P_j(\tau_{ij}(x))}(z)\right)d[\tau_{ij\ast}^{-1}\nu_j](x).
\end{eqnarray*}
In particular, we find that for, $\nu_i$-almost all $x\in T_i$
\begin{equation}
\label{quasiinvariantemesureabsolumtcont}
\frac{d[\tau_{ij\ast}^{-1}\nu_j]}{d\nu_i}(x)=\frac{h_i(z,x)}{h_j(z,\tau_{ij}(x))}.
\end{equation}
 Note that in particular the right hand side of \eqref{quasiinvariantemesureabsolumtcont} does not depend on $z$.

\paragraph{Harmonic measures.} When $(M,\F)$ is endowed with a leafwise metric, there exist special measures with Lebesgue disintegration called \emph{harmonic} and which have been widely studied. See for example Ghys' topological classification for ``generic'' leaves of a Riemann surface lamination given in \cite{Gh}. There, a leaf is generic if it belongs to a saturated Borel set full for every harmonic measure.

Every leaf $L$ possesses a \emph{Laplace-Beltrami operator} $\Delta_L$. Recall that a real function $h$ defined on a leaf $L$ is said to be \emph{harmonic} if it is of class $C^2$ and if $\Delta_L h=0$.

\begin{defi}[Harmonic measures]
\label{harmonicmeasure}
Let $(M,\F)$ be a closed foliated manifold endowed with a leafwise metric. A probability measure $m$ on $M$ is said to be harmonic if it has Lebesgue disintegration for $\F$, and if the local densities with respect to the Lebesgue measure of the plaques are harmonic functions.

In other words for every foliated chart $U_i$ with $m(U_i)>0$ the restriction $m_{|U_i}$ disintegrates as \eqref{disintegration} for some finite Borel measure $\nu_i$ on the transversal $T_i$ and some measurable function $h_i$ such that for $\nu_i$-almost every $x$, $h_i(.,x)$ is a positive function of $P_i(x)$ which is of class $C^2$ and harmonic. 
\end{defi}

Harmonic measures have been introduced, and their existence has been proven, by Garnett in \cite{Gar}. They describe the behaviour of \emph{Brownian paths} tangent to the leaves of $\F$ (we refer to \cite{Gar} for further details).

\begin{rem}
\label{totallyinvariant}
Note that totally invariant measures, when they exist, are harmonic measures since the local densities in the plaques are constant functions which are in particular harmonic. In that sense, harmonic measures are generalizations of transverse invariant measures.
\end{rem}

\paragraph{Extension of local densities.} The proof of the next lemma is a straightforward application of Ghys' lemma (see \cite[p.414]{Gh}) and more precisely of the interpretation given by Formula \ref{consequenceghys}. It states that when a measure $m$ has Lebesgue disintegration, the local density with respect to Lebesgue defined in a typical plaque can be extended to the whole leaf.

\begin{lemma}
\label{extenddensity}
Let $(M,\F)$ be a closed foliated manifold endowed with a leafwise metric. Let $m$ be a measure with Lebesgue disintegration for $\F$. Then there is a full and saturated Borel set $\X\dans M$ such that for every $y\in\X$, if $y\in P_{i_0}(x)\dans U_{i_0}$, for some $i_0\in I$, and $x\in T_{i_0}$, the following formula defines a positive and measurable function of $z\in L_y$
\begin{equation}
\label{Eq:harmonicextension1}
H_x(z)=\frac{d[\tau_{c\ast}^{-1}\nu_i]}{d\nu_{i_0}}(x) h_i(z,\tau_c(x)),
\end{equation}
where $z\in U_i$, $c$ is any path joining $y$ and $z$, and $\tau_c$ is a holonomy map along $c$.
\end{lemma}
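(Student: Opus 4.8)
The plan is to reduce the well-posedness of formula \eqref{Eq:harmonicextension1} to Ghys' Lemma \ref{ghysquasiinvariant}, more precisely to the cocycle identity \eqref{consequenceghys}, applied to the transverse quasi-invariant family $(\nu_i)_{i\in I}$ (which we also view as a single quasi-invariant measure $\nu$ on $\TT$) that the Lebesgue disintegration \eqref{disintegration} associates to $m$. First I would fix once and for all a Borel representative of each density $h_i$ on $U_i$, together with a full set on which the Radon-Nikodym cocycle $\tau\mapsto d[\tau_\ast\nu]/d\nu$ is everywhere defined and satisfies the cocycle relation, and then remove from each transversal $T_i$ the following $\nu_i$-null sets: the set of $x$ where $h_i(\cdot,x)$ fails to be positive and Lebesgue-integrable on $P_i(x)$; for each pair $(i,j)$ with $m(U_i\cap U_j)>0$, the set where the identity \eqref{quasiinvariantemesureabsolumtcont} fails; and the complement of the full saturated set $\X_0\subset\TT$ given by Ghys' lemma. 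There are only countably many conditions, so the resulting set $\X_0'$ is still of full $\nu_i$-measure for every $i$, and by Lemma \ref{fullsaturated} I may shrink it further so that it is also saturated by the holonomy pseudogroup $\Pp$. Finally I let $\X\subset M$ be the saturation of $\X_0'$. It is Borel and saturated, and it is full for $m$: in each chart $U_i$ the set $U_i\setminus\X$ is a union of plaques $P_i(x)$ with $x\in T_i\setminus\X_0'$, which has zero $m$-measure by \eqref{disintegration}, and the atlas is finite.

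The core of the argument is to prove that for $y\in\X$ the right-hand side of \eqref{Eq:harmonicextension1} is insensitive to the auxiliary choices. Fix such a $y$, a chart $U_{i_0}$ and $x\in T_{i_0}$ with $y\in P_{i_0}(x)$, and a point $z\in L_y$. If $z\in U_i$, then $z$ lies in exactly one plaque $P_i(x')$ of $U_i$, and the fact that $\A$ is a good foliated atlas forces $\tau_c(x)=x'$ for \emph{every} leafwise path $c$ from $y$ to $z$, since sliding $x$ along the leaf above $c$ can only reach the transversal coordinate of the plaque through $z$; in particular $h_i(z,\tau_c(x))=h_i(z,x')$ does not depend on $c$. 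If $c_1,c_2$ are two such paths, the holonomies $\tau_{c_1},\tau_{c_2}\in\Pp$ contain $x$ in their domains and agree at $x$, so \eqref{consequenceghys}, valid at $x$ because $x\in\X_0'$, gives $d[\tau_{c_1\ast}^{-1}\nu_i]/d\nu_{i_0}(x)=d[\tau_{c_2\ast}^{-1}\nu_i]/d\nu_{i_0}(x)$, and path-independence follows. Independence of the chart $U_i\ni z$ is obtained in the same spirit: if moreover $z\in U_j$, then $\tau_{ij}\circ\tau_c$ is an admissible path-holonomy from $T_{i_0}$ to $T_j$, the cocycle property of the Radon-Nikodym derivatives yields
$$\frac{d[(\tau_{ij}\tau_c)_\ast^{-1}\nu_j]}{d\nu_{i_0}}(x)=\frac{d[\tau_{c\ast}^{-1}\nu_i]}{d\nu_{i_0}}(x)\cdot\frac{d[\tau_{ij\ast}^{-1}\nu_j]}{d\nu_i}(\tau_c(x)),$$
and substituting \eqref{quasiinvariantemesureabsolumtcont} at $\tau_c(x)=x'$ produces the factor $h_i(z,x')/h_j(z,\tau_{ij}(x'))$, which cancels against $h_j(z,\tau_{ij}\tau_c(x))$; hence the two candidate values of $H_x(z)$ agree.

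Positivity and measurability are then routine to check. Positivity is clear because each factor in \eqref{Eq:harmonicextension1} is positive at the points of $\X_0'$ we have retained (with the standing understanding that the leafwise density $h_i(\cdot,x')$, and hence $H_x$, is well-defined only up to a leafwise Lebesgue-null set). For measurability of $z\mapsto H_x(z)$ on $L_y$, cover the leaf by the countably many plaques it meets inside the charts of the atlas; on each such plaque $H_x$ is the constant $d[\tau_{c\ast}^{-1}\nu_i]/d\nu_{i_0}(x)$ times the Borel function $h_i(\cdot,x')$, and gluing these countably many Borel pieces in the leaf topology gives a Borel function of $z$.

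I expect the single delicate point to be the well-posedness argument, and inside it the observation that a change of path (or of chart) amounts to a holonomy germ that \emph{fixes} the transversal point $x$, which is exactly what makes Ghys' Lemma \ref{ghysquasiinvariant} and formula \eqref{consequenceghys} applicable. The rest---choosing Borel representatives of the $h_i$ so that \eqref{Eq:harmonicextension1} is a bona fide function rather than a mere equivalence class, keeping track of the countably many full-measure sets, and the final gluing---is standard.
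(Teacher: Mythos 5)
Your proof is correct and follows exactly the route the paper indicates: reduce well-posedness of \eqref{Eq:harmonicextension1} to Ghys' Lemma \ref{ghysquasiinvariant} via Formula \eqref{consequenceghys}, applied to the quasi-invariant family $(\nu_i)_{i\in I}$ coming from the Lebesgue disintegration. The paper leaves the lemma without a written-out argument (it merely calls it ``a straightforward application of Ghys' lemma''), and you have simply supplied the bookkeeping the author omits: the construction of the full saturated set via Lemma \ref{fullsaturated}, the observation that any two holonomies from $y$ to $z$ agree at $x$ so that \eqref{consequenceghys} applies, the cocycle calculation combined with \eqref{quasiinvariantemesureabsolumtcont} for chart-independence, and the gluing argument for measurability.
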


\section{The foliated geodesic flow of foliations with negatively curved leaves}
\label{folgeoflownegcurvleaves}

\subsection{Definitions}
\label{definitionunittgtbdle} 

 Let $M$ be a closed manifold endowed with a smooth foliation $\F$ of dimension $d\geq 2$.

\paragraph{The unit tangent bundle.} In what remains of the article $\hM$  shall denote the \emph{unit tangent bundle} of $\F$ i.e. the set of unit vectors tangent to the leaves of $\F$. It is a closed manifold endowed with a foliation that we will denote by $\hcF$ whose leaves are the unit tangent bundles of leaves of $\F$. We shall also denote by $pr:\hM\to M$ the \emph{basepoint projection} which associates to each vector tangent to $\F$ its basepoint.

When $v\in\hM$ we will often denote by $\hL_v$ the leaf of $v$, that is $T^1L_x$ where $x$ denotes the basepoint of $v$ and $L_x$, the leaf of $x$. We will also often denote by $T^1_x\F$ the unit tangent fiber $T^1_xL_x$. Note that these unit tangent fibers (which are the fibers of $pr$) form a subfoliation of $\hcF$. In the sequel, we shall denote this foliation by $\UF$.

\paragraph{The foliations $\F$ and $\hcF$ have the same holonomy.} More precisely, consider a good atlas $\A$ of $\F$. By lifting $\A$ via $pr$, one obtains an atlas $\widehat{\A}$ of $\hM$ consisting of local diffeomorphisms $\widehat{\phi}_i:\widehat{U}_i\to T^1P_i \times T_i$. The changes of charts are of the form
$$\widehat{\phi}_j\circ\widehat{\phi}_i^{-1}:((z,v),x)\in\widehat{\phi}_i(U_i\cap U_j)\mapsto\left((\zeta_{ij}(z,v),D_z\zeta_{ij}(.,x)v),\tau_{ij}(x)\right)\in\widehat{\phi}_j(U_i\cap U_j),$$
where $D_z\zeta_{ij}(.,x)$ denotes the derivative at $z$ of $z\mapsto \zeta_{ij}(z,x)$. Note that the definition is coherent since by Remark \ref{transitionisom} $D_z\zeta_{ij}(.,x)$ sends $T_z^1P_i$ onto $T_{\zeta_{ij}(z,x)}^1P_j$.

Since each plaque of $\A$ is diffeomorphic to an open cube it is contractible. Hence $pr$ induces a trivial bundle over such a plaque. In particular the charts $\widehat{U}_i$ are trivially foliated by fibers of $pr$ (which are spheres) and can be covered by open sets which trivialize jointly the foliations induced by fibers of $pr$ and by leaves of $\F$. This provides a foliated atlas $\B$ for $\hcF$, such that the holonomy along a path tangent to a fiber of $pr$ is trivial. Finally the respective pseudogroups of $\F$ and $\hcF$ associated to $\A$ and $\B$, acting on the system of transversals $(T_i)_{i\in I}$, are the same. In particular we get the following

\begin{proposition}
\label{equholinv}
$\F$ admits a transverse invariant measure if and only if $\hcF$ does.
\end{proposition}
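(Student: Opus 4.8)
The plan is to bootstrap off the observation made immediately before the statement: that $\F$ and $\hcF$ "have the same holonomy". Concretely, I would start from a good foliated atlas $\A=(U_i,\phi_i)_{i\in I}$ of $\F$, lift it via $pr$ to the atlas $\widehat{\A}$ of $\hM$, and refine it to a good foliated atlas $\B$ of $\hcF$ in which holonomy along paths tangent to the sphere fibers $\UF$ is trivial — exactly the atlas built above. The key point to record is then that the holonomy pseudogroup $\widehat\Pp$ of $\hcF$ attached to $\B$, acting on the common complete system of transversals $(T_i)_{i\in I}$, is generated by precisely the same local diffeomorphisms $\tau_{ij}$ of $\TT=\bigcup_i T_i$ as the holonomy pseudogroup $\Pp$ of $\F$ attached to $\A$. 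In other words, $\F$ and $\hcF$ carry literally the same holonomy pseudogroup acting on the same transversal.

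Granting this, I would conclude using the Bowen--Marcus presentation of transverse invariant measures recalled in \S\ref{transversecocycles} (see \cite{BM}): a transverse invariant measure is a family $(\nu_i)_{i\in I}$ of finite nonnegative Borel measures on the $T_i$ with $\nu_i(T_i)>0$ for some $i$ and with $\nu_i(A_i)=\nu_j(\tau(A_i))$ for every holonomy map $\tau\colon S_i\to S_j$ between open subsets of the transversals and every Borel $A_i\subset S_i$. Since the collection of such holonomy maps is the same for $\F$ and for $\hcF$, these constraints on $(\nu_i)_{i\in I}$ are word for word identical in the two cases. Hence a family $(\nu_i)_{i\in I}$ is a transverse invariant measure for $\F$ if and only if it is one for $\hcF$; in particular one exists for $\F$ iff one exists for $\hcF$, which is the assertion. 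To close the loop I would invoke the (standard, and used implicitly throughout the paper) facts that the Bowen--Marcus notion is equivalent to that of a $\Pp$-invariant finite measure on a single complete transversal, and that the existence of a transverse invariant measure does not depend on the chosen good atlas.

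The only points that genuinely need care are structural rather than dynamical, and they are already taken care of by the construction of $\B$: first, that $\B$ can be chosen to be a good foliated atlas of $\hcF$ whose transversals are the very transversals $T_i$ of $\A$ — this uses contractibility of each plaque $P_i$, so that $pr$ restricts to a trivial sphere bundle over it and the two subfoliations of $\widehat U_i$ by fibers of $pr$ and by plaques of $\F$ can be simultaneously trivialized; and second, that holonomy of $\hcF$ along a path tangent to $\UF$ is trivial, so that the transverse dynamics of $\hcF$ really does factor through that of $\F$. Once these are in place there is essentially no obstacle left: the equivalence is a formal consequence of the identification of the two holonomy pseudogroups, and the "hard part" — building $\B$ — has already been done.
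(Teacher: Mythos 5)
Your argument is correct and is precisely the paper's own: the proposition is stated as an immediate consequence of the preceding discussion, which identifies the holonomy pseudogroups of $\F$ (from $\A$) and $\hcF$ (from the refined atlas $\B$) as the same pseudogroup acting on the common transversal $\TT=\bigcup_i T_i$. Your extra step — reading off the equivalence from the Bowen--Marcus description of transverse invariant measures as a family $(\nu_i)$ invariant under the $\tau_{ij}$ — just makes explicit what the paper leaves implicit.
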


\paragraph{The foliated Sasaki metric.} We refer to \cite[\S 1.3.1]{P} for further details about this topic. Let $L$ be a leaf of $\F$ and $g_L$ the corresponding Riemannian metric. The bundle $TTL$ splits as follows. Let $Z\in TTL$ and $X$ be a smooth curve of $TL$ with $\dot{X}(0)=Z$ and $c=pr\circ X$. The correspondence $Z\mapsto \left(\dot{c}(0), \left.\frac{D}{dt}\right|_{t=0} X\right)$ provides a bundle isomorphism $\iota:TTL\to H\oplus V$, where both $H$ and $V$ are copies of the pull-back of $TL$ by $pr$ ($H$ and $V$ are respectively called the \emph{horizontal} and \emph{vertical} bundles). Here $D/dt$ denotes the covariant derivative for the \emph{Levi-Civita connection}. Recall that the local coefficients of this connection, which are called the \emph{Christoffel's symbols}, only depend on the $1$-jet of the metric $g_L$ (see \cite[Chapter 2, Section 3]{dC}).

By pulling-back by $pr$ on the bundles $H$ and $V$ the bundle metric of $TL$ (i.e. the field of quadratic forms in the fibers given by $g_L$), one defines a natural bundle metric on $H\oplus V$ which makes $H$ and $V$ orthogonal. By pulling-back this metric by $\iota$ one defines a bundle metric on $TTL$. The resulting Riemannian metric on $TL$ is called the \emph{Sasaki metric}. This metric only depends on the $1$-jet of $g_L$ (as does the identification $\iota$). In particular, it varies locally continuously with the metric $g_L$. We consider the restriction, which we denote by $\hg_{\hL}$, of this Riemannian metric to $T^1L$, which from now on we denote by $\hL$.

The discussion above proves that the assignment $\hL\mapsto \hg_{\hL}$ is a leafwise metric on $\hM$. We shall refer to it as the \emph{foliated Sasaki metric}.

\paragraph{The foliated geodesic flow.} A vector $v\in\hM$ directs a unique geodesic inside $L_v$. We define $G_t(v)$ by flowing $v$ along this geodesic at unit speed during a time $t$. This flow $G_t$ is  called the \emph{foliated geodesic flow}.

This flow is $C^{\infty}$ inside the leaves of $\hcF$ and varies continuously in the $C^{\infty}$-topology with the transverse parameter. Indeed, the geodesics are solutions of the \emph{geodesic equation} which is a second order ODE whose coefficients are locally given by the Christoffel's symbols (see \cite[Chapter 3, Section 2]{dC}). Therefore, this equation as well as its solutions are continuous with the metric in the $C^{\infty}$-topology.

\subsection{Foliated hyperbolicity.} 
\label{foliatedhyperbolicity}

Until the end of the paper we assume that $(M,\F)$ is endowed with a negatively curved leafwise metric. Note that by Remark \ref{bddgeo} the sectional curvature of every leaf is everywhere pinched between two uniform negative constants denoted by $-b^2\leq-a^2<0$.

If $v,w$ belong to the same leaf $\hL$ we denote by $\dist_{\hcF}(v,w)$ the distance between them for the Sasaki metric $\hg_{\hL}$ on $\hL$. When $r>0$ we denote by $B_{\hcF}(v,r)$ the ball inside $\hL_v$ centered at $v$ of radius $r$  for $\dist_{\hcF}$.

The leaves of $\F$ are not compact a priori. But since they are leaves of a foliation of a compact manifold, there has to be some sort of recurrence in their geometries (see Remark \ref{bddgeo}). In particular this will enable us to recover some of the fundamental tools of uniform hyperbolic dynamics. Most of the proofs can be copied without modification from the classical results, for which we will give precise references. In the sequel $T\hcF$ shall denote the \emph{tangent bundle} of $\hcF$, i.e. the subbundle of $T\hM$ consisting of vectors which are tangent to some leaf of $\hcF$. 

\paragraph{Invariant bundles.} Following \cite[Chapter IV]{Ba},  one proves that there are two continuous and $DG_t$-invariant subbundles of $T\hcF$ of the same dimension $d-1$ ($d$ being the dimension of $\F$) denoted by $E^s$ and $E^u$ such that

\begin{equation}
\label{uniformcontraction}
\begin{cases}
\displaystyle{\frac{a}{b}e^{-bt}||v_s||\leq ||DG_t(v_s)||\leq \frac{b}{a}e^{-at}||v_s||} & \text{if $v_s\in E^s$ and $t>0$} \\
                                                                                                                      \\
               \displaystyle{\frac{a}{b}e^{-bt}||v_u||\leq ||DG_{-t}(v_u)||\leq \frac{b}{a}e^{-at}||v_u||} & \text{if $v_u\in E^u$ and $t>0$}
   .\end{cases}
\end{equation}

We then have the following continuous and $DG_t$-invariant splitting of the tangent bundle of the foliation

\begin{equation}
\label{invsplitting}
T\hcF=E^s\oplus E^c\oplus E^u,
\end{equation}
where $E^c=\R X$, $X$ being the generator of the foliated geodesic flow. We will also set $E^{cs}=E^s\oplus E^c$ and $E^{cu}=E^u\oplus E^c$. These continuous and $DG_t$-invariant subbundles are respectively called \emph{center-stable} and \emph{center-unstable} bundles.

\emph{Let us recapitulate}. $G_t$ is a continuous flow, which is smooth inside the leaves and varies transversally continuously in the smooth topology. It preserves a continuous splitting of $T\hcF$ of the form \eqref{invsplitting} where the first and last factors are respectively uniformly exponentially contracted and expanded by $DG_t$. This is precisely the definition of \emph{foliated hyperbolicity} given in \cite{BGM}.

When $L$ is a leaf of $\F$ and $E$ is a subbundle of $T\hL$ we define the distance $\delta(E(x),E(y))$ as the Hausdorff distance in $T\hL$ of the unit spheres of $E(x)$ and $E(y)$. Brin's proof of \cite[Proposition 4.4.]{Ba} can be copied verbatim in order to get the

\begin{proposition}
\label{holderbundles}
The $DG_t$-invariant distributions $E^s,E^u,E^{cs},E^{cu}$ are uniformly H{\"o}lder continuous in the leaves of $\hcF$.
\end{proposition}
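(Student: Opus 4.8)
The statement is a foliated transcription of Brin's classical argument for the Hölder continuity of the stable and unstable distributions of a hyperbolic flow (\cite[Proposition 4.4.]{Ba}), and the whole point is to check that every ``uniform'' ingredient of that proof survives here because $\hM$ is compact and the foliated Sasaki metric has uniformly bounded geometry (Remark \ref{bddgeo}). The ingredients I would use are: (i) the estimates \eqref{uniformcontraction}, which in particular yield a uniform constant $C_0\geq 1$ with $\|DG_t|_{T\hcF}\|\leq C_0 e^{b|t|}$ for all $t\in\R$, and $\|DG_t|_{E^s}\|\leq C_0 e^{-at}$, $\|DG_{-t}|_{E^u}\|\leq C_0 e^{-at}$ for $t>0$; (ii) the continuity of the splitting \eqref{invsplitting} over the compact manifold $\hM$, which forces the angles between any two of $E^s,E^c,E^u$ to be bounded away from $0$ uniformly, and makes each subbundle uniformly continuous on $\hM$, hence along the leaves of $\hcF$ with a uniform modulus of continuity; (iii) a uniform radius $r_0>0$ such that around every $v\in\hM$ the exponential chart of $(\hL_v,\hg_{\hL_v})$ of radius $r_0$ is $2$-bi-Lipschitz and trivializes the Grassmann bundle of $T\hcF$ with uniform distortion.

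Fix $v,w$ in the same leaf $\hL$ with $\delta:=\dist_{\hcF}(v,w)$; I would prove $\delta(E^s(v),E^s(w))\leq C_1\delta^{a/b}$ for a uniform $C_1$, which suffices (for $\delta$ not small, the Grassmann distance is bounded while $\delta^{a/b}$ is bounded below, so only small $\delta$ needs treatment). For $t>0$ with $C_0 e^{bt}\delta\leq r_0$ one has $\dist_{\hcF}(G_tv,G_tw)\leq C_0e^{bt}\delta$, and, after shrinking $r_0$, the modulus of continuity of $E^s$ ensures that in the chart around $G_tv$ the subspace $E^s(G_tw)$ is the graph of a linear map $B_t:E^s(G_tv)\to E^{cu}(G_tv)$ with $\|B_t\|\leq 1$ and $\|B_t\|$ comparable to $\delta(E^s(G_tv),E^s(G_tw))$. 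Since $DG_t(E^s(w))=E^s(G_tw)$ and $DG_t(E^{cu}(w))=E^{cu}(G_tw)$ by invariance, the graph-transform identity (modulo the uniform chart distortion of (iii), absorbed in a constant $C_2$) reads $B_0=(DG_{-t}|_{E^{cu}(G_tv)})\circ B_t\circ(DG_t|_{E^s(v)})$, where $B_0$ is the graph of $E^s(w)$ over $E^s(v)$ in the chart around $v$; since $E^c$ is $DG_t$-isometric and $E^u$ is backward-contracted, $\|DG_{-t}|_{E^{cu}}\|$ is bounded by a uniform constant (using (ii)), so
$$\delta(E^s(v),E^s(w))\ \leq\ C_2\,\|B_0\|\ \leq\ C_2\,\|DG_{-t}|_{E^{cu}}\|\cdot\|B_t\|\cdot\|DG_t|_{E^s}\|\ \leq\ C_3\, e^{-at}.$$

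It then remains to take $t$ as large as $C_0 e^{bt}\delta\leq r_0$ allows, namely $t=\frac1b\log\frac{r_0}{C_0\delta}$, which gives $e^{-at}=(C_0\delta/r_0)^{a/b}$, hence $\delta(E^s(v),E^s(w))\leq C_1\delta^{a/b}$ with $C_1$ depending only on $a,b,C_0,r_0$; this is the uniform Hölder continuity of $E^s$. The distribution $E^u$ is handled by the same computation after exchanging $G_t\leftrightarrow G_{-t}$ and $E^s\leftrightarrow E^u$. Finally, $E^{cs}=E^s\oplus E^c$ (resp. $E^{cu}=E^u\oplus E^c$) is the direct sum of a uniformly Hölder distribution with the line field $E^c=\R X$ spanned by the geodesic spray $X$, which is $C^\infty$ along the leaves of $\hcF$ and hence uniformly Lipschitz there; since $E^c$ makes a uniform angle with $E^s$ and with $E^u$ by (ii), the direct sum of two transverse distributions, one Hölder and one Lipschitz, is again uniformly Hölder, which finishes the proof. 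The only real difficulty is the bookkeeping of uniformity: one must verify that the exponential charts, the trivialization of the Grassmann bundle, the bi-Lipschitz constants, the moduli of continuity of $E^s,E^u$ and the lower bounds on the mutual angles of $E^s,E^c,E^u$ can all be chosen independently of the base point — which is exactly what compactness of $\hM$ together with uniformly bounded geometry of the foliated Sasaki metric provides; granting this, the argument is Brin's verbatim.
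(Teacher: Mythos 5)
Your proposal is correct and takes exactly the same route as the paper: the paper's entire proof consists of the sentence ``Brin's proof of \cite[Proposition 4.4.]{Ba} can be copied verbatim,'' and what you have written out is precisely that argument of Brin, transcribed in the foliated setting, with explicit attention to the uniformity bookkeeping (the uniform $C_0,r_0$, uniform angles and moduli of continuity furnished by compactness of $\hM$ and bounded geometry of the Sasaki metric). The graph-transform estimate $\|B_0\|\leq\|DG_{-t}|_{E^{cu}}\|\,\|B_t\|\,\|DG_t|_{E^s}\|\leq C e^{-at}$, optimized over $t\leq\frac1b\log\frac{r_0}{C_0\delta}$, is exactly how Brin obtains the Hölder exponent $a/b$; the reduction of $E^{cs},E^{cu}$ to $E^s,E^u$ via the smooth line field $E^c$ and the uniform angle bound is also the standard closing step. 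No gap.
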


\paragraph{Stable and unstable manifolds.}  In the following theorem we use the notations defined in the preceding paragraphs.

\begin{theorem}[Stable manifold theorem]
\label{stable}
Let $(M,\F)$ be a closed foliated manifold endowed with a negatively curved leafwise metric. For any $v\in \hM$, there exists a pair of $C^{\infty}$ open discs embedded in the leaf of $v$ which contain $v$ and are denoted by $W_{loc}^s(v)$ and $W_{loc}^u(v)$ such that

\begin{enumerate}
\item $T_v W_{loc}^s(v)=E^s(v)$ and $T_v W_{loc}^u(v)=E^u(v)$;
\item for any $t\geq 0$, $G_t(W^s_{loc}(v))\dans W^s_{loc}(G_t(v))$ and $G_{-t}(W^u_{loc}(v))\dans W^u_{loc}(G_{-t}(v))$;
\item there exist immersed global manifolds that subfoliate $\hcF$ defined by
$$W^s(v)=\bigcup_{t\geq 0} G_{-t}(W^s_{loc}(G_t(v)))$$
$$W^u(v)=\bigcup_{t\geq 0} G_{t}(W^u_{loc}(G_{-t}(v))).$$
\end{enumerate}
The sets $W_{loc}^s(v)$ and $W_{loc}^u(v)$ are respectively called the local stable and unstable manifolds of $v$. These local manifolds vary continuously with the point $v$ in the $C^{\infty}$-topology (in the leaves, and also transversally). The global manifolds are characterized by the following dynamical properties
$$W^s(v)=\left\{w\in \hL_v; \lim_{t\to\infty}\dist_{\hcF}(G_t(v),G_t(w))=0\right\}$$
$$W^u(v)=\left\{w\in \hL_v; \lim_{t\to\infty}\dist_{\hcF}(G_{-t}(v),G_{-t}(w))=0\right\}.$$
\end{theorem}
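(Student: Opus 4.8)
The plan is to obtain Theorem \ref{stable} by transcribing, \emph{leafwise}, the classical construction of the stable and unstable manifolds of the geodesic flow of a Cartan--Hadamard manifold with pinched negative curvature (see \cite[Chapter IV]{Ba}), and then to extract the transverse continuity from the transverse continuity of the leafwise metric. The point making everything uniform is that, $M$ being compact, the leafwise metric has uniformly bounded geometry (Remark \ref{bddgeo}): the injectivity radii of the leaves are bounded below by some $\rho_0>0$ and their curvatures lie in $[-b^2,-a^2]$. Consequently the hyperbolicity rates of \eqref{uniformcontraction}, the sizes of the leafwise exponential charts, and the contraction rate of the graph transform used below can all be taken uniform over $\hM$.

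\emph{Leafwise construction.} Fix a leaf $L$ of $\F$ and pass to its universal cover $\tL$, a Cartan--Hadamard manifold with curvature pinched between $-b^2$ and $-a^2$, whose geodesic flow is Anosov with stable and unstable subbundles the stable and unstable Jacobi tensors, i.e. the bundles $E^s,E^u$ of \eqref{invsplitting} read in $T^1\tL$. For $v\in T^1\tL$ pointing to $\xi\in\tL(\infty)$ I take for $W^s_{loc}(v)$ the local stable manifold of $v$ produced by the Hadamard--Perron theorem: a sufficiently small $C^\infty$ embedded disc, tangent at $v$ to $E^s(v)$, which geometrically is the field of inward unit normals of the horosphere through $pr(v)$ centered at $\xi$; symmetrically $W^u_{loc}(v)$ is built from $-v$ and the horosphere centered at $\xi(-v)$. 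This yields property (1) directly, and property (2) because the geodesic flow directed by $\xi$ carries horospheres centered at $\xi$ to horospheres centered at $\xi$ while contracting distances along them (this is \eqref{uniformcontraction}), so $G_t(W^s_{loc}(v))\dans W^s_{loc}(G_t(v))$ for $t\ge0$, and likewise $G_{-t}(W^u_{loc}(v))\dans W^u_{loc}(G_{-t}(v))$. Being small enough, these discs project injectively to $T^1L$, and for $v\in T^1L$ we call $W^s_{loc}(v),W^u_{loc}(v)$ the projected discs.

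\emph{Transverse continuity and global manifolds.} The foliated geodesic flow varies continuously with the transverse parameter in the $C^\infty$-topology (\S\ref{definitionunittgtbdle}), hence so do the coefficients of the leafwise geodesic equation, of the matrix Riccati equation defining $E^s,E^u$, and of the graph transform whose unique fixed point is $W^s_{loc}(v)$ in leafwise exponential coordinates. By the uniformity of the constants above and the continuous dependence of a contraction's fixed point on its data, the discs $W^s_{loc}(v),W^u_{loc}(v)$, together with all their derivatives, depend continuously on $v$ inside the leaves and transversally; this is the continuity assertion of the theorem, of which Proposition \ref{holderbundles} is the leafwise counterpart. For the global manifolds, property (2) shows that the $G_{-t}(W^s_{loc}(G_t(v)))$, $t\ge0$, form an increasing nested family of $C^\infty$ discs; set $W^s(v)$ to be their union, an injectively immersed $C^\infty$ submanifold of $\hL_v$, and define $W^u(v)$ by reversing time.

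\emph{Subfoliation and dynamical characterization.} That the $W^s(v)$ partition each leaf $T^1L$, hence subfoliate $\hcF$, follows from the horospherical picture --- horospheres centered at the same point at infinity are nested and those centered at distinct points meet, so two local stable manifolds are either disjoint or nested --- or equivalently from the characterization below, which exhibits ``lying on the same $W^s$'' as an equivalence relation. If $w\in W^s(v)$ then $G_{t_0}(w)\in W^s_{loc}(G_{t_0}(v))$ for some $t_0\ge0$, and on a local stable manifold the forward flow contracts $\dist_{\hcF}$ exponentially (combine $T W^s_{loc}=E^s$, the estimate \eqref{uniformcontraction} and a routine bootstrap), so $\dist_{\hcF}(G_t(v),G_t(w))\to0$. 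Conversely, if $\dist_{\hcF}(G_t(v),G_t(w))\to0$, then for $t$ large $G_t(w)$ lies in the domain of the local product structure at $G_t(v)$ given by $W^s_{loc}$, the flow direction and $W^u_{loc}$; since the forward flow does not contract the components transverse to $W^s_{loc}$, the classical argument (see \cite[Chapter IV]{Ba}) forces $G_t(w)\in W^s_{loc}(G_t(v))$, i.e. $w\in W^s(v)$, and reversing time handles $W^u$. I expect the only genuinely delicate step to be the transverse continuity in the $C^\infty$-topology --- upgrading the leafwise Hadamard--Perron construction so that the local discs, the bundles $E^s,E^u$ and all higher derivatives vary continuously with the transverse parameter, and not merely within leaves. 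This is exactly where uniform bounded geometry is indispensable, since it makes the exponential-chart domains, the hyperbolicity rates and the graph-transform contraction uniform over $\hM$, so that the fixed point depends continuously on continuously varying data.
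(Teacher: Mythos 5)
Your proof is correct, but it takes a genuinely different route from the paper's. The paper follows Hirsch--Pugh--Shub's Banach-manifold strategy: it observes that conjugation by $G_1$ induces a transformation $\widetilde{G}_1$ of the Banachable manifold of leaf-preserving maps $\hM\to\hM$ that are bounded in each leaf (modeled on bounded sections $\hM\to T\hcF$), checks that $\widetilde{G}_1$ is hyperbolic there thanks to the uniform rates of \eqref{uniformcontraction}, and then invokes \cite[Section 3]{HPS} once. This packages the entire leafwise construction into one application of the contraction principle on a single Banach manifold, and --- crucially --- the transverse continuity of $W^s_{loc}(v)$, $W^u_{loc}(v)$ in $v$ (which you correctly identify as the delicate point of your own argument) drops out automatically: the objects produced are already continuous maps on the compact total space $\hM$, so there is nothing extra to prove.

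You instead work leaf by leaf, using the classical Cartan--Hadamard/horospherical picture from \cite[Chapter IV]{Ba} to build $W^s_{loc}$ and $W^u_{loc}$ in each universal cover $T^1\tL$, then project down and argue transverse continuity separately by continuous dependence of the graph-transform fixed point on continuously varying data, with uniform bounded geometry (Remark \ref{bddgeo}) supplying uniform constants. This is perfectly legitimate and has the virtue of being more geometric and explicit (one literally sees the stable manifolds as horosphere normal fields), but you pay for it with the need to run the graph transform over a transversal and verify $C^\infty$ continuity of the fixed point ``by hand.'' One small imprecision: in the subfoliation paragraph, the statement ``horospheres centered at distinct points meet'' is not the right mechanism --- what you want is that the fields of inward unit normals to horospheres centered at distinct $\xi\neq\xi'$ are disjoint subsets of $T^1\tL$, which together with the nesting of horospheres centered at the same $\xi$ gives the partition; alternatively, the dynamical characterization you prove in the same paragraph already exhibits membership in the same $W^s$ as an equivalence relation, which is cleaner and makes the horospherical remark redundant.
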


\begin{proof}[Sketch of proof]
The usual \emph{Stable Manifold Theorem} given by \cite[Theorem 6.2]{Sh} adapts without modification to this context (see also \cite{BGM}).

Consider a continuous cone field $\CC^u$ around $E^u$ which is tangent to $\hcF$ and consider the set $\DD^u$ of families $(D^u_v)_{v\in\hM}$ of $C^1$-discs tangent to $\CC^u$ of a given small radius and continuous with $v$ for the $C^1$-topology. Since the flow preserves $\hcF$ and expands $E^u$, the time $T_0$-map of the flow $G_{T_0}$ sends $\CC^u$ strictly inside itself for some $T_0>0$. Moreover it acts on the space $\DD^u$ by rescaling the disc $G_{T_0}(D^u_{G_{-T_0}(v)})$ (see \cite[Chap. 6]{Sh} for more details).

Using the transverse contraction of the dynamics, we can prove that the preceeding action has a fixed point in the family $\DD^u$, which corresponds to the family of local unstable manifolds $W^u_{loc}(v)$, varying continuously with $v\in\hM$ in the $C^1$-topology (this cone field-method ensures the continuity both tangentially and transversally).

In order to prove the regularity of the family of discs we make an inductive argument. We let the derivative of the flow act on the tangent spaces of the discs of $\DD^u$ and prove that the natural action is still a contraction.
\end{proof}

\begin{figure}[hbtp]
\centering
\includegraphics[scale=0.4]{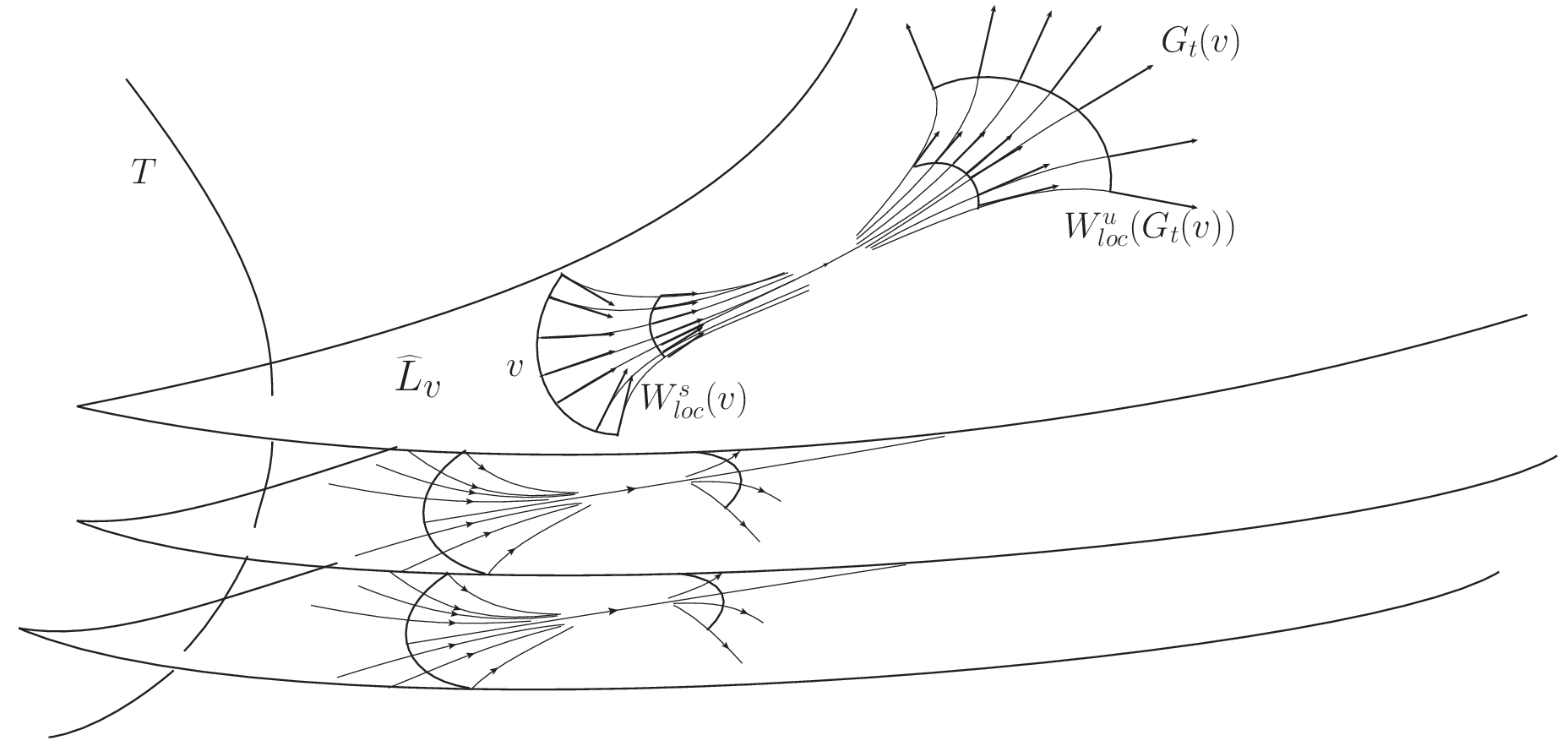}
\caption{Stable and unstable manifolds}
\label{jetehaislatex}
\end{figure}

\paragraph{Invariant foliations and local product structure.}

The collections $(W^s(v))_{v\in\hM}$ and $(W^u(v))_{v\in\hM}$ form two continuous subfoliations of $\hcF$ that we call \emph{stable} and \emph{unstable foliations}. We denote them respectively by $\W^s$ and $\W^u$.

These foliations are invariant by the foliated geodesic flow, i.e. for every $t\in\R$ and $v\in \hM$, we have $G_t(W^s(v)))=W^s(G_t(v))$ and $G_t(W^u(v)))=W^u(G_t(v))$.

The \emph{center-stable} and \emph{center-unstable manifolds} of a point $x$ will be denoted by $W^{cs}(v)$ and $W^{cu}(v)$. They are by definition the saturations of $W^s(v)$ and $W^u(v)$ in the direction of the flow. These manifolds form two continuous subfoliations of $\hcF$ denoted respectively by $\W^{cs}$ and $\W^{cu}$ that we call the \emph{center-stable} and \emph{center-unstable foliations}.

For every $t\in\R$ and $v\in\hM$, one has $G_t(W^{cs}(v))=W^{cs}(G_t(v))=W^{cs}(v)$ and $G_t(W^{cu}(v))=W^{cu}(G_t(v))=W^{cu}(v)$.

Let $v\in\hM$ and $\delta>0$ small enough. The stable manifold of radius $\delta$, denoted by $W_{\delta}^s(v)\dans\hL_v$, is defined as the connected component of $B_{\hcF}(v,\delta)\cap W^s(v)$ containing $v$. The notation $W_{loc}^s(v)$ will be used to denote a stable manifold ``with sufficiently small radius''. The same notations will be used replacing $s$ by $u,cs$ or $cu$.

The following proposition states that the leaves of $\hcF$ possess a structure of local product which is uniformly H\"older continuous. The proof of the local product structure may be found in \cite[Theorem 3.2]{PuSu}. The proof of the H\"older continuity is an application to our context of arguments of \cite{PSW}. The details can be found in an appendix of the author's thesis \cite{Al3}.

\begin{proposition}[Local product structure]
\label{lpshypfeuilletee}
For sufficiently small $\delta>0$ there exists $\gamma=\gamma(\delta)\in (0,\delta)$ such that for every $v,w$ lying on the same leaf $\hL$ and satisfying $\dist_{\hL}(v,w)\leq 2\gamma$ then $W_{\delta}^{cs}(v)\cap W_{\delta}^u(w)$ is a single point denoted by $[v,w]$. Furthermore, the local function $[.,.]$ depends continuously on $v$ and $w$, and H\"older continuously with uniform H\"older constants when $v,w$ vary on the same leaf.
\end{proposition}

\begin{figure}[hbtp]
\centering
\includegraphics[scale=0.4]{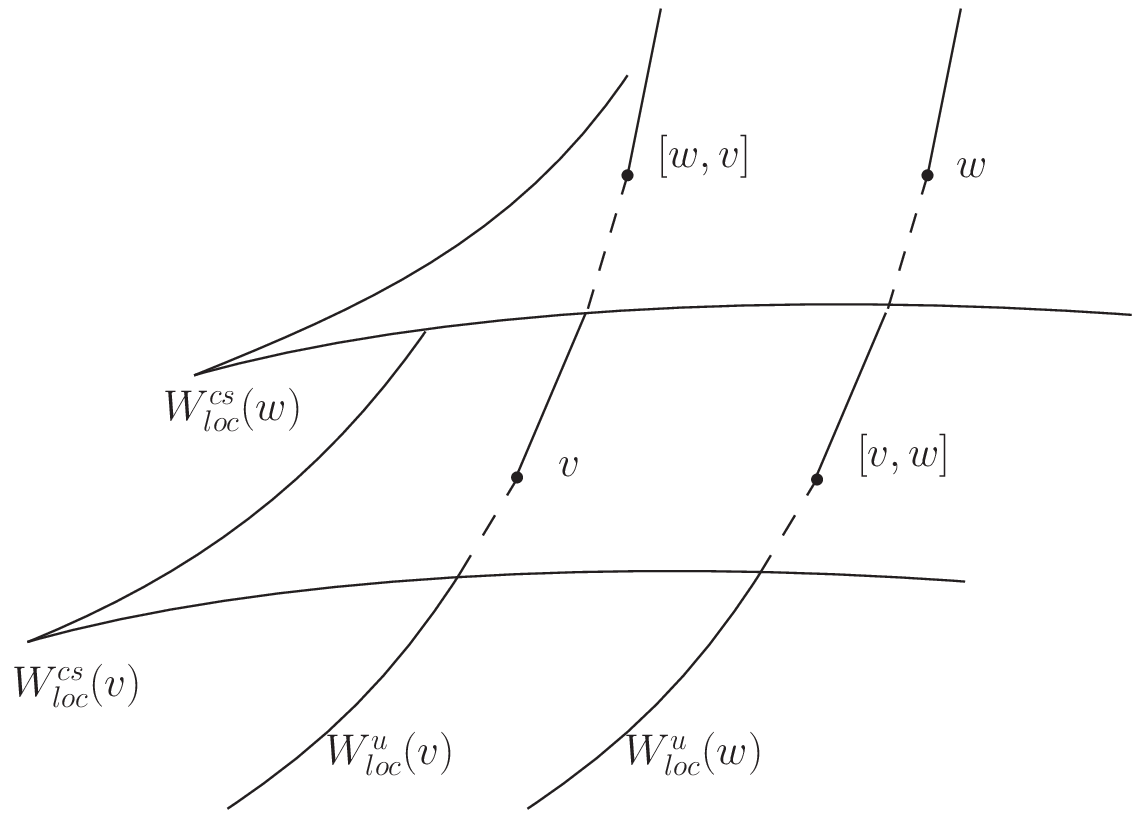}
\caption{Local product structure}
\label{baaaah}
\end{figure}

We will use the following convenient notation (see Figure \ref{rectouille}). If $v\in \hM$ and $A_u\dans W_{\gamma}^u(v)$ and $A_{cs}\dans W_{\gamma}^{cs}(v)$, set
$$[A_u,A_{cs}]=\{[v_u,v_{cs}];\,v_u\in A_u,\,v_{cs}\in A_{cs}\}.$$
The set $[W_{\gamma}^u(v),W_{\gamma}^{cs}(v)]$ will be denoted by $\RR_v$ and called a \emph{rectangle}. These rectangles are by definition foliated by local stable and unstable manifolds and are open subset of the leaves.

\begin{figure}[!h]
\centering
\includegraphics[scale=0.4]{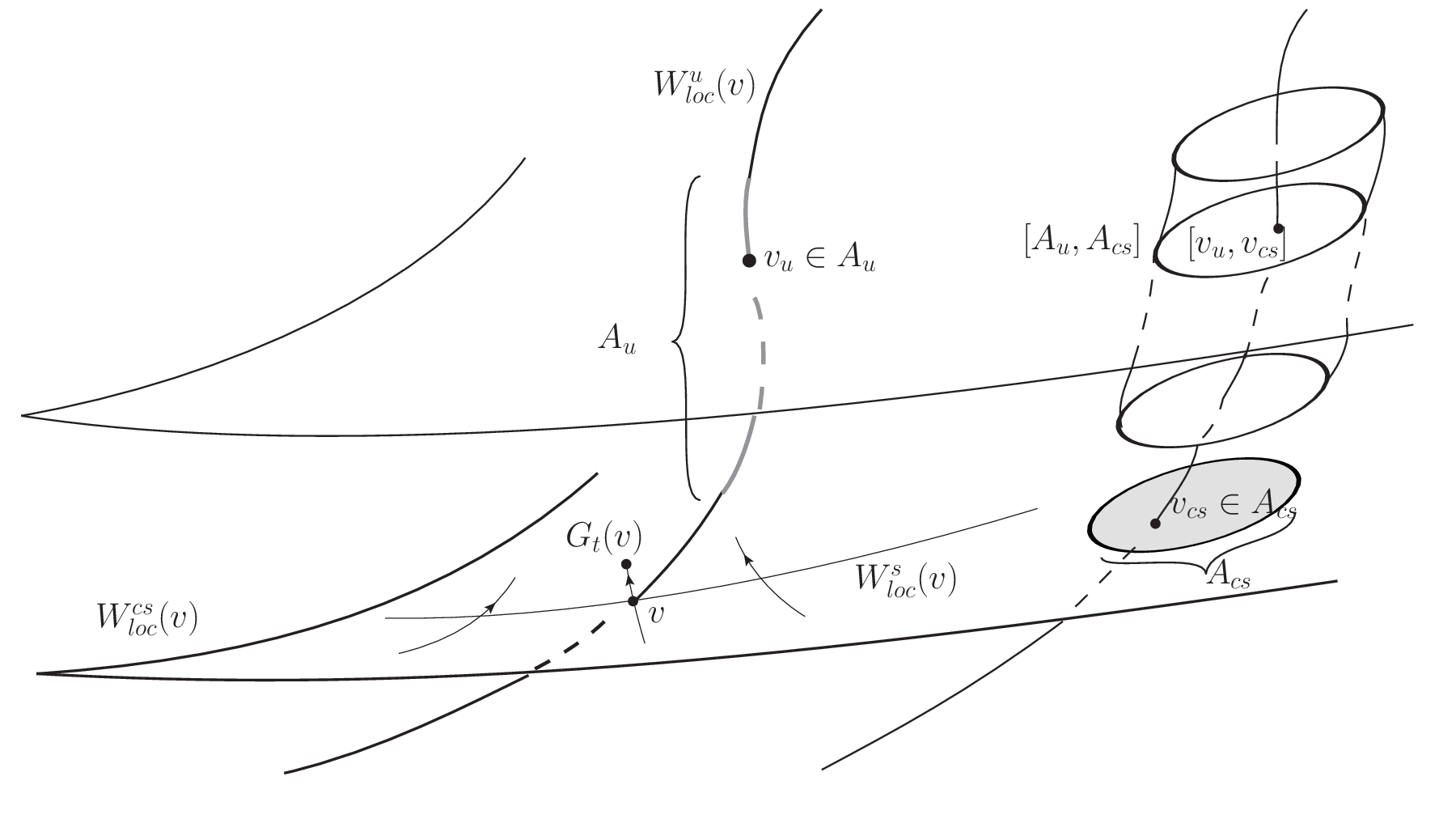}
\caption{Constructing the rectangles}
\label{rectouille}
\end{figure}

\begin{rem}
\label{transvcontrectangle}
Usually, we will call rectangle every set of the form $\RR_v=[W_{loc}^u(v),W^{cs}_{loc}(v)]$, without specifying the size of local unstable and center-stable manifolds.

By the stable manifold theorem the manifolds $W^u_{loc}(v)$ and $W^{cs}_{loc}(v)$ vary transversally continuously, and so does the local function $[.,.]$. Hence it is possible to consider transversally continuous families of rectangles $\RR_v$. This will allow us to consider foliated charts $U$ for $\hcF$ of the form
$$U=\bigcup_{v\in T}\RR_v,$$
where $T$ is a transverse section of $\hcF$.
\end{rem}

\paragraph{The unstable foliation as a limit.} Consider the foliation $\UF$ whose leaves are the unit tangent fibers (the fibers of $pr:\hM\to M$).

\begin{lemma}
\label{transvouille}
The foliations $\UF$ and $\W^{cs}$ are transverse.
\end{lemma}

\begin{proof}
Recall that we introduced a bundle isomorphism $TTL\to H\oplus V$. If $(x,v)\in \hL$, a vector tangent to $T^1_xL$ is of the form $(0,Y)$, $Y\in T_xL$. It is proven in \cite[p.72]{Ba} that a vector of $E^{cs}(v)$ has the form $(X,J'_X(0))$ where $X\in T_xL$ and $J_X$
 denotes the unique \emph{stable Jacobi field} along the geodesic directed by $v$ 
 satisfying $J_X(0)=X$ 
 (we refer to \cite[Chapter IV]{Ba} for the precise definitions). Hence these two subspaces of $T_{(x,v)}\hL$ are in direct sum.
\end{proof}

For $t\geq 0$ consider the image foliations $\UF_t=G_t(\UF)$. An adaptation of the classical inclination lemma (or $\lambda$-lemma, see \cite[Chapter 9]{Sh}) shows the following
\begin{theorem}[$\lambda$-lemma]
\label{lambdalemma}
The foliation $\UF_t$ converges to $\W^u$ in the $C^0$-topology of plane fields.
\end{theorem}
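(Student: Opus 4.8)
The plan is to imitate the classical inclination ($\lambda$-)lemma, working leafwise inside $\hcF$ but exploiting the uniform estimates from foliated hyperbolicity so that all constants are uniform over $\hM$. Fix a point $v\in\hM$ and recall that the fiber $T^1_x\F\ds\hL_v$ through $v$ is a compact submanifold which, by Theorem \ref{lambdalemma}'s hypothesis that $\UF$ is transverse to $\W^{cs}$, is everywhere transverse to the center-stable bundle $E^{cs}$; hence its tangent space at $v$ is a $(d-1)$-plane in $T_v\hcF$ contained in no $E^{cs}(w)$ and close (in a uniform sense) to the graph of a linear map $E^u(v)\to E^{cs}(v)$. I would first record a uniform transversality estimate: by compactness of $\hM$ and continuity of $E^{cs}$, there is $\theta_0>0$ such that every tangent plane to a leaf of $\UF$ makes angle at least $\theta_0$ with $E^{cs}$, so it is a graph over $E^u$ with slope bounded by some $C_0$.

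Next I would push forward by $DG_t$ and use the cone/graph-transform argument. Working in the local product coordinates of Proposition \ref{lpshypfeuilletee}, a plaque of $\UF$ through a point $w$ is, for $t$ large, carried by $G_t$ to a disc that is a graph over $W^u_\delta(G_t(w))$ with slope $s(t)$; the invariant splitting \eqref{invsplitting} together with the uniform bounds \eqref{uniformcontraction} gives a contraction estimate of the form $s(t)\le \tfrac{b^2}{a^2}e^{-2at}s(0)+(\text{curvature-type term})$, exactly as in the proof of the stable manifold theorem \cite[Thm. 4.1]{HPS}, so $s(t)\to 0$ uniformly. This shows that for any $\eps>0$ there is $T$ so that for $t\ge T$ and every $w$, the plaque of $\UF_t$ through $G_t(w)$ is $C^0$-$\eps$-close, as a plane field along the disc, to $W^u$. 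One also needs the dual statement that these discs have uniformly bounded geometry and eventually fill out a whole local unstable plaque; this follows because $DG_{-t}$ uniformly expands $E^{cs}$-directions while contracting along $\UF$, so the expanding image of a fixed-size fiber-plaque covers $W^u_\delta$ for $t$ large — here the compactness of the fibers of $pr$ and the uniform lower bound on their ``diameter transverse to $E^{cs}$'' is what makes the covering uniform.

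Finally I would assemble these into the statement about $C^0$-convergence of plane fields: given $v\in\hM$, a tangent plane $T_v\W^u$, and $\eps>0$, choose $t$ as above; the point $v$ lies in some plaque of $\UF_t$, namely $G_t$ of the fiber-plaque through $G_{-t}(v)$, and the computation above bounds the angle between $T_v(\UF_t)$ and $E^u(v)=T_v\W^u$ by $\eps$. Since all estimates were uniform in the base point, this gives convergence in the $C^0$-topology of plane fields on all of $\hM$. \textbf{The main obstacle} I anticipate is not the contraction estimate itself — that is routine graph-transform bookkeeping — but verifying uniformity in two places at once: that the fiber-plaques of $\UF$ start out uniformly transverse to $E^{cs}$ (handled by compactness of $\hM$ and continuity of $E^{cs}$), and that their forward images under $G_t$ grow to cover a full-size local unstable plaque uniformly in the base point, despite the leaves of $\F$ being noncompact. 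The recurrence of leaf geometry noted in Remark \ref{bddgeo}, the uniform size $\delta$ of the local product structure, and the compactness of the $pr$-fibers are precisely what close this gap, so the bulk of the work is in packaging the classical $\lambda$-lemma proof of \cite[Chapter 9]{Sh} with these uniform inputs.
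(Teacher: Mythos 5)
Your proposal carries out the adaptation of the classical inclination ($\lambda$-)lemma that the paper merely asserts and cites (\cite[Chapter 9]{Sh}) without detail, and its structure — uniform transversality of $\UF$ to $E^{cs}$ by compactness of $\hM$, slope contraction via the graph transform powered by the uniform rates \eqref{uniformcontraction}, and a covering argument so that $G_t$-images of fiber plaques eventually fill out a local unstable plaque — is the right one. One small wording slip: $DG_{-t}$ does not uniformly expand $E^{cs}$-directions, since the center direction $E^c$ is preserved isometrically by the flow; what the graph transform actually uses is that $DG_t$ uniformly contracts $E^s$, preserves $E^c$, and uniformly expands $E^u$, and this is what simultaneously flattens the slope of the image discs toward $E^u$ and grows their diameter.
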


\subsection{Absolute continuity and local product structure of the volume in the leaves}

\paragraph{Some notations.} All along the text we will make use of the following notations.

First recall that the foliated geodesic flow preserves the unstable bundle $E^u$. We will denote
$$\Jac^u_t(v)=\det (D_vG_t)_{|E^u(v)}.$$
We define similarly $\Jac^s_t,\Jac^{cu}_t$ and $\Jac^{cs}_t$. We call these functions respectively \emph{unstable, stable, center-unstable and center-stable jacobians} of $G_t$.

We will also denote by $\dist_u$ the distance on unstable manifolds induced by the foliated Sasaki metric. The distances $\dist_s,\dist_{cu},\dist_{cs}$ are defined similarly.

Also, in order to prevent the confusion with holonomies of the foliations $\F$ and $\hcF$, we chose to adopt a special notation for holonomies of the invariant foliations. When $v,w$ lie in a same leaf of $\W^u$, we will use the following notation for the unstable holonomy between the respective local center-stable manifolds
$$\h uvw:W^{cs}_{loc}(v)\to W^{cs}_{loc}(w).$$
Analogous definitions are given for the holonomy maps $\h {s}vw$, $\h {cu}vw$, and $\h {cs}vw$.

The foliated Sasaki metric induces a Riemannian metric on the leaves of $\W^u$. We shall denote by $\Leb^u_v$ the Lebesgue measure induced on $W^u(v)$. The measures $\Leb^s_v$, $\Leb^{cu}_v$ and $\Leb^{cs}_v$ are defined similarly. In general, we will denote by $\Leb_W$ the Lebesgue (or Liouville in the context of unit tangent bundles) measure on a Riemannian manifold $W$.

\subsubsection{Absolute continuity}

\paragraph{Distortion controls.} The following lemma gives foliated versions of two classical distortion controls.

\begin{lemma}
\label{distortioncontrol}
There exist constants $C_0>0$ and $\alpha_0>0$ such that for every $v,w\in\hM$ lying on the same unstable leaf and for every $t>0$
\begin{equation}
\label{distortion1}
\frac{\Jac^u_{-t}(v)}{\Jac^u_{-t}(w)}\leq \exp\left(C_0\dist_{\hcF}(v,w)\right).
\end{equation}
and

\begin{equation}
\label{distortion2}
\frac{\Jac^{cs}_{-t}(v)}{\Jac^{cs}_{-t}(w)}\leq\exp\left( C_0\dist_{\hcF}(v,w)^{\alpha_0}\right).
\end{equation}

In particular, the left hand sides of \eqref{distortion1} and \eqref{distortion2} have limits as $t$ goes to $\infty$.
\end{lemma}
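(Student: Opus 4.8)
The plan is to prove both inequalities by the classical bounded distortion (telescoping) argument, exactly as in \cite[Lemmas 3.2 and 3.6, Chapter III]{M}, the only adjustment being that all estimates are carried out inside a single, non‑compact, unstable leaf. Fix $v,w$ lying on a common leaf of $\W^u$ and an integer $n\geq 1$. The cocycle identity for the unstable Jacobian, $\Jac^u G_{-(s+t)}=\bigl(\Jac^u G_{-t}\circ G_{-s}\bigr)\cdot\Jac^u G_{-s}$, gives
$$\log\frac{\Jac^u G_{-n}(v)}{\Jac^u G_{-n}(w)}=\sum_{k=0}^{n-1}\Bigl[\log\Jac^u G_{-1}(G_{-k}v)-\log\Jac^u G_{-1}(G_{-k}w)\Bigr],$$
and the same identity with $u$ replaced by $cs$; for a non‑integer $T=n+r$, $r\in[0,1)$, one adds the single extra term $\log\Jac^u G_{-r}(G_{-n}v)-\log\Jac^u G_{-r}(G_{-n}w)$, controlled below by the same estimates.

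The two regularity inputs are the following. First, since by Theorem \ref{stable} the unstable leaves are $C^{\infty}$ discs varying continuously in the $C^{\infty}$-topology and $G_t$ is $C^{\infty}$ in the leaves of $\hcF$, the map $v\mapsto\log\Jac^u G_{-t}(v)$ is uniformly $C^1$ in restriction to the leaves of $\W^u$, uniformly for $t\in[0,1]$; hence there is $\kappa>0$ with $\bigl|\log\Jac^u G_{-t}(p)-\log\Jac^u G_{-t}(q)\bigr|\leq\kappa\,\dist_u(p,q)$ for all such $t$ and all $p,q$ on a common unstable leaf. Second, for the center‑stable Jacobian only Hölder control is available: $\Jac^{cs}G_{-t}(v)=\det(D_vG_{-t})_{|E^{cs}(v)}$ depends on the subspace $E^{cs}(v)$, which by Proposition \ref{holderbundles} varies uniformly Hölder‑continuously along the leaves of $\hcF$, in particular along unstable leaves; composing this Hölder dependence with the smooth map ``determinant of the fixed‑time linear map $D_vG_{-t}$'' yields constants $\kappa>0$ and $\alpha_0\in(0,1]$ with $\bigl|\log\Jac^{cs}G_{-t}(p)-\log\Jac^{cs}G_{-t}(q)\bigr|\leq\kappa\,\dist_{\hcF}(p,q)^{\alpha_0}$ for $t\in[0,1]$ and $p,q$ on a common unstable leaf.

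Now the contraction: joining $v$ and $w$ by a path inside their common unstable leaf (all of whose tangent vectors lie in $E^u$) and integrating the upper bound in \eqref{uniformcontraction} for $DG_{-s}$ gives $\dist_u(G_{-s}v,G_{-s}w)\leq\frac ba e^{-as}\dist_u(v,w)$ for every $s\geq0$, hence a fortiori the same bound for $\dist_{\hcF}$. Inserting this into the telescoping identity and using the regularity estimates,
$$\Bigl|\log\frac{\Jac^u G_{-T}(v)}{\Jac^u G_{-T}(w)}\Bigr|\leq\kappa\sum_{k\geq0}\dist_u(G_{-k}v,G_{-k}w)\leq\frac{\kappa b}{a\,(1-e^{-a})}\,\dist_u(v,w),$$
$$\Bigl|\log\frac{\Jac^{cs} G_{-T}(v)}{\Jac^{cs} G_{-T}(w)}\Bigr|\leq\kappa\sum_{k\geq0}\dist_{\hcF}(G_{-k}v,G_{-k}w)^{\alpha_0}\leq\frac{\kappa}{1-e^{-a\alpha_0}}\Bigl(\frac ba\Bigr)^{\alpha_0}\dist_u(v,w)^{\alpha_0},$$
the extra term for non‑integer $T$ being absorbed identically. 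Exponentiating yields \eqref{distortion1} and \eqref{distortion2} with $\dist_u(v,w)$ in place of $\dist_{\hcF}(v,w)$; to get the stated form one notes that the inequalities are trivial after enlarging $C_0$ when $\dist_{\hcF}(v,w)$ exceeds a fixed threshold, while below that threshold the local product structure of Proposition \ref{lpshypfeuilletee} makes $\dist_u(v,w)$ comparable to $\dist_{\hcF}(v,w)$ on the unstable leaf, so one adjusts $C_0$ and $\alpha_0$ accordingly. Finally, the partial sums above are absolutely summable uniformly in $v,w$ — their tails $\sum_{k\geq n}$ are dominated by geometric tails tending to $0$ — so $\log\bigl(\Jac^u G_{-n}(v)/\Jac^u G_{-n}(w)\bigr)$ converges as $n\to\infty$, the interpolating term for non‑integer $T$ tends to $0$, and therefore the left‑hand sides of \eqref{distortion1} and \eqref{distortion2} have limits as $T\to\infty$.

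The main obstacle is the Hölder estimate in the second paragraph for the center‑stable Jacobian: one must convert the Hölder regularity of the distribution $E^{cs}$ along unstable leaves (Proposition \ref{holderbundles}) into Hölder regularity, with constants uniform over $\hM$, of $v\mapsto\log\det(D_vG_{-1})_{|E^{cs}(v)}$ — that is, control how the determinant of a fixed‑time linear cocycle restricted to a moving subspace varies with that subspace — and verify that no exponential factor is lost over the unit‑time step. The unstable side is comparatively soft because the unstable leaves are genuinely $C^{\infty}$, and the remaining pieces (cocycle identity, summation of the geometric series, the metric comparison) are routine.
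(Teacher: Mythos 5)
Your telescoping-plus-contraction argument is exactly the one the paper intends: the paper gives no proof of its own, merely citing Mañé [M, Ch.~III, Lemmas 3.2 and 3.6] and listing the same regularity inputs you use (uniform exponential contraction of $\dist_u$ under $G_{-t}$, uniform $C^1$ regularity of $\log\Jac^u G_{-1}$ along unstable leaves, and uniform H\"older regularity of $\log\Jac^{cs}G_{-1}$ coming from Proposition~\ref{holderbundles}). The obstacle you flag at the end -- converting H\"older continuity of $E^{cs}$ into H\"older continuity of $v\mapsto\log\det(D_vG_{-1})|_{E^{cs}(v)}$ -- is the right one, and it is soft: $D_vG_{-1}$ is uniformly Lipschitz in $v$ inside a leaf, so restricting a Lipschitz family of bounded invertible linear maps to a H\"older-varying family of subspaces (with locally H\"older orthonormal frames) gives a H\"older determinant, with no loss over one unit of time because $\|DG_{-1}^{\pm1}\|$ is uniformly bounded.

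The final normalization step, however, does not work as you wrote it. Your sum gives
$$\Bigl|\log\tfrac{\Jac^u G_{-T}(v)}{\Jac^u G_{-T}(w)}\Bigr|\leq C_1\,\dist_u(v,w),\qquad C_1=\frac{\kappa b}{a(1-e^{-a})},$$
and similarly with exponent $\alpha_0$ on the center-stable side. Two problems. First, the claim that \emph{``the inequalities are trivial after enlarging $C_0$ when $\dist_{\hcF}(v,w)$ exceeds a fixed threshold''} is false: the left-hand side is \emph{not} uniformly bounded over all pairs on a common unstable leaf. When $\dist_u(v,w)$ is large the ratio can be large (it only converges as $T\to\infty$ to a finite value that grows with $\dist_u$), while $\dist_{\hcF}(v,w)$ stays below the leaf diameter; so no enlargement of $C_0$ rescues the estimate. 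Second, \emph{``below that threshold the local product structure makes $\dist_u$ comparable to $\dist_{\hcF}$''} is also unjustified in general: small $\dist_{\hcF}$ does not force small $\dist_u$, because the (typically dense) unstable leaf can return arbitrarily close to itself. And even when the two distances agree, your constant $C_1$ need not be $\leq 1$, whereas the display \eqref{distortion1} as printed has coefficient exactly $1$ in the exponent.

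What you have actually proved, cleanly, is the local statement --- a bound $C_0\exp(C_1\dist_u(v,w))$, equivalently a uniform bound for $v,w$ in a common local unstable manifold $W^u_{loc}$ --- which is both the content of Mañé's lemma and the form in which \eqref{distortion1}--\eqref{distortion2} are used everywhere in the paper (always on local unstable plaques, with $\dist_u<1$ as in Theorem~\ref{absolutecontinuity}). So the core of your proof is correct and follows the same route as the paper; the last paragraph's attempt to convert $\dist_u$ into $\dist_{\hcF}$ globally should be dropped in favor of a local formulation.
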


\begin{proof} We recall briefly the proofs of these distortion controls for the reader's commodity (the classical reference is \cite[Lemma 3.2, Chapter III]{M}). Let $n\in\N^{\ast}$ and $v,w\in\hM$ lying in the same unstable manifold.
$$
\left|\log\frac{\Jac^u_{-n}(v)}{\Jac^u_{-n}(w)}\right|\leq\sum_{i=0}^{n-1}|\log\Jac^u_{-1}(G_{-i}(v))-\log\Jac^u_{-1}(G_{-i}(w))|\leq C\sum_{i=0}^{n-1}\dist_{\F}(G_{-i}(v),G_{-i}(w)),$$
where $C$ denotes a Lipshitz constant of $\log\Jac_{-1}^u$ in unstable manifolds (we use here that this function is uniformly $C^1$ in unstable leaves: this comes from the fact that $E^u$ is smooth along unstable manifolds). Using that $v,w$ lie on the same unstable manifold, one sees that there is $C'>0$ and $\lambda>0$ such that the $\dist_{\F}(G_{-i}(v),G_{-i}(w))\leq C'\,e^{-i\lambda}\dist_{\F}(v,w)$ and the result follows since 
$\sum_{i\geq 0} e^{-i\lambda}<\infty$.

The second distortion control follows the same lines with a small change. We only know that $E^{cs}$ is H{\"o}lder continuous along unstable manifolds.
\end{proof}

\paragraph{Absolute continuity.} We say that an invertible map $\hh:W_1\to W_2$ between two smooth Riemannian manifolds is \emph{absolutely continuous} if both $\hh$ and $\hh^{-1}$ send sets of zero Lebesgue measure on sets of zero Lebesgue measure. In that case the \emph{Jacobian} of $\hh$ is well defined for $\Leb_{W_1}$-almost every $x$ as the Radon-Nikodym derivative
$$\Jac\hh(x)=\frac{d\left[\hh^{-1}_{\ast}\Leb_{W_2}\right]}{d\Leb_{W_1}}(x).$$

The proof of the next theorem will be sketched in \S \ref{absolouille}.

\begin{theorem}[Absolute continuity]
\label{absolutecontinuity}
Let $(M,\F)$ be a closed foliated manifold endowed with a negatively curved leafwise metric. Then the unstable holonomy maps are absolutely continuous, their Jacobians are defined everywhere and satisfy the following.
\begin{enumerate}
\item For all $v,w$ lying in the same unstable manifold, one has
\begin{equation}
\label{jacuhol}
\Jac\,\h uvw(v)=\lim_{t\to\infty}\frac{\Jac^{cs}_{-t}(v)}{\Jac^{cs}_{-t}(w)}.
\end{equation}
We refer to Figure \ref{figureabscont} located at the beginning of \S \ref{absolouille} for an illustration of the formula, and of the proof.
\item There exist uniform positive constants $C_1,\alpha_1>0$ such that for $v,v'$ lying on the same center-stable manifold and $w,w'$ lying respectively in $W^u(v),W^u(v')$ such that if
$$\dist_{cs}(v,v'),\dist_u(v,w),\dist_u(v',w')<1,$$
then we have
$$\left|\log\Jac\,\h uvw(v)-\log\Jac\,\h u{v'}{w'}(v')\right|\leq C_1\dist_{cs}(v,v')^{\alpha_1}.$$
\end{enumerate}
\end{theorem}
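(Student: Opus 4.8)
The plan is to mimic, in the foliated setting, the classical proof of absolute continuity of the stable/unstable holonomy for Anosov flows as presented in Ma\~n\'e's book, using the two distortion controls of Lemma \ref{distortioncontrol} as the only new inputs. First I would fix $v,w$ on the same leaf of $\W^u$ and a small transversal pair of center-stable discs; the unstable holonomy $\h uvw$ is the limit, as $T\to\infty$, of the maps obtained by flowing $W^{cs}_{loc}(v)$ forward by $G_{-T}$, sliding along the (now very short) unstable leaves using Theorem \ref{stable}(3), and flowing back by $G_T$. For finite $T$ this composite map is smooth (it is built from the smooth leafwise flow and a smooth short-range holonomy between nearby center-stable plaques), and its Jacobian at $v$ is exactly
$$
\frac{\Jac^{cs}G_{-T}(v)}{\Jac^{cs}G_{-T}(h^T(v))}\cdot \Jac(\text{short unstable holonomy at }G_{-T}v),
$$
where $h^T$ denotes the finite-time approximant. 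Because $G_{-T}$ contracts unstable leaves uniformly exponentially \eqref{uniformcontraction}, the short-range unstable holonomy factor tends to $1$, and by \eqref{distortion2} the center-stable Jacobian ratio converges; taking the limit gives \eqref{jacuhol}. The absolute continuity itself (that $\h uvw$ and its inverse preserve Lebesgue-null sets) follows from the same uniform estimate: one covers a null set by a Vitali family, transports it by the finite-time maps whose Jacobians are uniformly bounded above and below by \eqref{distortion1}--\eqref{distortion2}, and passes to the limit, exactly as in \cite[Theorem 3.1, Chapter III]{M}.

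For part (2), the H\"older dependence of $\log\Jac\,\h uvw(v)$ on the transverse parameter, I would start from the formula \eqref{jacuhol} and write
$$
\log\Jac\,\h uvw(v)=\sum_{n\geq 0}\left[\log\Jac^{cs}G_{-1}(G_{-n}v)-\log\Jac^{cs}G_{-1}(G_{-n}w)\right],
$$
the series converging because each term is controlled (via the H\"older continuity of $E^{cs}$ in the leaves, Lemma \ref{holderbundles}, together with the exponential contraction of $\dist_{\hcF}(G_{-n}v,G_{-n}w)$ on the unstable leaf) by a geometric sequence. To compare with the corresponding series for $v',w'$, I would estimate each term: the quantities $\dist_{\hcF}(G_{-n}v,G_{-n}v')$, $\dist_{\hcF}(G_{-n}w,G_{-n}w')$ stay bounded by roughly $\dist_{cs}(v,v')$ plus something exponentially small in $n$ (using that $v,v'$ are on the same center-stable manifold, hence their forward-in-$-t$, i.e. stable, orbits stay close, and that the unstable "legs" shrink), while $\log\Jac^{cs}G_{-1}$ is uniformly H\"older on the leaves (it is a $C^0$, leafwise-$C^1$ function of $v$, and $E^{cs}$ is uniformly H\"older). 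Summing the resulting H\"older bounds over $n$ against the geometric factors yields the desired estimate $\bigl|\log\Jac\,\h uvw(v)-\log\Jac\,\h u{v'}{w'}(v')\bigr|\leq C_1\dist_{cs}(v,v')^{\alpha_1}$ for a suitable $\alpha_1$ depending on $\alpha_0$, on the contraction rates $a,b$, and on the H\"older exponent of $E^{cs}$.

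The main obstacle, as in the classical case, is the bookkeeping in part (2): one must carefully track how a perturbation of the transverse data propagates along the two legs of the orbit (the center-stable leg, where distances stay comparable, and the unstable leg, where they contract) so that the per-term H\"older estimates are summable, and one must make sure all constants are uniform over $\hM$ --- this is where compactness of $M$ and the uniform bounds of \S\ref{foliatedhyperbolicity} (uniform pinching, uniform H\"older constants from Proposition \ref{holderbundles}, uniform distortion constants from Lemma \ref{distortioncontrol}) are essential. The only genuinely new feature compared to \cite{M} is that everything happens inside non-compact leaves rather than a compact manifold, but the uniform-geometry Remark \ref{bddgeo} and the transverse continuity in the $C^\infty$-topology make every step go through verbatim; accordingly I would present the argument by reducing to, and citing, \cite[Theorem 3.1, Chapter III]{M}, spelling out only the two points where the distortion controls \eqref{distortion1}--\eqref{distortion2} enter and the summation giving \eqref{jacuhol} and the H\"older estimate.
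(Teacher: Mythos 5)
Your overall strategy is exactly the paper's: reduce to Ma\~n\'e's proof for Anosov flows, with Lemma~\ref{distortioncontrol} as the new input, and this is a correct reduction since the only ingredients Ma\~n\'e uses are precisely the two distortion controls (which the paper has established in the foliated setting) together with uniform hyperbolicity and H\"older continuity of the bundles (Proposition~\ref{holderbundles}), all of which hold here with uniform constants thanks to compactness of $M$. Your account of part~(1) via the finite-time approximants $G_T\circ(\text{short holonomy})\circ G_{-T}$ and the resulting Jacobian telescope is the standard one and is fine.

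However, the sketch of part~(2) contains a genuine error in the key estimate. You claim that $\dist_{\hcF}(G_{-n}v,G_{-n}v')$ and $\dist_{\hcF}(G_{-n}w,G_{-n}w')$ ``stay bounded by roughly $\dist_{cs}(v,v')$ plus something exponentially small in $n$'', justifying this by saying that the stable orbits stay close forward in $-t$. This is backwards: $v,v'$ lie on the same center-stable manifold, and under $G_{-n}$ for $n>0$ the \emph{stable} component of their separation is \emph{expanded} exponentially (it is $G_{+t}$ that contracts $W^s$), so these distances grow like $e^{bn}\dist_{cs}(v,v')$ rather than remain bounded. With your claimed bound the termwise estimates would be uniformly of order $\dist_{cs}(v,v')^\beta$ and the series would not converge at all. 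The correct argument is the classical two-scale truncation underlying the H\"older estimates in \cite[Chapter III]{M} and \cite{PSW}: for $n\leq N$ bound each term using the H\"older modulus of $\log\Jac^{cs}G_{-1}$ against the (exponentially growing) distance between the $v$- and $v'$-orbits; for $n>N$ bound each bracket $[\log\Jac^{cs}G_{-1}(G_{-n}v)-\log\Jac^{cs}G_{-1}(G_{-n}w)]$ by the exponentially decaying unstable distance $\dist(G_{-n}v,G_{-n}w)\lesssim e^{-an}$; then optimize $N\sim -\log\dist_{cs}(v,v')$. It is exactly this balancing that produces a H\"older exponent $\alpha_1$ strictly smaller than $\alpha_0$ (and depending on $a,b$), which your final formula correctly anticipates even though the intermediate justification is wrong. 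Since both you and the paper ultimately cite Ma\~n\'e for the details, the reduction stands, but the described bookkeeping would fail as written.
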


\begin{rem}
\label{othertransversals}
We stated the theorem for holonomy maps \emph{between local center-stable manifolds}. The same statement holds for all holonomy maps along unstable leaves $\hh:T_1\to T_2$, where $T_1,T_2$ are small smooth transversals of the unstable foliations \emph{included in the same leaf} (this is a small modification of the proof we present: one should follow Ma{\~n}{\'e}'s \cite[Lemma 3.7 Chap. III]{M}). Proceeding as in Appendix \ref{absolouille} one sees that the formula is

$$
\Jac\hh(v)=\lim_{t\to\infty}\frac{\Jac^{T_1}_{-t}(v)}{\Jac^{T_2}_{-t}(w)},
$$
where $v\in T_1$, $w=\hh(v)\in T_2$ and $\Jac^{T_i}_t$ denotes the restriction of $\det(DG_t)$ to the tangent space to $T_i$.

One should be careful with this notion of absolute continuity. We are looking at holonomy maps between transversals of $\W^u$ \emph{that are included in a same leaf of $\hcF$} and not between transversals of $\W^u$ \emph{as a foliation of} $\hM$. In fact this weak notion of absolute continuity would also hold in the \emph{laminated} context where the ambient space is not a manifold anymore (provided this space is compact).
\end{rem}

\begin{rem}
\label{stablecase}
By inversion of time, we get the analogue version for stable holonomies. Moreover since the flow is smooth in the leaves, holonomies along center-stable and center-unstable foliations are also absolutely continuous.
\end{rem}

\subsubsection{Local product structure of the Liouville measure}
\label{lpsliouv}

\paragraph{Natural measures in rectangles.} 
Let $v\in\hM$. Recall that we called rectangles the subsets of $\hL_v$ of the form $\RR_v=[W_{loc}^u(v),W_{loc}^{cs}(v)]$.

One can define a finite measure $m_v$ on the rectangle $\RR_v$ by requiring that for every Borel sets $A^u\dans W^u_{loc}(v)$ and $A^{cs}\dans W^{cs}_{loc}(v)$ the following relation holds
$$m_v[A^u,A^{cs}]=\Leb^u_v(A^u)\Leb^{cs}_v(A^{cs}).$$

This amounts to the same as defining $m_v$ on the rectangle $\RR_v$
\begin{itemize}
\item by integration against $\Leb_v^{cs}$ of the measures  defined on $W^u_{loc}(z)$ by $(\h {cs}v{z})_{\ast} \Leb^u_v$;
\item by integration against $\Leb_v^u$ of the measures defined on $W^{cs}_{loc}(w)$ by $(\h uv{w})_{\ast} \Leb^{cs}_v$.
\end{itemize}

By absolute continuity of the holonomy maps (see Theorem \ref{absolutecontinuity}), we find the following disintegrations of $m_v$ in local unstable and center-stable manifolds respectively:
\begin{eqnarray}
\label{dismx1}
dm_v&=&\left(\Jac\,\h {cs}zv\,d\Leb^u_z\right)\,d\Leb^{cs}_v(z)\\
\label{dismx2}
    &=&\left(\Jac\,\h uwv\,d\Leb^{cs}_w\right)\,d\Leb^{u}_v(w)
\end{eqnarray}

\paragraph{Angle function.}  Let $T\hcF=E\oplus F$ be a continuous splitting of the tangent space of $\hcF$.  Let $v\in\hM$. The foliated Sasaki metric induces a Euclidean structure on $T_v\hL_v$ which comes with a volume denoted by $\vol_v$. Let $P(v)$ be the parallelepiped of $T_v\hL_v$ spanned by the concatenation of an orthonormal basis of $E(v)$ and an orthonormal basis of $F(v)$ (these two spaces are supplementary in $T_v\hL_v$).

\begin{defi}[Angle function]
\label{anglefct}
The angle function of the splitting $T\hcF=E\oplus F$ is the function defined by the formula
$$\theta_{E,F}(v)=\vol_v\,P(v),\,\,\,\,\,\,\,\,\,\,\,\,\,\,\,v\in\hM.$$
\end{defi}

\begin{rem}
The number $\theta_{E,F}(v)$ is independent of the choices of the orthonormal bases of $E(v)$ and $F(v)$.  Moreover $\theta_{E,F}(v)\in(0,1]$ and is equal to $1$ if and only if the spaces $E(v)$ and $F(v)$ are orthogonal. If $E(v)$ and $F(v)$ were of dimension $1$, $\theta_{E,F}(v)$ would be the sine of the angle between $E(v)$ and $F(v)$.
\end{rem}

\paragraph{Decomposition of the Liouville measure.} The goal of this paragraph is to prove the following proposition, which is a direct consequence of the absolute continuity of the invariant foliations.

\begin{proposition}
\label{leblps}
The measure $m_v$ in $\RR_v$ has the following density with respect to $\Leb_{\hL_v}$
$$dm_v(w)=\left(\frac{1}{\theta_{E^u,E^{cs}}(w)}\,\Jac\,\h uwv(w)\,\Jac\,\h {cs}wv(w)\right)d\Leb_{\hL_v}(w).$$
\end{proposition}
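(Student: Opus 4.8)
The plan is to compute the density of $m_v$ with respect to $\Leb_{\hL_v}$ locally via a disintegration argument, leveraging the two disintegration formulas \eqref{dismx1}-\eqref{dismx2} together with a Fubini-type identity relating the product structure on a rectangle to the ambient Riemannian volume. First I would fix $v\in\hM$ and work entirely inside the leaf $\hL_v$ with its Sasaki metric, so that all Lebesgue measures involved ($\Leb^u_z$, $\Leb^{cs}_v$, $\Leb_{\hL_v}$) are genuine Riemannian volumes. The key local computation is to express $\Leb_{\hL_v}$ restricted to the rectangle $\RR_v=[W^u_\delta(v),W^{cs}_\delta(v)]$ as an integral, over $W^{cs}_\delta(v)$ against $\Leb^{cs}_v$, of the Riemannian volumes on the unstable plaques $W^u_{loc}(z)$ — but with a correction factor given precisely by the angle function $\alpha$, since at a point $w$ the tangent space splits as $E^u(w)\oplus E^{cs}(w)$ and these factors are not orthogonal. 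Concretely, by the coarea/Fubini formula applied to the foliation of $\RR_v$ by local unstable manifolds, one gets
$$d\Leb_{\hL_v}(w)=\frac{1}{\alpha(w)}\,\bigl(d\Leb^u_z(w)\bigr)\,d\widetilde\Leb^{cs}(z),$$
where $\widetilde\Leb^{cs}$ is the measure on the $cs$-leaf obtained by projecting $\Leb_{\hL_v}$ along unstable plaques; the factor $1/\alpha(w)$ is exactly the Jacobian of the ``oblique'' decomposition of $\vol_w$ into $\vol^u\times\vol^{cs}$ versus the orthogonal one, by definition \eqref{anglefct} of $\alpha$.

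Next I would identify $\widetilde\Leb^{cs}$. Projecting $\Leb_{\hL_v}$ along unstable plaques onto $W^{cs}_\delta(v)$ is, up to the holonomy of $\W^u$, the same as integrating $\Leb^{cs}$-measures against $\Leb^{cs}_v$ transported by the unstable holonomy; by Theorem \ref{absolutecontinuity} the unstable holonomy $\h u z v$ is absolutely continuous with Jacobian $\Jac\,\h u z v$, so $d\widetilde\Leb^{cs}(z)$ is comparable to $d\Leb^{cs}_v$ with a density that is a Jacobian of $\h {cs}{}{}$-type. The cleanest route is probably to compare directly the two disintegrations of $m_v$ and of $\Leb_{\hL_v}$: from \eqref{dismx1} we know $dm_v=(\Jac\,\h{cs}zv\,d\Leb^u_z)\,d\Leb^{cs}_v(z)$, and from the previous paragraph $d\Leb_{\hL_v}=\tfrac{1}{\alpha}(d\Leb^u_z)\,d\widetilde\Leb^{cs}(z)$. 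Taking Radon-Nikodym derivatives fiberwise (the conditional measures on $W^u_{loc}(z)$ are $\Jac\,\h{cs}zv\,d\Leb^u_z$ and $\tfrac{1}{\alpha}d\Leb^u_z$ respectively, both absolutely continuous with respect to $\Leb^u_z$) gives, for $\Leb^{cs}_v$-a.e.\ $z$ and $\Leb^u_z$-a.e.\ $w$,
$$\frac{dm_v}{d\Leb_{\hL_v}}(w)=\alpha(w)\,\Jac\,\h{cs}zv(w)\cdot\frac{d\Leb^{cs}_v}{d\widetilde\Leb^{cs}}(z).$$
Then I would compute $d\Leb^{cs}_v/d\widetilde\Leb^{cs}$ using the symmetric disintegration \eqref{dismx2} and the analogous decomposition of $\Leb_{\hL_v}$ along center-stable plaques, which introduces $\Jac\,\h u w v$; matching the cocycle relations $\Jac\,\h{cs}wv=\Jac\,\h{cs}zv\cdot(\text{stuff along }W^u)$ and $\Jac\,\h u w v$, and using that these Jacobians satisfy a cocycle identity over the local product structure, the two fiber-densities combine into $\Jac\,\h u w v(w)\,\Jac\,\h{cs}wv(w)$ evaluated at the base point $w$ itself. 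Being careful with which point the Jacobian is anchored at — the statement wants everything evaluated at $w$, using the cocycle property $\Jac\,\h{cs}wv(w)=\Jac\,\h{cs}vw(v)^{-1}\cdot(\cdots)$ and the fact that holonomy Jacobians of a point along its own leaf to a fixed $v$ are the natural densities — yields the claimed formula.

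The main obstacle, I expect, is bookkeeping the base-point dependence of the holonomy Jacobians and making the oblique-Fubini identity $d\Leb_{\hL}=\tfrac{1}{\alpha}\,d\Leb^u\,d\Leb^{cs}$ rigorous when $E^u$ and $E^{cs}$ are only Hölder-continuous (not smooth) subbundles. The remedy is that this identity is purely pointwise in $T_w\hL$ — it is just linear algebra comparing two volume forms on a fixed vector space, namely $\vol_w=\tfrac{1}{\alpha(w)}\,\vol^u_w\wedge\vol^{cs}_w$ — so it holds at every $w$ regardless of regularity, and the only place regularity enters is in invoking Theorem \ref{absolutecontinuity} (absolute continuity of unstable and, by Remark \ref{stablecase}, center-stable holonomies), which is already granted. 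One must also justify the fiberwise Radon-Nikodym step, i.e.\ that differentiating the disintegrations atom-by-atom is legitimate; this follows from the uniqueness clause in Rokhlin's theorem (Theorem \ref{Rokhlin}) applied to the measurable partition of $\RR_v$ by local unstable (resp.\ center-stable) manifolds, which are the fibers of a continuous fibration and hence form a measurable partition. Assembling these pieces gives the proposition.
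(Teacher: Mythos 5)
Your proposal hinges on the claim that $\Leb_{\hL_v}$ disintegrates over the unstable plaques with conditional measures $\tfrac{1}{\alpha(w)}\,d\Leb^u_z(w)$, justified by the ``pointwise'' volume-form identity $\vol_w=\tfrac{1}{\alpha(w)}\vol^u_w\wedge\vol^{cs}_w$. Both assertions are false. The linear algebra gives the \emph{opposite} factor: if $(e_i)$, $(f_j)$ are orthonormal bases of $E^u(w)$ and $E^{cs}(w)$, then $\vol^u_w\wedge\vol^{cs}_w$ evaluates to $1$ on the parallelepiped $\Pp(w)$ while $\vol_w$ evaluates to $\alpha(w)$ by definition \eqref{anglefct}, so $\vol_w=\alpha(w)\,\vol^u_w\wedge\vol^{cs}_w$, with $\alpha\le 1$ by Hadamard. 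More importantly, even after correcting the inverse, a pointwise identity between top-forms on $T_w\hL_v$ does \emph{not} determine the conditional measures in a disintegration: the conditional of $\Leb_{\hL_v}$ on $W^u_{loc}(z)$ is not proportional to $\Leb^u_z$ at all. To see this concretely, take the leaf to be $(0,1)^2$ with a conformal metric $e^{2\phi(x,y)}(dx^2+dy^2)$, unstable plaques $\{y=\text{const}\}$, center-stable plaques $\{x=\text{const}\}$, so $\alpha\equiv 1$. Then the conditional of the Riemannian volume $e^{2\phi}dx\,dy$ on the plaque $y=c$ is proportional to $e^{2\phi(x,c)}dx$, while $\Leb^u_z=e^{\phi(x,c)}dx$; the discrepancy $e^{\phi(x,c)}$ varies along the plaque. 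In fact the correct density of the conditional with respect to $\Leb^u_z$ is (up to a $z$-dependent normalization) $\alpha(w)/\Jac\,\h uwv(w)$, which involves the unstable holonomy Jacobian. This is exactly why no purely pointwise ``oblique Fubini'' can work: the coarea factor of the projection along the unstable plaques is a product of the angle and a transverse-volume factor determined by the holonomy, and the latter varies on each plaque.

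Because of this, your intermediate identity would force $\tfrac{d\Leb^{cs}_v}{d\widetilde\Leb^{cs}}(z)$ — a function of $z$ alone — to equal $\Jac\,\h uwv(w)$, which genuinely depends on $w\in W^u_{loc}(z)$, a contradiction; the bookkeeping you defer to the end cannot rescue it. The paper avoids the whole issue by never attempting to write down the conditional measures of $\Leb_{\hL_v}$ on unstable plaques explicitly. Instead it computes the density of $m_v$ with respect to $\Leb_{\hL_v}$ \emph{only at the center point} $v$ (where all the holonomy Jacobians are trivially $1$) via a shrinking-ball argument, identifies the Radon--Nikodym derivative $dm_v/dm_w$ through small product rectangles and the Borel density theorem, and then chains the two limits at a general point $w$. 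If you want a disintegration-style proof, you would have to first determine the conditional of $\Leb_{\hL_v}$ on unstable plaques, which already requires the absolute continuity machinery in a form essentially equivalent to the proposition you are trying to prove.
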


The proof of this proposition follows from two lemmas.

\begin{lemma}
The following limit exists for every $v\in\hM$
$$\lim_{r\to 0}\,\,\,\frac{m_v(B_{\hcF}(v,r))}{\Leb_{\hL_v}(B_{\hcF}(v,r))}=\frac{1}{\theta_{E^u,E^{cs}}(v)}.$$
\end{lemma}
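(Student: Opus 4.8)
The plan is to reduce the statement to a purely infinitesimal computation at the point $v$, using the two natural ways of building $m_v$ via holonomy from the local unstable and center-stable manifolds through $v$. First I would observe that, since the Jacobians of the holonomy maps $\h{cs}{v}{v}$ and $\h{u}{v}{v}$ are equal to $1$ at $v$ itself (the holonomy from a transversal to itself is the identity near the base point), the claimed density from Proposition \ref{leblps} specializes at $w=v$ to $1/\alpha(v)$; so the present lemma is exactly the statement that this density is continuous and takes the value $1/\alpha(v)$ at $v$, which in turn follows if we can show that $m_v$ and $\Leb_{\hL_v}$ have a Radon--Nikodym derivative at $v$ (in the sense of the limit of ratios of masses of shrinking balls) equal to $1/\alpha(v)$.

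The key step is therefore to compare, for small $r$, the quantity $m_v(B_{\hcF}(v,r))$ with $\Leb_{\hL_v}(B_{\hcF}(v,r))$. I would work in exponential coordinates on $\hL_v$ centered at $v$, identifying a neighborhood of $v$ with a neighborhood of $0$ in $T_v\hL_v$ equipped with the Euclidean structure coming from $\hg_{\hL_v}$; in these coordinates $\Leb_{\hL_v}$ is, up to a factor $1+O(r)$, the Lebesgue measure of $T_v\hL_v$, and $B_{\hcF}(v,r)$ is, up to the same kind of error, the Euclidean ball of radius $r$. On the other hand $m_v$ is, by its definition (the displayed equalities \eqref{dismx1}--\eqref{dismx2}, or equivalently the product formula $m_v[A^u,A^{cs}]=\Leb^u_v(A^u)\Leb^{cs}_v(A^{cs})$), the image under the map $[\cdot,\cdot]$ of the product $\Leb^u_v\otimes\Leb^{cs}_v$ on $W^u_\delta(v)\times W^{cs}_\delta(v)$. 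Since $W^u_\delta(v)$ and $W^{cs}_\delta(v)$ are $C^\infty$ discs tangent at $v$ to $E^u(v)$ and $E^{cs}(v)$ respectively, in the exponential chart the map $[\cdot,\cdot]$ has derivative at $(v,v)$ equal to the linear identification $E^u(v)\oplus E^{cs}(v)\to T_v\hL_v$; hence $m_v$ is, up to a factor $1+O(r)$, the pushforward of Lebesgue measure on $E^u(v)\oplus E^{cs}(v)$ under that linear map. The Jacobian of that linear map is precisely $\alpha(v)=\vol_v(\Pp(v))$ by the very definition \eqref{anglefct} of the angle function (the parallelepiped spanned by orthonormal bases of $E^u(v)$ and $E^{cs}(v)$ has volume $\alpha(v)$). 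Therefore $m_v(B_{\hcF}(v,r)) = \frac{1}{\alpha(v)}\,\Leb_{\hL_v}(B_{\hcF}(v,r))\,(1+O(r))$, and dividing and letting $r\to0$ gives the claim.

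The main obstacle, and the place where some care is needed, is justifying the error estimates uniformly enough: one must check that the product measure $\Leb^u_v\otimes\Leb^{cs}_v$ of the preimage of $B_{\hcF}(v,r)$ under $[\cdot,\cdot]$ really is $\alpha(v)^{-1}\Leb_{\hL_v}(B_{\hcF}(v,r))(1+o(1))$, which requires controlling (i) the $C^1$-distortion of the discs $W^u_\delta(v)$, $W^{cs}_\delta(v)$ near $v$ — this is fine since they are smooth and vary continuously in the $C^\infty$ topology by Theorem \ref{stable} — and (ii) the difference between Riemannian balls $B_{\hcF}(v,r)$ and Euclidean balls in the exponential chart, which is the standard $O(r^2)$-comparison of a Riemannian metric with its value at a point. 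A clean way to package this is to note that both $m_v$ and $\Leb_{\hL_v}$ are smooth positive measures on the rectangle $\RR_v$ (smooth because holonomies along the smooth flow-invariant foliations are $C^1$ in the leaf, and the discs are smooth), so their Radon--Nikodym derivative is a continuous function whose value at $v$ can be computed as the above limit of ratios of ball masses; the computation of the value then reduces to the linear-algebra identity $\det([\cdot,\cdot]'_{(v,v)}) = \alpha(v)$, which is exactly \eqref{anglefct}. No genuinely hard analytic estimate is involved; it is really a bookkeeping of first-order terms.
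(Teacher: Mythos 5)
Your core argument (the second paragraph) is essentially correct and reaches the conclusion by the same mechanism as the paper, namely linearizing around $v$ and identifying the limiting ratio with the determinant $\alpha(v)$ of the natural identification $E^u(v)\oplus E^{cs}(v)\to T_v\hL_v$. The technical packaging differs: you push $\Leb^u_v\otimes\Leb^{cs}_v$ forward by the bracket map $[\cdot,\cdot]$ and differentiate that map at $(v,v)$, whereas the paper disintegrates $m_v$ by \eqref{dismx1} along unstable slices and appeals to the continuity of the center-stable holonomy Jacobian from Theorem~\ref{absolutecontinuity}. Both are valid; yours is arguably a touch more self-contained, since it uses the product formula defining $m_v$ directly.

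Two points in your write-up need to be fixed, however. First, the justification ``Since $W^u_\delta(v)$ and $W^{cs}_\delta(v)$ are $C^\infty$ discs tangent at $v$ to $E^u(v)$ and $E^{cs}(v)$, $[\cdot,\cdot]$ has derivative at $(v,v)$ equal to the linear identification'' is incomplete. The bracket $[v_u,v_{cs}]$ for $v_u,v_{cs}$ near but distinct from $v$ is the intersection of $W^{cs}(v_u)$ with $W^u(v_{cs})$, so it depends on \emph{nearby} leaves of the two subfoliations, not only on the two discs through $v$. Differentiability at the diagonal point $(v,v)$ does hold, but the argument needs two more inputs coming from Theorem~\ref{stable}: uniform $C^2$ bounds on the local leaves $W^{cs}(\cdot)$, $W^u(\cdot)$ near $v$, and $C^0$-continuity of the distributions $E^{cs}$, $E^u$ (so that the tangent planes of $W^{cs}(v_u)$ at $v_u$ converge to $E^{cs}(v)$). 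Without stating these, the derivative claim is not justified, especially given that Proposition~\ref{lpshypfeuilletee} only asserts H\"older continuity of $[\cdot,\cdot]$. Second, the ``clean way to package'' paragraph contains a false assertion: $m_v$ is \emph{not} a smooth measure, because the holonomies along $\W^u$ and $\W^{cs}$ inside a leaf are only absolutely continuous with H\"older Jacobian (Theorem~\ref{absolutecontinuity}), not $C^1$. The eventual density in Proposition~\ref{leblps} is H\"older, not smooth. Continuity of the density would suffice for the Lebesgue-differentiation argument you want to run, but (a) the smoothness claim as written is wrong, and (b) invoking Proposition~\ref{leblps} at that point is circular, since that proposition is what this lemma and the next one are building towards. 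Drop that paragraph; the second paragraph, properly fleshed out, is the proof.
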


\begin{proof}
Let $v\in\hM$. We write, when $r$ is small enough
$$m_v(B_{\hcF}(v,r))=\int_{W^{cs}_r(v)}\left[\int_{B_{\hcF}(v,r)\cap W_{loc}^u(z)}\,\Jac\,\h {cs}zv(\zeta)\,d\Leb^u_z(\zeta)\right] d\Leb_v^{cs}(z).$$
Now, when $r$ tends to zero, we know that
\begin{itemize}
\item the Jacobian of the exponential map $\exp_v:\exp_v^{-1}( B_{\hcF}(v,r))\to B_{\hcF}(v,r)$ tends to $1$. Thus we can choose to work in the Euclidean space $T_v \hL_v$ with almost no volume distortion;
\item by continuity of the Jacobian of the center stable holonomy maps, $\Jac\, \h {cs}zv$ is uniformly close to $1$ when $z\in W^{cs}_r(v)$;
\item the Jacobian of $(\exp^{cs}_v)^{-1}: W_r^{cs}(v)\to E^{cs}(v)$ tends to $1$ thus we can assume that the integral above is taken on a Euclidean ball of radius $r$ inside $E^{cs}(v)$;
\item by continuity of unstable manifolds in the smooth topology, the Jacobians of all exponential maps $(\exp^u_z)^{-1}: W_r^u(z)\to E^u(z)$ are close to $1$, and the angle function between spaces $E^u(z)$ and $E^{cs}(v)$ (such as defined above) is close to $\theta_{E^u,E^{cs}}(v)$. Thus with little distortion, we may assume that the $E^u(z)$ are parallel with angle function $\theta_{E^u,E^{cs}}(v)$. 
\end{itemize}

Hence, consider in $T_v\hL_v$, a Euclidean structure obtained by requiring that the basis used to define $\Pp(v)$ is orthonormal (for that structure $E^u$ and $E^s$ are orthonormal). By definition the corresponding volume has density $1/\theta_{E^u,E^{cs}}(v)$ with respect to $\vol_v$.

By what precedes and by Fubini's theorem the previous integral is equivalent, when $r$ tends to zero, to the mass of a Euclidean ball of radius $r$ for the volume $1/\theta_{E^u,E^{cs}}(v)\vol_v$.

We then have the following limit
$$\lim_{r\to 0}\,\,\,\frac{m_v(B_{\hcF}(v,r))}{\Leb_{\hL_v}(B_{\hcF}(v,r))}=\frac{1}{\theta_{E^u,E^{cs}}(v)}.$$
\end{proof}

\begin{lemma}
Assume that $v,w$ belong to the same leaf, and that $\RR_v$ and $\RR_w$ intersect. Then $m_v$ and $m_w$ are equivalent on $\RR_v\cap\RR_w$. More precisely, if $\zeta\in\RR_v\cap\RR_w$, and if $\zeta_w^u=[w,\zeta]\in W^u_{loc}(w)$ and $\zeta_w^{cs}=[\zeta,w]\in W^{cs}_{loc}(w)$, we have
\begin{equation}
\label{floubidoubidoutroutrou}
\frac{dm_v}{dm_w}(\zeta)=\Jac\,\h uwv(\zeta^u_w)\Jac\,\h {cs}wv(\zeta^{cs}_w).
\end{equation}
\end{lemma}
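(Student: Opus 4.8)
The plan is to reduce the statement to an identity between the two natural disintegrations of $m_v$ and $m_w$ inside the overlap $\RR_v\cap\RR_w$. Both rectangles are foliated by local unstable manifolds, and for $\zeta\in\RR_v\cap\RR_w$ the point $\zeta$ lies on $W^u_{loc}(\zeta^{cs}_v)$ where $\zeta^{cs}_v=[\zeta,v]\in W^{cs}_{loc}(v)$, and also on $W^u_{loc}(\zeta^{cs}_w)$ where $\zeta^{cs}_w=[\zeta,w]\in W^{cs}_{loc}(w)$. Since $v$ and $w$ are on the same leaf, these two local unstable manifolds through $\zeta$ coincide (they are plaques of the single foliation $\W^u$ restricted to $\hL_v$), so the unstable holonomy identifies the center-stable plaques of $\RR_v$ with those of $\RR_w$. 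Thus the partition of $\RR_v\cap\RR_w$ by local unstable plaques is a common measurable partition, and I would disintegrate both $m_v$ and $m_w$ along it and compare the transverse factor and the conditional factor.

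First I would write, via \eqref{dismx2}, $dm_v=(\Jac\,\h u{w'}{v}\,d\Leb^{cs}_{w'})\,d\Leb^u_v(w')$ where $w'$ ranges over $W^u_{loc}(v)$, and the analogous formula for $m_w$ over $W^u_{loc}(w)$; but it is cleaner to disintegrate instead along the unstable plaques, which by \eqref{dismx1} gives $dm_v=(\Jac\,\h{cs}{z}{v}\,d\Leb^u_z)\,d\Leb^{cs}_v(z)$ with $z\in W^{cs}_{loc}(v)$. So on the unstable plaque through $\zeta$ (the one passing through $z=\zeta^{cs}_v$ from the $v$-side, equivalently through $\zeta^{cs}_w$ from the $w$-side), the conditional measure of $m_v$ is $\Jac\,\h{cs}{\zeta^{cs}_v}{v}\,d\Leb^u_{\zeta^{cs}_v}$ and that of $m_w$ is $\Jac\,\h{cs}{\zeta^{cs}_w}{w}\,d\Leb^u_{\zeta^{cs}_w}$. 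Since both are multiples of Lebesgue on the same unstable plaque (this uses that $W^u$ is a genuine foliation of the leaf and the two local unstable manifolds agree on the overlap), the Radon-Nikodym derivative $dm_v/dm_w$ restricted to this plaque is the constant ratio of these two densities, evaluated along the plaque. Then I would use the chain rule for unstable Jacobians: the center-stable holonomy $\h{cs}{\zeta^{cs}_v}{v}$ and the unstable holonomy $\h u{w}{v}$ are related because composing $\h{cs}{w}{v}$ with the shift along the unstable plaque realizes $\h{cs}{\zeta^{cs}_w}{v}$ up to the unstable-direction motion; more precisely the cocycle relation $\Jac\,\h{cs}{\zeta^{cs}_w}{v}=\Jac\,\h{cs}{\zeta^{cs}_w}{w}\cdot\Jac\,\h{cs}{w}{v}$ at the appropriate base points, combined with Theorem~\ref{absolutecontinuity}(1), reorganizes the ratio into $\Jac\,\h u{w}{v}(\zeta^u_w)\cdot\Jac\,\h{cs}{w}{v}(\zeta^{cs}_w)$.

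The main obstacle I expect is the bookkeeping of base points in these holonomy cocycles — making sure that each Jacobian is evaluated at the point where \eqref{dismx1}, \eqref{dismx2}, and Theorem~\ref{absolutecontinuity} actually place it, and that the cocycle identities for $\Jac\,\h{cs}{\cdot}{\cdot}$ and $\Jac\,\h u{\cdot}{\cdot}$ are applied along compatible paths inside $\W^s$, $\W^u$, and along the flow. A clean way to avoid the evaluation-point confusion is to use the formula from the previous lemma, $dm_v(w')=\frac{1}{\alpha(w')}\Jac\,\h u{w'}{v}(w')\,\Jac\,\h{cs}{w'}{v}(w')\,d\Leb_{\hL_v}(w')$ (Proposition~\ref{leblps}), for \emph{both} $m_v$ and $m_w$ against the \emph{common} reference measure $\Leb_{\hL}$ on the leaf: then $dm_v/dm_w(\zeta)$ is simply the ratio $\frac{\Jac\,\h u{\zeta}{v}(\zeta)\,\Jac\,\h{cs}{\zeta}{v}(\zeta)}{\Jac\,\h u{\zeta}{w}(\zeta)\,\Jac\,\h{cs}{\zeta}{w}(\zeta)}$, and the cocycle relations $\Jac\,\h u{\zeta}{v}(\zeta)=\Jac\,\h u{\zeta}{w}(\zeta)\cdot\Jac\,\h u{w}{v}(w)$ (valid since $\zeta,w$ are on the same center-stable manifold... actually $\zeta$ and $\zeta^u_w$ are, and one first moves from $\zeta$ to $\zeta^u_w$ along the unstable plaque, where $\h u{}{}$ is trivial, i.e. $\Jac\,\h u{\zeta}{v}(\zeta)=\Jac\,\h u{\zeta^u_w}{v}(\zeta^u_w)$) together with the analogous center-stable cocycle relation collapse the ratio to $\Jac\,\h u{w}{v}(\zeta^u_w)\,\Jac\,\h{cs}{w}{v}(\zeta^{cs}_w)$, the $\alpha$-factors cancelling. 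The only genuine subtlety is justifying that $\Jac\,\h u{\zeta}{v}$ depends only on the unstable plaque, not on the point of it — which is exactly the assertion that $\h u{}{v}$ is a holonomy of the unstable foliation and so factors through the leaf space locally; this is built into the definition and into Theorem~\ref{absolutecontinuity}(1).
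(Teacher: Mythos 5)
Your second, ``clean'' route is circular: you invoke Proposition~\ref{leblps} (``the formula from the previous lemma'') for both $m_v$ and $m_w$, but in the paper Proposition~\ref{leblps} is \emph{proved from} this lemma and the one before it --- the paragraph ``Proof of Proposition~\ref{leblps}'' combines exactly these two lemmas. Using it here assumes what you are trying to prove.

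Your first route (disintegrate both $m_v$ and $m_w$ along the common partition into unstable plaques) has the right idea but stops short with a gap and a malformed identity. The gap: the Radon--Nikodym derivative $dm_v/dm_w$ is \emph{not} the ratio of the two conditional densities on the common plaque; you also need to compare the two transverse factors $d\Leb^{cs}_v$ and $d\Leb^{cs}_w$, which live on $W^{cs}_\delta(v)$ and $W^{cs}_\delta(w)$ and are related by the unstable holonomy $\h{u}{v}{w}$. Omitting this is precisely where the term $\Jac\,\h{u}{w}{v}(\cdot)$ in the conclusion should come from. The malformed identity: you write $\Jac\,\h{cs}{\zeta^{cs}_w}{v}=\Jac\,\h{cs}{\zeta^{cs}_w}{w}\cdot\Jac\,\h{cs}{w}{v}$, but $\h{cs}{\zeta^{cs}_w}{v}$ is not defined in general --- the cs-holonomy $\h{cs}{a}{b}$ requires $a,b$ to lie on the same center-stable leaf, and $\zeta^{cs}_w\in W^{cs}(w)$ need not lie in $W^{cs}(v)$. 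So the cocycle manipulation you sketch cannot be carried out as written. Finishing this route requires a genuine ``commuting rectangle'' identity relating the four Jacobians ($\h{cs}{zv}$ at $\zeta$, $\h{cs}{z'w}$ at $\zeta$, $\h{u}{wv}$ at $\zeta^u_w$, and $\h{u}{wv}$ at $\zeta^{cs}_w$), which you neither state nor prove.

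The paper avoids all of this with a more elementary, direct argument: take small product rectangles $D=[D^u,D^{cs}]$ around $\zeta$ built from discs $D^u\subset W^u_\delta(w)$, $D^{cs}\subset W^{cs}_\delta(w)$ centered at $\zeta^u_w$ and $\zeta^{cs}_w$; because both $m_v$ and $m_w$ are product measures the ratio $m_v(D)/m_w(D)$ factors as the product of two holonomy mass-ratios, which converge to $\Jac\,\h{u}{w}{v}(\zeta^u_w)\,\Jac\,\h{cs}{w}{v}(\zeta^{cs}_w)$; then the Borel density (Lebesgue differentiation) theorem identifies this limit with $dm_v/dm_w(\zeta)$. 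This sidesteps both the transverse-factor bookkeeping and the cocycle algebra entirely.
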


\begin{figure}[hbtp]
\centering
\includegraphics[scale=0.8]{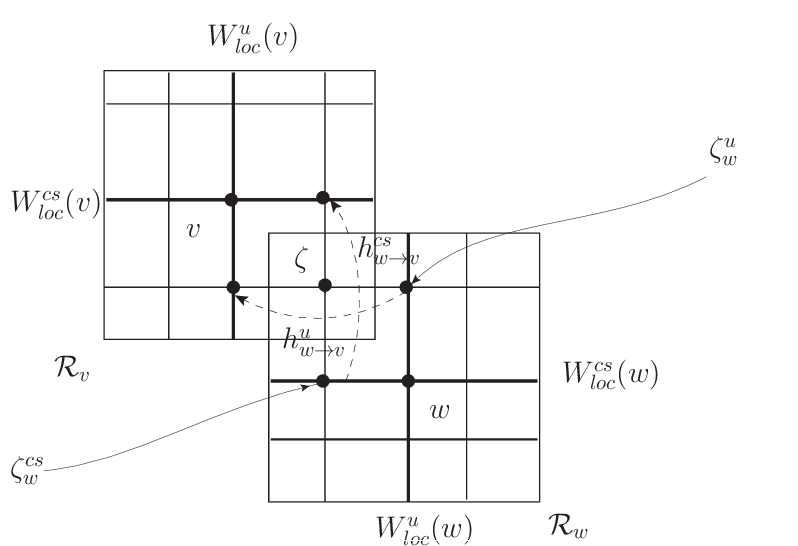}
\caption{Intersecting rectangles}
\label{f:Formouille}
\end{figure}

\begin{proof}
Suppose that $v,w$ satisfy the hypothesis of the lemma. Let $\zeta\in\RR_v\cap\RR_w$, $\zeta^u_w=[w,\zeta]$, $\zeta_w^{cs}=[\zeta,w]$ and consider two small discs $D^{u}\dans W^{u}_{\delta}(w)$ and $D^{cs}\dans W^{cs}_{\delta}(w)$ with same (small) radius and centered respectively at $\zeta^{u}_w$ and $\zeta^{cs}_w$.

The rectangle $D=[D^u,D^{cs}]$ is an open set containing $\zeta$. We have the following equality
$$\frac{m_v(D)}{m_w(D)}=\frac{\Leb^u_v(\h uwv(D^u))}{\Leb^u_w(D^u)}\frac{\Leb^{cs}_v(\h {cs}wv(D^{cs}))}{\Leb^{cs}_w(D^{cs})}.$$

When the common radius of the discs $D^u$ and $D^{cs}$ goes to zero

\begin{equation}
\label{Boreldensity}
\frac{\Leb^u_v(\h uwv(D^u))}{\Leb^u_w(D^u)}\frac{\Leb^{cs}_v(\h {cs}wv(D^{cs}))}{\Leb^{cs}_w(D^{cs})}\to \Jac\,\h uwv(\zeta_w^u)\Jac\,\h {cs}wv(\zeta_w^{cs}).
\end{equation}

The angle function between unstable and center-stable manifolds is uniformly bounded, and the unstable and center-stable holonomy maps are uniformly H\"older continuous. Thus as the common radius of the discs $D^u$ and $D^{cs}$ tends to zero, the open set $D$ shrinks nicely between two Riemannian balls such that the quotients of their radii are uniformly bounded. Hence, we can use the Borel density theorem (see \cite[Theorem 2.12]{Matti}) in order to get for $m_w$-almost every $\zeta\in\RR_v\cap\RR_w$

$$\frac{m_v(D)}{m_w(D)}\to\frac{dm_v}{dm_w}(\zeta).$$
By continuity of the right hand side of \eqref{Boreldensity} with respect to $\zeta$, the result follows.
\end{proof} 

\begin{proof}[Proof of Proposition \ref{leblps}]
Now, we can conclude the proof of Proposition \ref{leblps}. If $w\in\RR_v$, then
$$\frac{m_v(B_{\hcF}(w,r))}{\Leb_{\hL_v}(B_{\hcF}(w,r))}=\frac{m_w(B_{\hcF}(w,r))}{\Leb_{\hL_v}(B_{\hcF}(w,r))}\frac{m_v(B_{\hcF}(w,r))}{m_w(B_{\hcF}(w,r))}.$$
As $r$ tends to $0$, the first factor of the product above converges to $\theta_{E^u,E^{cs}}(w)^{-1}$ by the first lemma and, for Liouville-almost every $w$. The second factor converges to $\Jac\,\h uwv(w)\Jac\,\h {cs}wv(w)$ (since here $\zeta=w$, we have $\zeta^u_w=\zeta^{cs}_w=w$), and the proposition follows.
\end{proof}

\section{Gibbs $su$-states and transverse invariant measures}
\label{sugibbssection}
\subsection{Gibbs $u$-states for leafwise hyperbolic flows}

\begin{defi}[Gibbs $u$-states]
\label{Gibbsustates}
Let $(M,\F)$ be a closed foliated manifold endowed with a negatively curved leafwise metric. A Gibbs $u$-state for the foliated geodesic flow $G_t$ is a probability measure on $\hM$ which is invariant by the flow, and has Lebesgue disintegration for $\W^u$.

Similarly, a Gibbs $s$-state for $G_t$ is a Gibbs $u$-state for $G_{-t}$. If one prefers it is a $G_t$-invariant probability measure on $\hM$ which has Lebesgue disintegration for $\W^s$.

Finally, a Gibbs $su$-state for $G_t$ is a probability measure which is both a Gibbs $u$-state and a Gibbs $s$-state for $G_t$.
\end{defi}

\begin{rem}
\label{logbounded}
The conditional measures of a Gibbs $u$-state $\mu$ in unstable plaques are of the form $\psi^u_w\,d\Leb^u_w$ , where $\psi^u_w$ are densities defined in $W^u_{loc}(w)$. Of course one has to be careful: these densities depend on the local chart where we disintegrated $\mu$ (see Remarks \ref{disintegrationsuccessive} and \ref{psi1} below).

However the following theorem shows that for dynamical reasons, these densities $\psi^u_w$ have to be continuous, uniformly (independently of $w$) bounded away from $0$ and $\infty$ (such a function is said to be \emph{log-bounded} since its logarithm has to be bounded) and to verify a prescribed cocycle formula.
\end{rem}

\begin{theorem}
\label{reconstruirelesetatsdeugibbs}
Let $(M,\F)$ be a closed foliated manifold endowed with a negatively curved leafwise metric. Then

\begin{enumerate}
\item for every $v\in\hM$ and every Borel set $D^u\dans W^u_{loc}(v)$ with positive Lebesgue measure, any accumulation point of the family of measures , $\mu_t$, $t>0$ is a Gibbs u-state where for every $A\dans\hM$ $\mu_t(A)$ is defined by
$$\mu_{t}(A)=\frac{1}{t}\int_0^{t}\frac{\Leb^u(D^u\cap G_{-s}(A))}{\Leb^u(D^u)}ds.$$
Moreover its densities along unstable plaques, denoted by $\psi^u_w$, are uniformly log-bounded and satisfy the following for $z_1,z_2\in W^u_{loc}(w)$
\begin{equation}
\label{relationdensiteslocales}
\frac{\psi^u_w(z_2)}{\psi^u_w(z_1)}=\lim_{t\to\infty}\frac{\Jac^u_{-t}(z_2)}{\Jac^u_{-t}(z_1)};
\end{equation}
\item all ergodic components of a Gibbs u-state are Gibbs u-states with local densities in the unstable plaques that are uniformly log-bounded and satisfy \eqref{relationdensiteslocales};
\item every Gibbs u-state for $G_t$ is a measure whose local densities in the unstable plaques are uniformly log-bounded and satisfy \eqref{relationdensiteslocales}.
\end{enumerate}
\end{theorem}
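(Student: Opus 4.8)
The plan is to prove the three items in the order $(1)$, $(3)$, $(2)$, since $(3)$ carries the genuine content and the other two reduce to it. Throughout I set, for $w\in\hM$ and $z$ in its local unstable plaque,
$$\psi^u_w(z)=\lim_{t\to\infty}\frac{\Jac^u G_{-t}(z)}{\Jac^u G_{-t}(w)},$$
which exists by the last sentence of Lemma \ref{distortioncontrol}; it is normalised by $\psi^u_w(w)=1$, it is log-bounded with constants depending only on $C_0$ and the diameter of the plaque by \eqref{distortion1}, and (using that $\log\Jac^u G_{-t}$ is uniformly $C^1$ along unstable leaves while $G_{-t}$ contracts them) it depends continuously on $z$. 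A direct manipulation of the cocycle identity $\Jac^u G_{-s-t}=(\Jac^u G_{-s}\circ G_{-t})\cdot\Jac^u G_{-t}$ yields the equivariance $\psi^u_{G_{-t}w}(G_{-t}z)=\big(\Jac^u G_{-t}(w)/\Jac^u G_{-t}(z)\big)\,\psi^u_w(z)$, which I use repeatedly. For item $(1)$: since $G_t$ maps unstable leaves to unstable leaves, $G_{t\ast}\big(\Leb^u_{|D^u}/\Leb^u(D^u)\big)$ is carried by the piece $G_t(D^u)$ of an unstable leaf and, by the change-of-variables formula together with $\Jac^u G_t(G_{-t}\zeta)\,\Jac^u G_{-t}(\zeta)=1$, has density proportional to $\zeta\mapsto\Jac^u G_{-t}(\zeta)$ with respect to $\Leb^u$. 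Restricting to a foliated chart for $\W^u$ (which exists by the local product structure, Proposition \ref{lpshypfeuilletee}), the unstable-conditional densities of $G_{t\ast}\big(\Leb^u_{|D^u}/\Leb^u(D^u)\big)$ have, at two points $\zeta_1,\zeta_2$ of a plaque, ratio $\Jac^u G_{-t}(\zeta_2)/\Jac^u G_{-t}(\zeta_1)$, which by \eqref{distortion1} is uniformly log-bounded in $t$ and converges to $\psi^u_w(\zeta_2)/\psi^u_w(\zeta_1)$. Any weak-$\ast$ accumulation point $\mu$ of $\mu_T$ is $G_t$-invariant (Krylov--Bogolyubov: $G_{s\ast}\mu_T-\mu_T\to 0$); and since the times $t$ in a fixed bounded interval contribute mass $O(1/T)$ to $\mu_T$ while for the remaining times the unstable-conditional densities are, in shape, uniformly within $\varepsilon$ of $\psi^u_w$, a weak-$\ast$ compactness argument (two-sided log-boundedness prevents concentration and passes to the limit) shows that the unstable conditionals of $\mu$ are of the form $\psi^u_w\,d\Leb^u_w$ up to a transverse factor. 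Hence $\mu$ is a Gibbs $u$-state, its densities are uniformly log-bounded and satisfy \eqref{relationdensiteslocales}.

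For item $(3)$, let $\mu$ be an arbitrary Gibbs $u$-state and, in a $\W^u$-foliated chart, write its unstable conditionals as $\rho_w\,d\Leb^u_w$ with $\rho_w>0$ a.e.\ and $\rho_w\in L^1(\Leb^u_w)$. Because $\mu$ is $G_t$-invariant and $\W^u$ is $G_t$-invariant, the Rokhlin disintegration of $\mu$ along $\W^u$ is $G_t$-equivariant; combining this with the change-of-variables formula for $G_{-t}$ (which, for $t>0$, contracts unstable plaques by \eqref{uniformcontraction}) gives, for every $t>0$, $\nu$-a.e.\ $w$ and $\Leb^u_w$-a.e.\ $z_1,z_2$,
$$\frac{\rho_w(z_2)}{\rho_w(z_1)}=\frac{\rho_{G_{-t}w}(G_{-t}z_2)}{\rho_{G_{-t}w}(G_{-t}z_1)}\cdot\frac{\Jac^u G_{-t}(z_2)}{\Jac^u G_{-t}(z_1)}.$$
Using the equivariance of $\psi^u$ recalled above, this says exactly that the ratio $(\rho/\psi^u)(z_2)\big/(\rho/\psi^u)(z_1)$ — which by uniqueness in Rokhlin's theorem glues along each unstable leaf up to a leafwise multiplicative constant — is invariant under $G_{-t}$. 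Since $G_{-t}z_1$ and $G_{-t}z_2$ lie on a common unstable leaf with $\dist_{\hcF}(G_{-t}z_1,G_{-t}z_2)$ of order $e^{-at}$, letting $t\to\infty$ should force this ratio to equal $1$, i.e.\ $\rho_w\propto\psi^u_w$ on each plaque; then \eqref{relationdensiteslocales} holds and, by \eqref{distortion1}, the uniform log-boundedness follows at once. Making this limit rigorous is the technical heart of the proof: one invokes approximate continuity of $\rho$ along unstable plaques (the Lebesgue density theorem) and a Fubini/Besicovitch-covering argument over the transverse measure, the absolute continuity and uniformly bounded distortion of the unstable holonomies (Theorem \ref{absolutecontinuity}) being what allows one to transport density points across the whole family of plaques simultaneously and thereby control the fact that the relevant full-measure set of base points changes with $t$. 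This is the foliated transcription of the Pesin--Sinai characterisation of $u$-Gibbs states.

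For item $(2)$, let $\mu$ be a Gibbs $u$-state with ergodic decomposition $\mu=\int\mu_\theta\,dm(\theta)$ for the flow $G_t$. Since $G_{-t}$ contracts unstable leaves, two points of a common unstable leaf have the same backward Birkhoff averages, hence — being $\mu$-a.e.\ backward-generic and using that $\mu$ has absolutely continuous unstable conditionals (Fubini) — lie in the same ergodic component; thus $\mu$-a.e.\ unstable leaf is contained in a single ergodic component, so in a foliated chart the partition into unstable plaques refines the ergodic partition. By transitivity of Rokhlin disintegration, the conditional measures of $\mu$ along unstable plaques coincide, for $m$-a.e.\ $\theta$, with those of $\mu_\theta$; in particular the latter are absolutely continuous with respect to $\Leb^u$, so each $\mu_\theta$ is a Gibbs $u$-state, and item $(3)$ applied to $\mu_\theta$ delivers the uniform log-boundedness and \eqref{relationdensiteslocales}.

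The main obstacle is precisely the limiting argument in item $(3)$: everything else is a routine adaptation of uniformly hyperbolic technology already packaged in the lemmas quoted above, but upgrading the a priori merely $L^1$ unstable conditional densities of an arbitrary $G_t$-invariant measure to explicit continuous, log-bounded densities satisfying \eqref{relationdensiteslocales} is delicate, since the naive ``pull back by $G_{-t}$ and pass to the limit'' scheme must evaluate densities at individual points of shrinking plaques strung along backward orbits — a set of measure zero — and must do so uniformly over the transverse parameter, which is what forces the covering/approximate-continuity machinery and the use of absolute continuity of the unstable holonomies.
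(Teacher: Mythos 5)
The paper's own ``proof'' is a one-line deferral to Bonatti--D{\'i}az--Viana, Section~11.2.2, and to Pesin--Sinai, so there is no in-text argument to compare word-for-word; the comparison is with the BDV scheme. That scheme proceeds in the order $(1)\Rightarrow(2)\Rightarrow(3)$: one proves $(1)$ by distortion control; one then proves $(2)$ by combining $(1)$ with Birkhoff's theorem (for an \emph{ergodic} Gibbs $u$-state $\mu$, $\mu$-a.e.\ point is forward generic, Fubini with the Lebesgue unstable disintegration gives that $\Leb^u$-a.e.\ point of a.e.\ unstable plaque is forward generic, hence the Ces\`aro averages of $G_{t\ast}(\Leb^u|_D/\Leb^u(D))$ converge to $\mu$ itself, to which $(1)$ applies); and $(3)$ then falls out of $(2)$ because, as you correctly observe, $\mu$-a.e.\ unstable leaf is contained in a single ergodic component, so by transitivity of Rokhlin disintegration the unstable conditionals of $\mu$ are a.e.\ those of an ergodic component.

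You instead attempt $(1)\Rightarrow(3)\Rightarrow(2)$, proving $(3)$ directly by ``unwinding the cocycle'': you show $h_w:=\rho_w/\psi^u_w$ satisfies, a.e., $h_w(z_2)/h_w(z_1)=h_{G_{-t}w}(G_{-t}z_2)/h_{G_{-t}w}(G_{-t}z_1)$, and then argue that since $\dist(G_{-t}z_1,G_{-t}z_2)\to 0$ the right side must tend to $1$. This step is the genuine gap, and it is where your sketch does not close. The function $h$ is only defined $\Leb^u$-a.e.\ and is a priori only $L^1$; nearby Lebesgue density points can carry very different values, so exponential proximity of $G_{-t}z_1$ and $G_{-t}z_2$ does not by itself force $h(G_{-t}z_1)/h(G_{-t}z_2)\to 1$, even if for each $t$ both points happen to be density points of $\rho_{G_{-t}w}$. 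Your parenthetical invocation of ``approximate continuity $+$ Fubini/Besicovitch covering $+$ absolute continuity of unstable holonomies'' is not a proof; it is exactly the hard part, and it is hard precisely because it must control a full-measure set of base points that \emph{moves with $t$}. The standard route exists to avoid this.

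The good news is that the correct argument is already latent in your write-up. Your item~$(1)$ is fine. Your first paragraph of item~$(2)$ — $G_{-t}$ contracts unstable leaves, hence backward Birkhoff averages are constant on unstable leaves, hence (via the Lebesgue disintegration of $\mu$ along $\W^u$) the partition into unstable plaques refines the ergodic partition, hence each ergodic component $\mu_\theta$ inherits absolutely continuous unstable conditionals — is exactly right and is the key structural fact. At that point you should not appeal to $(3)$. Instead, for each ergodic $\mu_\theta$: by Birkhoff, $\mu_\theta$-a.e.\ point is forward generic for $\mu_\theta$; by Fubini and absolute continuity, $\Leb^u$-a.e.\ point of $\mu_\theta$-a.e.\ unstable plaque is forward generic; hence for such a plaque and any sub-disc $D^u$ of positive Lebesgue measure, the Ces\`aro averages $\frac1T\int_0^T G_{t\ast}(\Leb^u|_{D^u}/\Leb^u(D^u))\,dt$ converge to $\mu_\theta$ by dominated convergence; now apply your own item~$(1)$ to conclude that $\mu_\theta$'s densities are log-bounded and satisfy \eqref{relationdensiteslocales}. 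This proves $(2)$ self-contained. Then $(3)$ follows immediately: for $\mu$-a.e.\ $w$, the conditional of $\mu$ on $W^u_{loc}(w)$ equals the conditional of the (single) ergodic component containing that plaque, which by $(2)$ has the required form. Rearranging to $(1)\Rightarrow(2)\Rightarrow(3)$ along these lines removes the gap; the direct ``unwind the cocycle'' route for $(3)$ should either be dropped or be given a genuinely rigorous treatment of the a.e.\ limit, which you have not supplied.
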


\begin{proof}
We give the proof of this theorem in the appendix: see \S \ref{proofugibbs}.
\end{proof}

\begin{figure}[hbtp]
\centering
\includegraphics[scale=0.4]{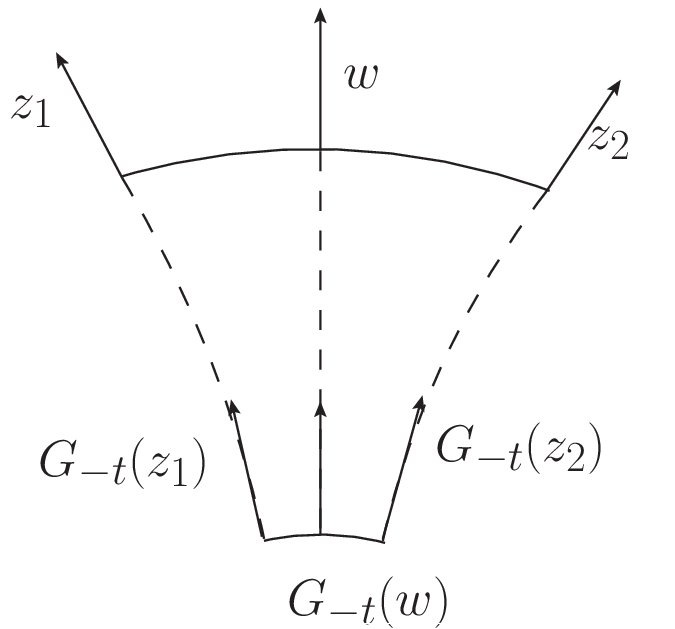}
\caption{Local densities of  Gibbs $u$-states}
\label{localdensitouille}
\end{figure}

\begin{rem}
\label{transversediscs}
The first item of Theorem \ref{reconstruirelesetatsdeugibbs} also holds if $D$ is a Borel subset of positive Lebesgue measure of a smooth disc included in a leaf of $\hcF$ which is transverse to $\W^{cs}$. We sketch the proof in the appendix: see Step 4 of \S \ref{proofugibbs}.
\end{rem}

\begin{rem} 
\label{disintegrationsuccessive}
In what follows, we will often consider charts of the form 
\begin{equation}
\label{specialcharts}
U=\bigcup_{v\in T}\RR_v,
\end{equation}
where $T$ is a transversal of $\hcF$ and $\RR_v$ are the rectangles defined in \S \ref{lpsliouv}. We refer to Remark \ref{transvcontrectangle} for a discussion on the transverse continuity of rectangles. Such charts trivialize the foliation $\hcF$ as well as the unstable foliation $\W^u$. Let $\mu$ be a Gibbs $u$-state and suppose $\mu(U)>0$. Denote by $(\mu_v)_{v\in T}$ the family of conditional measures of $\mu_{|U}$ in rectangles $\RR_v$ with respect to the projection of $\mu_{|U}$ on $T$ along plaques of $\hcF$ which we denote by $\nu_T$. By uniqueness of the disintegration (see Theorem \ref{Rokhlin}), for $\nu_T$-almost every $v$ the conditional measures of $\mu$ in unstable plaques of $\RR_v$ coincide with that of $\mu_v$.

\emph{To summarize}, in order to identify the densities of a Gibbs $u$-state in local unstable manifolds, we first disintegrate it in the plaques of $\hcF$, and then disintegrate the resulting conditional measures in the local unstable manifolds.
\end{rem}

\begin{rem}
\label{psi1}
Let $\mu$ be a Gibbs $u$-state and suppose $\mu(U)>0$ for some chart $U$ of the form \eqref{specialcharts}. Consider the family $(\mu_v)_{v\in T}$ of conditional measures in rectangles $\RR_v$ with respect to a transverse measure $\nu_T$. For $v$ which is $\nu_T$-typical (i.e. $v$ inside a Borel set full for $\nu_T$), $\mu_v$ is obtained by integrating against some measure $\eta^{cs}_v$ on $W^{cs}_{loc}(v)$ measures of the form $\psi^u_w\,\Leb^u_w$. Once again by uniqueness of the disintegration, the choice of $\psi^u_w$ is prescribed by the choice of the measure $\eta^{cs}_v$ against which one disintegrate $\mu_v$. By disintegrating against $d\nu^{cs}_v(w)=1/\psi^u_w(w)d\eta^{cs}_v(w)$ (recall that $\psi^u_w$ is uniformly log-bounded by Theorem \ref{reconstruirelesetatsdeugibbs}), we can always assume that $\psi_w^u(w)=1$ for $w\in W^{cs}_{loc}(v)$.

\emph{To summarize}, when we have a Gibbs $u$-state $\mu$, we first disintegrate its restriction to a foliated chart for $\hcF$ in the partition given by rectangles $\RR_v$ thus obtaining conditional measures $\mu_v$. This partition admits a subpartition by local unstable manifolds. We disintegrate the measures $\mu_v$ in local unstable manifolds so it has the form
\begin{equation}
\label{dis}
d\mu_v=\left(\psi^u_w \,d\Leb^u_w\right)\,d\nu^{cs}_v(w),
\end{equation}
with $\psi^u_w(w)=1$ or if one prefers, for $\zeta\in W_{loc}^u(w)$
\begin{equation}
\label{formulapsiu}
\psi^u_w(\zeta)=\lim_{t\to\infty}\frac{\Jac^u_{-t}(\zeta)}{\Jac^u_{-t}(w)}.
\end{equation}
\end{rem}

\begin{rem}
\label{custates}
A measure $\mu$ invariant by the flow induces the $dt$ element on flow lines. Hence, any Gibbs $u$-state for $G_t$ is a Gibbs $cu$-state, i.e. it has Lebesgue disintegration for $\W^{cu}$. Then there exist densities $\psi^{cu}_w$, which coincide with $\psi^{u}_w$ in the local unstable manifold of $w$ (in particular, $\psi^{cu}_w(w)=1$) such that $\mu_v$ disintegrates as follows in the local center unstable manifolds
$$d\mu_v=\left(\psi^{cu}_w \,d\Leb^{cu}_w\right)\,d\nu^{s}_v(w),$$
where $\nu^s_v$ is a finite measure on $W^s_{loc}(v)$. These local densities are determined by the following relation (see Figure \ref{localdensitouillecu}). If $\zeta\in W^{cu}_{loc}(w)$, and if $\tau$ is such that $G_{-\tau}(\zeta)\in W^u_{loc}(w)$, then
\begin{equation}
\label{formulapsicu}
\psi^{cu}_w(\zeta)=\lim_{t\to\infty}\frac{\Jac^u_{-t-\tau}(\zeta)}{\Jac^u_{-t}(w)}.
\end{equation}
\end{rem}

\begin{figure}[hbtp]
\centering
\includegraphics[scale=0.6]{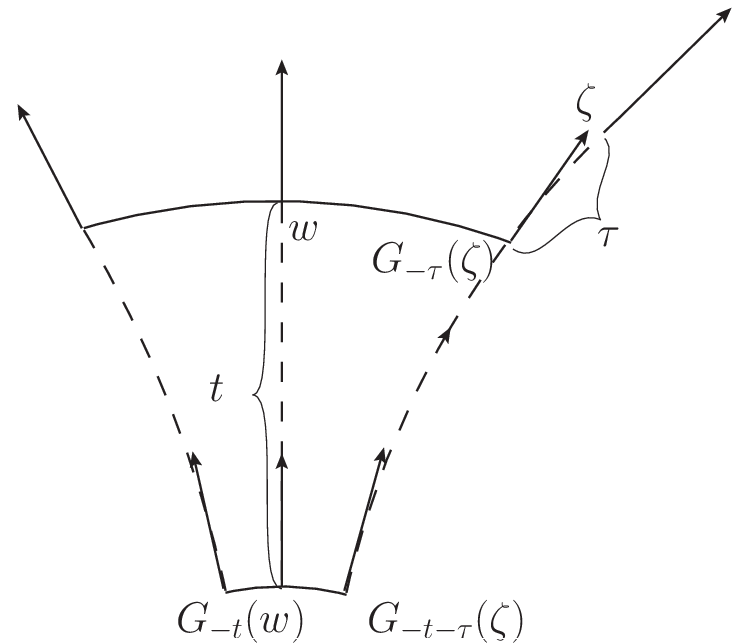}
\caption{Local densities of  Gibbs $cu$-states}
\label{localdensitouillecu}
\end{figure}

\subsection{A sufficient condition for existence of transverse invariant measures}

The goal of this Section is to prove Theorem \ref{mtheoremsugibbsgeodesique}. Let us briefly recall the hypothesis and what we want to prove. Here $(M,\F)$ is a closed foliated manifold endowed with a negatively curved leafwise metric. The foliated geodesic flow $G_t$ acts on $\hM$, the unit tangent bundle of $\F$.

We suppose the existence of a Gibbs $su$-state $\mu$ and we intend to prove that such a measure has to be totally invariant, i.e. has to be locally the product of the Liouville measure of the plaques times a transverse invariant measure for $\hcF$.

\paragraph{Strategy of the proof.} Following Remarks \ref{disintegrationsuccessive} and \ref{psi1}, we are going to disintegrate such a Gibbs $su$-state in rectangles $\RR_v$ and identify the densities in local unstable, and center-stable manifolds. Recall that rectangles are filled with local unstable manifolds, but also with local center-stable manifolds. By disintegrating $\mu_v$ both in unstable and center-stable local manifolds, and carefully analyzing the densities we will show that the conditional measures $\mu_v$ are \emph{multiples} of the Liouville measure. This will force the family of transverse measures induced by $\mu$ to be holonomy-invariant.

In what follows, we work with a good foliated atlas for $\hcF$ whose charts are of the form $U=\bigcup_{v\in T}\RR_v$ where $T\dans U$ is a transverse section of $\hcF$. We choose such a chart $U$ with $\mu(U)>0$ and denote by $(\mu_v)_{v\in T}$ the family of conditional measures of $\mu_{|U}$ in rectangles with respect to the transverse measure $\nu_T$.

\paragraph{Identifying the transverse measures.}
Let $v\in T$ be $\nu_T$-typical. Using Remarks \ref{psi1} and \ref{custates} there exist a measure $\nu_v^{cs}$ on $W^{cs}_{loc}(v)$, as well as a measure $\nu^u_v$ on $W^u_{loc}(v)$ such that $\mu_v$ disintegrates in $\RR_v$ as
\begin{eqnarray}
\label{nuu}
d\mu_v&=&(\psi_z^u\,d\Leb^u_z)\,d\nu_v^{cs}(z)\\
\label{nucs}
   &=&(\psi_w^{cs}\,d\Leb^{cs}_w)\,d\nu_v^u(w),
\end{eqnarray}
where $\psi^u_z$ is given by Formula \eqref{formulapsiu} and $\psi^{cs}_w$ is given by changing the role of $s$ and $u$ and reversing the time in Formula \eqref{formulapsicu}.

\begin{lemma}
\label{transleb}
The measures $\nu^{cs}_v$ and $\nu^u_v$ are respectively equivalent to $\Leb^{cs}_v$ and $\Leb^u_v$.
\end{lemma}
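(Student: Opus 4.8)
The plan is to reduce everything to the single statement that, inside the rectangle $\RR_v$, the conditional measure $\mu_v$ is equivalent to the natural measure $m_v$. Granting this, Proposition \ref{leblps} gives $\mu_v\sim m_v\sim\Leb_{\hL_v}$ on $\RR_v$, and the two equivalences of the Lemma follow by comparison of disintegrations: the measures $\mu_v$ (by \eqref{nuu}) and $m_v$ (by \eqref{dismx1}) are disintegrated over the same partition of $\RR_v$ into local unstable plaques, with transverse measures $\nu^{cs}_v$, resp. $\Leb^{cs}_v$, and with everywhere positive, uniformly log-bounded conditional densities $\psi^u_z$, resp. $\Jac\,\h {cs}zv$ (Theorems \ref{reconstruirelesetatsdeugibbs} and \ref{absolutecontinuity}, Remark \ref{stablecase}); comparing these disintegrations and using the uniqueness part of Rokhlin's theorem (Theorem \ref{Rokhlin}) forces $\nu^{cs}_v\sim\Leb^{cs}_v$, and comparing \eqref{nucs} with \eqref{dismx2} in the same way forces $\nu^u_v\sim\Leb^u_v$. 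So the whole task is to prove $\mu_v\sim m_v$.

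For this I would fix inside $\RR_v$ local product coordinates $(x_u,x_c,x_s)\in W^u_{loc}(v)\times W^c_{loc}(v)\times W^s_{loc}(v)$ coming from the local product structure (Proposition \ref{lpshypfeuilletee}), where $W^c_{loc}(v)$ is the local $G_t$-orbit of $v$, and in which the local center-unstable plaques of $\RR_v$ are the slices $\{x_s=\mathrm{const}\}$ and the local center-stable plaques are the slices $\{x_u=\mathrm{const}\}$. Arguing as in the proof of Proposition \ref{leblps} (local product structure and absolute continuity of the horospheric holonomies, Theorem \ref{absolutecontinuity}, Remark \ref{stablecase}), the Sasaki volumes of these plaques are equivalent, with locally log-bounded densities, to $dx_u\,dx_c$, resp. $dx_c\,dx_s$, and hence $\Leb_{\hL_v}$ restricted to $\RR_v$ is equivalent to $dx_u\,dx_c\,dx_s$. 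Now $\mu$ is a Gibbs $s$-state as well as a Gibbs $u$-state (Definition \ref{Gibbsustates}); applying Remark \ref{custates} to $G_t$ and to $G_{-t}$ shows that $\mu_v$ disintegrates along the local center-unstable plaques and along the local center-stable plaques of $\RR_v$ (the latter being exactly \eqref{nucs}), which in the coordinates above read
\begin{equation*}
d\mu_v=F(x_u,x_c,x_s)\,dx_u\,dx_c\,d\nu^s_v(x_s)=G(x_u,x_c,x_s)\,dx_c\,dx_s\,d\nu^u_v(x_u),
\end{equation*}
with $F,G$ everywhere positive and uniformly log-bounded (Theorem \ref{reconstruirelesetatsdeugibbs}), $\nu^s_v$ a finite measure on $W^s_{loc}(v)$, and $\nu^u_v$ the measure appearing in \eqref{nucs}. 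Since $v$ is $\nu_T$-typical, $\mu_v$ is a probability measure, so $\nu^s_v$ and $\nu^u_v$ are nonzero.

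The heart of the argument is then to disintegrate both expressions above once more, over the projection to the flow coordinate $x_c$. As $F,G$ are log-bounded and $\nu^s_v,\nu^u_v$ are finite and nonzero, the image measure on $W^c_{loc}(v)$ is equivalent to $dx_c$ in both cases, so by uniqueness (Theorem \ref{Rokhlin}) the two resulting families of conditional measures on the slices $W^u_{loc}(v)\times W^s_{loc}(v)$ agree for $dx_c$-almost every $x_c$; fixing one such $x_c$, the measures $F(\cdot,x_c,\cdot)\,dx_u\,d\nu^s_v$ and $G(\cdot,x_c,\cdot)\,dx_s\,d\nu^u_v$ coincide on $W^u_{loc}(v)\times W^s_{loc}(v)$, and since $F(\cdot,x_c,\cdot)$ and $G(\cdot,x_c,\cdot)$ are log-bounded this gives the equivalence of measure classes $dx_u\otimes\nu^s_v\sim dx_s\otimes\nu^u_v$. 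Projecting this equivalence onto the factor $W^u_{loc}(v)$ (the marginals of equivalent measures being equivalent, and $\nu^s_v,\nu^u_v$ having positive mass) yields $\nu^u_v\sim\Leb^u_v$, and projecting onto $W^s_{loc}(v)$ yields $\nu^s_v\sim\Leb^s_v$. Feeding $\nu^s_v\sim\Leb^s_v$ back into $d\mu_v=F\,dx_u\,dx_c\,d\nu^s_v$ and using the equivalences of the previous paragraph together with Proposition \ref{leblps}, we get $\mu_v\sim dx_u\,dx_c\,dx_s\sim m_v$, which by the first paragraph completes the proof. The genuinely delicate point is this comparison over the flow direction, the only place where one uses that $\mu$ is a Gibbs state simultaneously in the center-stable and center-unstable directions; the rest is bookkeeping with Rokhlin's uniqueness theorem and with the locally log-bounded densities furnished by the absolute continuity of the horospheric holonomies and by Proposition \ref{leblps}.
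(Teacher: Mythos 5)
Your proof is correct, but it takes a genuinely different and considerably longer route than the paper's. The paper argues directly: to show $\nu^u_v\sim\Leb^u_v$, it considers the projection $\eta^u_v$ of $\mu_v$ onto $W^u_{loc}(v)$ along the center-stable plaques; from \eqref{nucs} together with the uniform log-boundedness of the $\psi^{cs}_w$, this projection is equivalent to $\nu^u_v$, while from \eqref{nuu} the same projection is the $\nu^{cs}_v$-average of the push-forwards by center-stable holonomy of the measures $\psi^u_z\,d\Leb^u_z$, each of which is equivalent to $\Leb^u_v$ by Theorem \ref{absolutecontinuity}; so $\eta^u_v\sim\Leb^u_v$, and the $cs$ case is symmetric. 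That is the whole proof — no local product coordinates, no auxiliary disintegration over the flow direction, no appeal to $m_v$.

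You instead first establish the stronger fact $\mu_v\sim m_v$ (which in the paper appears only later, as Lemma \ref{equivliouville}, and is deduced \emph{from} the present Lemma) and then recover the Lemma by comparing marginals. The key step of your argument — disintegrating both the $cu$- and $cs$-expressions for $\mu_v$ once more over the flow coordinate $x_c$, using Rokhlin uniqueness to match the two families of conditionals on the $W^u\times W^s$ slices, and passing to marginals — is a genuinely new idea not used anywhere in the paper's proof of this Lemma. It works and it is not circular, and it has the advantage of delivering $\mu_v\sim m_v$ as a by-product. But it is substantially more involved than necessary: it requires setting up a three-variable local product chart, applying Rokhlin's theorem twice at different levels, and keeping track of proportionality constants, where the paper's argument needs only one observation about absolute continuity of a single holonomy map. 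Two small points to watch: Rokhlin's uniqueness gives the two families of conditionals on the $x_c$-slices only up to a multiplicative factor (the Radon–Nikodym derivative of the two $x_c$-projections), which you implicitly absorb but should state; and the existence of the product coordinates $(x_u,x_c,x_s)$ trivializing simultaneously $\W^u$, the flow, and $\W^s$ inside the rectangle needs a word of justification beyond Proposition \ref{lpshypfeuilletee}, which only produces the $(u,cs)$ splitting.
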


\begin{proof}
We only prove the fact that $\nu^u_v$ is equivalent to $\Leb^u_v$ in $W^u_{loc}(v)$. The other assertion follows from an analogous argument.

Note that by Remark \ref{psi1}, the projection of $\mu_v$ on $W^u_{loc}(v)$ along the center-stable plaques, which we denote by $\eta^u_v$, is equivalent to $\nu_v^u$. Hence it is enough to show that $\eta_v^u$ is equivalent to $\Leb^u_v$.

Using Formula \eqref{nuu} one sees that the measure $\eta^u_v$ defined above is equal to the $\nu_v^{cs}$-average of the projections on $W^u_{loc}(v)$  by center-stable holonomy maps of the measures $\psi^u_z\,d\Leb^u_z$ defined on $W^u_{loc}(z)$. 

By absolute continuity of center-stable holonomy maps, the latter projections on $W^u_{loc}(v)$ are equivalent to $\Leb^u_v$. Their $\nu_v^{cs}$-average must be as well, and the result follows.
\end{proof}

As a consequence there exist two measurable functions $f:W^u_{loc}(v)\to\R^{\ast}_+$ and $g:W^{cs}_{loc}(v)\to\R^{\ast}_+$ such that the disintegrations of $\mu_v$ in the local unstable and center-stable manifolds read as
\begin{eqnarray}
\label{dismuv1}
d\mu_v &=&\left(\psi^u_z\,d\Leb^u_z\right)\,g(z)d\Leb^{cs}_v(z)\\
\label{dismuv2}
     &=&\left(\psi^{cs}_w\,d\Leb^{cs}_w\right)\,f(w)d\Leb^u_v(w).
\end{eqnarray}

\paragraph{Conditional measures in rectangles are equivalent to Liouville.} We now use Lemma \ref{transleb} in order  to show that conditional measures of $\mu$ in rectangles are equivalent to the Liouville measure. The first step is to prove that they are equivalent to the natural measure $m_v$ defined in \S \ref{lpsliouv} and to identify the densities.

\begin{figure}[hbtp]
\centering
\includegraphics[scale=0.5]{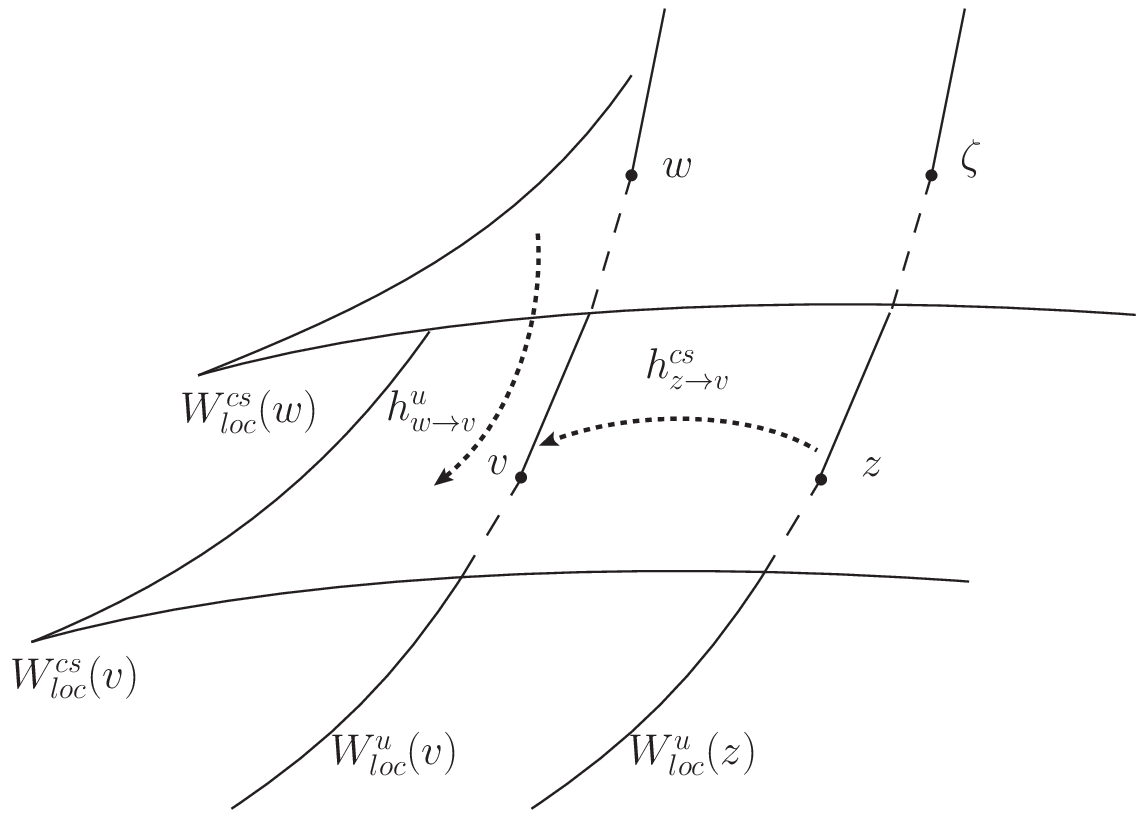}
\caption{Finding the densities}
\label{findthemyourself}
\end{figure}

\begin{lemma}
\label{equivliouville}
The measure $\mu_v$ is equivalent to $m_v$ in $\RR_v$. More precisely, if $\zeta\in\RR_v$, $w=[\zeta,v]\in W^u_{loc}(v)$, $z=[v,\zeta]\in W^{cs}_{loc}(v)$, and $F$ denotes the Radon-Nikodym derivative $d\mu_v/dm_v$, then
\begin{equation}
\label{dismux}
\frac{F(\zeta)}{F(v)}=\psi^u_v(w)\,\psi^{cs}_w(\zeta)\frac{\Jac\,\h uwv(w)}{\Jac\,\h uwv(\zeta)}.
\end{equation}
\end{lemma}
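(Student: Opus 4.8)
The plan is to compute the Radon--Nikodym derivative $F=d\mu_v/dm_v$ by comparing the disintegrations of $\mu_v$ and $m_v$ in local unstable manifolds, using the key fact from Lemma \ref{transleb} that the transverse component $\nu_v^u$ of $\mu_v$ is equivalent to $\Leb_v^u$. First I would recall the two relevant disintegrations along local unstable manifolds: on the one hand, by \eqref{dismuv2} (or \eqref{nucs}) together with Lemma \ref{transleb},
$$d\mu_v=\left(\psi^{cs}_w\,d\Leb^{cs}_w\right)\,f(w)\,d\Leb^u_v(w);$$
on the other hand, the natural measure $m_v$ disintegrates by \eqref{dismx2} as
$$dm_v=\left(\Jac\,\h uwv\,d\Leb^{cs}_w\right)\,d\Leb^u_v(w),$$
where here I must be slightly careful: \eqref{dismx2} is a disintegration in local \emph{unstable} manifolds $W^u_{loc}(w)$, whereas the displayed decomposition I want is along \emph{center-stable} plaques indexed by $w\in W^u_{loc}(v)$; but since $m_v=\Leb^u_v\otimes\Leb^{cs}_v$ and the holonomy $\h uwv$ transports $\Leb^{cs}_w$ to a measure equivalent to $\Leb^{cs}_v$ with Jacobian $\Jac\,\h uwv$, this is exactly the same computation. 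The point is that both $\mu_v$ and $m_v$, when disintegrated along the center-stable plaques $W^{cs}_{loc}(w)$ of the rectangle (indexed by $w\in W^u_{loc}(v)$), have conditionals of the form $(\text{density})\,d\Leb^{cs}_w$, and the transverse parts are both equivalent to $\Leb^u_v$. Hence $\mu_v\ll m_v$ and $m_v\ll\mu_v$, proving the equivalence.

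Next I would identify $F$ by uniqueness of disintegration (Rokhlin, Theorem \ref{Rokhlin}). Comparing the two disintegrations above, for $\Leb^u_v$-almost every $w$ and $\Leb^{cs}_w$-almost every $\zeta\in W^{cs}_{loc}(w)$,
$$F(\zeta)=\frac{d\mu_v}{dm_v}(\zeta)=\frac{f(w)\,\psi^{cs}_w(\zeta)}{\Jac\,\h uwv(\zeta)},$$
where I use that along the center-stable plaque through $w$ the density of $\mu_v$ relative to $\Leb^{cs}_w$ is $f(w)\psi^{cs}_w(\zeta)$ while that of $m_v$ is $\Jac\,\h uwv(\zeta)$ (the value of the holonomy Jacobian at the point $\zeta$ of $W^{cs}_{loc}(w)$, which is the relevant density factor when one writes $\Leb^{cs}_w$-integration transported to $\Leb^{cs}_v$). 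Applying this at $\zeta$ (with $w=[\zeta,v]$) and at $\zeta=v$ (where $w=v$, $\psi^{cs}_v(v)=1$, and $\Jac\,\h uvv(v)=1$) gives
$$\frac{F(\zeta)}{F(v)}=\frac{f(w)}{f(v)}\cdot\frac{\psi^{cs}_w(\zeta)}{\Jac\,\h uwv(\zeta)}.$$
It remains to compute the ratio $f(w)/f(v)$.

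For this I would return to the \emph{other} disintegration of $\mu_v$, namely \eqref{dismuv1} along local unstable manifolds, $d\mu_v=(\psi^u_z\,d\Leb^u_z)\,g(z)\,d\Leb^{cs}_v(z)$, and cross-reference it with \eqref{dismuv2}: equating the two expressions for $\mu_v$ and using uniqueness of disintegration one extracts a compatibility relation between $f$, $g$, $\psi^u$, $\psi^{cs}$, and the unstable/center-stable holonomy Jacobians — essentially projecting $\mu_v$ onto $W^u_{loc}(v)$ two ways. Concretely, the projection of $\mu_v$ onto $W^u_{loc}(v)$ along center-stable plaques is $f(w)\,d\Leb^u_v(w)$ from \eqref{dismuv2}, and from \eqref{dismuv1} it is the $\Leb^{cs}_v$-average of the center-stable-holonomy pushforwards of $\psi^u_z\,d\Leb^u_z$; comparing densities and using the normalization $\psi^u_z(z)=1$ together with Formula \eqref{formulapsiu} for $\psi^u_v$ should yield $f(w)/f(v)=\psi^u_v(w)\cdot(\Jac\,\h uwv(w))^{-1}\cdot(\text{factor equal to }1\text{ at }w=v)$, which plugged into the previous display gives exactly \eqref{dismux}. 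The main obstacle I anticipate is precisely this last bookkeeping step: keeping straight \emph{which} holonomy Jacobian (and evaluated at which point, $w$ versus $\zeta$) appears when one passes between the unstable-plaque disintegration and the center-stable-plaque disintegration of the same measure, and making sure the normalizations $\psi^u_w(w)=\psi^{cs}_w(w)=1$ are used consistently so that everything collapses to $1$ at the base point $v$. Everything else is a routine application of uniqueness of disintegration together with the absolute continuity statements of Theorem \ref{absolutecontinuity} and the distortion-limit formulas already established.
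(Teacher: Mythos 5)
Your overall strategy coincides with the paper's: compare the Rokhlin disintegrations of $\mu_v$ and $m_v$ in the two directions (unstable and center-stable plaques of the rectangle) and extract $F=d\mu_v/dm_v$ by uniqueness of disintegration. The equivalence $\mu_v\sim m_v$ and the identity
$$F(\zeta)=\frac{\psi^{cs}_w(\zeta)}{\Jac\,\h uwv(\zeta)}\,f(w),$$
hence $F(\zeta)/F(v)=\left(f(w)/f(v)\right)\cdot\psi^{cs}_w(\zeta)/\Jac\,\h uwv(\zeta)$ once you note $F(v)=f(v)$, are exactly as in the paper and are fine.

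The gap is the computation of $f(w)/f(v)$, which you leave as a gesture and in fact write down incorrectly: you guess $f(w)/f(v)=\psi^u_v(w)\left(\Jac\,\h uwv(w)\right)^{-1}$, but the correct identity puts the Jacobian in the \emph{numerator}, $f(w)/f(v)=\psi^u_v(w)\,\Jac\,\h uwv(w)$; with your inverse exponent the end result would differ from \eqref{dismux} by the factor $\left(\Jac\,\h uwv(w)\right)^{2}$. The cleanest way to get the correct factor --- and the one the paper uses --- is to also record the other expression for $F(\zeta)$, coming from \eqref{dismuv1} and \eqref{dismx1}:
$$F(\zeta)=\frac{\psi^u_z(\zeta)}{\Jac\,\h {cs}zv(\zeta)}\,g(z).$$
Equating the two expressions for $F(\zeta)$ gives, for all $\zeta\in\RR_v$,
$$\frac{f(w)}{g(z)}=\frac{\psi^u_z(\zeta)\,\Jac\,\h uwv(\zeta)}{\psi^{cs}_w(\zeta)\,\Jac\,\h {cs}zv(\zeta)},$$
and specializing $\zeta\in W^u_{loc}(v)$ (so $w=\zeta$, $z=v$), together with the normalization $\psi^{cs}_w(w)=1$ and the triviality of $\h {cs}{v}{v}$, leaves exactly $f(w)/g(v)=\psi^u_v(w)\,\Jac\,\h uwv(w)$; since $g(v)=f(v)=F(v)$ this is the missing step. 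Your alternative route --- projecting $\mu_v$ onto $W^u_{loc}(v)$ by integrating over center-stable plaques --- is in principle viable, but beware that the projected density from \eqref{dismuv2} is $f(w)\int_{W^{cs}_{loc}(w)}\psi^{cs}_w\,d\Leb^{cs}_w$ rather than $f(w)$ itself (the conditionals $\psi^{cs}_w\,d\Leb^{cs}_w$ are normalized at the basepoint, not in $L^1$), so you would also have to track this extra normalization, making that path harder to close.
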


\begin{proof}
Using Disintegration Formulas \eqref{dismuv1} and \eqref{dismx1} we deduce that $\mu_v$ and $m_v$ have equivalent projections on $W^{cs}_{loc}(v)$ (they are both equivalent to Lebesgue) and that their conditional measures in the local unstable manifolds are also equivalent (to Lebesgue). As a consequence these measures have to be equivalent in $\RR_v$.

Let us be more precise and identify the density $F$. Comparing on the one hand Formulas \eqref{dismuv1} and \eqref{dismx1} and on the other hand Formulas \eqref{dismuv2} and \eqref{dismx2} we find that for all $\zeta\in\RR_v$, if $w=[\zeta,v]$ and $z=[v,\zeta]$

\begin{eqnarray}
\label{relationz}
F(\zeta)=\frac{d\mu_v}{dm_v}(\zeta)&=&\frac{\psi^u_z(\zeta)}{\Jac\,\h {cs}zv(\zeta)}g(z)\\
\label{relationy}                             
                          &=&\frac{\psi^{cs}_w(\zeta)}{\Jac\,\h uwv(\zeta)} f(w).
\end{eqnarray}
In particular the Radon-Nikodym derivative $F(v)$ satisfies

\begin{equation}
\label{initialisation}
g(v)=f(v)=\frac{d\mu_v}{dm_v}(v)=F(v).
\end{equation}
Note that we have used here (and that we will use below) the facts that $\psi^u_v(v)=1$ and $\psi^{cs}_v(v)=1$ (see Remarks \ref{psi1} and \ref{custates}). We find
$$\frac{f(w)}{g(z)}=\frac{\psi^u_z(\zeta)\Jac\,\h uwv(\zeta)}{\psi^{cs}_w(\zeta)\Jac\,\h {cs}zv(\zeta)},$$
and this relation holds for all $\zeta\in\RR_v$. If we choose $\zeta\in W^u_{loc}(v)$, then we have $\zeta=w$ and $z=v$. The previous equality then becomes
\begin{equation}
\label{identifdensity1}
\frac{f(w)}{g(v)}=\psi^u_v(w)\Jac\,\h uwv(w).
\end{equation}
We obtain Relation \eqref{dismux} for the normalized density by injecting Equalities \eqref{initialisation} and \eqref{identifdensity1} in \eqref{relationy}.
\end{proof}

\paragraph{Conditional measures in rectangles proportional to Liouville.} We now turn to the last step of the proof of Theorem \ref{mtheoremsugibbsgeodesique}. We will use Proposition \ref{leblps}, which gives the densities of the measures $m_v$  defined in \S \ref{lpsliouv}, as well as the crucial fact that the Liouville measures of the leaves are preserved by the foliated geodesic flow.

\begin{proposition}
\label{equalliouville}
Let $\mu$ be a Gibbs $su$-state for $G_t$ and let $U=\bigcup_{v\in T}\RR_v$ be a foliated chart for $\hcF$ with $\mu(U)>0$. Then the conditional measures $\mu_v$ in rectangles $\RR_v$ coincide with a multiple of the Liouville measure.
\end{proposition}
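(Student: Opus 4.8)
The plan is to prove that the density of $\mu_v$ with respect to the Liouville measure $\Leb_{\hL_v}$ of the leaf is constant on the rectangle $\RR_v$. By Lemma \ref{equivliouville}, $\mu_v$ is equivalent to $m_v$ with the continuous positive density $F$ of Formula \eqref{dismux}; by Proposition \ref{leblps}, $m_v$ is in turn equivalent to $\Leb_{\hL_v}$ with the density $\zeta\mapsto\alpha(\zeta)^{-1}\Jac\,\h u\zeta v(\zeta)\,\Jac\,\h{cs}\zeta v(\zeta)$. Hence $\Phi:=d\mu_v/d\Leb_{\hL_v}$ is a well-defined continuous positive function on $\RR_v$, and it suffices to show it is constant, which is the assertion of the proposition. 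The one new ingredient, not used so far, is that $G_t$ preserves $\Leb_{\hL_v}$; expressed in the $DG_t$-invariant splitting $T\hcF=E^u\oplus E^{cs}$ by means of the angle function \eqref{anglefct}, this says exactly that
$$\Jac^uG_t(\xi)\,\Jac^{cs}G_t(\xi)=\frac{\alpha(\xi)}{\alpha(G_t\xi)}\qquad(\xi\in\hM,\ t\in\R),$$
and I will use this identity repeatedly.

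First I would insert \eqref{dismux} and the density of Proposition \ref{leblps} into $\Phi=F\cdot(dm_v/d\Leb_{\hL_v})$. Put $w=[\zeta,v]\in W^u_{loc}(v)$; since $w\in W^{cs}_{loc}(\zeta)$ one has $W^{cs}_{loc}(\zeta)=W^{cs}_{loc}(w)$, so $\h u\zeta v$ and $\h uwv$ are one and the same holonomy map $W^{cs}_{loc}(w)\to W^{cs}_{loc}(v)$, and the term $\Jac\,\h uwv(\zeta)$ in the denominator of \eqref{dismux} cancels $\Jac\,\h u\zeta v(\zeta)$. One is left with
$$\Phi(\zeta)=F(v)\,\psi^u_v(w)\,\Jac\,\h uwv(w)\;\cdot\;\frac{\psi^{cs}_w(\zeta)\,\Jac\,\h{cs}\zeta v(\zeta)}{\alpha(\zeta)}.$$
As $\zeta$ runs over $W^{cs}_{loc}(w)\cap\RR_v$ the first factor is constant, and since $\RR_v$ is swept out by these center-stable plaques over $w\in W^u_{loc}(v)$, the proof reduces to two claims: \textbf{(a)} the function $\Psi(\zeta):=\psi^{cs}_w(\zeta)\Jac\,\h{cs}\zeta v(\zeta)/\alpha(\zeta)$ is constant on $W^{cs}_{loc}(w)$, with value $\alpha(w)^{-1}$ — obtained at $\zeta=w$ using $\psi^{cs}_w(w)=1$ and the fact that $\h{cs}wv$ restricts to the identity of $W^u_{loc}(w)=W^u_{loc}(v)$, so $\Jac\,\h{cs}wv(w)=1$; and \textbf{(b)} $\psi^u_v(w)\,\Jac\,\h uwv(w)=\alpha(w)/\alpha(v)$ for every $w\in W^u_{loc}(v)$. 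Granting (a) and (b), $\Phi\equiv F(v)/\alpha(v)$ on $\RR_v$, so $\mu_v=F(v)\alpha(v)^{-1}\Leb_{\hL_v}$ there.

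Claim (b) is immediate: by \eqref{formulapsiu} and \eqref{jacuhol}, $\psi^u_v(w)\Jac\,\h uwv(w)=\lim_{t\to\infty}\dfrac{\Jac^uG_{-t}(w)\Jac^{cs}G_{-t}(w)}{\Jac^uG_{-t}(v)\Jac^{cs}G_{-t}(v)}$, which by the Liouville identity equals $\lim_{t\to\infty}\dfrac{\alpha(w)}{\alpha(v)}\cdot\dfrac{\alpha(G_{-t}v)}{\alpha(G_{-t}w)}$; as $w\in W^u_{loc}(v)$, the points $G_{-t}w,G_{-t}v$ converge by \eqref{uniformcontraction}, so the last ratio tends to $1$ by continuity of $\alpha$. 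For claim (a) I would show $\Psi$ is invariant along flow lines and along stable plaques of $W^{cs}_{loc}(w)$, which between them span that plaque. Along a flow line $\sigma\mapsto G_\sigma\zeta$: using the $\psi^{cs}$-analogue of \eqref{formulapsicu}, the time-reversed analogue of \eqref{jacuhol} for the holonomy $\h{cs}{}{}$, the cocycle rule for holonomy Jacobians, and the local identification of $\h{cs}{G_\sigma\zeta}\zeta$ with $G_{-\sigma}$ restricted to $W^u_{loc}(G_\sigma\zeta)$, the three factors of $\Psi$ get multiplied respectively by $\Jac^sG_{-\sigma}(G_\sigma\zeta)$, by $\Jac^uG_{-\sigma}(G_\sigma\zeta)$, and by $\alpha(G_\sigma\zeta)/\alpha(\zeta)$, and these cancel by the Liouville identity. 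Along a stable plaque the argument is of the same type, using now that $\psi^{cs}_w$ restricted to a stable leaf contained in $W^{cs}_{loc}(w)$ equals the stable density up to a constant (the two relevant time-shifts agreeing because stably related points have the same flow-height over $W^s_{loc}(w)$), again combined with the holonomy formula, the Liouville identity, and the convergence of stably related orbits.

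The step I expect to be the main obstacle is claim (a): one has to line up the time shifts in the various limits defining $\psi^{cs}$, the stable density, and $\Jac\,\h{cs}{}{}$, invoke the distortion controls of Lemma \ref{distortioncontrol} together with Theorem \ref{absolutecontinuity} to know that these limits exist and are unaffected by reparametrisation, correctly identify the pertinent local holonomy maps with time-$\sigma$ maps of $G_t$, and repeatedly use continuity of $\alpha$ to kill the residual ratios $\alpha(G_t\cdot)/\alpha(G_t\cdot)$ along convergent orbits. Everything else is formal.
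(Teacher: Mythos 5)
Your proposal is correct and follows essentially the same route as the paper: compute the density $d\mu_v/d\Leb_{\hL_v}$ by combining Lemma \ref{equivliouville} and Proposition \ref{leblps}, then invoke the Liouville identity $\Jac^uG_t\cdot\Jac^{cs}G_t=\alpha/\alpha\circ G_t$ in the $u$- and $cs$-directions separately; your claims (a) and (b) are precisely the two identities the paper establishes. The only variation is cosmetic: the paper derives the $cs$-identity (a) directly, exactly as it does (b), by writing $\psi^{cs}_w$ and $\Jac\,\h{cs}{}{}$ as limits of Jacobian ratios and letting the Liouville identity cancel them (it simply says ``Similarly''), whereas you propose splitting the $cs$-plaque into flow lines and stable plaques and checking invariance of $\Psi$ along each — a valid but somewhat heavier way to fill in the same ``similarly,'' and the time-shift bookkeeping you flag as the main obstacle is indeed the only technical content the paper suppresses.
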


\begin{proof}
It follows from Lemma \ref{equivliouville} that the conditional measures $\mu_v$ of a Gibbs $su$-state in rectangles $\RR_v$ are equivalent to $m_v$ and hence to $\Leb_{\hL_v}$. Moreover the following cocycle relation holds $\Leb_{\hL_v}$-almost everywhere: $d\mu_v/d\Leb_{\hL_v}=d\mu_v/dm_v\,\,.\,\,dm_v/d\Leb_{\hL_v}$. In other words, if $H$ denotes the density $d\mu_v/d\Leb_{\hL_v}$ it coincides almost everywhere with the product of two factors. The first one is the Radon Nikodym derivative $F$ obtained in Lemma \ref{equivliouville}. The second one is the density obtained in Proposition \ref{leblps}.

In particular for every $\zeta\in\RR_v$ we obtain

$$
\frac{H(\zeta)}{H(v)}=\frac{\theta_{E^u,E^{cs}}(v)}{\theta_{E^u,E^{cs}}(\zeta)}\psi^{u}_v(w)\,\psi^{cs}_w(\zeta)\Jac\,\h {cs}zv(\zeta)\,\Jac\,\h uwv(w)\\
$$
where $\theta_{E^u,E^{cs}}$ is the angle function defined in \S \ref{lpsliouv}. Now we are going to use the precise expressions of local densities of Gibbs $u$-states and of the Jacobians of the holonomy maps given respectively in Theorem \ref{reconstruirelesetatsdeugibbs} (or more precisely in Remark \ref{psi1}) and in Theorem \ref{absolutecontinuity}. We have

$$\psi^u_v(w)=\lim_{t\to\infty}\frac{\Jac^u_{-t}(w)}{\Jac^u_{-t}(v)},$$
and
$$\Jac\,\h uwv(w)=\lim_{t\to\infty} \frac{\Jac^{cs}_{-t}(w)}{\Jac^{cs}_{-t}(v)}.$$

Since the foliated geodesic flow preserves the Liouville measure of the leaves we have for every $w\in\hM$, $\Jac_t(w)=\det(D_wG_t))_{|T_w\hL_w}=1$. Using the definition of the angle function $\theta_{E^u,E^{cs}}$ and the preservation of $E^u$ and $E^{cs}$ by $DG_{-t}$, we find

$$1=\Jac_{-t}(w)=\frac{\theta_{E^u,E^{cs}}(G_{-t}(w))}{\theta_{E^u,E^{cs}}(w)}\Jac^u_{-t}(w)\Jac^{cs}_{-t}(w).$$

But, $\log\theta_{E^u,E^{cs}}$ is uniformly H\"older continuous in the leaves of $\hcF$ because $\theta_{E^u,E^{cs}}$ is uniformly log-bounded and uniformly H\"older in the leaves. Hence when $v$ and $w$ lie in the same unstable manifold we have that $\lim_{t\to\infty}\theta_{E^u,E^{cs}}(G_{-t}(w))/\theta_{E^u,E^{cs}}(G_{-t}(v))=1$. We deduce from what precedes that for $v$ and $w$ lying on the same local unstable manifold

$$1=\lim_{t\to\infty}\frac{\Jac_{-t}(w)}{\Jac_{-t}(v)}=\frac{\theta_{E^u,E^{cs}}(v)}{\theta_{E^u,E^{cs}}(w)}\psi^u_v(w)\Jac\,\h uwv(w).$$

Similarly we prove that if $w$ lies on the same local center stable manifold as $\zeta$ then the following holds true
$$1=\frac{\theta_{E^u,E^{cs}}(w)}{\theta_{E^u,E^{cs}}(\zeta)}\psi^{cs}_w(\zeta)\Jac\,\h {cs}zv(\zeta).$$

Multiplying these two equalities, it comes that $H(\zeta)=H(v)$ for every $\zeta\in\RR_v$.
\end{proof}

\paragraph{End of the proof of Theorem \ref{mtheoremsugibbsgeodesique}.} We are now going to prove that the Gibbs $su$-state $\mu$ is totally invariant.

Consider a good foliated atlas for $\hcF$ whose charts are union of rectangles (see \eqref{specialcharts}). Let $U_i$ be such a chart  with $\mu(U_i)>0$ and call the corresponding transversal $T_i$. By Proposition \ref{equalliouville} there is a finite measure $\nu_i$ on $T_i$ such that the conditional measures of $\mu_{|U_i}$ with respect to $\nu_i$ in rectangles $\RR_v$ are given by $H_i(v)\Leb_{\hL_v}$ for some positive measurable function $H_i$. By uniqueness of the disintegration, if one considers the measure $\nu_i'=H_i(v)\nu_i(v)$, one sees that the conditional measures of $\mu$ with respect to $\nu_i'$ are precisely given by $\Leb_{\hL_v}$. In other words,  the disintegration of $\mu_{|U_i}$ in the plaques of $U_i$ with respect to $\nu_i'$ reads as \eqref{disintegration} with $h_i=1$.

We claim that the family of measures $(\nu_i')$ is holonomy-invariant: this implies that $\mu$ is totally invariant. Indeed this comes from the fact that if $\mu(U_i\cap U_j)>0$, one obtains Identity \eqref{quasiinvariantemesureabsolumtcont} where the densities are equal to $1$. This implies that $\tau_{ij\ast}^{-1}\nu_j'=\nu'_i$ as claimed. The proof of Theorem \ref{mtheoremsugibbsgeodesique} is now over.\quad \hfill $\square$

\section{Gibbs $u$-states for the foliated geodesic flow and $\phi^u$-harmonic measures}
\label{phiuharmonicsection}

\subsection{Sphere at infinity and Busemann cocycle}
\label{sphereinfty}

In this section, $(M,\F)$ still stands for a closed foliated manifold endowed with a negatively curved leafwise metric: recall that it implies that the sectional curvatures of all leaves are pinched between two negative constants $-b^2\leq-a^2<0$. If $L$ is a leaf of $\F$, then $\tL$ denotes its universal cover.

\paragraph{Sphere at infinity.} The space $\tL(\infty)$ represents the \emph{sphere at infinity} of $\tL$, that is to say, the set of equivalence classes of geodesic rays for the relation ``stay at bounded distance''. Say that a geodesic ray \emph{points to} $\xi\in\tL(\infty)$ if $\xi$ is its equivalence class. Since $\pi_1(L)$ acts on $\tL$ by isometry, it acts by sending equivalent geodesic rays on equivalent geodesic rays: we have a natural action of $\pi_1(L)$ on $\tL(\infty)$.

Given $z\in\tL$, we denote by $\widehat{\pi}_z:T^1_z\tL\to \tL(\infty)$ the natural projection which associates to $v$ the class of the geodesic ray it determines. Since the curvature of $\tL$ is pinched between two negative constants, all transition maps 
\begin{equation}
\label{transitionhat}
\widehat{\sigma}_{y,z}=\widehat{\pi}_{z}^{-1}\circ\widehat{\pi}_{y}:T^1_{y}\tL\to T^1_{z}\tL
\end{equation}
 are H\"older continuous (see \cite[Proposition 2.1]{AS}): there is a well defined H\"older class on $\tL(\infty)$. Note that these maps are holonomy maps along the center-stable foliation between unit tangent fibers.

\begin{rem}
\label{remvisibility}
In this paper, we are more interested in the unstable foliation than in the stable one. Define the \emph{flip function} as the involution $\iota:\hM\to\hM$ which associates to $(x,v)\in\hM$ the element $(x,-v)$.  This function conjugates the flows $G_t$ and $G_{-t}$, preserves the unit tangent fibers and exchanges stable and unstable manifolds.

Consider the projection maps $\pi_z=\widehat{\pi}_z\circ\iota:T^1_z\tL\to\tL(\infty)$ as well as the transition maps $\sigma_{y,z}:T^1_y\tL\to T^1_z\tL$. Properties of $\iota$ show that $\sigma_{y,z}=\pi^{-1}_z\circ\pi_y$ are holonomy maps along the center-unstable foliation between unit tangent fibers.
\end{rem}

\paragraph{Busemann cocycle and horospheres.}We define the \emph{Busemann cocycle} by the following
$$\beta_{\xi}(y,z)=\lim_{t\to\infty}\dist(c(t),z)-\dist(c(t),y),$$
where $\xi\in \tL(\infty)$, $y,z\in\tL$ and $c$ is \emph{any geodesic ray} parametrized by arc length and pointing to $\xi$ (recall that two geodesic rays pointing to the same limit become exponentially close at infinity). The Busemann cocycle is a smooth function of $(y,z)$ and a H\"older continuous function of $\xi$.

The \emph{horospheres} are the level sets of this cocycle: two points $y,z$ are said to be on the same horosphere centered at $\xi$ if $\beta_{\xi}(y,z)=0$. Horospheres are smooth manifolds and when endowed with the normal vector field pointing outwards  (resp. inwards) they provide the unstable (resp. stable) manifolds of the geodesic flow.

\subsection{Gibbs kernel}
\label{gibbskernelsection}

\paragraph{Potential.} Following the classical notation (see \cite{BR}) we define the \emph{potential} $\phi^u$ as the infinitesimal volume change rate in the unstable direction. It is defined by the following formula
\begin{equation}
\label{Eq:potential}
\phi^u(v)=-\left.\frac{d}{dt}\right|_{t=0}\log\Jac^u_t(v),
\end{equation}
which is well defined because $\Jac^u G_t(v)$ varies smoothly with $t$ (due to the chain rule). The potential $\phi^u$ is continuous in $\hM$ and varies H\"older continuously in the leaves of $\hcF$. This is due to the fact that $G_t$ is smooth in the leaves and varies continuously transversally in the smooth topology, as well as from the fact that $E^u$ is continuous in $\hM$ and H\"older continuous in the leaves of $\hcF$ (see \cite[Section 4]{BR} for precise justifications).

\paragraph{Gibbs kernel.} The restriction to $\hL$ of the potential lifts as a bounded and H\"older continuous function in $T^1\tL$ denoted by $\widetilde{\phi^u}$. This allows one to define the \emph{Gibbs kernel} as the following function of $(y,z,\xi)\in\tL\times\tL\times\tL(\infty)$

\begin{equation}
\label{gibbskernel}
k^u(y,z;\xi)=\exp\left[\int_{\xi}^{z}\widetilde{\phi}^u-\int_{\xi}^{y}\widetilde{\phi}^u\right].
\end{equation}
Here the difference of the integrals has the following meaning. 
$$\int_{\xi}^{z}\widetilde{\phi}^u-\int_{\xi}^{y}\widetilde{\phi}^u=\lim_{t\to\infty}\left(\int_{c(t)}^{z}\widetilde{\phi}^u-\int_{c(t)}^{y}\widetilde{\phi}^u\right).$$
where $c$ is a geodesic ray asymptotic to $\xi$ and $\int_{c(t)}^z\widetilde{\phi}^u$ denotes the integral of the potential on the \emph{directed geodesic} segment starting at $c(t)$ and ending at $z$. The limit exists by the usual distortion argument because $\widetilde{\phi}^u$ is H\"older in $T^1\widetilde{L}$ (see the proof of \cite[Lemma 3.2, Chapter III]{M}). Moreover, as for the limit used in the definition of the Busemann cocycle, this limit does not depend on the geodesic ray ending at $\xi$. A simple computation based on the chain rule $\Jac^u_{s+t}=\Jac^u_{t}\circ G_{s}\,.\,\Jac^u_{s}$ shows the following (see for example \cite[Section 4]{BR}).

\begin{lemma}
\label{kerneljacob1}
For every triple $(y,z,\xi)\in \widetilde{L}\times\widetilde{L}\times\widetilde{L}(\infty)$ we have the following equality (see also Figure  \ref{bordel})
$$k^u(y,z;\xi)=\lim_{t\to\infty}\frac{\Jac^u_{-t-\beta_{\xi}(y,z)}(v_{\xi,z})}{\Jac^u_{-t}(v_{\xi,y})},$$
where $v_{\xi,z}$ denotes the unit vector based at $z$ such that $\lim G_{-t}(v_{\xi,z})=\xi$ as $t\to\infty$ (the same notation is used for $v_{\xi,y}$).
\end{lemma}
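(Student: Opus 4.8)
The plan is to unwind both sides of the claimed identity using the definition of the Gibbs kernel in \eqref{gibbskernel} together with the relation between the potential $\phi^u$ and the unstable Jacobian given by \eqref{Eq:potential}. The starting point is the observation that integrating $\phi^u$ along a directed geodesic segment recovers (minus) a logarithm of an unstable Jacobian: if $w$ is the unit vector tangent to a geodesic segment at its \emph{initial} point and the segment has length $\ell$, then by the fundamental theorem of calculus and the chain rule $\Jac^u G_{s+t}=(\Jac^u G_t\circ G_s)\cdot\Jac^u G_s$, one has
$$\int \widetilde{\phi}^u\ \text{(along that directed segment)}\ =\ -\log\Jac^u G_{\ell}(w).$$
Thus for a geodesic ray $c$ pointing to $\xi$, writing $u_{t,z}$ for the unit vector based at $c(t)$ directing the segment from $c(t)$ to $z$ (of length $\dist(c(t),z)$), we get $\int_{c(t)}^{z}\widetilde{\phi}^u=-\log\Jac^u G_{\dist(c(t),z)}(u_{t,z})$, and similarly for $y$. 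Substituting into \eqref{gibbskernel} gives
$$k^u(y,z;\xi)=\lim_{t\to\infty}\frac{\Jac^u G_{\dist(c(t),y)}(u_{t,y})}{\Jac^u G_{\dist(c(t),z)}(u_{t,z})}.$$

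Next I would rewrite this in terms of the backward flow applied to the vectors $v_{\xi,z},v_{\xi,y}$ based at $z$ and $y$. The key geometric identity is that, as $t\to\infty$, flowing $v_{\xi,z}$ backward lands asymptotically on the ray $c$: more precisely $G_{-s}(v_{\xi,z})$ and the appropriate point of $c$ converge exponentially (this is exactly the statement that $v_{\xi,z}$ and the negative of $u_{t,z}$ point to the same point $\xi$ at infinity, so they lie on the same stable manifold for the backward flow). The amount of time needed to flow $v_{\xi,z}$ backward to reach ``the level of $c(t)$'' is $\dist(c(t),z)$, which by the definition of the Busemann cocycle equals $t+\beta_\xi(y,z)$ up to an error tending to $0$; for $y$ it is $\dist(c(t),y)=t+o(1)$. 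So one should be able to replace $\Jac^u G_{\dist(c(t),z)}(u_{t,z})$ by $\Jac^u G_{-t-\beta_\xi(y,z)}(v_{\xi,z})^{-1}$ and $\Jac^u G_{\dist(c(t),y)}(u_{t,y})$ by $\Jac^u G_{-t}(v_{\xi,y})^{-1}$, each up to a multiplicative error going to $1$. Carrying this substitution through the ratio yields exactly the formula in Lemma~\ref{kerneljacob1}. (It is cleanest to absorb the Busemann normalization by reparametrizing: set $s=t+\beta_\xi(y,z)$ in the $z$-factor, noting that $\lim_{t}$ and $\lim_{s}$ agree.)

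The rigorous justification of the two ``up to a multiplicative error going to $1$'' steps is where the distortion control does the work, and this is the main obstacle. One invokes the estimate behind \eqref{distortion1} (equivalently, H\"older continuity of $\widetilde{\phi}^u$ on $T^1\widetilde L$, as used in \cite[Lemma 3.2, Chapter III]{M}): when two unit vectors lie on the same stable manifold for the backward geodesic flow and their backward orbits stay exponentially close, the ratio of the corresponding unstable Jacobians over a long backward time has a finite positive limit, and replacing the vector or perturbing the time by a bounded amount changes this limit by a controlled factor. Concretely, $G_{-t-\beta_\xi(y,z)}(v_{\xi,z})$ and $-u_{t,z}$ (pushed an appropriate distance) are exponentially close and direct nearly opposite geodesics, so their unstable Jacobians over the relevant time windows differ by $1+O(e^{-\mathrm{const}\cdot t})$; the same for $y$. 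I would therefore: (1) record the Jacobian--potential identity; (2) express $k^u$ via forward Jacobians of the $u_{t,\cdot}$ as above; (3) use the Busemann cocycle to match travel times; (4) apply the distortion bound to pass from $u_{t,\cdot}$ to $v_{\xi,\cdot}$ and to absorb $o(1)$ time errors; and (5) conclude by taking the limit, which exists precisely because each factor converges. Since $\phi^u$ is only H\"older (not smooth) in the leaves, one must be careful that all the comparisons are done within a single leaf of $\hcF$, where the H\"older distortion estimates of Lemma~\ref{distortioncontrol} apply; this is automatic here since $y,z,\xi$ all live over the fixed leaf $L$.
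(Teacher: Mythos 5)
Your overall strategy is the right one and is what the paper alludes to by citing \cite[\S 4]{BR}: convert the two integrals in \eqref{gibbskernel} into logarithms of unstable Jacobians via the identity $\int_0^\ell\widetilde{\phi}^u(G_s(w))\,ds=-\log\Jac^u G_\ell(w)$ (an immediate consequence of \eqref{Eq:potential} and the chain rule), and then trade the segment-vectors $u_{t,\cdot}$ for the fixed vectors $v_{\xi,\cdot}$ by a distortion estimate. The Jacobian--potential identity is derived correctly, and if one takes $c$ to be the ray with $\dot{c}(t)=-G_{-t}(v_{\xi,y})$ the $y$-factor is even exact, namely $\Jac^u G_{\dist(c(t),y)}(u_{t,y})=1/\Jac^u G_{-t}(v_{\xi,y})$, so the whole proof reduces to the $z$-factor comparison you set up.

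The geometric justification of that comparison is not correct as written, however. You assert that $v_{\xi,z}$ and $-u_{t,z}$ ``point to the same point $\xi$ at infinity, so they lie on the same stable manifold for the backward flow,'' and then invoke Lemma~\ref{distortioncontrol}. This is false: $-u_{t,z}$ points to the endpoint at infinity of the geodesic through $z$ and $c(t)$, which tends to $\xi$ as $t\to\infty$ but is distinct from $\xi$ for every finite $t$; consequently $w_t:=G_{\dist(c(t),z)}(u_{t,z})$ and $v_{\xi,z}$, both based at $z$, lie on \emph{different} unstable leaves, and $G_{-t-\beta_\xi(y,z)}(v_{\xi,z})$ is not on any invariant leaf through $u_{t,z}$. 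Lemma~\ref{distortioncontrol} therefore does not apply verbatim. (There is also a sign slip: the comparison should be between $G_{-t-\beta_\xi(y,z)}(v_{\xi,z})$ and $u_{t,z}$, both of which point towards $z$, not $-u_{t,z}$.) The correct substitute is the distortion computation underlying \cite[III.Lemma~3.2]{M}, in two steps. First, $\dist(c(t),z)-(t+\beta_\xi(y,z))\to0$ and $|\log\Jac^u G_s|\le\|\phi^u\|_\infty|s|$, so shifting the time parameter costs only $o(1)$ in the logarithm. Second, the backward orbits $G_{-s}(w_t)$ and $G_{-s}(v_{\xi,z})$ for $s\in[0,t+\beta_\xi(y,z)]$ run along the two geodesic segments from $z$ towards $c(t)$ and towards $\xi$; the far endpoints lie on a common horosphere at mutual distance $O(e^{-at})$ (here is where pinched negative curvature and the Busemann cocycle enter), so by convexity of the distance function the two orbits stay $O(e^{-at})$-close for all such $s$, and H\"older continuity of $\widetilde{\phi}^u$ in the leaf gives $\int_0^{t+\beta_\xi(y,z)}|\widetilde{\phi}^u(G_{-s}(w_t))-\widetilde{\phi}^u(G_{-s}(v_{\xi,z}))|\,ds=O\bigl((t+\beta_\xi(y,z))\,e^{-\alpha a t}\bigr)\to0$. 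Replacing your ``same stable manifold'' step by this estimate repairs the argument; the remainder of your outline is sound.
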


\begin{rem}
\label{remcocyclerelation}
Remark that the Gibbs kernel satisfies the following cocycle relation for all $x,y,z\in\tL$ and $\xi\in\tL(\infty)$
\begin{equation}
\label{cocyclerelation}
k^u(x,y;\xi)k^u(y,z;\xi)=k^u(x,z;\xi).
\end{equation}
It also satisfies the following equivariance relation for every $y,z\in\tL$, $\xi\in\tL(\infty)$ and $\gamma\in\pi_1(L)$
\begin{equation}
\label{equivariancerelation}
k^u(\gamma y,\gamma z;\gamma\xi)=k^u(y,z;\xi)
\end{equation}

\end{rem}

\begin{rem}
\label{kerneljacob2}
What follows will be useful in the sequel. Let $v,w\in\hM$ be two vectors lying in the same center-unstable manifold. Call $L$ the leaf containing their basepoints. Consider lifts $\tilde{v},\tilde{w}\in T^1\tL$ lying inside the same fundamental domain for the action of $\pi_1(L)$. Denote by $y,z$ the respective basepoints of $\tilde{v}$ and $\tilde{w}$.

Let $c_{\tilde{v}}$, and $c_{\tilde{w}}$ be the geodesic directed respectively by $\tilde{v}$ and $\tilde{w}$. Let $\xi=c_{\tilde{v}}(-\infty)=c_{\tilde{w}}(-\infty)$ where the latter notation is used for the common past extremity of geodesics $c_{\tilde{v}}$ and $c_{\tilde{w}}$ (recall that $\tilde{v}$ and $\tilde{w}$ belong to the same center-unstable manifold). Then by Lemma \ref{kerneljacob1} and Formula \eqref{formulapsicu}
$$\psi^{cu}_v(w)=k^u(y,z;\xi).$$
Note that by Equivariance Relation \eqref{equivariancerelation} $k^u(y,z;\xi)$ does not depend of the lifts of $v$ and $w$. We will sometimes use the abusive but convenient notation $k^u(v,w;c_v(-\infty))$.
\end{rem}

\begin{rem}
\label{kernelhyperbolic}
When the sectional curvature of $L$ is constant equal to $-1$ we have for every $v\in\hL$, $\phi^u(v)=-(d-1)$ in such a way that  $k^u$ coincide with the \emph{Poisson kernel}
$$k(y,z;\xi)=\exp\left[-(d-1)\beta_{\xi}(y,z)\right].$$
\end{rem}

\subsection{$\phi^u$-harmonic measures}
\label{phiumeasuresection}

\paragraph{$\phi^u$-harmonic functions.} We are going to define a class of positive functions of $\tL$ with an integral representation similar to the \emph{Poisson representation} of harmonic functions in negative curvature (see \cite{AS}).

A positive function $h:\tL\to(0,\infty)$ is said to be \emph{$\phi^u$-harmonic} if there exists a finite Borel measure $\eta_o$ on $\tL(\infty)$ such that for every $z\in\tL$
\begin{equation}
\label{phiuharmonic}
h(z)=\int_{\tL(\infty)}k^u(o,z;\xi) d\eta_o(\xi),
\end{equation}
$o\in\tL$ being a base point. A positive function of $L$ is said to be $\phi^u$-harmonic if it lifts as a $\phi^u$-harmonic function of $\tL$. 

\begin{rem}
The notion of $\phi^u$-harmonic function is independent of the choice of the base point $o$. If $o'$ is another point of $\tL$, if $\eta_o$ is a finite Borel measure on $\tL(\infty)$ and if $h$ is the corresponding $\phi^u$-harmonic function, then we can write for every $z\in\widetilde{L}$
$$h(z)=\int_{\widetilde{L}(\infty)}k^u(o',z;\xi)d\eta_{o'}(\xi),$$
where $d\eta_{o'}(\xi)=k^u(o,o';\xi)d\eta_o(\xi)$.
\end{rem}

The next proposition follows directly from Remark \ref{kernelhyperbolic}.
\begin{proposition}
\label{phiunormal}
Assume that $L$ is a hyperbolic manifold. Then $\phi^u$-harmonic functions coincide with harmonic function (for the Laplace operator).
\end{proposition}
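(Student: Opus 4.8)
The plan is to reduce the statement to the classical Poisson representation of positive harmonic functions on real hyperbolic space, using Remark \ref{kernelhyperbolic}. When $L$ is hyperbolic its universal cover $\tL$ is isometric to $\Hyp^d$, and Remark \ref{kernelhyperbolic} identifies the Gibbs kernel with the Poisson kernel, $k^u(o,z;\xi)=k(o,z;\xi)=\exp\left[-(d-1)\beta_\xi(o,z)\right]$. Hence, by definition, a positive function $h$ on $\tL$ is $\phi^u$-harmonic precisely when it admits a Poisson integral representation $h(z)=\int_{\tL(\infty)}k(o,z;\xi)\,d\eta_o(\xi)$ against some finite Borel measure $\eta_o$ on $\tL(\infty)$, and a positive function of $L$ is $\phi^u$-harmonic precisely when its lift to $\tL$ is of this form. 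Since the covering projection is a local isometry, harmonicity of a function on $L$ is equivalent to harmonicity of its lift, so it suffices to show that the positive functions on $\tL$ admitting such a representation are exactly the positive harmonic functions on $\tL$.

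First I would check ``$\phi^u$-harmonic $\Rightarrow$ harmonic''. For fixed $\xi\in\tL(\infty)$ the function $z\mapsto k(o,z;\xi)$ is harmonic on $\Hyp^d$: the Busemann function satisfies $|\nabla_z\beta_\xi(o,z)|\equiv 1$ and $\Delta_z\beta_\xi(o,z)\equiv d-1$ (the horospheres of $\Hyp^d$ having constant mean curvature $d-1$), so that $\Delta_z k(o,z;\xi)=k(o,z;\xi)\bigl[(d-1)^2|\nabla_z\beta_\xi|^2-(d-1)\Delta_z\beta_\xi\bigr]=0$. Since $k(o,\cdot;\cdot)$ together with its derivatives in $z$ up to second order is continuous in both variables and, by the bounded geometry of $\tL$, locally bounded uniformly in $\xi$, one may differentiate under the integral sign and conclude $\Delta h(z)=\int_{\tL(\infty)}\Delta_z k(o,z;\xi)\,d\eta_o(\xi)=0$; thus $h$ is harmonic.

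For the converse I would invoke the classical Poisson representation of positive harmonic functions, due to Furstenberg and Helgason in constant curvature and to Anderson--Schoen in the pinched negative curvature case (see \cite{AS}): the Martin boundary of $\tL$ is its visual boundary $\tL(\infty)$, the minimal positive harmonic functions normalized at $o$ are exactly the Poisson kernels $z\mapsto k(o,z;\xi)$, and every positive harmonic function $h$ on $\tL$ is of the form $h(z)=\int_{\tL(\infty)}k(o,z;\xi)\,d\eta_o(\xi)$ for a unique finite Borel measure $\eta_o$; finiteness is automatic because $k(o,o;\xi)\equiv 1$ forces $\eta_o(\tL(\infty))=h(o)<\infty$. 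Hence every positive harmonic function on $\tL$ is $\phi^u$-harmonic, which finishes the proof. The only delicate point is the interchange of Laplacian and integral in the first implication, and this is routine here because of the uniform control on the Poisson kernel provided by the bounded geometry of the leaves.
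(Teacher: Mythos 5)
Your proof is correct and follows the same approach the paper intends: the paper simply states that the proposition ``follows directly from Remark \ref{kernelhyperbolic},'' i.e.\ from the identification of the Gibbs kernel with the Poisson kernel, and you supply exactly the two steps (harmonicity of the Poisson kernel via the Busemann identities $|\nabla\beta_\xi|=1$, $\Delta\beta_\xi=d-1$, and the Poisson representation of positive harmonic functions from \cite{AS}) that make this precise.
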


\paragraph{A natural $\phi^u$-harmonic function on the leaves.} We want to show the existence of a natural positive function on $M$ which is continuous and whose restriction to any leaf $L$, is $\phi^u$-harmonic. This will allow us to define the notion of \emph{totally invariant $\phi^u$-harmonic measure}, and to show that any transverse holonomy invariant measure gives rise to such a totally invariant measure.

Before we state the result we introduce a family $(E_t)_{t\geq 0}$ of subbundles of $T\hcF$. Let $L$ be a leaf of $\F$ and let $v\in\hL$ be based at $x\in L$. First set $E(v)=E_0(v)=T_vT^1_xL$. By Lemma \ref{transvouille} (it was stated with $E^{cs}$ instead of $E^{cu}$, but a symmetric argument applies) we have
$$T\hcF=E\oplus E^{cu}.$$

For $t\geq 0$ define the bundle $E_t$ by requiring that $E_t(G_t(v))=D_{x}G_t\,E(v)$. Note in particular that for every $t\geq 0$ we have
$$T\hcF=E_t\oplus E^{cu}.$$

\begin{proposition}
\label{trouverunefctharmonique}
Let $(M,\F)$ be a closed foliated manifold endowed with a negatively curved leafwise metric. Then the map $h_0:M\to\R$ which associates to $x\in M$ the number
$$h_0(x)=\int_{T_x^1\F}\theta_{E,E^{cu}}(v)\,d\Leb_{T^1_x\F}(v),$$
is continuous on $M$ and, when restricted to the leaves of $\F$, is $\phi^u$-harmonic.
\end{proposition}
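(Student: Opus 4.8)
The plan is to work on the universal cover $\tL$ of a fixed leaf $L$ and express $h_0$ as an integral over the sphere at infinity, thereby exhibiting it as a $\phi^u$-harmonic function in the sense of \eqref{phiuharmonic}. First I would fix a base point $o\in\tL$ and, for $x\in L$, lift to a point $\tilde x\in\tL$. The key geometric observation is that the fibers $T^1_z\tL$ are identified with $\tL(\infty)$ via the maps $\pi_z=\widehat\pi_z\circ\iota$ of Remark \ref{remvisibility}, and that these identifications are holonomy maps along the center-unstable foliation between unit tangent fibers. Pushing $\Leb_{T^1_z\tL}$ forward by $\pi_z$ gives a measure on $\tL(\infty)$, and comparing the measures obtained from two different base points amounts exactly to computing the Jacobian of a center-unstable holonomy map, which by Theorem \ref{absolutecontinuity} (or rather its center-unstable analogue, see Remark \ref{stablecase}) is $\lim_{T\to\infty}\Jac^{s}G_{-T}$-type expressions — equivalently, by Remark \ref{kerneljacob2} and Lemma \ref{kerneljacob1}, these Jacobians are given by the Gibbs kernel $k^u$.

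Next I would make the change of variables explicit. Writing $\theta(v)$ as the angle function between $E^{cu}(v)$ and $T_vT^1_x\F$, and noting that the Lebesgue measure $\Leb_{T^1_x\F}$ restricted to the unit fiber is comparable to the Lebesgue measure on center-unstable plaques weighted by $1/\theta(v)$ (this is the same Fubini-type computation as in the proof of Proposition \ref{leblps}, with the roles of the bundles adjusted), the integrand $\theta(v)\,d\Leb_{T^1_x\F}(v)$ becomes, after transport to $\tL(\infty)$ by $\pi_{\tilde x}$, a measure on $\tL(\infty)$ whose density with respect to the analogous measure built from $o$ is precisely $k^u(o,\tilde x;\xi)$. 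Hence
$$h_0(x)=\int_{T^1_{\tilde x}\tL}\theta(v)\,d\Leb_{T^1_{\tilde x}\tL}(v)=\int_{\tL(\infty)}k^u(o,\tilde x;\xi)\,d\eta_o(\xi),$$
where $\eta_o=\pi_{o\ast}\Leb_{T^1_o\tL}$ is a fixed finite Borel measure on $\tL(\infty)$ (the visibility class representative at $o$), independent of $x$. This is exactly the integral representation \eqref{phiuharmonic}, so $h_0|_{\tL}$ is $\phi^u$-harmonic; equivariance under $\pi_1(L)$ follows from Equivariance Relation \eqref{equivariancerelation}, so $h_0$ descends to $L$.

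For continuity of $h_0$ on $M$, I would argue that the angle function $\theta$ is continuous on $\hM$ (it is built from the continuous bundles $E^{cu}$ and the smooth vertical bundle $T\UF$, just as $\alpha$ in \S\ref{lpsliouv}), that the fibers $T^1_x\F$ vary continuously, and that $\Leb_{T^1_x\F}$ is the normalized Lebesgue measure on a sphere varying continuously with $x$; integrating a continuous function against a continuously varying probability measure on a compact fiber gives a continuous function of $x$. The main obstacle I anticipate is the bookkeeping in the second step: correctly matching up the Jacobian of the center-unstable holonomy between unit tangent fibers (which Theorem \ref{absolutecontinuity} gives in terms of $\Jac^{cu}G_{-T}$, hence of $\Jac^u$ and $\Jac^s$) with the Gibbs kernel $k^u$ as in Remark \ref{kerneljacob2}, while keeping track of the angle factors $\theta$ and the Fubini weight so that they cancel and leave exactly $k^u(o,\tilde x;\xi)\,d\eta_o(\xi)$ with no leftover density. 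Getting the normalizations right — in particular checking that $\psi^{cu}_v(w)=k^u(y,z;\xi)$ is really the Radon–Nikodym derivative of the pushed-forward Lebesgue measures and not its reciprocal — is where the argument has to be done carefully rather than invoked.
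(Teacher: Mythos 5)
Your route is essentially the paper's: both push the angle\-weighted measure $\omega_x=\theta\,\Leb_{T^1_x\F}$ forward to $\tL(\infty)$ by the maps $\pi_z$, identify the Radon--Nikodym cocycle of the resulting family $(\eta_z)$ with the Gibbs kernel, read off the integral representation from the total masses, and deduce continuity from continuity of $\theta$ and of the fiber volume element. So the approach matches; the problems are in the execution of the key computation, which you correctly flag as delicate but do not close.

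The Jacobian of a cu-holonomy $\hh:T_1\to T_2$ between unit tangent fibers that Remark \ref{othertransversals}/Theorem \ref{absolutecontinuity} actually give is $\lim_{t\to\infty}\Jac^{T_2}G_{-t-\beta}(w)/\Jac^{T_1}G_{-t}(v)$, where $\Jac^{T_i}$ is the restriction of $DG_t$ to the \emph{tangent spaces of the fibers} --- not $\Jac^{cu}$, as you write. Converting this ratio into a ratio of $\Jac^u$ (hence into $k^u$) is exactly what the paper's Lemma \ref{holonomiecentreinstablesphereasphere} does, and it needs three inputs absent from your plan: (i) invariance of the Liouville measure, to express $\Jac^{T_i}G_t$ via $\Jac^{cu}G_t$ and a running angle $\theta_t$; (ii) the identity $\Jac^{cu}G_t=\Jac^uG_t$ from orthogonality of the flow to unstable manifolds; and (iii) the backwards $\lambda$-lemma (Theorem \ref{lambdalemma}) together with H\"older continuity of $\log\alpha$, to show $\theta_{-t}(G_{-t}w)$ asymptotes to $\alpha(G_{-t}w)$ and that the ratio of these angles at $v$ and $w$ tends to $1$. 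Your Fubini comparison does not supply the asymptotic control in (iii). Separately, your final display is internally inconsistent: you should have $\eta_o=\pi_{o\ast}\omega_o$, not $\pi_{o\ast}\Leb_{T^1_o\tL}$; with the latter the formula gives value $1$ at $z=o$, whereas $h_0(o)=\int_{T^1_o\F}\theta\,d\Leb$ is strictly less than $1$ in general. The slip is harmless for the bare statement of $\phi^u$-harmonicity (any finite Borel measure suffices), but it shows the normalization you rightly worried about was not in fact tracked.
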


The first step in proving this proposition is the following lemma.

\begin{lemma}
\label{holonomiecentreinstablesphereasphere}
Let $T_1$, $T_2$ be two open transverse sections to $\W^{cu}$ which are included in some unit tangent fibers. Assume that there exists a holonomy map along $\W^{cu}$, $\hh:T_1\to T_2$, which is a homeomorphism. Then for all $w\in T_2$ and $v=\hh^{-1}(w)\in T_1$:
$$\frac{d\left[\hh_{\ast}\Leb_{T_1}\right]}{d\Leb_{T_2}}(w)=\frac{\theta_{E,E^{cu}}(w)}{\theta_{E,E^{cu}}(v)}k^u(w,v;\xi),$$
where $\xi=c_{w}(-\infty)$ (we use here the notations of Remark \ref{kerneljacob2}).
\end{lemma}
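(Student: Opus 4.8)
The plan is to reduce the statement to a Jacobian computation on the universal cover of a single leaf, and then to run the flow-conjugation argument used to prove Formula~\eqref{jacuhol} (see \cite[Chapter~III]{M}), the only new feature being the appearance of the angle function $\theta$. Since the statement is local along $\W^{cu}$, I pass to the universal cover $\tL$ of the leaf $L$ carrying the basepoints of $T_1$ and $T_2$; then $\hh$ is a restriction of the holonomy map $\sigma_{y_1,y_2}\colon T^1_{y_1}\tL\to T^1_{y_2}\tL$ of $\W^{cu}$ between two unit tangent fibers, which by Remark~\ref{remvisibility} is nothing but the transition map $\pi_{y_2}^{-1}\circ\pi_{y_1}$ between the two identifications of these fibers with $\tL(\infty)$. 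Writing $v\in T^1_{y_1}\tL$, $w=\hh(v)$ and $\xi=c_v(-\infty)=c_w(-\infty)$, the quantity to compute is $\Jac\,\sigma_{y_1,y_2}(v)$.

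Since $G_t$ maps leaves of $\W^{cu}$ to leaves of $\W^{cu}$, for each $t>0$ the map $\hh_t:=G_{-t}\circ\sigma_{y_1,y_2}\circ G_t$ is the $\W^{cu}$-holonomy between the two transversals $G_{-t}(T^1_{y_1}\tL)$ and $G_{-t}(T^1_{y_2}\tL)$, which are leaves of the foliation $\UF_{-t}=G_{-t}(\UF)$, and it sends $G_{-t}v$ to $G_{-t}w$. Factoring accordingly and applying the chain rule,
$$\Jac\,\sigma_{y_1,y_2}(v)=\frac{\Jac\!\left(G_{-t}|_{T^1_{y_1}\tL}\right)(v)}{\Jac\!\left(G_{-t}|_{T^1_{y_2}\tL}\right)(w)}\cdot\Jac\,\hh_t(G_{-t}v),$$
where $\Jac(G_{-t}|_{T^1_{y_i}\tL})$ is the Jacobian of $G_{-t}$ restricted to the fiber, regarded as a diffeomorphism onto the corresponding leaf of $\UF_{-t}$. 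The two fiber Jacobians I would then unpack by linear algebra in the Sasaki inner product, using Liouville invariance (the flow preserves the Liouville measure of each leaf of $\hcF$) together with the very definition of $\theta$ as the sine of the angle between $E^{cu}$ and the vertical bundle: each such Jacobian equals, up to the factor $\theta$ evaluated at the vector and at its $G_{-t}$-image, a jacobian of the flow along the invariant bundles ($\Jac^u$, $\Jac^{cs}$), together with angle corrections coming from the fact that the fibers are only transverse to, not contained in, the invariant subbundles.

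The remaining work is the passage to the limit $t\to+\infty$. By Theorem~\ref{lambdalemma} applied to the reversed flow, $\UF_{-t}$ converges to $\W^s$ in the $C^0$ topology of plane fields, so $\hh_t$ becomes, asymptotically, the $\W^{cu}$-holonomy between stable manifolds; and since $G_{-b}(w)\in W^u(v)$ for $b:=\beta_\xi(y_1,y_2)$, this holonomy carries a flow shift by time $b$ --- exactly the Busemann offset occurring in the Gibbs kernel --- which produces a $\Jac^u$-type factor. Using the distortion controls of Lemma~\ref{distortioncontrol}, the uniform H\"older continuity of the invariant distributions (Proposition~\ref{holderbundles}) and of the holonomy jacobians along the invariant foliations (Theorem~\ref{absolutecontinuity} and Remark~\ref{stablecase}), and --- crucially --- Liouville invariance once more (which forces the ``far away'' products of $\Jac^u$-, $\Jac^{cs}$- and $\theta$-factors evaluated at $G_{-t}v$ and $G_{-t}w$ to reassemble into $\Jac\,G_{-b}\equiv1$ up to an error tending to $1$, exactly as in the proof of Proposition~\ref{equalliouville}), all far-away contributions cancel. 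What survives is $\theta(w)/\theta(v)$ times the limit $\lim_{t\to\infty}\Jac^u G_{-t-\tau}(v)/\Jac^u G_{-t}(w)$ (with $\tau$ such that $G_{-\tau}(v)\in W^u_{loc}(w)$), which is $\psi^{cu}_w(v)=k^u(w,v;\xi)$ by Formula~\eqref{formulapsicu} and Remark~\ref{kerneljacob2}.

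The main obstacle is precisely this limiting step. Unlike in the proof of \eqref{jacuhol}, the two base points $G_{-t}v$ and $G_{-t}w$ do \emph{not} converge to one another: they remain a bounded distance $\approx|b|$ apart because of the flow offset, and the transversals $G_{-t}(T^1_{y_i}\tL)$ are only asymptotic to genuine stable manifolds. Consequently neither the residual holonomy $\Jac\,\hh_t$ nor the individual far-away jacobian and angle ratios converge on their own; one has to show that they reorganize, via Liouville invariance and the distortion estimate \eqref{distortion1}, into combinations that do converge. This bookkeeping is the technical heart of the argument, and it is where the $C^2$ (resp. $C^3$, cf. Remark~\ref{regularity}) regularity of the flow in the leaves is used in an essential way.
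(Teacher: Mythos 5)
Your overall strategy is the right one and matches the paper's in spirit: reduce to a Jacobian computation on the universal cover by conjugating with the flow, control the $t\to\infty$ limit via Liouville invariance of the leafwise Liouville measure, the reversed $\lambda$-lemma (Theorem~\ref{lambdalemma}), and H\"older continuity of the angle function, then identify what survives with the Gibbs kernel via Formula~\eqref{formulapsicu} and Remark~\ref{kerneljacob2}. You also correctly flag the roles of $\theta$ and of the Busemann offset. However, the particular conjugation you chose, $\hh_t=G_{-t}\circ\sigma_{y_1,y_2}\circ G_t$ with the \emph{same} time $t$ on both sides, is precisely what creates the obstacle you describe at the end, and nothing in the body of your proposal resolves it. Since $v$ and $w$ lie on the same center-unstable leaf but not on the same unstable leaf, the points $G_{-t}v$ and $G_{-t}w$ keep a flow-time separation $\approx\beta_\xi(v,w)$, the transversals $G_{-t}T_i$ are only asymptotically stable, and the residual Jacobian $\Jac\,\hh_t(G_{-t}v)$ has no identifiable limit on its own. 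Your final sentence, that these factors ``have to reorganize into combinations that do converge,'' names the missing step rather than carrying it out: this is a genuine gap.

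The paper avoids the problem by building the Busemann offset into the conjugation from the start. Running the Ma\~n\'e-type argument for the $cu$-holonomy $\hh$ between the general transversals $T_1,T_2$ (Theorem~\ref{absolutecontinuity} in the form provided by Remarks~\ref{othertransversals} and~\ref{stablecase}) gives
$$\frac{d\left[\hh_{\ast}\Leb_{T_1}\right]}{d\Leb_{T_2}}(w)=\lim_{t\to\infty}\frac{\Jac^{T_2}G_{-t-\beta_{\xi}(v,w)}(w)}{\Jac^{T_1}G_{-t}(v)},$$
where $\Jac^{T_i}G_t$ denotes $\det DG_t$ restricted to the tangent spaces of $T_i$. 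The crucial point, emphasized in the paper's proof, is that $G_{-\beta_\xi(v,w)}(w)$ and $v$ lie on the \emph{same unstable manifold}, so $G_{-t-\beta_\xi(v,w)}(w)$ and $G_{-t}(v)$ converge to one another as $t\to\infty$ and there is no residual holonomy at all. From there the rest of what you sketch goes through cleanly: Liouville invariance gives $1=\frac{\theta_t(G_t(\cdot))}{\theta(\cdot)}\,\Jac^{T_i}G_t\cdot\Jac^{cu}G_t$ with $\Jac^{cu}G_t=\Jac^u G_t$; the reversed $\lambda$-lemma sends $\theta_{-t}$ to the angle function $\alpha$ between $E^s$ and $E^{cu}$; and H\"older continuity of $\log\alpha$ together with the \emph{vanishing} unstable distance between $G_{-t}v$ and $G_{-t-\beta_\xi(v,w)}w$ forces the far-away $\alpha$-ratio to $1$. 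What remains is exactly $\frac{\theta(w)}{\theta(v)}\lim_{t\to\infty}\frac{\Jac^u G_{-t}(v)}{\Jac^u G_{-t-\beta_\xi(v,w)}(w)}=\frac{\theta(w)}{\theta(v)}\,k^u(w,v;\xi)$. So the fix is local: replace your symmetric conjugation by the Busemann-shifted one, and the technical obstacle you correctly identify simply never arises.
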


\begin{proof}
Let $T_1$, $T_2$ be two transverse sections to $\W^{cu}$, as stated in the lemma: they are open subsets of two unit tangent fibers, and there is a holonomy map along center-unstable leaves $\hh:T_1\to T_2$, which is a homeomorphism.

By Theorem \ref{absolutecontinuity} we know that this map is absolutely continuous and we know its Jacobian. We have, for all $w\in T_2$, and $v=\hh^{-1}(w)$

\begin{equation}
\label{jacobouille}
\frac{d\left[\hh_{\ast}\Leb_{T_1}\right]}{d\Leb_{T_2}}(w)=\lim_{t\to\infty}\frac{\Jac^{T_2}_{-t-\beta_{\xi}(v,w)}(w)}{\Jac^{T_1}_{-t}(v)},
\end{equation}
where $\Jac^{T_i}_t$ stands for the restriction of $\det(DG_t)$ to the tangent space to $T_i$ and $\xi=c_{w}(-\infty)$ (we have used the same abusive notation for the Busemann function as explained  in Remark \ref{kerneljacob2}).

\begin{rem}
Note that the definition of the Jacobian, one has $d[\hh_{\ast}\Leb_{T_1}]/d\Leb_{T_2}=\Jac\hh^{-1}$, where $\hh^{-1}$ is a holonomy map between $T_2$ and $T_1$: Formula \eqref{jacobouille} follows (see also Remark \ref{othertransversals}).
\end{rem}

Let us emphasize the following fact that we shall use later. We used that if $v$ and $w$ lie in the same center-unstable manifold then $G_{-\beta_{\xi}(v,w)}(w)$ and $v$ lie in the same unstable manifold, and the limit \eqref{jacobouille} makes sense.

Using the invariance of the Liouville measure by the geodesic flow inside the leaves, we get for all $w\in T_2$ and $t\in\R$

$$1=\frac{\theta_{E_t,E^{cu}}(G_t(w))}{\theta_{E,E^{cu}}(w)}\Jac^{T_2}_t(w)\Jac^{cu}_t(w),$$

By definition the unstable distribution $E^u$ is the space of unstable Jacobi fields which are everywhere orthogonal to the geodesics: we refer to \cite[p.90]{Ba} for more details. This implies that the vector field generating the geodesic flow of $\hL$ is orthogonal to the distribution $E^u$. Moreover the geodesic flow acts by isometries on the orbits. Hence we have for all $t\in\R$, $\Jac^{cu}_t(w)=\Jac^u_t(w)$.

Finally, using the $\lambda$-lemma, i..e. Theorem \ref{lambdalemma} (reversing the time and exchanging the roles of $s$ and $u$), it comes that $E_{-t}(G_{-t}(w))$ approaches $E^s(G_{-t}(w))$ as $t$ tends to $\infty$. In particular we have

$$\lim_{t\to\infty}\frac{\theta_{E_{-t},E^{cu}}(G_{-t}(w))}{\theta_{E^s,E^{cu}}(G_{-t}(w))}=1.$$

Now note that $\log\theta_{E^s,E^{cu}}$, is H\"older continuous in leaves of $\hcF$ (see the proof of Proposition \ref{equalliouville}). If $v=\hh^{-1}(w)\in W^{cu}(w)$, then $v$ and $G_{-\beta_{\xi}(w,v)}(w)$ are in the same unstable manifold. Hence we deduce that
$$\lim_{t\to\infty}\frac{\theta_{E^s,E^{cu}}(G_{-t}(v))}{\theta_{E^s,E^{cu}}(G_{-t-\beta_{\xi}(v,w)}(w))}=1.$$
It follows that

$$\lim_{t\to\infty}\frac{\theta_{E_{-t},E^{cu}}(G_{-t}(v))}{\theta_{E_{-t-\beta_{\xi}(v,w)},E^{cu}}(G_{-t-\beta_{\xi}(w,v)}(w))}=1.$$

Putting all this together we find
\begin{eqnarray*}
\lim_{t\to\infty}\frac{\Jac^{T_2}_{-t-\beta_{\xi}(v,w)}(w)}{\Jac^{T_1}_{-t}(v)}&=&\frac{\theta_{E,E^{cu}}(w)}{\theta_{E,E^{cu}}(v)}\lim_{t\to\infty}\frac{\Jac^u_{-t}(v)}{\Jac^u_{-t-\beta_{\xi}(v,w)}(w)}\\
                                                                                     &=&\frac{\theta_{E,E^{cu}}(w)}{\theta_{E,E^{cu}}(v)}k^u(w,v;\xi),
\end{eqnarray*}

which proves the lemma.
\end{proof}

\begin{proof}[Proof of Proposition \ref{trouverunefctharmonique}]
Define on the unit tangent fiber $T^1_x\F$, $x\in M$, the measure $\omega_x$ with a density $\theta_{E,E^{cu}}(v)$ with respect to the Lebesgue measure. Note that $h_0(x)$ is precisely the total mass of $\omega_x$.

Consider a pair of transverse sections of the center-unstable foliation $T_1,T_2$ which are open sets of some unit tangent fibers, and such that there is a holonomy map $\hh:T_1\to T_2$ which is a homeomorphism. Let $w\in T_2$ be a vector based at $z\in M$ and let $v=\hh^{-1}(w)$ be based at $y\in M$: the two basepoints belong to a same leaf $L$. Then by Lemma \ref{holonomiecentreinstablesphereasphere} we get
\begin{equation}
\label{relholonomy}
\frac{d\left[\hh_{\ast}\omega_{y}\right]}{d\omega_{z}}(w)=k^u(z,y;\xi),
\end{equation}
where $\xi=c_{w}(-\infty)$ and we made the following abusive notation explained in Remark \ref{kerneljacob2}.

Now we can naturally lift the family $(\omega_x)_{x\in L}$ of measures on $T_x^1L$ to the universal cover so as to obtain a family of measures on $T_z^1\widetilde{L}$ still denoted by $(\omega_z)_{z\in\widetilde{L}}$. Consider the maps
$\pi_z$ and $\sigma_{y,z}$ defined in Remark \ref{remvisibility}. Recall that $\pi_z$ maps a vector $v\in T_z^1\widetilde{L}$ on the \emph{past} extremity of the geodesc it directs, and that $\sigma_{y,z}$ is a holonomy map between $T^1_y\widetilde{L}$ and $T^1_z\widetilde{L}$ along center-unstable manifolds. Given $z\in\tL$ define $\eta_z=\pi_{z\ast}\omega_z$, which is a measure on $\tL(\infty)$. Using Relation \eqref{relholonomy} we see that it is a family of equivalent measures which satisfy

\begin{equation}
\label{mesuresinfiniteugibbs}
\frac{d\eta_{z}}{d\eta_{y}}(\xi)=\frac{d\omega_{z}}{d[\sigma_{y,z\ast}\omega_{y}]}(\pi_{z}^{-1}(\xi))=k^u(y,z;\xi).
\end{equation}
In other words the Gibbs kernel is realized as the Radon-Nikodym cocycle of the family $(\eta_z)_{z\in\tL}$. In particular integrating Relation \eqref{mesuresinfiniteugibbs} against $\eta_{y}$ we obtain that the function $z\mapsto\mass(\eta_z)$ is $\phi^u$-harmonic. By projecting this function down to $L$ one obtains precisely the function $(h_0)_{|L}$, showing that this function is $\phi^u$-harmonic.

In order to conclude the proof, it remains to prove that $h_0$ is continuous in $M$. Note that $\theta_{E,E^{cu}}$ is continuous in $\hM$ (as are unit tangent fibers and center-unstable manifolds). Note moreover that the metric, and therefore the volume element $x\mapsto \Leb_{T^1_x\F}$, varies continuously. More precisely, let $U$ be a local foliated chart for $\F$ which trivializes the unit tangent bundle. In smooth coordinates $pr^{-1}(U)\simeq U\times S$ where $S$ is a sphere endowed with a volume form $\vol$. In these coordinates $\Leb_{T^1_x\F}$ has a smooth density with respect to $\vol$. This density varies continuously with $x$ in the smooth topology.

Finally $h_0(x)$ being the integral of $\theta_{E,E^{cu}}$ against $\Leb_{T^1_x\F}$, it varies continuously with $x$.
\end{proof}

\paragraph{$\phi^u$-harmonic measures.} Now we can define the notion of $\phi^u$-harmonic measure for foliations with negatively curved leaves just as we defined harmonic measures (see Definition \ref{harmonicmeasure}).

\begin{defi}[$\phi^u$-harmonic measures]
Let $(M,\F)$ be a closed foliated manifold endowed with a negatively curved leafwise metric. A probability measure $m$ on $M$ is said to be $\phi^u$-harmonic if it has Lebesgue disintegration for $\F$, and if the local densities are $\phi^u$-harmonic functions.
\end{defi}
The question of existence of these measures will be treated in the next paragraph. We first show how transverse invariant measures give rise to canonical $\phi^u$-harmonic measures.

\paragraph{Totally invariant $\phi^u$-harmonic measures.} We said that when $\F$ possesses a transverse measure invariant by holonomy, we can form a harmonic measure by combining it with the volume inside the leaves.

Similarly, we can form a $\phi^u$-harmonic measure by combining it with the measure whose density with respect the volume of the leaves is given by the $\phi^u$-harmonic function $h_0$.

\begin{defi}[Totally invariant measures]
\label{totallyinvphiu}
Let $(M,\F)$ be a closed foliated manifold endowed with a negatively cured leafwise metric. Let $\A=(U_i,\phi_i)$ be a good foliated atlas and $(T_i)_{i\in I}$ be an associated complete system of transversals. A $\phi^u$-harmonic measure $m$ on $M$ is said to be totally invariant if there exists a holonomy-invariant family of transverse measures $(\nu_i)_{i\in I}$ on $T_i$ such that when $m(U_i)>0$ we have

$$dm_{|U_i}=(h_0(z)\,d\Leb_{P_i(x)}(z))\,d\nu_i(x),$$
where $h_0$ is defined in Proposition \ref{trouverunefctharmonique} and $P_i(x)$ denotes the plaque of $x$.
\end{defi}

\subsection{Bijective correspondence between Gibbs u-states and $\phi^u$-harmonic measures}
\label{bijcorrespondencesection}

The goal here is to introduce a natural bijective correspondence $p:\G ibbs^u\to\HH ar^{\phi^u}(\F)$ where

\begin{itemize}
\item $\G ibbs^u$ denotes the set of Gibbs u-states for the foliated geodesic flow;
\item $\HH ar^{\phi^u}(\F)$ denotes the set of $\phi^u$-harmonic measures for $\F$.
\end{itemize}

\subsubsection{A toy model}
\label{toy}

The proof follows closely the main line of reasoning of \cite{Al1} where we show how to lift canonically harmonic measures in the sense of Garnett. We propose to give a glimpse of the argument in a toy example. 

In what follows $L$ is a leaf  of $\F$ and  $\tL$ denotes its universal cover. We will show how to lift to $T^1\tL$ a $\phi^u$-harmonic measure of
$\tL$. Lifts of center unstable manifolds to $\tL$ shall be denoted by $\widetilde{W}^{cu}(v)$, and the foliation they define, by $\widetilde{\W}^{cu}$. Manifolds $\widetilde{W}^u(v),\widetilde{W}^{cs}(v),\widetilde{W}^s(v)$ and foliations $\widetilde{\W}^{u},\widetilde{\W}^{cs},\widetilde{\W}^{s}$ are defined analogously.

\paragraph{Trivialization of the center unstable foliation.} There is a identification $\rho:T^1\tL\to\tL\times\tL(\infty)$ sending a vector $v$ 
on the couple $(z,\xi)=(c_v(0),c_v(-\infty))$ where $c_v$ is the geodesic directed by $v$.

The center unstable foliation $\widetilde{\W}^{cu}$ is sent onto the trivial foliation $(\tL\times\{\xi\})_{\xi\in\tL(\infty)}$.
A slice $\tL\times\{\xi\}$ has to be thought as filled by unstable horospheres centered at $\xi$ and by geodesics starting from $\xi$.
 In particular the geodesic flow acts on such a slice.

Moreover, even if a priori $\rho$ is only a homeomorphism, its restriction to any center unstable leaf is a diffeomorphism on its image.

\paragraph{Volume elements.} The unit tangent bundle $T^1\tL$ is endowed with its Sasaki metric. Each center unstable leaf $\widetilde{W}^{cu}(v)$ 
is endowed with the induced Riemannian structure, and with a volume form. Since $\rho$ is a smooth diffeomorphism in restriction to $\widetilde{W}^{cu}(v)$
the corresponding slice $\tL\times\{\xi\}$ may be endowed with the image volume form denoted by $\Leb^{cu}_{\tL\times\{\xi\}}$.

Moreover, any $\tL\times\{\xi\}$ carries a metric that makes it an isometric copy of $\tL$. The corresponding volume form shall be denoted
by $\Leb_{\tL\times\{\xi\}}$.

Note that a priori these two volume forms are different, although equivalent. However, because of the algebraic nature of the hyperbolic space, they coincide when the curvature is constant.

\paragraph{Unrolling argument.} Consider a $\phi^u$-harmonic measure of $\tL$ given by $dm(z)=h(z)d\Leb(z)$, where $h$ is a $\phi^u$-harmonic function.
Write $h(z)=\int_{\tL(\infty)} k^u(o,z;\xi)d\eta_o(\xi)$ where $o\in\tL$ is a base point and 
$\eta_o$ is a finite Borel measure on $\tL(\infty)$.

On the slice $\tL\times\{\xi\}$ consider a measure $dm_{\xi}(z)=k^u(o,z;\xi)d\Leb_{\tL\times\{\xi\}}(z)$.

Now ``unroll'' the $\phi^u$-harmonic measure $m$ i.e. consider the measure $\overline{m}$ on $\tL\times\tL(\infty)$ having the following disintegration in the spaces $\tL\times\{\xi\}$
$$d\overline{m}=(dm_{\xi})\,d\eta_o(\xi).$$

This defines a Borel measure in $\tL\times\tL(\infty)$ which projects down to $m$.

The correspondence $m\mapsto\overline{m}$ is therefore injective, and $\overline{m}$ is called the \emph{canonical lift} of $m$.

\begin{figure}[hbtp]
\centering
\includegraphics[scale=0.6]{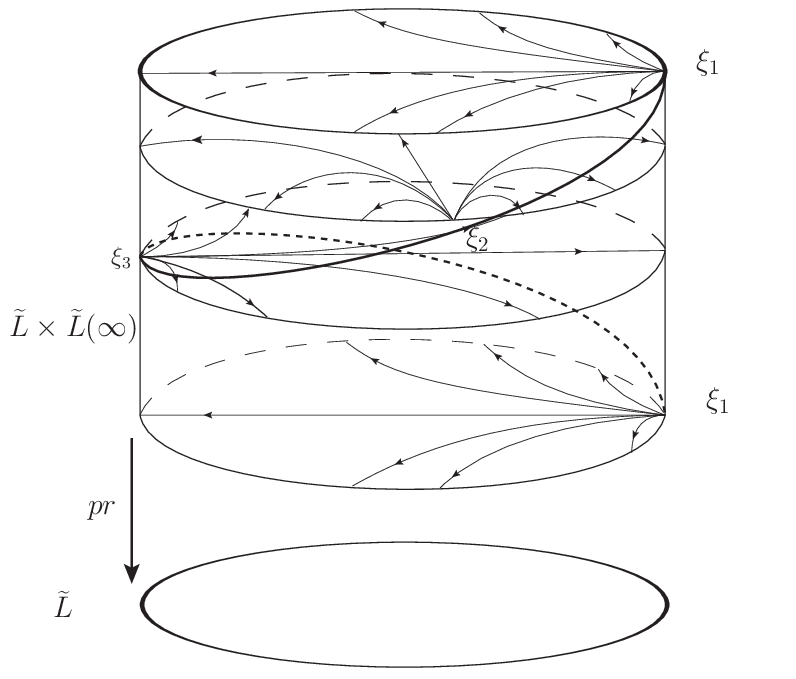}
\caption{The unrolling argument}
\label{unrouille}
\end{figure}

\paragraph{Reparametrization.} A priori the measures $m_{\xi}$ are not invariant under the geodesic flow. Hence let us ``reparametrize'' $\overline{m}$
in the center unstable manifolds. That is consider the family $d\mu_{\xi}=k^u(o,z;\xi)d\Leb^{cu}_{\tL\times\{\xi\}}$.

Using the definition of $k^u$ and the Cocycle Relation given in Remark \ref{remcocyclerelation}
 it is easy to see that $\mu_{\xi}$ is invariant by the geodesic flow. Moreover, since the unstable foliation is smooth in a center-unstable leaf, 
the conditional measures of $\mu_{\xi}$ in horospheres centered at $\xi$ are equivalent to Lebesgue.

Now let $\mu$ be the measure on $\tL\times\tL(\infty)$ having the following disintegration in the spaces $\tL\times\{\xi\}$
$$d\mu=(d\mu_{\xi})\,d\eta_o(\xi).$$

As are all $\mu_{\xi}$, the measure $\mu$ is invariant by the geodesic flow and has Lebesgue disintegration in unstable manifolds. It is a Gibbs $u$-state.
This reparametrization gives an injective correspondence $\overline{m}\mapsto\mu$.

Hence we have an injective map from $\phi^u$-harmonic measures on $\tL$ to Gibbs $u$-states on $T^1\tL$.

\subsubsection{Lifting $\phi^u$-harmonic measures}

The proof of the general case consists in applying the same line of reasoning for all leaves of the foliation simultaneously. To give a detailed proof of this fact was the purpose of \cite{Al1}. Let us review the main steps of the proof.

\paragraph{Lifted atlases.} Let us give some notations  that we are going to use in the following. Some of them have already been given in \S \ref{definitionunittgtbdle}. We work with a good foliated atlas $\A=(U_i,\phi_i)_{i\in I}$ for $\F$ and a complete system of transversals $(T_i)_{i\in I}$. We denote by $\TT$ the associated complete transversal.

We lift $\A$ to $\hM$ via the basepoint projection so as to obtain an atlas $\widehat{\A}$. Refining $\A$ if needed we assume that charts of $\widehat{\A}$ trivialize the center-unstable foliation. A chart of $\widehat{\A}$ then has the form
$$\widehat{U}_i=pr^{-1}(U_i)=\bigcup_{v\in S_i} W^{cu}_{loc}(v),$$
where $U_i$ is a chart of $\A$ and $S_i$ is a transversal of $\W^{cu}$ which is trivially foliated by unit tangent fibers: it reads as $\bigcup_{w\in T_i}T^1_w\F$. Note that the family $(T_i)_{i\in I}$ is also a complete system of transversals for $\hcF$.

\paragraph{Induced measures on a complete transversal.}
 A chart $\widehat{U}_i$ comes naturally with a projection along leaves of $\hcF$ denoted by $\widehat{p}_i:\widehat{U}_i\to T_i$. 
 Let $\mu$ be a probability measure in $\hM$. \emph{The measure on $\TT$ induced by $\mu$} will be denoted by $\widehat{\mu}$. By definition, it is a measure on $\TT$ which in restriction to $T_i$, it is given by the projection of $\mu_{|\widehat{U}_i}$ by $\widehat{p}_i$ that we denote by $\widehat{\mu}_i$.

Similarly, a measure $m$ on $M$ induces a measure $\widehat{m}$ on $\TT$ or, if one prefers, a family of measures $(\widehat{m}_i)_{i\in I}$ on transversals $T_i$.  Recall that $pr:\hM\to M$ denotes the basepoint projection.

\begin{proposition}[Lifts of $\phi^u$-harmonic measure]
\label{liftouille} There exists an map $\sigma: \HH ar^{\phi^u}(\F)\to\Pp(\hM)$, where $\Pp(\hM)$ denotes the space of Borel probability measures on $\hM$, such that the following holds for every $m\in\HH ar^{\phi^u}(\F)$:
\begin{enumerate}
\item $pr_
{\ast}\sigma(m)=m$, in particular $\sigma$ is injective;
\item $m$ and $\sigma(m)$ induce the same measures on $\TT$.
\end{enumerate}
\end{proposition}

\begin{proof}
The idea is to use the unrolling argument for all leaves simultaneously. This technical point has been treated in \cite[Proposition 3.6]{Al1}. It should be noted that by definition of the basepoint projection, the second property implies the first one.
\end{proof}

\subsubsection{Reparametrization and projection of Gibbs $u$-states}
\label{reparouille}
\paragraph{Measures induced by Gibbs $u$-states.} We will need the following proposition that we will state without proof. It relies on the absolute continuity of invariant foliations of $G_t$ and an argument \emph{\`a la Hopf} which can be copied verbatim from the proof of \cite[Proposition 4.6]{Al1} (which is one of the main points of that paper).

\begin{proposition}
\label{mesuresinduitesugibbs}
Let $(M,\F)$ be a closed foliated manifold endowed with a negatively curved leafwise metric. Let $\A$ be a good foliated atlas for $\F$, $\TT$ an associated complete transversal, and $\widehat{\A}$ be the lifted atlas via the basepoint projection. Then

\begin{enumerate}
\item any Gibbs u-state for $G_t$ induces a measure on $\TT$ which is quasi-invariant by the holonomy pseudogroup of $\hcF$;
\item two mutually singular Gibbs $u$-states for $G_t$ induce mutually singular measures on $\TT$. In particular, using the ergodic decomposition of Gibbs $u$-states, two different Gibbs $u$-states for $G_t$ induce different measures on $\TT$.
\end{enumerate}
\end{proposition}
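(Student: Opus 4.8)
The proof has two halves, and for both of them the natural move is to transport everything onto the center-unstable foliation, on which Theorem~\ref{absolutecontinuity} supplies absolute continuity of holonomy. For item~1, recall that a Gibbs $u$-state is a Gibbs $cu$-state (Remark~\ref{custates}), so that in a lifted chart $\widehat U_i=\bigcup_{v\in S_i}W^{cu}_{loc}(v)$, with $S_i=\bigcup_{w\in T_i}T^1_w\F$, one has a disintegration $\mu_{|\widehat U_i}=\int_{S_i}\bigl(\psi^{cu}_v\,d\Leb^{cu}_v\bigr)\,d\rho_i(v)$, where the densities $\psi^{cu}_v$ are the uniformly log-bounded functions of Theorem~\ref{reconstruirelesetatsdeugibbs}. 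Exactly as in \eqref{quasiinvariantemesureabsolumtcont}, the family $(\rho_i)$ is quasi-invariant by the holonomy pseudogroup of $\W^{cu}$: the corresponding Radon--Nikodym derivatives are products of ratios of the $\psi^{cu}$'s by Jacobians of center-unstable holonomies, which are everywhere positive and finite by Theorem~\ref{absolutecontinuity} and Remark~\ref{stablecase}. The projection $\widehat p_i$ along $\hcF$-leaves factors through the $\W^{cu}$-projection $\widehat U_i\to S_i$ followed by the fibre projection $q_i\colon S_i\to T_i$; since $pr$ maps each center-unstable plaque diffeomorphically onto an open piece of a plaque of $\F$ and the center-unstable holonomies between unit tangent fibres cover the holonomy of $\hcF$ (the maps $\sigma_{y,z}$ of Remark~\ref{remvisibility}), one deduces that $\widehat\mu_i$ is equivalent to $(q_i)_\ast\rho_i$, with log-bounded density, hence quasi-invariant. (Equivalently, $\widehat\mu$ is the transverse measure attached to the projected measure $pr_\ast\mu$ on $M$, which has Lebesgue disintegration for $\F$, see \cite{BGM} and Proposition~\ref{projectionugibbs}.)

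For item~2, the map $\mu\mapsto\widehat\mu$ being affine, the ergodic decomposition of Gibbs $u$-states (Theorem~\ref{reconstruirelesetatsdeugibbs}) reduces the statement to: two \emph{distinct ergodic} Gibbs $u$-states $\mu_1,\mu_2$ induce mutually singular measures on $\TT$. The ``in particular'' clause then follows because distinct ergodic $G_t$-invariant measures are mutually singular, so two Gibbs $u$-states that are mutually singular (in particular two distinct ones) have ergodic decompositions carried by disjoint Borel sets of ergodic measures, and this singularity passes to the induced measures by the usual bootstrap. To handle the ergodic case one runs the argument \emph{à la Hopf}. Let $\mathcal B_k\subset\hM$ be the set of vectors whose forward Birkhoff averages $\frac1T\int_0^T\delta_{G_tv}\,dt$ converge to $\mu_k$; by Birkhoff's theorem $\mu_k(\mathcal B_k)=1$ and $\mathcal B_1\cap\mathcal B_2=\emptyset$. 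Each $\mathcal B_k$ is $G_t$-invariant and, because two points on a common stable leaf have forward-asymptotic orbits, it is saturated by $\W^s$, hence by $\W^{cs}$. Combining this with the local product structure of the leaves of $\hcF$ (Proposition~\ref{lpshypfeuilletee}), the absolute continuity of the center-stable and center-unstable holonomies (Theorem~\ref{absolutecontinuity}, Remark~\ref{stablecase}), and the Lebesgue disintegration of $\mu_k$ along $\W^{cu}$, the usual chain of Hopf equalities shows that $\mathcal B_k$ meets in a full Lebesgue-measure set every center-unstable plaque on which $\mu_k$ disintegrates.

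It remains to read this singularity off the transversal. Using that $pr$ is a diffeomorphism on each center-unstable plaque and, crucially, that the conditional densities $\psi^{cu}$ in the disintegrations of $\mu_1$ and $\mu_2$ are the \emph{same} canonical log-bounded functions, one argues that a Borel subset of $\TT$ carrying $\widehat\mu_k$ lifts through the center-unstable disintegration to a subset of $\hM$ full for $\mu_k$; since $\mathcal B_1$ and $\mathcal B_2$ are disjoint and respectively full, the two carrier sets on $\TT$ can be taken disjoint, whence $\widehat\mu_1\perp\widehat\mu_2$. This last transfer is the only non-formal step, and the main obstacle: a priori the basepoint projection could glue the mutually singular measures $\mu_1$ and $\mu_2$ together, so one must genuinely play the dynamical rigidity of the generic sets $\mathcal B_k$ against the absolute continuity of the horospheric subfoliations. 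This is the technical heart of the proposition, and it is carried out line for line as in the proof of \cite[Proposition~4.6]{Al1}.
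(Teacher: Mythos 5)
Your proposal matches the paper's approach: the paper states this proposition without proof, remarking only that it relies on the absolute continuity of the invariant foliations of $G_t$ and a Hopf-type argument copied verbatim from \cite[Proposition~4.6]{Al1}, and your reconstruction uses exactly these two ingredients (absolute continuity of $\W^{cu}$-holonomies via Theorem~\ref{absolutecontinuity} and Remark~\ref{custates} for quasi-invariance, then the Hopf chain through $\W^{cs}$-saturated Birkhoff basins and accessibility for singularity) while correctly flagging the final transfer to the transversal as the non-formal step carried out in detail in~\cite{Al1}.
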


\paragraph{Reparametrization of a Gibbs $u$-state.} Set $\HH=\sigma(\HH ar^{\phi^u}(\F))$ where $\sigma$ is obtained in Proposition \ref{liftouille}.

\begin{proposition}[Reparametrization]
\label{repouille} There exists a bijective correspondence $Rep:\G ibbs^u\to\HH$ such that for every $\mu\in\G ibbs^u$, $\mu$ and $Rep(\mu)$ induce the same measure on the complete transversal $\TT$.
\end{proposition}

\begin{proof}
By definition of Gibbs $u$-states and of canonical lifts of $\phi^u$-harmonic measures, both measures have Lebesgue disintegration along $\W^{cu}$. As in \S \ref{toy} it is possible to identify explicitely the form of the conditional measures in the local center unstable leaves. They have the same density (given by the Gibbs kernel) with respect to two different volume forms (see the discussion of \S \ref{toy}).

Now it is straightforward to generalize to our case the argument given in \S \ref{toy} in order to give the ``reparametrization correspondence" with the desired property (we refer to \cite[Proposition 4.8]{Al1} for the technical details).
\end{proof}

\begin{rem}
\label{Gibbsueq}
A Gibbs $u$-state $\mu$ and its reparametrization $Rep(\mu)$ are always equivalent.
\end{rem}

\begin{proof}
As we explained in the previous paragraph, the reparametrization consists in changing the local form of $\mu$: we replace the Lebesgue measure in center unstable plaques by another volume form, which is equivalent, without changing the transverse component. This does not change the measure class of $\mu$.
\end{proof}

\paragraph{End of the proof of Theorem \ref{mthbijcorresp}.}

Every $\mu\in\G ibbs^u$, induces the same measure as $Rep(\mu)$ in a complete transversal $\TT$ (Proposition \ref{repouille}). The measure $pr_{\ast}Rep(\mu)$ is a $\phi^u$-harmonic measure $m$ inducing the same measure on $\TT$ as $\mu$ (Proposition \ref{liftouille}). Moreover, as a combination of Propositions \ref{liftouille} and \ref{repouille}, it comes easily that $pr_{\ast}Rep:\G ibbs^u\to\HH ar^{\phi^u}(\F)$ is surjective.

To prove that it is injective, it is enough to note that two different Gibbs $u$-states induce different measures on $\TT$. Hence the same property holds for their images by $pr_{\ast}Rep$. This implies in particular that their images by $pr_{\ast} Rep$ are different. The injectivity follows.

\subsection{Ergodic decomposition of $\phi^u$-harmonic measures}
\label{ergdec} 

 In what follows, we shall denote by $p:\G ibbs^u\to\HH ar^{\phi^u}(\F)$ the bijective correspondence constructed above. Its inverse shall be denoted by $s:\HH ar^{\phi^u}(\F)\to\G ibbs^u$.

\subsubsection{Ergodicity}

\paragraph{Ergodic measures.} We first define a natural notion of ergodic $\phi^u$-harmonc measure for $\F$.
\begin{defi}[Ergodic $\phi^u$-harmonic measures]
Let $(M,\F)$ be a closed manifold endowed with a negatively curved leafwise metric. A $\phi^u$-harmonic measure $m$ on $M$ is said to be ergodic if any Borel set saturated by $\F$ is full or null for $m$.
\end{defi}

\begin{rem}
\label{ermergodicity}
Let $\TT$ be a complete transversal for $\F$. A $\phi^u$-harmonic measure has Lebesgue disintegration for $\F$ and hence induces a finite measure $\widehat{m}$, on $\TT$ which is quasi-invariant by the holonomy pseudogroup (see \S \ref{disintegrationfoliations}). Saying that $m$ is ergodic amounts to saying that any Borel subset of $\TT$ that is saturated by the action of the holonomy pseudogroup is full or null for $\widehat{m}$.
\end{rem}

\begin{lemma}
\label{supportsature}
Let $m_1$ and $m_2$ be two mutually singular $\phi^u$-harmonic measures. Then, there exists a Borel set $\X\dans M$ which is saturated by $\F$ and such that $m_1(\X)=1$ and $m_2(\X)=0$.

\end{lemma}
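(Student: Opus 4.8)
The plan is to transport the problem to the unit tangent bundle through the correspondence between $\phi^u$-harmonic measures and Gibbs $u$-states, read off the mutual singularity on a complete transversal, replace a full set there by a saturated one, and push the result back down to $M$.

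First I would use Theorem \ref{mthbijcorresp} to write $m_i=pr_\ast\mu_i$ with $\mu_i$ a Gibbs $u$-state, $i=1,2$. These Gibbs $u$-states are themselves mutually singular: picking a Borel set $A\subset M$ with $m_1(A)=1$ and $m_2(A)=0$, the set $pr^{-1}(A)$ carries all of $\mu_1$ and none of $\mu_2$. Fix a good foliated atlas for $\F$ and the associated complete transversal $\TT$ (which is also complete for $\hcF$ by \S \ref{definitionunittgtbdle}). By Proposition \ref{projectionugibbs}, $\mu_i$ and $m_i$ induce the same measure $\widehat m_i$ on $\TT$; by Proposition \ref{mesuresinduitesugibbs} each $\widehat m_i$ is quasi-invariant by the holonomy pseudogroup $\Pp$, and the singularity $\mu_1\perp\mu_2$ forces $\widehat m_1\perp\widehat m_2$.

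Next I would choose a Borel set $B\subset\TT$ with $\widehat m_1(B)=\widehat m_1(\TT)$ and $\widehat m_2(B)=0$, and apply Lemma \ref{fullsaturated} to $\nu=\widehat m_1$ to obtain a Borel set $Y\subset B$ that is full for $\widehat m_1$ and saturated by $\Pp$; then automatically $\widehat m_2(Y)=0$ as well. Let $\X\subset M$ be the union of the plaques $P_i(x)$ over all charts $U_i$ of the atlas and all $x\in Y\cap T_i$. Since $Y$ is $\Pp$-saturated this is exactly the $\F$-saturation of $Y$: it is a countable union of measurable plaque-unions, hence Borel and $\F$-saturated, and $\X\cap T_i=Y\cap T_i$ for every $i$.

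Finally, disintegrating $m_1$ in each chart $U_i$ over $T_i$ against $\widehat m_1$ as in \eqref{disintegration} gives $m_1(U_i\setminus\X)=0$ because $\widehat m_1(T_i\setminus Y)=0$, and disintegrating $m_2$ similarly against $\widehat m_2$ gives $m_2(\X\cap U_i)=0$ because $\widehat m_2(Y)=0$; summing over a finite subcover yields $m_1(\X)=1$ and $m_2(\X)=0$. The only step with genuine content is the implication $\mu_1\perp\mu_2\Rightarrow\widehat m_1\perp\widehat m_2$, i.e. that a Gibbs $u$-state and its basepoint projection induce the same transverse measure and that this transverse measure detects mutual singularity — precisely Propositions \ref{projectionugibbs} and \ref{mesuresinduitesugibbs}, which we take as given; everything else is routine bookkeeping with disintegrations and holonomy.
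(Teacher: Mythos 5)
Your proof is correct, and it reaches the same endpoint by the same skeleton — pass to the complete transversal $\TT$, show $\widehat m_1\perp\widehat m_2$, upgrade a full Borel set to a $\Pp$-saturated one via Lemma \ref{fullsaturated}, take its $\F$-saturation $\X$, and read off $m_1(\X)=1$, $m_2(\X)=0$ by disintegrating each $m_i$ in the charts. Where you diverge from the paper is in how you obtain the mutual singularity on $\TT$. You lift $m_1,m_2$ to Gibbs $u$-states via Theorem \ref{mthbijcorresp} (in particular the surjectivity of $pr_\ast$, whose proof is only sketched in this paper and cited from \cite{Al1}), observe that the lifts are mutually singular, and invoke Proposition \ref{mesuresinduitesugibbs}(2) (the Hopf-type argument) to transport the singularity to $\TT$. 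The paper does not lift to $\hM$ at all: it argues directly that a Borel set $\X_0\subset M$ carrying $m_1$ and not $m_2$ projects to an $\widehat m_1$-full, $\widehat m_2$-null set, using only the Lebesgue disintegration of the two measures. Your route is logically sound (there is no circularity — Lemma \ref{supportsature} is not used in the proofs of the propositions you cite) but it calls on noticeably heavier machinery, including a result proved elsewhere, for a statement the paper treats as elementary. On the other hand, you should know that the paper's direct step is itself slightly compressed: the plaque-wise projection of an $m_2$-null set to $T_i$ need not be $\widehat m_2$-null in general, so the paper is implicitly using the equivalence of plaque conditionals to Lebesgue to conclude $\widehat m_1\perp\widehat m_2$ — the same fact you are importing wholesale from Proposition \ref{mesuresinduitesugibbs}(2). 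Both routes are valid; the paper's is more self-contained, yours is shorter to write once the correspondence with Gibbs $u$-states is granted.
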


\begin{proof}
We endow the manifold $M$ with a good foliated atlas for $\F$, $\A=(U_i,\phi_i)_{i\in I}$. An associated complete system of transversal is denoted by $(T_i)_{i\in I}$ and the union of these transversals is denoted by $\TT$.

Let $m_1$ and $m_2$ be two mutually singular $\phi^u$-harmonic measures. There exists a Borel set $\X_0\dans M$ such that $m_1(\X_0)=1$ and $m_2(\X_0)=0$. Since the measures $m_1$ and $m_2$ have Lebesgue disintegration for $\F$, they induce quasi-invariant families of measures on $\TT$ denoted by $\widehat{m}_1$ and $\widehat{m}_2$ (see \S  \ref{disintegrationfoliations}).

For $i\in I$ denote by $X_{0,i}$ the projection on $T_i$ of $\X_0\cap U_i$ along the plaques of $U_i$. Define $\widehat{\X}_0=\bigcup_i X_{0,i}$. This is a set which is full for $\widehat{m}_1$ and null for $\widehat{m}_2$. Using Lemma \ref{fullsaturated} we may assume that $\widehat{\X}_0$ is saturated by the holonomy pseudogroup $\Pp$.

Let $\X \dans M$ be the saturation by $\F$ of $\widehat{\X}_0$. Denote by $X_i$ the projection of $\X\cap U_i$ on $T_i$ along the plaques of $U_i$, and by $\widehat{\X} \dans\bigcup_i T_i$ the union of these sets. Since $\widehat{\X}_0$ has been chosen invariant by holonomy we find that $\widehat{\X}=\widehat{\X}_0$. In particular this set is full for $\widehat{m}_1$ and null for $\widehat{m}_2$.

One deduces that $m_1(\X)=1$ and $m_2(\X)=0$. The lemma follows since $\X$ is saturated.
\end{proof}

\begin{proposition}
\label{mesuresergodiquesphiharm}
A $\phi^u$-harmonic measure on $M$ is ergodic if and only if $\mu=s(m)$ is an ergodic Gibbs $u$-state for $G_t$.
\end{proposition}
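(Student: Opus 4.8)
The plan is to transfer the question to the common complete transversal of $\F$ and $\hcF$ and there identify ergodicity of $\mu$ under the foliated geodesic flow with ergodicity of the induced transverse measure under the holonomy pseudogroup. Concretely, I would let $\mu$ denote the canonical lift of $m$, so that $pr_\ast\mu=m$; by Proposition~\ref{projectionugibbs} the Gibbs $u$-state $\mu$ and the $\phi^u$-harmonic measure $m$ induce the \emph{same} finite measure $\widehat{\mu}=\widehat{m}$ on a fixed complete transversal $\TT$. Since $\F$ and $\hcF$ have the same holonomy pseudogroup $\Pp$ acting on $\TT$ (see \S\ref{definitionunittgtbdle} and Proposition~\ref{equholinv}), and since by Remark~\ref{ermergodicity} the measure $m$ is ergodic precisely when $\widehat{m}$ is ergodic for $\Pp$, it suffices to prove that $\mu$ is an ergodic Gibbs $u$-state if and only if $\widehat{\mu}$ is ergodic for $\Pp$. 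I would prove each direction by contraposition.

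Suppose first that $\mu$ is \emph{not} $G_t$-ergodic. Using the ergodic decomposition of $\mu$ as a $G_t$-invariant probability measure together with the second item of Theorem~\ref{reconstruirelesetatsdeugibbs}, whose ergodic components are again Gibbs $u$-states carrying the same canonical unstable densities \eqref{formulapsiu}, I would split the space of components into two Borel pieces of positive mass and take the corresponding barycenters; this yields a decomposition $\mu=c_1\mu_1+c_2\mu_2$ with $c_1,c_2>0$ and $\mu_1,\mu_2$ mutually singular Gibbs $u$-states. By Proposition~\ref{mesuresinduitesugibbs} the induced transverse measures $\widehat{\mu}_1,\widehat{\mu}_2$ on $\TT$ are quasi-invariant and mutually singular; choosing a Borel set which is $\widehat{\mu}_1$-full and $\widehat{\mu}_2$-null and applying Lemma~\ref{fullsaturated} to $\widehat{\mu}_1$, I obtain a $\Pp$-saturated such set $\widehat{X}$. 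By linearity of $m\mapsto\widehat{m}$ one then has $\widehat{\mu}(\widehat{X})=c_1\|\widehat{\mu}_1\|$, which is strictly between $0$ and $\|\widehat{\mu}\|$ (both $\|\widehat{\mu}_1\|$ and $\|\widehat{\mu}_2\|$ being positive, since $\mu_1,\mu_2$ have positive mass in some chart), so $\widehat{\mu}$ is not $\Pp$-ergodic.

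Conversely, suppose $\widehat{\mu}$ is not $\Pp$-ergodic, and pick a $\Pp$-saturated Borel set $\widehat{X}\subset\TT$ with $0<\widehat{\mu}(\widehat{X})<\|\widehat{\mu}\|$; let $X\subset\hM$ be its saturation by $\hcF$. Each leaf of $\hcF$ is a unit tangent bundle $T^1L$ and is hence $G_t$-invariant, so $X$ is $G_t$-invariant. To see that $0<\mu(X)<1$ I would work in a lifted chart $\widehat{U}_i$ trivializing $\hcF$ over $T_i$: since $\TT$ is a complete transversal and $\widehat{X}$ is $\Pp$-saturated, $X$ meets $T_i$ exactly in $\widehat{X}\cap T_i$, so $X\cap\widehat{U}_i$ is the union of the $\hcF$-plaques lying over $\widehat{X}\cap T_i$; disintegrating $\mu_{|\widehat{U}_i}$ over $T_i$ (recall $\mu$ is a $cu$-state, Remark~\ref{custates}) then gives $\mu(X\cap\widehat{U}_i)=\widehat{\mu}_i(\widehat{X}\cap T_i)$, so that $\mu(X)=0$ iff $\widehat{\mu}(\widehat{X})=0$; applying the same to the complement $\widehat{X}^c$, whose $\hcF$-saturation is $\hM\setminus X$, gives $\mu(X)=1$ iff $\widehat{\mu}(\widehat{X})=\|\widehat{\mu}\|$. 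Hence $0<\mu(X)<1$ and $\mu$ is not $G_t$-ergodic.

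I expect the two steps needing care to be the bookkeeping in the last paragraph identifying $\mu(X)$ with $\widehat{\mu}(\widehat{X})$ through overlapping charts, and the verification that a barycenter over a positive-mass family of ergodic Gibbs $u$-states is again a Gibbs $u$-state and that two such barycenters over disjoint families are mutually singular. The genuinely substantial input, however, is external: it is Proposition~\ref{mesuresinduitesugibbs} (distinct Gibbs $u$-states induce mutually singular transverse measures), which is where absolute continuity of the invariant horospheric foliations enters, together with the identification of transverse measures in Proposition~\ref{projectionugibbs}.
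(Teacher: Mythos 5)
Your proof is correct and follows essentially the same route as the paper's: both transfer ergodicity to the common complete transversal $\TT$ via Proposition~\ref{projectionugibbs} (so that $\widehat{m}=\widehat{\mu}$), use that $\hcF$-saturated sets are $G_t$-invariant, and invoke Proposition~\ref{mesuresinduitesugibbs} together with Lemma~\ref{fullsaturated} (which the paper packages as Lemma~\ref{supportsature}) for the nontrivial implication. The only cosmetic difference is that you argue both implications by contraposition and keep all the bookkeeping on $\TT$, while the paper pushes the singular decomposition down to saturated Borel subsets of $M$ before concluding.
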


\begin{proof}
We continue to use the same notations we used since the beginning of this section: $\A=(U_i,\phi_i)_{i\in I}$ is a good foliated atlas for $\F$, that we lift to $\hM$ via the basepoint projection so as to obtain an atlas $\widehat{\A}$. Let $\TT$ be an associated complete transversal. 

Let $m$ be a $\phi^u$-harmonic measure for $\F$ and $\widehat{m}$ be the measure it induces on $\TT$. Let $\mu=s(m)$: this is a Gibbs $u$-state for $G_t$, and the measure $\widehat{\mu}$ it induces on $\TT$ equals $\widehat{m}$ (see Theorem \ref{mthbijcorresp}). It is clear by construction that both $p$ and $s$ are linear.

Let $\widehat{\Y}\dans\TT$ be a Borel set which is saturated by the holonomy pseudogroup of $\F$ and $\Y$ be the Borel set of $\hM$ defined as the union of the leaves of $\hcF$ which meet $\widehat{\Y}$. The set $\Y$ is saturated by $\hcF$, and consequently is invariant by the foliated geodesic flow $G_t$.

As a consequence if we assume that $\mu$ is $G_t$-ergodic, $\Y$ has to be full or null for $\mu$, and hence $\widehat{\Y}$ is full or null for $\widehat{\mu}=\widehat{m}$. This proves the ergodicity of $m$ (see Remark \ref{ermergodicity}).

Now assume that a Gibbs $u$-state $\mu$ is not ergodic. Then there are two singular Gibbs $u$-states, denoted by $\mu_1$ and $\mu_2$, as well as a number $0<\alpha<1$ such that $\mu=\alpha\mu_1+(1-\alpha)\mu_2$ (recall that ergodic components of Gibbs $u$-states are still Gibbs $u$-states).

Since $p:\G ibbs^u\to\HH ar^{\phi^u}(\F)$ is linear, we obtain $m=\alpha m_1+(1-\alpha)m_2$. Since $\mu_1$ and $\mu_2$ are singular, by Proposition \ref{mesuresinduitesugibbs} they induce singular measures on the transversal $\TT$. Since $m_1$ and $m_2$ induce respectively the same measures on $\TT$, they are also singular.

By Lemma \ref{supportsature}, there exists a saturated Borel set $\X\dans M$ such that $m_1(\X)=1$, and $m_2(\X)=0$. Then we find $m(\X)=\alpha\in(0,1)$: $m$ is not ergodic.

We can conclude the proof of the proposition.
\end{proof}

\subsubsection{Ergodic decomposition}
\label{ergounette}

\paragraph{Accessibility property.} The proof of Theorem \ref{decompositionergodiquephiu} lies on an argument \emph{\`a la Hopf}. We need first to define some notions.

\begin{defi}
\label{uregular}
 An element $v\in\hM$ is said to be $u$-regular if the limits
$$\mu^+_v=\lim_{T\to\infty}\frac{1}{T}\int_0^T\delta_{G_t(v)}dt\,\,\,\,\,\,\,\,\,\,\,\,\,\,\textrm{and}\,\,\,\,\,\,\,\,\,\,\,\,\,\,\mu^-_v=\lim_{T\to\infty}\frac{1}{T}\int_0^T\delta_{G_{-t}(v)}dt$$
exist, coincide and if the common value $\mu_v$ is a Gibbs $u$-state. We denote by $\Y^u$ the set of $u$-regular vectors.
\end{defi}

\begin{rem}
\label{muplusmumoins}
Note that if $v$ and $w$ lie on the same unstable (resp. center-stable) manifold then $\mu^-_v=\mu^-_w$ (resp. $\mu^+_v=\mu^+_{w}$) when these measures exist.
\end{rem}

\begin{rem}
\label{ergdecuGibbsstates}
By Birkhoff's ergodic theorem the measures $\mu_v$ defined above are ergodic and by Theorem \ref{reconstruirelesetatsdeugibbs} these are precisely the ergodic components of Gibbs-$u$-states.

In particular the set $\Y^u$ is full for every Gibbs $u$-state. Since Gibbs $u$-states have Lebesgue disintegration for $\W^u$ we deduce that for every $v\in\Y^u$ the intersection $\Y^u\cap W^u(v)$ is $\Leb^u_v$-full in $W^u(v)$.
\end{rem}

\begin{rem}
\label{regholinv}
Let $\B=(V_i,\psi_i)_{i\in I}$ be a good foliated atlas for $\hcF$, and $\TT=\bigcup_{i\in I} T_i$ be an associated complete transversal. The set $\Y^u$ \emph{induces a set} $\widehat{\Y}_0$ on $\TT$. By definition this set is defined as the union of the projections of $\Y^u\cap V_i$ on $T_i$. It is full for the measure induced on $\TT$ by any Gibbs $u$-state (see \S \ref{bijcorrespondencesection} for the definition of induced measure).

Using the construction of Lemma \ref{fullsaturated} we find a set $\widehat{\Y}\dans \widehat{\Y}_0$ which is holonomy invariant. Recall that $\Y^u$ is full for every Gibbs $u$-state, and that every Gibbs $u$-state induces a quasi-invariant measure on $\TT$ (see Proposition \ref{mesuresinduitesugibbs}). It follows from Lemma \ref{fullsaturated} that $\widehat{\Y}$ is still full for the measure induced by every Gibbs $u$-state.

Finally define the set $\Y$ as the intersection of $\Y^u$ with the saturation of $\widehat{\Y}$ for $\hcF$. It is full for every Gibbs $u$-state. and induces $\widehat{\Y}$ on $\TT$.

\emph{Let us  recapitulate}. We can, and will, assume that $\Y^u$ induces on $\TT$ a set which is holonomy invariant.
\end{rem}

\begin{defi}
\label{ucspath}
A $(u,cs)$-path is a piecewise smooth arc obtained by concatenation of smooth arcs included in $\W^u$ and $\W^{cs}$.

A $(u,cs)$-path is said to be regular if it is the concatenation of smooth arcs included in $\W^u$ and $\W^{cs}$ whose extremities are $u$-regular.
\end{defi}

The following property of accessibility is the key point in the proof of Theorem \ref{decompositionergodiquephiu}.

\begin{proposition}[ Accessibility property]
\label{accessibiity}
Let $v,w$ be two $u$-regular vectors belonging to the same leaf $\hL$. Then there exists a regular $(u,cs)$-path joining $v$ and $w$.
\end{proposition}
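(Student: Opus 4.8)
The plan is to produce, for any two $u$-regular $v,w$ lying in a leaf $\hL$, an explicit $(u,cs)$-path with only three legs, built in the universal cover $\tL$ of the leaf $L$ of $\F$ carrying the basepoints of $v$ and $w$, and then pushed down to $\hL$. Let $\pi:T^1\tL\to T^1L=\hL$ be the $\pi_1(L)$-cover: the foliated geodesic flow, the bundles $E^s,E^u$ and the subfoliations $\W^u,\W^{cs}$ lift to $T^1\tL$, so it is enough to join lifts $\tilde v,\tilde w$ of $v,w$ by a regular $(u,cs)$-path upstairs and apply $\pi$. Since $\tL$ is a Hadamard manifold whose curvature is pinched between two negative constants it is a visibility manifold, and $T^1\tL$ identifies with the triples $(\xi^-,\xi^+,s)$, where $\xi^\pm\in\tL(\infty)$ are the two endpoints of the directed geodesic and $s\in\R$ is a flow parameter; in these coordinates a leaf of $\W^u$ is obtained by fixing $\xi^-$ (and restricting to one unstable horosphere), a leaf of $\W^{cs}$ by fixing $\xi^+$, and all such leaves are connected.

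Denote the endpoints of $\tilde v,\tilde w$ by $(v^-,v^+)$ and $(w^-,w^+)$. For $\alpha\in\tL(\infty)\setminus\{v^-,w^-\}$ let $p_\alpha$ (resp. $q_\alpha$) be the unique vector of $W^u(\tilde v)$ (resp. $W^u(\tilde w)$) whose forward endpoint is $\alpha$; then $p_\alpha$ and $q_\alpha$ lie on the common center-stable leaf $\{\xi^+=\alpha\}$, while $\tilde w$ is the point of $W^u(\tilde w)$ with forward endpoint $w^+$. Concatenating a smooth arc in $W^u(\tilde v)$ from $\tilde v$ to $p_\alpha$, one in $\{\xi^+=\alpha\}$ from $p_\alpha$ to $q_\alpha$, and one in $W^u(\tilde w)$ from $q_\alpha$ to $\tilde w$ gives a $(u,cs)$-path from $\tilde v$ to $\tilde w$; this already yields plain accessibility.

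It remains to choose $\alpha$ so that the vertices $p_\alpha$ and $q_\alpha$ are $u$-regular ($\tilde v,\tilde w$ being so by hypothesis, as $\tilde\Y^u:=\pi^{-1}(\Y^u)$ contains them). By Remark \ref{ergdecuGibbsstates}, lifted through the local isometry $\pi$, for every $\tilde z\in\tilde\Y^u$ the set $\tilde\Y^u\cap W^u(\tilde z)$ is $\Leb^u_{\tilde z}$-full in $W^u(\tilde z)$. Now the map $p_\alpha\mapsto q_\alpha$ is exactly the center-stable holonomy between the transversals $W^u(\tilde v)$ and $W^u(\tilde w)$ of $\W^{cs}$ inside the leaf $T^1\tL$ (they are smooth transversals because $E^u\oplus E^{cs}=T\hcF$), defined on $W^u(\tilde v)$ minus one point; near each point it coincides with a local center-stable holonomy, and since absolute continuity is a local property it is absolutely continuous by Theorem \ref{absolutecontinuity} together with Remarks \ref{othertransversals} and \ref{stablecase}. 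Hence the set of $p\in W^u(\tilde v)$ with $p\in\tilde\Y^u$ and holonomy image in $\tilde\Y^u\cap W^u(\tilde w)$ is $\Leb^u_{\tilde v}$-full, in particular non-empty; for such a $p=p_\alpha$ all four vertices $\tilde v,p_\alpha,q_\alpha,\tilde w$ are $u$-regular, and pushing the three-legged path down by $\pi$ produces the required regular $(u,cs)$-path joining $v$ and $w$.

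The delicate point I expect is the absolute continuity of the \emph{global} center-stable holonomy between the two (non-compact) unstable leaves — which is exactly why the whole construction is carried out in the universal cover. There the visibility property of $\tL$ ensures that each center-stable leaf $\{\xi^+=\alpha\}$ meets $W^u(\tilde w)$ in a single point, so that this global holonomy is everywhere locally a genuine center-stable holonomy and absolute continuity propagates from the local statement of Theorem \ref{absolutecontinuity}; downstairs in $\hL$ these intersections could be multiple or non-transverse. The remaining ingredients — connectedness of the leaves of $\W^u$ and $\W^{cs}$ in $T^1\tL$, the identification of these leaves with level sets of the endpoint maps, and compatibility of the boundary coordinates with the leafwise smooth structure — are standard facts about Hadamard manifolds with pinched negative curvature (cf. \cite{AS}).
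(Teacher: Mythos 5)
Your proof is correct, and it takes a genuinely different route from the paper's. The paper works entirely in $\hM$ with a three-step local-to-global argument: first it handles the case when $v,w$ lie in a common rectangle $\RR_{v_0}$ by combining the local product structure (Proposition~\ref{lpshypfeuilletee}) with the absolute continuity of local center-stable holonomies, exactly as in your ``Step~1'' for nearby points; then it covers an arbitrary leafwise path from $v$ to $w$ by a chain of plaques, invokes Remark~\ref{regholinv} (the induced set $\widehat{\Y}$ on the complete transversal can be taken holonomy-invariant) to place a $u$-regular vector in each plaque of the chain, and concatenates. Your approach instead passes to the universal cover $T^1\tL$, identifies global unstable and center-stable leaves with level sets of the endpoint maps $\xi^\mp$ on $\partial^2\tL$, and produces a single three-leg path $\tilde v \to p_\alpha \to q_\alpha \to \tilde w$ whose middle vertices are made $u$-regular by the absolute continuity of the \emph{global} center-stable holonomy between $W^u(\tilde v)$ and $W^u(\tilde w)$. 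What your argument buys is a shorter, more explicit path and the elimination of the Remark~\ref{regholinv} machinery (the two applications of Lemma~\ref{fullsaturated} to saturate $\Y^u$); what it requires in exchange is the global structure theory of the ideal boundary of a Hadamard leaf and the observation that the global holonomy is a well-defined homeomorphism on a cofinite subset (which, as you note, holds because each center-stable leaf $\{\xi^+=\alpha\}$ is simply connected and meets $W^u(\tilde w)$ transversally in one point, and because absolute continuity is local). Both arguments ultimately rest on the same two ingredients: Remark~\ref{ergdecuGibbsstates} (fullness of $\Y^u$ in unstable leaves for $\Leb^u$) and the absolute continuity of center-stable holonomies from Theorem~\ref{absolutecontinuity} with Remarks~\ref{othertransversals} and~\ref{stablecase}.

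One small presentational point: $u$-regularity (Definition~\ref{uregular}) is a property of vectors in $\hM$, not of $T^1\tL$; your passing to $\tilde\Y^u=\pi^{-1}(\Y^u)$ and projecting the path at the end handles this correctly, but it is worth stating explicitly that you are really checking that $\pi(\tilde v),\pi(p_\alpha),\pi(q_\alpha),\pi(\tilde w)\in\Y^u$, so the projected path satisfies Definition~\ref{ucspath}. This is not a gap, just a wording issue. Likewise, the claim that $\pi|_{W^u(\tilde v)}$ carries a $\Leb^u$-null set to a $\Leb^u$-null set (so that fullness of $\Y^u\cap W^u(v)$ lifts to fullness of $\tilde\Y^u\cap W^u(\tilde v)$) deserves a sentence; it follows because the covering restricts to a surjective local isometry of unstable horospheres.
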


\begin{proof}
We will consider a good foliated atlas $\B=(V_i,\psi_i)_{i\in I}$ for $\hcF$  which is fine enough so that the union of any pair of intersecting plaques is included in some rectangle $\RR_v$ as defined in \S \ref{foliatedhyperbolicity} (this is possible thanks to the local product structure of the leaves of $\hcF$: see Proposition \ref{lpshypfeuilletee}). Denote by $\TT$ an associated complete transversal.

\emph{Step 1.} We start by proving the result when $v$ and $w$ are close enough.

Consider two $u$-regular vectors $v$ and $w$ that belong to the same rectangle $\RR_{v_0}$. In that case $W^u_{loc}(v)$ intersects $W^{cs}_{loc}(w)$ (see Proposition \ref{lpshypfeuilletee}), but the intersection is not necessarily $u$-regular. Using that Lebesgue-almost every vectors of $W^u_{loc}(v)$ and of $W^u_{loc}(w)$ are $u$-regular (see Remark \ref{ergdecuGibbsstates}) and the absolute continuity of center-stable holonomies, we deduce that there exist $v'\in W^{u}_{loc}(v)$ and $w'\in W^{u}_{loc}(w)$ lying on the same local center-stable manifold and which are $u$-regular. Hence there exists a regular $(u,cs)$-path between $v$ and $w$.

\emph{Step 2.} We show now that two $u$-regular vectors lying in the same leaf can be joined by a piecewise smooth arc obtained by concatenation of smooth arcs $\gamma_k$ tangent to $\hcF$, whose extremities lie inside $\Y^u$, and such that $\gamma_k\dans\RR_{v_k}$ for some $v_k\in\hM$.

Let $v\in\Y^u\cap V_i$ and let $j\in I$ such that $V_j\cap P\neq\vide$, where $P\dans V_i$ is the plaque of $v$. Since $\B$ is a good atlas the plaque $P$ intersects a unique plaque $P'$ of $V_j$. We assumed that $\Y^u$ induces on $\TT$ a holonomy invariant set (see Remark \ref{regholinv}). This implies that there exists $v'\in P'\cap\Y^u$.

Assume now that $v$ and $w$ are $u$-regular and lie in the same leaf $\hL$. Consider a path $\gamma$ tangent to $\hL$ joining them as well as a chain of charts $(V_{i_1},...,V_{i_k})$ covering $\gamma$. Call $(P_{i_1},...,P_{i_k})$ the corresponding chain of plaques. The discussion above shows that each plaque $P_{i_j}$ contains a point $v_{i_j}\in\Y^u$. The concatenation of the geodesic segments $[v_{i_j},v_{i_{j+1}}]$, $j=0,...,k$ (where $v_{i_0}=v$ and $v_{i_{k+1}}=w$) provides the desired path. Indeed since by hypothesis for every $j=1,...,k$ the union $P_{i_j}\cup P_{i_{j+1}}$ is included in some rectangle, the geodesic segment $[v_{i_j},v_{i_{j+1}}]$ must be as well.

\emph{Step 3.} Now we can conclude the proof of the proposition by combining Steps 1 and 2. Let $v,w\in\Y^u$ which belong to the same leaf. Consider the path $\gamma$ that joins $v$ and $w$ obtained by the concatenation of paths $\gamma_k\dans\RR_{v_k}$ constructed in Step 2. By construction the extremities of $\gamma_k$ are $u$-regular and belong to the same rectangle. Step 1 then provides a regular $(u,cs)$-path between them. Concatenating these paths provides the desired regular $(u,cs)$-path.
\end{proof}

\begin{coro}
\label{indepleaf}
Let $v,w\in\Y^u$  {two elements of} the same leaf of $\hcF$. Then $\mu_v=\mu_w$  (recall that $\mu_v$ denotes the common value of $\mu_v^{\pm}$: see Definition \ref{uregular}).
\end{coro}

\begin{proof}
The proof is a combination of Proposition \ref{accessibiity} and Remark \ref{muplusmumoins}.
\end{proof}

\paragraph{Ergodic decomposition.} 
Now, we are ready to prove that all $\phi^u$-harmonic measures may be obtained as convex combinations of ergodic $\phi^u$-harmonic measures.

\begin{proof}[Proof of Theorem \ref{decompositionergodiquephiu}]
The fact that extremal points of the convex set $\HH ar^{\phi^u}$ are the ergodic measures follows from the linearity of the projection $pr_{\ast}$, from Proposition \ref{mesuresergodiquesphiharm}, as well as from the fact that it is true for the convex $\G ibbs^u$ of Gibbs $u$-states for $G_t$.

Consider the set $\Y^u\dans\hM$ of $u$-regular vectors (see Definition \ref{uregular} and Remark \ref{regholinv}) as well as its projection $\X=pr(\Y^u)$. Since $\Y^u$ is full for every Gibbs $u$-state, it must be full for every element of $\HH=Rep(\G ibbs^u)$ (see Remark  \ref{Gibbsueq}). Since $pr_{\ast}:\HH\to\HH ar^{\phi^u}(\F)$ is a bijection, we deduce that $\X$ is full for every $\phi^u$-harmonic measure.

Let $x\in\X$. By Corollary \ref{indepleaf} the measure $\mu_v$ is independent of $v\in T^1_x\F\cap\Y^u$. The measure $m_x=p(\mu_v)$ is well defined independently of $v\in T^1_x\F\cap\Y^u$. By Remark \ref{ergdecuGibbsstates} and Proposition \ref{mesuresergodiquesphiharm} the $\phi^u$-harmonic measures $m_x$ are ergodic. Finally, using one more time Corollary \ref{indepleaf}, we see that if $x,y\in\X$ belong to the same leaf we have $m_x=m_y$.

In order to conclude, the ergodic decomposition remains to be proven. But to obtain it, we only have to apply $p$ to the ergodic decomposition of Gibbs $u$-states and to note that the measures $\mu_v$ are precisely the ergodic components of Gibbs $u$-states.
\end{proof}

\section{Another sufficient condition for the existence of transverse invariant measures}
\label{matsumotosection}

\subsection{Visibility class}
We make use of the notation introduced in Section \ref{sphereinfty}. Consider a leaf $L$ of $\F$. Recall that $\widehat{\pi}_z$ maps a vector $v\in T^1_z\widehat{L}$ on the \emph{future} extremity of the geodesic it directs and that $\widehat{\sigma}_{y,z}=\widehat{\pi}_z^{-1}\circ\widehat{\pi}_y$.

Fix a point $z\in\tL$ and define the measure $\widehat{\nu}_z$ as the projection of the Lebesgue measure of $T^1_z\tL$ by $\widehat{\pi}_z$. Even if the measure a priori depends on $z$, its \emph{class} does not.

\begin{figure}[hbtp]
\centering
\includegraphics[scale=0.8]{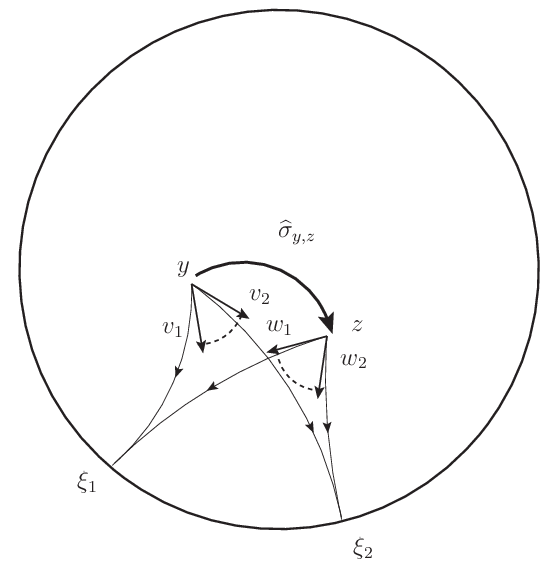}
\caption{The visibility class}
\label{visibilitounette}
\end{figure}

\begin{lemma}
\label{visibility}
The measures $\widehat{\nu}_z$ are pairwise equivalent.
\end{lemma}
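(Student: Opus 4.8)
The statement to prove is Lemma \ref{visibility}: the measures $\widehat{\nu}_z = \widehat{\pi}_{z\ast}\Leb_{T^1_z\widetilde{L}}$ on $\widetilde{L}(\infty)$ are pairwise equivalent as $z$ ranges over $\widetilde{L}$. The natural approach is to express the Radon--Nikodym derivative $d\widehat{\nu}_z/d\widehat{\nu}_y$ explicitly through the transition map $\widehat{\sigma}_{y,z} = \widehat{\pi}_z^{-1}\circ\widehat{\pi}_y : T^1_y\widetilde{L}\to T^1_z\widetilde{L}$ introduced in \eqref{transitionhat}, which (as noted in the excerpt) is a holonomy map along the center-stable foliation between unit tangent fibers. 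Since $\widehat{\pi}_z$ is by definition a bijection from $T^1_z\widetilde{L}$ onto $\widetilde{L}(\infty)$, pushing forward is a change of variables: $\widehat{\nu}_z$ is equivalent to $\widehat{\nu}_y$ on $\widetilde{L}(\infty)$ if and only if $\widehat{\sigma}_{y,z}$ is absolutely continuous (together with its inverse) as a map between the two spheres. So the whole lemma reduces to the absolute continuity of center-stable holonomies between unit tangent fibers.

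First I would observe that $\widehat{\sigma}_{y,z}$ is exactly a holonomy map along the leaves of $\W^{cs}$ between two smooth transversals $T^1_y\widetilde{L}$ and $T^1_z\widetilde{L}$ which lie in the same leaf $T^1\widetilde{L}$ of $\hcF$ (after passing to the universal cover, or equivalently working with the lifts described in Remark \ref{remvisibility} and its center-stable analogue). By the stable version of Theorem \ref{absolutecontinuity} together with Remarks \ref{othertransversals} and \ref{stablecase}, holonomy maps along $\W^{cs}$ between smooth transversals contained in a common leaf of $\hcF$ are absolutely continuous, with Jacobians defined everywhere. Applying this to $\widehat{\sigma}_{y,z}$ and to its inverse $\widehat{\sigma}_{z,y}$, one gets that $\widehat{\sigma}_{y,z}$ sends Lebesgue-null sets to Lebesgue-null sets in both directions. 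Concretely, writing $\Jac\,\widehat{\sigma}_{y,z}$ for the Jacobian of this center-stable holonomy, one obtains for every Borel set $A\subset\widetilde{L}(\infty)$
$$\widehat{\nu}_z(A)=\int_{\widehat{\pi}_y^{-1}(A)}\Jac\,\widehat{\sigma}_{y,z}(v)\,d\Leb_{T^1_y\widetilde{L}}(v),$$
so that $d\widehat{\nu}_z/d\widehat{\nu}_y(\xi)=\Jac\,\widehat{\sigma}_{y,z}(\widehat{\pi}_y^{-1}(\xi))$, which is finite and positive $\widehat{\nu}_y$-almost everywhere. This gives mutual absolute continuity of $\widehat{\nu}_z$ and $\widehat{\nu}_y$.

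Finally I would note two minor points. One is that everything takes place inside a single leaf $T^1\widetilde{L}$ of the lifted foliation, so the \emph{weak} notion of absolute continuity discussed in Remark \ref{othertransversals} (holonomy between transversals included in the same leaf) is exactly what is available and exactly what is needed — there is no need for absolute continuity of $\W^{cs}$ as a foliation of the ambient manifold. The other is equivariance: since $\pi_1(L)$ acts on $\widetilde{L}$ by isometries, it permutes the fibers $T^1_z\widetilde{L}$ compatibly with the $\widehat{\pi}_z$, so the measure class $[\widehat{\nu}_z]$ is $\pi_1(L)$-invariant and descends to a well-defined class on $\widetilde{L}(\infty)$ attached to the leaf; this is what makes the terminology \emph{visibility class} legitimate, though strictly speaking the lemma as stated only asserts the pairwise equivalence. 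The main (and essentially only) obstacle is making sure that Theorem \ref{absolutecontinuity} really does apply to these particular transversals — i.e. that the unit tangent fibers $T^1_y\widetilde{L}$, $T^1_z\widetilde{L}$ qualify as ``small smooth transversals to $\W^{cs}$ included in the same leaf'' in the sense of Remark \ref{othertransversals} — which is clear since the center-stable foliation is transverse to $\UF$ and the fibers are compact smooth submanifolds, so one can cover by finitely many small charts and compose.
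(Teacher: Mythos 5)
Your proof is correct and follows essentially the same route as the paper: identify $\widehat{\sigma}_{y,z}$ as a center-stable holonomy map between unit tangent fibers lying in a common leaf of $\hcF$, and invoke the absolute continuity of such holonomies from Theorem \ref{absolutecontinuity} (via Remarks \ref{othertransversals} and \ref{stablecase}). The paper's proof is terser but relies on exactly the same observation; your extra remarks about the explicit Jacobian, the ``weak'' notion of absolute continuity sufficing, and $\pi_1(L)$-equivariance are accurate elaborations rather than a different argument.
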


\begin{proof}
Let $y,z\in\tL$. Note that by definition $\widehat{\sigma}_{y,z}$ induces holonomy maps along the center-stable foliation between open sets of unit tangent fibers at $y$ and $z$. By Theorem \ref{absolutecontinuity} these maps have to be absolutely continuous: the Lebesgue class is preserved when we apply $\widehat{\sigma}_{y,z}$. This is another way to formulate that the measure classes of $\widehat{\nu}_{y}$ and $\widehat{\nu}_{z}$ are the same.
\end{proof}

\begin{defi}[Visibility class]
\label{visibilityclass}
Let $L$ be a leaf of a closed foliated manifold endowed with a negatively curved leafwise metric. The common class of measures $\widehat{\nu}_y$, $y\in\tL$ is called the visibility class of $\tL(\infty)$.
\end{defi}

\begin{rem}
\label{inversetime}
Consider the flip function $\iota:\hM\to\hM$ defined in Remark \ref{remvisibility}. This involution preserves the Liouville measure in the leaves of $\hcF$ (see \cite[Lemma 1.34]{P}). In particular it preserves the Lebesgue class of unit tangent fibers. Hence if we consider the measure $\nu_z$ obtained by pushing $\Leb_{T^1_z\tL}$ by $\pi_z$ we obtain a measure $\nu_z$ which is still in the visibility class.
\end{rem}

\subsection{Characteristic measure classes and transverse invariant measures}
\label{generthmatsumoto}

\paragraph{Characteristic functions and classes.}
 Let $m$ be a $\phi^u$-harmonic measure for $\F$. We say that a property holds for a $m$-\emph{typical leaf} if it holds in a saturated Borel set full for $m$. We will recall and adopt Matsumoto's terminology (see \cite{Mat}).

By definition $m$ has Lebesgue disintegration with local densities in the plaques which are $\phi^u$-harmonic. Using Ghys' Lemma (more precisely its direct consequence Lemma \ref{extenddensity}) it is possible to extend the density of the conditional measure of $m$ in a plaque $P$, thus obtaining a positive $\phi^u$-harmonic function on the whole leaf $L$ containing $P$.

We refer to this function as the \emph{characteristic function} of the leaf $L$. Note that it is not canonical and depends of the choice of the plaque $P$. Note moreover that the boundedness of this function does not depend of this choice. In fact the characteristic function is defined up to a multiplicative constant (this is the content of \cite[Proposition 2.8]{Mat}).

By definition the lift of the characteristic function of a leaf $L$ to $\tL$ is the integral of the Gibbs kernel against a finite Borel measure $\eta_L$ defined on $\widetilde{L}(\infty)$. The measure class $[\eta_L]$ on $\widetilde{L}(\infty)$ is independent of the initial choice of plaque.

We refer to this measure class as the \emph{characteristic  measure class} on the sphere at infinity the leaf.

\paragraph{Matsumoto's result.} In \cite{Mat} Matsumoto considers closed foliated manifolds $(M,\F)$ with \emph{hyperbolic leaves}. Recall that in that case Gibbs and Poisson kernels coincide and that $\phi^u$-harmonic functions are in fact harmonic (see Remark \ref{kernelhyperbolic} and Proposition \ref{phiunormal}). Thus $\phi^u$-harmonic measures for  $\F$ are harmonic in the sense of Garnett (see Definition \ref{harmonicmeasure}).

In that case the sphere at infinity of the universal cover of a leaf possesses a natural smooth structure and the visibility class defined in Definition \ref{visibilityclass} coincides with the Lebesgue class (all projection maps $\pi_z$ and transition maps $\sigma_{y,z}$ are smooth).

Matsumoto proves (see Theorem \ref{matsumototheorem}) that \emph{when $m$ is a not totally invariant harmonic measure of a closed manifold foliated by hyperbolic leaves}:

\begin{itemize}
\item the characteristic harmonic function of a typical leaf is unbounded;
\item the characteristic measure class on the sphere at infinity of a typical leaf is singular with respect to Lebesgue.
\end{itemize}

We intend here to prove an analogous theorem in the case of $\phi^u$-harmonic measures. It will provide in particular a new and dynamical proof of Matsumoto's result where the properties of Brownian motion are not needed: it relies entirely on the absolute continuity of invariant foliations. We recall that we want to prove that \emph{if $(M,\F)$ is a closed foliated manifold endowed with a negatively curved leafwise metric and} $m$ \emph{is a non totally invariant} $\phi^u$-\emph{harmonic measure, then the characteristic measure class of} $m$-\emph{every leaf is singular with respect to the visibility class}.

\begin{proof}[Proof of Matsumoto's theorem from Theorem \ref{matsumotocbrvar}]
Assume here that all leaves are hyperbolic. The fact that the characteristic  measure class of a typical leaf for a non totally invariant harmonic measure is singular with respect to the Lebesgue measure is an immediate consequence of Theorem \ref{matsumotocbrvar}.

In order to prove that the characteristic function of a typical leaf $L$ is unbounded we will use \emph{Fatou's theorem} of nontangential convergence of harmonic functions: see \cite[Chapter 11]{Rudin}. Let $h(z)=\int_{\partial\Hyp^n}k(o,z;\xi)d\nu(\xi)$ be a positive harmonic function of the hyperbolic space $\Hyp^n$, where $k$ is the Poisson kernel and $\nu$ is a finite Radon measure on the boundary $\partial\Hyp^n$. Let $d\nu=fd\Leb_{\partial\Hyp^n}+d\eta$ be the Lebesgue decomposition of $\nu$, i.e. $f$ is Lebesgue-integrable and $\eta$ is singular with respect to Lebesgue. Fatou's theorem has two parts. Firstly, for Lebesgue almost-every $\xi\in\partial\Hyp^n$ and every geodesic ray $c$ pointing to $\xi$, $\lim_{t\to\infty}h(c(t))=f(\xi)$. Secondly for $\eta$-almost-every $\xi\in\partial\Hyp^n$ and every geodesic ray $c$ pointing to $\xi$, $\lim_{t\to\infty}h(c(t))=\infty$.

Now that we know that the characteristic measure class $[\eta_L]$ is singular with respect to Lebesgue, it follows from the second part of Fatou's theorem that the characteristic function $h$ of a typical leaf is unbounded.
\end{proof}

\subsection{Proof of Theorem \ref{matsumotocbrvar}}

The first step in the proof of Theorem \ref{matsumotocbrvar} is to reduce the theorem to the case of ergodic $\phi^u$-harmonic measures. To do so, we use our ergodic decomposition theorem.

Now, let $m$ be an ergodic $\phi^u$-harmonic measure such that there exists a saturated Borel set $\X$ with positive measure such that the characteristic measure class of every leaf of $\X$ is not singular with respect to the visibility class. By ergodicity, this Borel set is full for $m$. Theorem \ref {matsumotocbrvar} is then a consequence of the following proposition as well as of Theorem \ref{mtheoremsugibbsgeodesique}:

\begin{proposition}
\label{relevesugibbs}
Assume that there exists an ergodic $\phi^u$-harmonic measure $m$ such that the characteristic measure class of $m$-almost every leaf of $\F$ is not singular with respect to the visibility class. Then $\mu=s(m)$ is a Gibbs $s$-state.
\end{proposition}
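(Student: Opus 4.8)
The plan is to show that the canonical lift $\mu$ of $m$, which is by construction a Gibbs $u$-state, also has Lebesgue disintegration along the stable foliation $\W^s$, hence is a Gibbs $su$-state (and then Theorem \ref{mtheoremsugibbsgeodesique} finishes the argument that $m$ is totally invariant, contradicting the hypothesis; but here we only assert the weaker intermediate statement). The key is to understand the disintegration of $\mu$ in center-stable manifolds and relate it to the characteristic class $[\eta_L]$ and the visibility class. Recall from the proof of Proposition \ref{projectionugibbs} that in a lifted chart $\widehat U_i$ trivializing $\W^{cu}$, the measure $\mu$ disintegrates over the unit tangent fibers $T^1_x\F$ with conditional measures $\eta_{i,x}$, and that when one pushes these forward by $\pi_x:T^1_x\tL\to\tL(\infty)$ one obtains $\nu_{i,x}=\pi_{x\ast}\eta_{i,x}$, which represents the characteristic $\phi^u$-harmonic function of the leaf; its class is precisely the characteristic class $[\eta_L]$. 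The hypothesis is that $[\eta_L]$ is \emph{not} singular with respect to the visibility class, i.e. (by Lemma \ref{visibility} and Remark \ref{inversetime}) $\eta_{i,x}$ has a part absolutely continuous with respect to $\Leb_{T^1_x\F}$ on a saturated set of positive, hence full, $m$-measure.

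First I would set up the dichotomy for $\eta_{i,x}$: writing $\eta_{i,x}=\eta^{ac}_{i,x}+\eta^{sing}_{i,x}$ with respect to $\Leb_{T^1_x\F}$, one checks using the holonomy-equivariance \eqref{mesuresinfiniteugibbs} (the Radon-Nikodym cocycle of $(\eta_z)$ is the Gibbs kernel, which is the same cocycle governing the visibility class up to the bounded factor $\theta$ from Proposition \ref{trouverunefctharmonique}) that the property ``$\eta^{ac}_{i,x}\neq 0$'' is holonomy invariant, hence by ergodicity of $m$ either holds $m$-a.e.\ or nowhere; the hypothesis puts us in the former case. Next, and this is where the argument becomes delicate, I would upgrade ``$\eta^{ac}\neq 0$'' to ``$\eta_{i,x}$ is equivalent to $\Leb_{T^1_x\F}$''. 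The idea: the conditional measure $\eta_{i,x}$ of a Gibbs $u$-state, transported by the center-stable holonomy maps $\sigma_{y,z}$ (equivalently by the maps $\widehat\sigma_{y,z}$ after applying $\iota$), transforms with a log-bounded, uniformly Hölder Jacobian (this is exactly Theorem \ref{absolutecontinuity}(2) combined with the boundedness of $\theta$ and of $\log\alpha$ as in the proof of Proposition \ref{equalliouville} and Lemma \ref{holonomiecentreinstablesphereasphere}). A measure whose density with respect to Lebesgue, after pushing by holonomies along a minimal-in-a-leaf family of transversals, stays comparable with uniform constants, cannot have a nontrivial singular part coexisting with a nontrivial absolutely continuous part without the absolutely continuous part being everywhere positive; one runs a Lebesgue-density / saturation argument (in the spirit of Lemma \ref{fullsaturated} and Ghys' Lemma) on the set where the density vanishes to conclude that set is null. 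Thus $\eta_{i,x}\sim\Leb_{T^1_x\F}$ for $m$-a.e.\ $x$.

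Once $\eta_{i,x}$ is equivalent to $\Leb_{T^1_x\F}$, I would feed this back into the disintegration of $\mu$ in the chart $\widehat U_i=\bigcup_{v\in S_i}W^{cu}_{loc}(v)$: we have $d\mu_{|\widehat U_i}=(\psi^{cu}_v\,d\Leb^{cu}_v)\,d\nu_i(v)$ with $\nu_i$ equivalent to the fiberwise integral of the $\eta_{i,x}$, hence $\nu_i$ is equivalent to the ``Lebesgue'' transverse-to-$\W^{cu}$-inside-the-leaf measure (Liouville of unit fibers). By the local product structure of the Liouville measure inside the leaves (Proposition \ref{leblps}) and absolute continuity of the stable/center-unstable holonomies (Remark \ref{stablecase}), a measure on a rectangle that is Lebesgue along center-unstable plaques (with log-bounded density $\psi^{cu}$) and whose transverse measure is equivalent to Lebesgue is automatically equivalent to the Liouville measure of the leaf, hence also disintegrates with Lebesgue conditionals along $\W^{cs}$, a fortiori along $\W^s$. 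Therefore $\mu$ is a Gibbs $s$-state, which is the claim. The main obstacle, as flagged above, is the upgrade from ``nontrivial absolutely continuous part'' to ``fully equivalent to Lebesgue''; everything else is bookkeeping with the absolute-continuity and distortion estimates already proved. I would carry out that step carefully by combining the uniform Hölder control of the holonomy Jacobians along $\W^{cs}$ with a Borel-density argument and the ergodicity of $m$ to show the vanishing locus of the density is a saturated null set.
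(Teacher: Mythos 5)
Your proposal and the paper's proof diverge at exactly the place you flag as ``where the argument becomes delicate,'' and that is a genuine gap, not just a point needing polish. You want to upgrade ``the conditional measures $\eta_{i,x}$ on the unit tangent fibers have a nontrivial absolutely continuous part'' to ``$\eta_{i,x}$ is \emph{equivalent} to $\Leb_{T^1_x\F}$,'' arguing that the uniformly log-bounded, H\"older Jacobians of the $\W^{cs}$-holonomies between unit fibers, combined with a Lebesgue-density/saturation argument and ergodicity of $m$, force the vanishing locus of the a.c.\ density to be null. But the holonomy maps $\widehat\sigma_{y,z}$ preserve the Lebesgue class, so they send the Lebesgue decomposition $\eta_y = \eta_y^{ac} + \eta_y^{sing}$ exactly onto that of $\eta_z$: the a.c.\ part is transported to the a.c.\ part, the singular part to the singular part, with log-bounded distortion. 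No amount of pushing around by such maps makes the singular part disappear or fills in zeros of the a.c.\ density; these are \emph{leafwise} constraints, while the ergodicity of $m$ only concerns the transverse direction (it yields ``nontrivial a.c.\ part'' on a full saturated set, which is the easy first reduction, and the paper makes it too). What you are implicitly asserting in this upgrade step is precisely the nontrivial content of Proposition~\ref{propositionprincipalesugibbs} of the paper (plus Theorem~\ref{mtheoremsugibbsgeodesique}), and the ingredient you are missing is the $G_t$-invariance used dynamically, not just the static product/holonomy structure.

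The paper's argument avoids the upgrade entirely and is considerably more economical. It reduces to the dichotomy of Proposition~\ref{propositionprincipalesugibbs}: an ergodic Gibbs $u$-state is either a Gibbs $su$-state or has Lebesgue-singular disintegration for $\UF$. The proof of that dichotomy is a Birkhoff argument for the \emph{time-reversed} flow: if the disintegration in the unit tangent fibers is not Lebesgue-singular, one finds $D\subset T^1_{x_0}\F$ of positive Lebesgue measure consisting of points whose backward Birkhoff averages converge to $\mu$; since $T^1_{x_0}\F$ is a smooth disc transverse to $\W^{cu}$, Remark~\ref{transversediscs} (applied to $G_{-t}$) gives that the averages $\frac1T\int_0^T G_{-t\ast}(\Leb_{|D}/\Leb(D))\,dt$ accumulate only on Gibbs $s$-states, so $\mu$ is a Gibbs $s$-state. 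This replaces your proposed measure-theoretic upgrade with the ergodic theorem plus the construction of Gibbs states from transverse discs, which is exactly the tool designed to convert ``positive Lebesgue measure in a transversal'' into ``Gibbs state.'' Your lemma on lifting the characteristic class to the unit tangent fibers (your first paragraph) is essentially the paper's Lemma~\ref{remonterlaclassecaracteristique}, and your final bookkeeping that a Gibbs $u$-state with Lebesgue-disintegrated $\UF$-conditionals is equivalent to Liouville on rectangles is, modulo details, consistent with Lemma~\ref{equivliouville} and Proposition~\ref{equalliouville}; but those steps are only reachable once you already know the $\UF$-conditionals are equivalent to Lebesgue, which is exactly what you have not established. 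If you want to salvage your route, you should replace the ``Lebesgue-density/saturation'' paragraph by the Birkhoff argument of Proposition~\ref{propositionprincipalesugibbs}; once that is in place your remaining reductions become redundant, since a Gibbs $su$-state follows immediately.
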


\subsection{Proof of Proposition \ref{relevesugibbs}}

\paragraph{Disintegration in stable manifolds.}
We will first show that proving Proposition \ref{relevesugibbs} reduces to proving the following one which is a property of Gibbs $u$-states. We use below he notation introduced before Theorem \ref{lambdalemma}: $\UF$ is the foliation of $\hM$ by unit tangent fibers.

\begin{proposition}
\label{propositionprincipalesugibbs}
Let $\mu$ be an ergodic Gibbs $u$-state on $\hM$ for the foliated geodesic flow $G_t$. Then the following alternative holds:
\begin{itemize}
\item either $\mu$ is a Gibbs su-state;
\item or $\mu$ has Lebesgue-singular disintegration for $\UF$.
\end{itemize}
\end{proposition}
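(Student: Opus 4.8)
The plan is to establish the dichotomy for an ergodic Gibbs $u$-state $\mu$ by examining the conditional measures of $\mu$ along the fibers of $pr$, i.e. along the leaves of $\UF$. Since $\UF$ is transverse to $\W^{cs}$ (and $\W^{cu}$), and since $\mu$ is a Gibbs $u$-state, hence a Gibbs $cu$-state by Remark \ref{custates}, I would work inside the lifted charts $\widehat{U}_i=\bigcup_{v\in S_i}W^{cu}_{loc}(v)$ used in \S \ref{bijcorrespondencesection}, where $S_i=\bigcup_{w\in T_i}T^1_w\F$. Disintegrating $\mu$ first in center-unstable plaques gives densities $\psi^{cu}_v$ governed by Formula \eqref{formulapsicu}; then the measure $\nu_i$ on $S_i$ disintegrates over $T_i$ into conditional measures $\eta_{i,x}$ on the unit tangent fibers $T^1_x\F$, exactly as in the proof of Proposition \ref{projectionugibbs}. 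The key dichotomy is then a Hopf-type / $0$--$1$-law statement about these fiber conditionals $\eta_{i,x}$: by ergodicity of $\mu$ and the absolute continuity of the center-unstable holonomies between unit tangent fibers (Lemma \ref{holonomiecentreinstablesphereasphere} and Theorem \ref{absolutecontinuity}), for $\mu$-almost every $x$ the measure $\eta_{i,x}$ is either equivalent to $\Leb_{T^1_x\F}$ on a full-measure set, or it is singular with respect to $\Leb_{T^1_x\F}$, with no intermediate behaviour permitted on a positive-measure set.

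The bridge between "fiber conditionals equivalent to Lebesgue" and "Gibbs $su$-state" is the identification, via Remark \ref{kerneljacob2}, of $\psi^{cu}_v(w)$ with the Gibbs kernel $k^u(x,z;\pi_x(v))$ after projecting to the sphere at infinity, together with the fact that the stable foliation $\W^s$ is smooth inside a center-unstable leaf (equivalently, that $\W^s$ corresponds to unstable horospheres for $G_{-t}$ inside each center-unstable slice). Concretely, if the conditional $\eta_{i,x}$ is equivalent to $\Leb_{T^1_x\F}$, then pushing forward by $\pi_x$ and using the Radon–Nikodym computation \eqref{mesuresinfiniteugibbs} shows that the projected measures $\nu_{i,x}=\pi_{x*}\eta_{i,x}$ on $\widetilde L_x(\infty)$ are equivalent to the visibility class; unrolling this (the argument of Theorem \ref{bijectivecorrespondanceugibbsphiharmonique}) over the center-unstable slices, and then disintegrating the resulting $G_{-t}$-invariant measure inside a center-unstable leaf along the stable horospheres, produces Lebesgue conditionals along $\W^s$ — i.e. $\mu$ is a Gibbs $s$-state, hence a Gibbs $su$-state. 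Conversely, if $\eta_{i,x}$ is Lebesgue-singular for $m$-almost every $x$, then the disintegration of $\mu$ along $\UF$ is Lebesgue-singular, which is the second alternative; this last implication is essentially a direct consequence of the definition once one checks (again by uniqueness of Rokhlin disintegration, Theorem \ref{Rokhlin}) that the successive disintegration "first in center-unstable plaques, then in unit tangent fibers" recovers the disintegration of $\mu$ directly in the leaves of $\UF$.

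As for deducing Proposition \ref{relevesugibbs} itself: the hypothesis is that the characteristic class $[\eta_L]$ of $m$-almost every leaf is not singular with respect to the visibility class; by ergodicity and the extension-of-densities construction (Lemma \ref{extenddensity}), the characteristic measure $\eta_L$ is, up to the usual multiplicative constant, the measure representing the $\phi^u$-harmonic local density of $m$ on $\widetilde L(\infty)$. Tracing through the proof of Proposition \ref{projectionugibbs}, this same measure $\eta_L$ is precisely $\nu_{i,x}=\pi_{x*}\eta_{i,x}$ for the canonical lift $\mu=\mu_m$, so "not singular with respect to the visibility class" for $[\eta_L]$ translates (using the dichotomy from Proposition \ref{propositionprincipalesugibbs}, applied in the form that rules out the singular branch) into "$\eta_{i,x}$ equivalent to $\Leb_{T^1_x\F}$", hence into $\mu$ being a Gibbs $su$-state, in particular a Gibbs $s$-state. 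I expect the main obstacle to be the $0$--$1$-law step: proving rigorously that the fiber conditionals $\eta_{i,x}$ cannot be "partly absolutely continuous, partly singular" on a positive-measure set of $x$, and that the "equivalent to Lebesgue" alternative is holonomy-invariant along $\W^{cu}$ so that ergodicity of $\mu$ forces a global alternative. This requires carefully combining the absolute continuity of center-unstable holonomies between unit tangent fibers (Lemma \ref{holonomiecentreinstablesphereasphere}), the uniform Hölder control of the relevant Jacobians (Theorem \ref{absolutecontinuity}(2)), and a Lebesgue-density argument on the fibers, mirroring the structure of a standard Hopf-type argument but transported to the foliated setting.
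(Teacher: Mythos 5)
Your proposal takes a genuinely different route from the paper, and although it could plausibly be made to work, it has two substantive gaps that the paper's proof sidesteps entirely.

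The paper's argument is considerably shorter and avoids all the machinery you invoke. It only needs to show that if the $\UF$-disintegration of $\mu$ is not Lebesgue-singular then $\mu$ is a Gibbs $s$-state. It does this in one stroke: by Birkhoff's theorem there is a $\mu$-full, $G_t$-invariant Borel set $\Y$ of backward-generic points; the non-singularity hypothesis produces a Borel set $D\subset T^1_{x_0}\F$ with $\Leb_{T^1_{x_0}\F}(D)>0$ and $D\subset\Y$; then, since $T^1_{x_0}\F$ is a smooth disc transverse to $\W^{cu}$, the time-reversed version of Theorem \ref{reconstruirelesetatsdeugibbs} as recorded in Remark \ref{transversediscs} says that every accumulation point of $\frac{1}{T}\int_0^T G_{-t\ast}\left(\Leb_{|D}/\Leb(D)\right)\,dt$ is a Gibbs $s$-state, while dominated convergence (because $D\subset\Y$) says these averages converge to $\mu$. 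Done.

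Your plan instead calls for (i) a Hopf-type $0$--$1$ law on the fiber conditionals $\eta_{i,x}$, promoting ``not singular'' to ``equivalent to Lebesgue,'' and (ii) a Radon--Nikodym computation identifying $\pi_{x\ast}\eta_{i,x}$ with the visibility class and then ``unrolling'' to read off Lebesgue conditionals along $\W^s$. Both steps are genuine gaps. You flag (i) as the main obstacle yourself, and it is delicate: one must show that the absolutely continuous and singular parts of $\eta_{i,x}$ are separately preserved by the $\W^{cu}$-holonomy so that the event ``positive a.c.\ part'' is saturated, and this is not a formal consequence of absolute continuity of the holonomy. Step (ii) is also not a consequence of anything stated in the paper: even granting that $\nu_{i,x}$ lies in the visibility class, deducing that the unrolled measure has Lebesgue conditionals along $\W^s$ requires a separate change-of-variables computation across center-unstable slices, since stable manifolds are transverse to those slices rather than contained in them. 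Neither piece of machinery is needed: a single positive-Lebesgue-measure set $D$ of $\mu$-generic points in one fiber, combined with Remark \ref{transversediscs}, already forces $\mu$ to be a Gibbs $s$-state. I would replace the $0$--$1$ law and the kernel computation by that one observation; your framing of the reduction and of how Proposition \ref{relevesugibbs} follows is otherwise consistent with the paper.
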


\paragraph{Proposition \ref{propositionprincipalesugibbs} implies Proposition \ref{relevesugibbs}.} Let $m$ be a $\phi^u$-harmonic measure and $\mu=s(m)$ be the corresponding Gibbs $u$-state. Let $(\omega_x)_{x\in M}$ be the family of disintegration of $\mu$ in the invariant fibers $T^1_x\F$ and $L$ be a typical leaf. Lift the family $(\omega_x)_{x\in L}$ to $\widetilde{L}$ and denote the lifted family by $(\omega_z)_{z\in\widetilde{L}}$. This is a family of measures in the $T_z^1\widetilde{L}.$ 

\begin{lemma}
\label{remonterlaclassecaracteristique}
Let $L$ be a typical leaf and $\pi_z: T^1_z\widetilde{L}\to\widetilde{L}(\infty)$ be the projection defined in Remark \ref{remvisibility}. Then $\pi_z\,_{\ast}\omega_z$ is in the characteristic measure class.
\end{lemma}

\begin{demo}
Let $U$ be a foliated chart with $m(U)>0$. Let $L$ be a typical leaf crossing $U$ and $P\dans U$ being a connected component of $U\cap L$.

Let $m_P$ be the conditional measure of $m$ in the plaque $P$. By definition it has a density with respect to $\Leb_P$, denoted by $h_P$ which is $\phi^u$-harmonic. Using Lemma \ref{extenddensity} we see that we can extend $h_P$ to a positive function of the whole leaf $L$, whose lift to $\widetilde{L}$ reads
$$\tilde{h}(z)=\int_{\widetilde{L}(\infty)}k(o,z;\xi)d\eta_L(\xi),$$
where $o\in\widetilde{L}$. By definition, $[\eta_L]$ is the characteristic measure class of $\widetilde{L}$.

Let $\overline{m}$ be the canonical lift of $m$: by construction it has the same disintegration in unit tangent fibers as $\mu=s(m)$. Consider its conditional measure $\overline{m}_P$ in $T^1P$ and consider $\overline{m}_{\widetilde{P}}$, the lift to $T^1\widetilde{P}$ where $\widetilde{P}\dans \widetilde{L}$ is some lift of $P$. Consider the identification $\rho:T^1\widetilde{L}\to \widetilde{L}\times \widetilde{L}(\infty)$ sending a vector $v$ on the pair $(c_v(0),c_v(-\infty))$ where $c_v$ is the geodesic directed by $v$.  By the construction of the canonical lift made in \cite{Al1} it comes that $\rho_{\ast}\overline{m}_{\tilde{P}}$ is a multiple of the measure 
$(k^u(o,z,\xi)\,d\eta_L(\xi))d\Leb(z)$ (see also \S \ref{toy}) In particular, the conditional measures of this measure in the $\{z\}\times\widetilde{L}$ are in the visibility class.

By uniqueness of the disintegration, the conditional measures of $\rho_{\ast}\overline{m}_{\tilde{P}}$ in the $\{z\}\times L(\infty)$ are precisely the $\pi_z\,_{\ast}\omega_z$, where $\omega_z$ are the conditional measures of $\overline{m}_{\tilde{P}}$. This proves that all the $\pi_z\,_{\ast}\omega_z$  lie inside the visibility class, concluding the proof of the lemma.

\end{demo}

We can now conclude the proof of Proposition \ref{relevesugibbs}. Let $m$ be an ergodic $\phi^u$-harmonic measure. For $z\in\widetilde{L}$, the projection by $\pi_z^{-1}$ of the visibility class on the unit tangent fiber $T^1_z\widetilde{L}$ is the Lebesgue class. Hence Lemma \ref{remonterlaclassecaracteristique} implies that if the characteristic measure class of a $m$-typical leaf is not singular with respect to the visibility class, then the disintegration of the associated Gibbs $u$-state $\mu=s(m)$ in the leaves of $\UF$ is not singular with respect to the Lebesgue measure. Then by Proposition \ref{propositionprincipalesugibbs} $\mu$ has to be a Gibbs $su$-sate, proving Proposition \ref{relevesugibbs}.\quad \hfill $\square$

\paragraph{Proof of proposition \ref{propositionprincipalesugibbs}.} It is enough to prove that an ergodic Gibbs $u$-state whose disintegration in the leaves of $\UF$ is not singular with respect to Lebesgue is a Gibbs $su$-state.

Let $\mu$ be such an ergodic Gibbs $u$-state. There exists a $G_t$-invariant Borel set $\Y$, full for $\mu$ and such that for every $v\in\Y$ and every continuous function $f:\hM\to\R$ we have:
$$\lim_{T\to\infty}\frac{1}{T}\int_0^Tf\circ G_{-t}(v) dt=\int_{\hM}f\,d\mu.$$

By hypothesis, if we disintegrate $\mu$ in unit tangent fibers, we can find a vector $v_0\in\Y$ based at $x_0\in M$, as well as a Borel set $D\dans T^1_{x_0}\F$ which contains $v_0$ and such that $\Leb_{T^1_{x_0}\F}(D)>0$ and $D\dans\Y$. In order to avoid a heavy notation, in the sequel we will denote simply by $\Leb$ the Lebesgue measure $\Leb_{T^1_{x_0}\F}$.

Since $D\dans\Y$, we obtain that for every $v\in D$ the Birkhoff averages $1/T\int_0^T\delta_{G_{-t}(v)}\,dt$ converge to $\mu$. So by use of dominated convergence, the following family converges to $\mu$:
$$\mu_T=\frac{1}{T}\int_0^T \frac{G_{-t\ast}(\Leb_{|D})}{\Leb(D)}dt.$$

But by Remark \ref{transversediscs}, we know that every accumulation point of $\mu_T$ is a Gibbs $s$-state. As a conclusion, $\mu$ is both a Gibbs $u$-state and a Gibbs $s$-state: this is a Gibbs $su$-state.

We have now proven the desired dichotomy: the proof of Proposition \ref{propositionprincipalesugibbs}, and thus that of Theorem \ref{matsumotocbrvar}, is complete.\quad \hfill $\square$

\section{Appendix.}

Below $(M,\F)$ is a closed foliated manifold with a negatively curved leafwise metric. We recall below the main steps of the proofs of classical results concerning uniformly and partially hyperbolic dynamical systems. As we will see all these arguments apply to our context.

\subsection{Absolute continuity}
\label{absolouille}

Let us treat now the absolute continuity of invariant foliations of the foliated geodesic flow and prove Theorem \ref{absolutecontinuity}.

\begin{figure}[hbtp]
\centering
\includegraphics[scale=0.5]{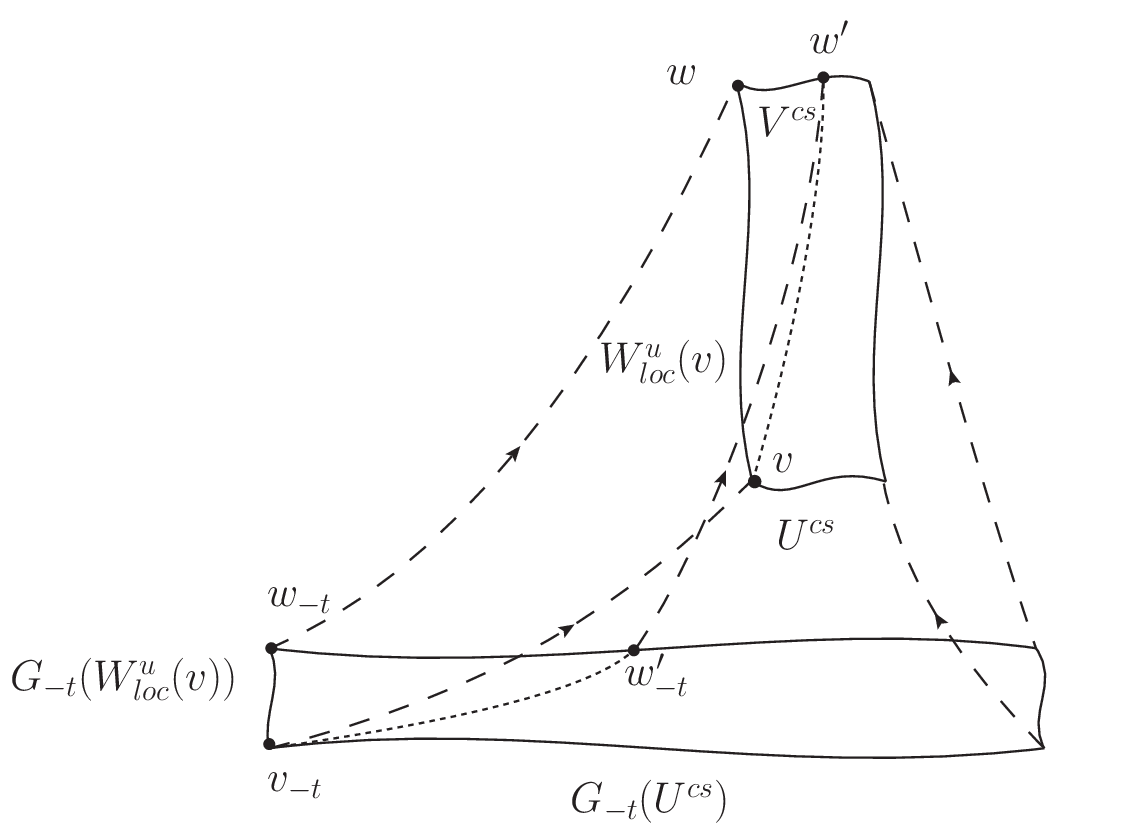}
\caption{Absolute continuity}
\label{figureabscont}
\end{figure}

\paragraph{Step 1.} Let $U^{cs}$ and $V^{cs}$ be two center-stable discs contained in the same leaf $\hL$ such that there exists an unstable holonomy map $\h uvw:U^{cs}\to V^{cs}$. Consider the function of $U^{cs}$ defined by
\begin{equation}
\label{limitabscont}
\Psi_t(v)=\frac{\Jac^{cs}_{-t}(v)}{\Jac^{cs}_{-t}(\h uvw(v))}=\frac{\Jac^{cs}_{-t}(v)}{\Jac^{cs}_{-t}(w)},
\end{equation}
where $v\in U^{cs}$ and $w=\h uvw(v)\in V^{cs}$.

The limit $\Psi_{\infty}$ as $t\to\infty$ is well defined by the distortion control \eqref{distortion2}. We will prove that $\h uvw$ is absolutely continuous and that its jacobian is given by $\Psi_{\infty}$.

\paragraph{Step 2.} We build smooth approximations of unstable holonomies.

Let $\cN^{cs}$ be the subbundle of $T\hcF$ defined as the normal bundle of $\W^{cs}$. We denote by $\cN^{cs}_{\eta}$ the $\eta$-neighbourhood of the zero section, and by $\cN^{cs}_{\eta}(v)$, the fiber of $v\in\hM$. When $K\dans \hM$ denote by $\cN^{cs}_{\eta}(K)$ the union of the fibers $\cN_{\eta}^{cs}(v)$ for $v\in K$. When $\eta>0$ is chosen small enough, the exponential map induces a diffeomorphism from $\cN^{cs}_{\eta}(W^{cs}_{loc}(v))$ onto a neighbourhood of $v$ inside $\hL_v$ for every $v\in\hM$. By compactness and continuity of the rectangles, we can find $\gamma>0$ such that for every $v\in\hM$ the rectangle $\RR_v=[W^u_{2\gamma}(v),W^{cs}_{2\gamma}(v)]$ is well defined and contained inside this neighbourhood.

For every $v\in\hM$ consider the smooth foliation of $\RR_v$ defined as the image by the exponential map of the foliation of $\cN^{cs}_{\eta}(W^{cs}_{loc}(v))$ defined by the fibers $\cN^{cs}_{\eta}(z)$. This foliation is transverse to $\W^{cs}$. Its holonomy provides $\pi_{v\to w}^0:W^{cs}_{\gamma}(v)\to W^{cs}(w)$ which is a diffeomorphism on its image, well defined when $w\in W^u_{\gamma}(v)$, and such that there exists a uniform $C_1>0$ such that when $w\in W^u_{\gamma}(v)$  
\begin{equation}
\label{approx1}
\dist_{\hcF}(\pi^0_{v\to w}(v),w)\leq C_1\dist_{\hcF}(v,w),
\end{equation}

\begin{equation}
\label{approx2}
||D_v\pi^0_{v\to w}-Id||\leq C_1\dist_{\hcF}(v,w).
\end{equation}

Consider $v\in U^{cs}$ and $w=\h uvw(v)$. We will asume that the diameter of $U^{cs}$ is small enough and that $v,w$ are close enough so that $\pi^0_{v\to w}$ is well defined inside $U^{cs}$.

Define $\pi^t_{v\to w}=G_{t}\circ\pi_{G_{-t}(v)\to G_{-t}(w)}^0\circ G_{-t}$. Denote by $w'=\pi_{v\to w}^t(v)\in W^{cs}_{loc}(w)$. In the sequel we shall denote $v_{-t}=G_{-t}(v)$. The elements $w_{-t}$ and $w'_{-t}$ are defined analogously. Note that by definition $w_{-t}'=\pi_{v_{-t},w_{-t}}^0(v_{-t})\in W^{cs}_{loc}(w_{-t})$. By definition of center stable and unstable manifolds and by \eqref{approx1} it follows that there exist $C_2,\lambda>0$ such that for every $s\geq 0$
\begin{equation}
\label{centrouillestablouille}
\dist(G_s(w_{-t}),G_s(w'_{-t}))\leq\dist(w_{-t},w_{-t})\leq C_1\dist(v_{-t},w_{-t})\leq  C_2 e^{-t\lambda}\dist(v,w).
\end{equation}
Using these inequalities we can prove that $\pi^t_{v\to w}$ converges uniformly to $\h uvw$ on $U^{cs}$. Now denote by $\Phi_t(v)$ the Jacobian at $v$ of $\pi_{v\to w}^t$ . We have
\begin{equation}
\label{convjacob}
\frac{\Phi_t(v)}{\Psi_t(v)}=\Jac\,\pi^0_{v_{-t},w_{-t}}(v_{-t})\,\frac{\Jac^{cs}_{-t}(w)}{\Jac^{cs}_{-t}(w')}.
\end{equation}

\paragraph{Step 3.} Finally we must prove the uniform convergence of $\Phi_t$ to $\Phi_{\infty}$. The first factor of the right hand side of \eqref{convjacob} tends to $1$ uniformly: this follows from \eqref{approx1} and \eqref{approx2}. In order to prove that the second one also tends to $1$ we need a second distortion control. Let $n\in\N$. Using the H{\"o}lder continuity of $\log\,\Jac^{cs}_{-1}$ and \eqref{centrouillestablouille} we can prove

\begin{eqnarray*}
\left|\log\frac{\Jac^{cs}_{-n}(w)}{\Jac^{cs}_{-n}(w')}\right|&\leq&\sum_{i=0}^{n-1}|\log\,\Jac^{cs}_{-1}(w_{-i})-\log\,\Jac^{cs}_{-1}(w'_{-i})|\\
&\leq& C_3\sum_{i=0}^{n-1}\dist(w_{-i},w_{-i}')^{\alpha_1}\\
&\leq& C_4ne^{-\alpha_1\lambda n}\To_{n\to\infty} 0.
\end{eqnarray*}
Using that $\Psi_t$ converge uniformly on $U^{cs}$ to $\Psi_{\infty}$ we deduce that $\Phi_t$ converge uniformly to the same limit. 

Now conclude using an abstract result of absolute continuity: take two continuous functions $h$ and $\Psi$ such that there exists a family of smooth functions $\pi_t$ that converge uniformly to $h$, whose Jacobians $\Phi_t$ converge uniformly to $\Psi$. Then $h$ is absolutely continuous and his Jacobian is given by $\Psi$ (see \cite[Theorem 3.3 Chap. III]{M}).

\subsection{Existence of Gibbs $u$-states}
\label{proofugibbs}

We give here the main steps of the proof of the existence of Gibbs $u$-states (i.e. Theorem \ref{reconstruirelesetatsdeugibbs}). This is an adaptation of \cite[Section 11.2.2]{BDV}. See also \cite[Theorem 4]{PS}.

We first treat the case where $D^u$ is a small unstable disc. We want first to prove that accumulation points of
$$\mu_t=\frac{1}{t}\int_0^t G_{s\ast}\left(\frac{\Leb^u_{D^u}}{\Leb^u_{D^u}(D^u)}\right)\,ds$$
are Gibbs-$u$-states.

\paragraph{Distortion control.} We first need an application of the distortion control \eqref{distortion1}: we leave the proof of the next lemma to the reader.

\begin{lemma}
\label{distctrl}
Let $D^u$ be a small unstable disc. Then given a positive number $\Delta>0$ there exists $K_0>1$ such that for every $t\geq 0$, every open set $O\dans G_t(D^u)$ of diameter smaller than $\Delta$ and every Borel set $Z\dans O$ we have
$$\frac{1}{K_0}\frac{\Leb^u_{G_t(D^u)}(Z)}{\Leb^u_{G_t(D^u)}(O)}\leq \frac{G_t\,_{\ast}\Leb^u_{D^u}(Z)}{G_t\,_{\ast}\Leb^u_{D^u}(O)}\leq K_0\frac{\Leb^u_{G_t(D^u)}(Z)}{\Leb^u_{G_t(D^u)}(O)}$$
\end{lemma}

\paragraph{Markovian components.} 
Let us consider $U=P\times T$ a foliated chart for $\W^u$: for every $y\in T$, $P(y)=P\times\{y\}$ is an unstable domain. A connected component of $G_t(D^u)\cap U$ is \emph{Markovian} if it is equal to a plaque $P(y)$, i.e. if it crosses entirely $U$. Otherwise we say that it is \emph{non-Markovian}. Call $D_{t,M}$ the union of Markovian components of $G_t(D^u)\cap U$ and $D_{t,NM}$ that of non-Markovian components.

We assume that $\mu_{t_k}$ converges to $\mu$, that $\mu(U)>0$ and that $\mu(\partial U)=0$ in such a way that $(\mu_{t_k})_{|U}$ converges to $\mu_{|U}$. Write
$$(\mu_{t_k})_{|U}=\mu_{t_k,M}+\mu_{t_k,NM}$$
where $\mu_{t_k,M}$ and $\mu_{t_k,NM}$ denote respectively the restrictions of $\mu_{t_k}$ to $D_{t_k,M}$ and $D_{t_k,NM}$.

\begin{lemma}
\label{Markov}
We have $\mu_{t_k,M}\to\mu_{|U}$ and $\mu_{t_k,NM}\to 0$ as $k\to\infty$.
\end{lemma}

\begin{proof}
Using that $G_{-t}$ contracts uniformly the unstable foliation it comes that $G_{-t_k}(D_{t_k,NM})$ are neighbourhoods of $\partial D^u$ whose diameters tend to zero. The lemma follows.
\end{proof}

\paragraph{Step 1.} We are ready to prove the first item of Theorem \ref{reconstruirelesetatsdeugibbs} when $D^u$ is an unstable disc.

\begin{lemma}
\label{distlemma}
The measures $\mu_{t_k,M}$ have Lebesgue disintegration along $\W^u$ with densities which are uniformly log-bounded.
\end{lemma}

\begin{proof}
The conditional measure of $G_{t}\,_{\ast}(\Leb^u_{D^u}/\Leb^u_{D^u}(D^u))$ in a markovian component $P(y)$ of $U\cap G_t(D^u)$ is the probability measure associating to a Borel set $A\dans P(y)$ the number 
$$\frac{\Leb^u_{D^u}(G_{-t}(A))}{\Leb^u_{D^u}(G_{-t}(P(y))).}$$
By Lemma \ref{distctrl} this measure has a uniformly log-bounded density with respect to Lebesgue.
\end{proof}

Using Lemmas \ref{Markov} and \ref{distlemma} we deduce that $\mu$ as well has Lebesgue disintegration with densities which are uniformly log-bounded (we can prove the compactness of the set of probability measures with Lebesgue density along the leaves of $\W^u$ with local densities uniformly bounded from above and below by two  given constants). We can say more: using Lemma \ref{distlemma} it is possible to prove that the local densities satisfy \eqref{relationdensiteslocales}. By construction it is $G_t$-invariant so it is a Gibbs $u$-state.

\paragraph{Step 2.} We now treat the case where $D^u$ is a Borel set of positive Lebesgue measure. This is a Lebesgue density argument. If $\delta>0$ is small it is possible to find disjoint unstable discs $D_1,...,D_n$ such that
\begin{itemize}
\item $\Leb_{D^u}^u(D_i\moins D^u)\leq\delta\Leb^u_{D^u}(D_i);$
\item $\Leb_{D^u}^u(D^u\moins\bigcup D_i)\leq\delta\Leb^u_{D^u}(D^u)$.
\end{itemize}
Write

$$\frac{G_t\,_{\ast}\Leb_{D^u}^u}{\Leb_{D^u}^u(D^u)}=\sum_{i=1}^n\frac{\Leb_{D^u}^u(D_i)}{\Leb_{D^u}^u(D^u)}\,\frac{G_t\,_{\ast}\Leb_{D_i}^u}{\Leb_{D^u}^u(D_i)}
 +\frac{G_t\,_{\ast}\Leb_{D^u\moins\bigcup D_i}^u}{\Leb_{D^u}^u(D^u)}-\sum_{i=1}^n  \frac{G_t\,_{\ast}\Leb_{D_i\moins D^u}^u}{\Leb_{D^u}^u(D^u)}.$$ Notice that the two last terms of the right hand side have total mass less than $\delta$, for every $t$. Using Step 1 we see that every accumulation point of Cesaro averages of the first term of the right hand side tends to a Gibbs $u$-state.
 
 Finally we deduce that every accumulation point of the left hand side differs by a measure of total mass less than $\delta$ from a Gibbs $u$-state with uniformly log-bounded local densities. Since the latter property is closed, and letting $\delta$ tend to zero, we can conclude Step 2.
 
 \paragraph{Step 3.} We now have to prove that ergodic components of Gibbs $u$-states are still Gibbs $u$-states. Consider the set $\RR$ of regular points that is to say the set of elements $v\in\hM$ whose past and future Birkhoff averages exist and coincide (i.e. $\mu^+_v=\mu_v^-=\mu_v$ with notations of Definition \ref{uregular}). All measures $\mu_v$ are ergodic.
 
A Gibbs $u$-state $\mu$ gives total measure to $\RR$ and  has Lebesgue disintegration in the unstable manifolds. Hence $\mu$-almost every $v\in\RR$ lies inside an unstable plaque which intersects $\RR$ on a set positive Lebesgue measure. Using Step 2 and the fact that $\mu_v^-$ does not depend on the choice of $v$ in a given unstable manifold, one gets that for $\mu$-almost every $v\in\RR$, $\mu_v$ is an ergodic Gibbs $u$-state.

\paragraph{Step 4.} The case of smooth discs $D$ tangent to $\hcF$ an transverse to $\W^{cs}$ (see Remark \ref{transversediscs}) follows from the fact that, by the inclination lemma, their iterates approach leaves of $\W^{cu}$ in the smooth topology.

The case of Borel subsets of such discs of positive Lebesgue measure follows from the same Lebesgue density argument as sketched in Step 2.

\vspace{10pt}
\paragraph{Acknowledgments.} This paper corresponds to the fifth chapter of my PhD thesis \cite{Al3}. I am deeply grateful to my supervisor Christian Bonatti for his advices, insights and great patience. It is also a pleasure to thank Lucia Duarte who forced me to take some time to draw the pictures, and who patiently showed me how to do. Finally I am thankful to the anonymous referee for his/her advices which improved the readability of the article.

\begin{flushleft}
{\scshape S\'ebastien Alvarez}\\
Instituto Nacional de Matem\'atica Pura e Aplicada (IMPA)\\
Estrada Dona Castorina 110, Rio de Janeiro, 22460-320, Brasil\\
email: salvarez@impa.br
\end{flushleft}

\end{document}